\theoremstyle{plain}
\newtheorem{theorem}{Theorem}[section]
\newtheorem{lemma}[theorem]{Lemma}
\newtheorem{proposition}[theorem]{Proposition}
\newtheorem{conjecture}{Conjecture}
\newtheorem{remark}{Remark}
\newtheorem{corollary}[theorem]{Corollary}
\newtheorem{definition}[theorem]{Definition}
\newtheorem*{Graal1*}{Special Case of Problem~\ref{Graal1}}
\newtheorem*{Graal2*}{Special Case of Problem~\ref{Graal2}}
\newtheorem*{theorem*}{Theorem}
\newenvironment{customthm}[1]
{\innercustomthm}
{\endinnercustomthm}
\theoremstyle{remark}
\newtheorem*{remark*}{{\bf Remark}}
\newtheorem*{remarks*}{{\bf Remarks}}
\newtheorem*{comment*}{{\bf Comment}}
\newcommand{\ue}{{\underline{\varepsilon}}}
\newcommand{\uh}{{\underline{h}}}
\newcommand{\wt}{\widetilde}
\newcommand{\bignorm}[1]{\big\lVert #1 \big\rVert}
\newcommand{\Bignorm}[1]{\Big\lVert #1 \Big\rVert}
\newcommand{\norm}[1]{\left\Vert #1\right\Vert}
\newcommand{\N}{\mathbb{N}}
\newcommand{\Q}{\mathbb{Q}}
\newcommand{\R}{\mathbb{R}}
\newcommand{\T}{\mathbb{T}}
\newcommand{\Z}{\mathbb{Z}}
\newcommand{\B}{\mathcal{B}}
\newcommand{\C}{\mathbb{C}}
\newcommand{\D}{\Delta}
\newcommand{\G}{\Gamma}
\newcommand{\X}{\mathcal{X}}
\newcommand{\m}{\mu}
\newcommand{\bv}{{\bf v}}
\newcommand{\La}{\Lambda}
\renewcommand{\P}{\mathbb{P}}
\newcommand{\e}{\varepsilon}
\DeclareMathOperator*{\E}{\mathbb{E}}
\def \colon{{:}\;}
\newcommand{\floor}[1]{{\left \lfloor #1 \right \rfloor}}
\newcommand{\bigabs}[1]{\bigl| #1 \bigr|}
\newcommand{\Bigabs}[1]{\Bigl| #1 \Bigr|}
\title{Ergodic averages for sparse sequences along primes}
\author{Andreas Koutsogiannis and Konstantinos Tsinas}
\address[Andreas Koutsogiannis]{
Department of Mathematics, Aristotle University of Thessaloniki, Thessaloniki 54124, Greece}
\email{akoutsogiannis@math.auth.gr}
\address[Konstantinos Tsinas]{Department of Mathematics and Applied Mathematics, University of Crete, Voutes University Campus, Heraklion 70013, Greece}
\email{kon.tsinas@gmail.com}
\thanks{The second author was supported by ELIDEK--Fellowship number 5367 (3rd Call for HFRI Ph.D. Fellowships).}
\subjclass[2020]{Primary: 37A44; Secondary: 28D05, 11B30}
\keywords{Ergodic averages, recurrence, prime numbers, Hardy fields.}
\begin{document}

\begin{abstract}
We investigate the limiting behavior of multiple ergodic averages along sparse sequences  evaluated at prime numbers. Our sequences arise from smooth and well-behaved functions that have polynomial growth.
Central to this topic is a comparison result between standard Ces\'{a}ro averages along positive integers and averages weighted by the (modified) von Mangoldt function. The main ingredients are a recent result of Matom\"{a}ki, Shao, Tao and Ter\"{a}v\"{a}inen on the Gowers uniformity of the latter function in short intervals, a lifting argument that allows one to pass from actions of integers to flows, a simultaneous (variable) polynomial approximation in appropriate short intervals, and some quantitative
equidistribution results for the former polynomials. We derive numerous applications in multiple recurrence, additive combinatorics, and equidistribution in nilmanifolds along primes. In particular, 
we deduce that any set of positive density contains arithmetic progressions with step $\lfloor p^c \rfloor$, where $c$ is a positive non-integer and $p$ denotes a prime, establishing a conjecture of Frantzikinakis. 
\end{abstract}

\maketitle
\tableofcontents

\section{Introduction and main results}\label{Section-Introduction}
\subsection{Motivation}

The seminal work of Furstenberg \cite{Furstenberg-original} towards Szemer\'{e}di's theorem has initiated significant interest in the study of ergodic theoretic problems and their applications in problems of combinatorial or number-theoretic nature. Dynamical methods have proven extremely effective at tackling problems relating to the combinatorial richness of positive density subsets of integers. Furthermore, there are frequently no alternative methods that can recover the results that ergodic theoretic tools provide. For instance, we have far-reaching generalizations of Szemer\'{e}di's theorem, such as the Bergelson-Leibman theorem \cite{Bergelson-Leibman-polynomial-VDW} that produces polynomial progressions on sets of integers with positive density.

The general structure of our problems is the following: we are
given a collection of sequences $a_1(n),\dots, a_k(n)$ of integers and
a standard probability space $(X,\mathcal{X},\m)$ equipped with invertible, commuting, measure-preserving transformations $T_1,\dots, T_k$ that act on $X,$ and we examine the limiting behavior of the multiple averages \begin{equation}\label{E: general ergodic averages}
  \frac{1}{N}\sum_{n=1}^{N} T_1^{a_1(n)}f_1\cdot\ldots\cdot T_k^{a_k(n)}f_k.
\end{equation}
Throughout the article, these assumptions on the transformations will be implicit; we call the tuple $(X,\X,\mu, T_1,\ldots, T_k)$ a \emph{measure-preserving system} (or just \emph{system}).
Here $f_1,\dots, f_k$ are functions in $L^{\infty}(\m)$ and we concern ourselves with their convergence mainly in the $L^2$-sense.  In view of Furstenberg’s correspondence principle, a satisfactory answer to this problem typically ensures that sets with positive density possess patterns of the form $(m,m+a_1(n),\dots, m+a_k(n))$, where $m,n\in \N$.
Specializing to the case where all the sequences are equal and $T_i=T^i$, we arrive at the averages \begin{equation}\label{E: Furstenberg averages}
    \frac{1}{N}\sum_{n=1}^{N} T^{a(n)}f_1\cdot T^{2a(n)}f_2\cdot\ldots\cdot T^{ka(n)}f_k,
\end{equation}which relate to patterns of arithmetic progressions, whose common difference belongs to the set $\{a(n)\colon n\in \N\}$.

Furthermore, it is particularly tempting to conjecture that results pertaining to mean convergence of the averages in \eqref{E: general ergodic averages} 
should still be valid, if we restrict the range of summation to a sparse set such as the primes. Normalizing appropriately, we contemplate whether or not the averages \begin{equation}\label{E: general ergodic averages along the primes}
    \frac{1}{\pi(N)} \sum_{ p\in \P\colon p\leq N}T_1^{a_1(p)}f_1\cdot\ldots\cdot T_k^{a_k(p)}f_k
\end{equation}converge in $L^2(\m)$ and what is the corresponding limit of these averages.
Here, $\pi(N)$ denotes the number of primes less than or equal to $N$ and $\P$ is the set of primes. 

The first results in this direction were established in the case $k=1$. Namely, S\'{a}rk\"{o}zy \cite{sarkozy3} used methods from analytic number theory to show that sets of positive density contain patterns of the form $(m,m+p-1)$, where $p$ is a prime.\footnote{Throughout this article, it will be a reoccurring theme that in combinatorial applications, certain arithmetic obstructions force one to consider the set of shifted primes $\P-1$ (or $\P+1$) in place of $\P$, when dealing with polynomials. This is a necessary assumption, as in such cases the corresponding results for the set $\P$ are easily seen to be incorrect (see, for example, \cite[Remark 1.4]{Wenbo-primes}). } Additionally, Wierdl \cite{Wierdl-primes} established the even stronger pointwise convergence result for the averages \eqref{E: general ergodic averages along the primes} in the case $k=1$ and $a_1(n)=n$, while Nair generalized this theorem to polynomials evaluated at primes \cite{nair-primes}.

In the setting of several iterates, the first results were provided by Frantzikinakis, Host, and Kra \cite{Fra-Ho-Kra-primes-1}, who established that sets of positive density contain 3-term arithmetic progressions whose common difference is a shifted prime. Furthermore, they demonstrated that the averages in \eqref{E: general ergodic averages along the primes} converge in the case $k=2$, $T_1=T_2$ and $a_i(n)=in,\ i\in\{1,2\}$. 
This was generalized significantly by Wooley and Ziegler \cite{Wooley-Ziegler} to hold in the case that the sequences $a_i(n),\ i\in \{1,\dots,k\}$ are polynomials with integer coefficients and the transformations $T_1,\dots, T_k$ are the same. Following that, Frantzikinakis, Host, and Kra confirmed the validity of the Bergelson-Leibman theorem in \cite{Fra-Host-Kra-primes} along the shifted primes. In addition, they showed that the averages in \eqref{E: general ergodic averages along the primes} converge in norm when $a_i(n)$ are integer polynomials. Furthermore, Sun obtained convergence and recurrence results in \cite{Wenbo-primes} for a single transformation and iterates of the form $i\floor{an},\ i\in \{1,\dots, k\}$ or $\floor{ja n}, \ j\in\{1,\dots,k\},$ with $a$ irrational.
Finally, using the convergence results in \cite{Koutsogiannis-correlations} 
along $\N$ for integer parts of real polynomials and several transformations,
the first author extended the convergence result of \cite{Fra-Host-Kra-primes} 
to real polynomials 
in \cite{koutsogiannis-closest}, obtaining recurrence for polynomials with real coefficients rounded to the closest integer.
In all of the previous cases, combinatorial applications along the shifted primes were derived as well.

In the case of multiple iterates, a shared theme in the methods used has been the close reliance on the deep results provided by the work of Green and Tao in their effort to show that primes contain arbitrarily long arithmetic progressions \cite{Green-Tao-primes-progressions}. For instance, all results\footnote{The methods in \cite{Wooley-Ziegler} do not invoke the full power of this theorem, although their approach draws heavily from the work of Green and Tao.} relied on the Gowers uniformity of the (modified) von Mangoldt function that was established in \cite{Green-Tao-linearequationsprimes} conditional to two deep conjectures, which were subsequently verified in \cite{GreeN-Tao-Ziegler-inversetheorem} and \cite{Green-Tao-Mobius}.

It was conjectured by Frantzikinakis that the polynomial theorems along primes should hold for more general sequences involving fractional powers $n^c$, such as $\floor{n^{3/2}}, \floor{n^{\sqrt{2}}}$ or even linear combinations thereof.
Indeed, it was conjectured in \cite{Fra-Hardy-singlecase} that the 
sequence $\floor{p_n^c}$, where $c$ is a positive non-integer and $p_n$ is the sequence of primes is good for multiple recurrence and convergence. To be more precise, he conjectured that the averages \begin{equation}\label{E: Szemeredi averages for p^c}
    \frac{1}{\pi(N)}\sum_{p\in \P\colon p\leq N}^{} T^{\floor{p^c}}f_1\cdot\ldots\cdot T^{k\floor{p^c}}f_k
\end{equation}converge in $L^2(\m)$ for all positive integers $k$ and all positive non-integers $c$.
Analogously, we have the associated multiple recurrence conjecture, namely that all sets of positive upper density contain $k$-term arithmetic progressions with common difference of the form $\floor{p^c}$.
When $0<c<1$, one can leverage the fact that the range of $\floor{p_n^c}$ contains all sufficiently large integers to establish the multiple recurrence result. Additionally, 
the convergence of the previous averages is known in the case $k=1$ since one can use the spectral theorem and the fact that the sequence $\{p_n^c a \}$ is equidistributed mod 1 for all non-zero $a\in \R$. This last assertion follows from \cite{Stux} or \cite{Wolke} when $c<1$ and \cite{Leitmann} in the case $c>1$.

There were significant obstructions to the solution of this problem.  One approach would be to modify the comparison method from \cite{Fra-Host-Kra-primes} (concerning polynomials), but  the Gowers uniformity of the von Mangoldt functions is insufficient to establish this claim.
The other approach would be to use the method of characteristic factors, which is based on the structure theorem of Host-Kra \cite{Host-Kra-annals}. Informally, this reduces the task  of proving convergence to a specific class of systems with special algebraic structure called nilmanifolds. However, this required some equidistribution results on nilmanifolds for the sequence $\floor{p_n^c}$, which were very difficult to establish.

A similar conjecture by Frantzikinakis was made
for more general averages of the form \begin{equation*}
    \frac{1}{\pi(N)} \sum_{p\in \P\colon p\leq N} T^{\floor{p^{c_1}}}f_1\cdot\ldots\cdot T^{\floor{p^{c_k}}}f_k
\end{equation*}for distinct positive non-integer $c_1,\dots, c_k$.
The recent result of Frantzikinakis \cite{Fra-primes} verifies that these averages converge in $L^2(\m)$ to the product of the integrals of the functions $f_1,\dots, f_k$ in any ergodic system, even in the more general case where the sequences in the iterates are linearly independent fractional polynomials. The number theoretic input required is a sieve-theoretic upper bound for the number of tuples of primes of a specific form, as well as an equidistribution result on fractional powers of primes in the torus that was already known. These methods relied heavily on the use of the joint ergodicity results in \cite{Fra-jointly-ergodic} and, thus, the linear independence assumption on the fractional polynomials was absolutely essential. In the same paper, it was conjectured \cite[Problem]{Fra-primes} that the case of fractional polynomials can be generalized to a significantly larger class of functions of polynomial growth, called Hardy field functions, which we consider below. 
The conjecture asks for necessary and sufficient conditions so that the averages along primes converge to the product of the integrals in ergodic systems.
The arguments in \cite{Fra-primes} cannot cover this larger class of functions,\footnote{A more fundamental obstruction in this more general setting was that the necessary seminorm estimates were unavailable even in the simplest case of averages along $\N$, apart from some known special cases. This was established a few months later by the second author \cite{Tsinas}.} as it was remarked in Subsection 1.3 of that article.

In this article, our objective is to strengthen the convergence results in \cite{Fra-Host-Kra-primes} and \cite{Fra-primes} and resolve the convergence problem of the averages in \eqref{E: Szemeredi averages for p^c}. Actually, there is no advantage in confining ourselves to sequences of the form $\floor{p^c}$, so we consider the more general class of sequences arising from Hardy field functions of polynomial growth (see Section \ref{Section: Background} for the general definition), which, loosely speaking, are functions with pleasant behavior (such as smoothness, for instance). 
The prototypical example of a Hardy field is the field $\mathcal{LE}$ of logarithmico-exponential functions, which are defined by a finite combination of the operations $+,-,\times, \div$ and the functions $\exp,\log$ acting on a real variable $t$ and real constants.  
For instance, the field $\mathcal{LE}$ contains the functions
$\log^{3/2}t,$ $t^{\pi},$ $t^{17}\log t+\exp(\sqrt{t^{\log t}+\log\log t  }).$ The fact that $\mathcal{LE}$ is a Hardy field was established in \cite{Hardy-1} and the reader can
keep this in mind as a model case throughout this article.
We resolve several conjectures involving the convergence of the averages in \eqref{E: general ergodic averages along the primes} along Hardy sequences. Consequently, we derive several applications in recurrence and combinatorics that expand the known results in the literature.
Finally, we also establish an equidistribution result in nilmanifolds for sequences evaluated at primes.

\subsection{Main results} We present here our main theorems. We start by stating our mean convergence results, followed by their applications to  multiple recurrence and combinatorics, and conclude our presentation with the equidistribution results in nilmanifolds.
We will assume below that we are working with a Hardy field $\mathcal{H}$ that contains the polynomial functions. This assumption is not necessary, but it simplifies the proofs of our main theorems. Besides, this restriction is very mild and the most interesting Hardy fields contain the polynomials. A few results impose additional assumptions on $\mathcal{H}$ and we state those when necessary. These extra assumptions are a byproduct of convergence results along $\N$ in the literature that were proved under these hypotheses and we will not need to use the implied additional structure on $\mathcal{H}$ in any of our arguments. 

\subsubsection{Comparison between averaging schemes}

For many number-theoretic problems, a suitable proxy for capturing the distribution of the prime numbers is the von-Mangoldt function, which is defined on $\N$ by \begin{equation}\label{E: Definition of con Mangoldt}
    \La(n)=\begin{cases}
        \log p&, \ \text{if } n=p^k\ \text{for some prime } p  \text{ and } k\in \N\\
        0&, \ \text{otherwise}
    \end{cases}.
\end{equation}
The function $\La$ has mean value 1 by the prime number theorem. Usually, the prime powers with exponents at least 2 contribute a term of significantly lower order in asymptotics, so one can think of $\La$ as being supported on primes.
However, due to the irregularity of the distribution of $\La$ in residue classes to small moduli, one typically considers a modified version of $\La$, called the W-tricked version. To define this, let $w$ be a positive integer and let $W=\prod_{p\leq w,p\in \P} p$. Then, for any integer $1\leq b\leq W$ with $(b,W)=1$, we define the $W$-tricked von Mangoldt function $\La_{w,b}$ by \begin{equation}\label{E: defintion of W-tricked von Mangoldt}
    \La_{w,b}(n)=\frac{\phi(W)}{W}\La(Wn+b),
\end{equation}where $\phi$ denotes the Euler totient function.

Our main result provides a comparison between ergodic averages along primes and averages along natural numbers. This will allow us to transfer mean convergence results for Ces\`{a}ro averages to the prime setting, answering numerous conjectures regarding norm convergence of averages as those in \eqref{E: general ergodic averages along the primes} followed by applications in multiple recurrence and combinatorics. We explain the choice of the conditions on the functions $a_{ij}$ in Subsection~\ref{strategysubsection}. Roughly speaking, the first condition implies that the sequence $a_{ij}$ is equidistributed mod 1 due to a theorem of Boshernitzan (see Theorem \ref{T: Boshernitzan} in Section \ref{Section: Background}).

\begin{theorem}\label{T: the main comparison}
    Let $\ell,k$ be positive integers and, for all $1\leq i\leq k,\ 1\leq j\leq \ell$, let $a_{ij}\in \mathcal{H}$ be functions of polynomial growth such that \begin{equation}\label{E: far away from rational polynomials}
        \lim_{t\to+\infty}
        \left|\frac{a_{ij}(t)-q(t)}{\log t} \right|=+\infty\ \text{ for every polynomial }  q(t)\in \Q[t],
        \end{equation} or \begin{equation}\label{E: essentially equal to a polynomial}
           \lim\limits_{t\to+\infty} |a_{ij}(t)-q(t)|=0\  \text{ for some polynomial }  q(t)\in \Q[t]+\R.
        \end{equation}Then, for any measure-preserving system $(X,\X, \m, T_1,\dots, T_k)$ and functions $f_1,\dots,f_{\ell}\in L^{\infty}(\m)$, we have  \begin{equation*}\label{E: main average in Proposition P: the main comparison}
         \lim_{w\to+\infty} \  \limsup\limits_{N\to+\infty}\  \max_{\underset{(b,W)=1}{1\leq b\leq W}}\ \Bignorm{\frac{1}{N}\sum_{n=1}^{N} \big(\La_{w,b}(n) -1\big) \prod_{j=1}^{\ell}\big(  \prod_{i=1}^{k} T_i^{\floor{a_{ij}(Wn+b)}}       \big)f_j   }_{L^2(\m)}=0.
        \end{equation*}
\end{theorem}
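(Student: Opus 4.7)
The plan is to combine a short-interval polynomial approximation of Hardy functions with the Gowers-uniformity-in-short-intervals result of Matom\"{a}ki--Shao--Tao--Ter\"{a}v\"{a}inen for $\Lambda_{w,b}-1$. The first step is to decompose $[1,N]$ into short intervals $I_s=[sH,(s+1)H)$ with $H=H(N)$ growing slowly (say $H=N^{1-\delta}$), and to use the triangle inequality to reduce the target $L^2$-norm to an average over $s$ of inner $L^2$-norms
\[
\Bignorm{\frac{1}{H}\sum_{n\in I_s}(\Lambda_{w,b}(n)-1)\prod_{j=1}^{\ell}\prod_{i=1}^{k}T_i^{\floor{a_{ij}(Wn+b)}}f_j}_{L^2(\m)}.
\]
It therefore suffices to show that the typical inner correlation vanishes as $w,N\to\infty$.

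Next, on each $I_s$, I would Taylor expand $a_{ij}(Wn+b)$ around $Wsh+b$ up to some fixed degree $d$ depending only on the growth exponent of $a_{ij}$. Standard Hardy-field derivative estimates give an approximation error of order $H^{d+1}\,|a_{ij}^{(d+1)}(sH)|=o(1)$ provided $d$ is large enough, so $\floor{a_{ij}(Wn+b)}$ can be replaced by $\floor{p_{ij}^{(s)}(n)}$ for a real polynomial $p_{ij}^{(s)}$ of degree at most $d$, up to an $L^\infty$-error $o(1)$. To handle the discontinuity of the floor, I would invoke the lifting argument alluded to in the abstract: enlarge the system by a rotation on a torus $\T^{k\ell}$ tracking the fractional parts $\{p_{ij}^{(s)}(n)\}$, which turns the $\Z$-action $T_i^{\floor{\cdot}}$ into (a slice of) an $\R$-flow $T_i^{p_{ij}^{(s)}(n)}$ on the enlarged space. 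At this stage, the problem on each short interval becomes a correlation of $\Lambda_{w,b}-1$ with a polynomial multiple average on a fixed system.

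For the polynomial multiple average, one approximates in $L^2$ (via the Host--Kra structure theorem combined with a quantitative Leibman-type equidistribution theorem on nilmanifolds) by a nilsequence $\Phi(g_s(n)\Gamma)$ of bounded complexity, where the complexity depends only on $d$, $k$, $\ell$ and the $f_j$, not on $s$. One then applies the Matom\"{a}ki--Shao--Tao--Ter\"{a}v\"{a}inen theorem, which asserts that $\Lambda_{w,b}-1$ is orthogonal in short intervals to nilsequences of bounded complexity, uniformly in the base point and uniformly in $b$ with $(b,W)=1$, as $w\to\infty$. Averaging this bound over $s$ gives the claimed vanishing.

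The main obstacle will be uniformity: the polynomial approximations $p_{ij}^{(s)}$ and the corresponding nilsequence parameters $g_s$ depend on $s$, so the MSTT short-interval estimate must be used in a form that is uniform over these parameters (and over $b$). This is precisely where the two hypotheses on $a_{ij}$ split the argument. Condition~\eqref{E: far away from rational polynomials}, together with Boshernitzan's theorem (Theorem~\ref{T: Boshernitzan}), ensures that the polynomial approximations $p_{ij}^{(s)}$ stay quantitatively far from rational polynomials, so the Leibman equidistribution input and the MSTT correlation bound can both be applied with polynomial savings uniformly in $s$. Condition~\eqref{E: essentially equal to a polynomial}, on the other hand, reduces essentially to the genuine polynomial case already treated by Frantzikinakis--Host--Kra in \cite{Fra-Host-Kra-primes}, with the $W$-trick neutralizing the residual rational constant.
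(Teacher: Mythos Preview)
Your overall architecture---short intervals, Taylor approximation, lifting to flows, then MSTT---matches the paper. But the step where you ``approximate the polynomial multiple average in $L^2$ by a nilsequence via Host--Kra plus Leibman'' is not the right tool and would not close. The Host--Kra structure theorem and the correlation-sequence results of Leibman are asymptotic statements about Ces\`aro averages on a fixed system; they do not hand you, for each short interval $I_s$, a nilsequence $\Phi(g_s(n)\Gamma)$ that approximates the finitary average uniformly in $s$ with complexity independent of $s$. What the paper does instead is elementary and uniform: once you have polynomials $p_{ij}^{(s)}$ in the iterates (and have lifted to an $\R$-flow), you run PET induction/van der Corput directly to obtain the bound
\[
\Bignorm{\E_{n\in I_s}(\Lambda_{w,b}(n)-1)\prod_{j}\prod_{i} S_{i,p_{ij}^{(s)}(n)}\tilde f_j}_{L^2}
\ll_{d,k,\ell}\ \bignorm{\Lambda_{w,b}-1}_{U^s(I_s)} + (\text{error}),
\]
with $s$ depending only on $d,k,\ell$ (this is Proposition~\ref{P: PROPOSITION THAT WILL BE COPYPASTED FROM FRA-HO-KRA}/Corollary~\ref{C: Gowers norm bound adapted to flows}). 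The right-hand side is already independent of the variable polynomials, so MSTT (Theorem~\ref{T: Gowers uniformity in short intervals}) applies directly with no uniformity-in-$s$ issue at all. Your misidentification of where the uniformity problem lies---it is not in the MSTT application but earlier---stems from this.

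The second gap is in the error-term removal. You write that the Taylor remainder is $o(1)$, so $\floor{a_{ij}(Wn+b)}$ can be replaced by $\floor{p_{ij}^{(s)}(n)}$ ``up to an $L^\infty$-error $o(1)$''. This is false: the floors can differ by $\pm 1$ whenever $\{p_{ij}^{(s)}(n)\}$ lies within the Taylor error of $0$ or $1$, and because the weight $\Lambda_{w,b}(n)-1$ has size $\sim\log N$, even a density-$o(1)$ set of bad $n$ can contribute $O(1)$ to the average. The lifting trick does not fix this---it is applied \emph{after} you already have honest polynomials in the iterates, and it requires a non-concentration hypothesis on $\{p_{ij}^{(s)}(n)\}$ near $1$ that you must establish separately. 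The paper resolves this with a genuine case analysis (Propositions~\ref{P: remove error term for fast functions}, \ref{P: remove error term for slow functions}, \ref{P: remove error terms for polynomial functions}): for functions with a non-sub-fractional strongly non-polynomial part one gets a $\log^{-A}N$ discrepancy bound via Erd\H{o}s--Tur\'an and Weyl, beating the $\log N$ from $\Lambda$; for sub-fractional functions one argues the floor is constant on most short intervals; and for the ``polynomial plus sub-fractional'' case with an irrational coefficient one must additionally bound $\sum_{n\in I_s}\Lambda_{w,b}(n)\mathbf{1}_{\text{bad}}(n)$ using MSTT orthogonality to polynomial phases. This is where conditions~\eqref{E: far away from rational polynomials} and~\eqref{E: essentially equal to a polynomial} actually enter, not (as you suggest) in the final MSTT step.
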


\begin{remark}\label{R: replacing rounding functions} 
    We can easily verify that each of the integer parts can be individually replaced by other rounding functions, such as the ceiling function (which we denote by $\lceil\cdot\rceil$) or the closest integer function (denoted by $[[\cdot]]$). This is an immediate consequence of the identities $\lceil  x\rceil=-\floor{-x}$ and $[[x]]=\floor{x+1/2},$ for all $x\in\R$ and the fact that the affine shifts (by rationals) $q_1 a_{ij}+q_2, q_1,q_2\in \Q$, still satisfy \eqref{E: far away from rational polynomials} or \eqref{E: essentially equal to a polynomial} if $a_{ij}$ does.
\end{remark}

Theorem \ref{T: the main comparison} is the main tool that we use to derive all of our applications.
The bulk of the article is aimed towards establishing it and everything else is practically a corollary (in combination with known norm convergence theorems for Ces\`{a}ro averages). We remark that unlike several of the theorems below, there are no ``independence'' assumptions between the functions $a_{ij}$, although, in applications, we will need to impose analogous assumptions to ensure convergence of the averages, firstly along $\N$, and then along $\P$. In order to clarify how the comparison works, we present the following theorem, which is effectively a corollary of Theorem \ref{T: the main comparison} and which shall be proven in Section \ref{Section-Proofs of remaining theorems}.

\begin{theorem}\label{T: criterion for convergence along primes}
  Let $\ell, k $ be positive integers, $(X,\X, \m, T_1,\dots, T_k)$ be a measure-preserving system and $f_1,\dots, f_k\in L^{\infty}(\m)$. 
  Assume that for all $1\leq i\leq k,\ 1\leq j\leq \ell$, $a_{ij}\in\mathcal{H}$ are functions of polynomial growth such that the following conditions are satisfied:\\
  (a) 
  Each one of the functions $a_{ij}(t)$ satisfies either $\eqref{E: far away from rational polynomials}$ or \eqref{E: essentially equal to a polynomial}.\\
  (b) For all positive integers $W,b$, the averages \begin{equation}\label{E: averages along Wn+b}
      \frac{1}{N}\sum_{n=1}^{N} \big(  \prod_{i=1}^{k} T_i^{\floor{a_{i1}(Wn+b)}}       \big)f_1\cdot\ldots \cdot  \big(  \prod_{i=1}^{k} T_i^{\floor{a_{i\ell}(Wn+b)}}       \big)f_{\ell}
  \end{equation}converge in $L^2(\m)$. 
  
  Then, the averages \begin{equation}\label{E: averages along primes converge}
      \frac{1}{\pi(N)}\sum_{p\in \P\colon p\leq N} \big(  \prod_{i=1}^{k} T_i^{\floor{a_{i1}(p)}}       \big)f_1\cdot\ldots \cdot  \big(  \prod_{i=1}^{k} T_i^{\floor{a_{i\ell}(p)}}       \big)f_{\ell}
  \end{equation}converge in $L^2(\m)$.

  Furthermore, if the averages in \eqref{E: averages along Wn+b} converge to the function $F\in L^{\infty}(\m)$ for all positive integers $W,b$, then the limit in $L^2(\m)$ of the averages \eqref{E: averages along primes converge} is equal to $F$. 
\end{theorem}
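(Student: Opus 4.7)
The plan is to use a ``W-trick'' to decompose the prime average into averages along arithmetic progressions, then invoke Theorem~\ref{T: the main comparison} to replace the $\Lambda_{w,b}$-weighted averages by unweighted ones, to which hypothesis~(b) directly applies. Set
$$G(n) := \prod_{j=1}^{\ell} \prod_{i=1}^{k} T_i^{\floor{a_{ij}(n)}} f_j,$$
which is uniformly bounded in $L^\infty(\mu)$ by $C := \prod_j \|f_j\|_\infty$.

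First, I would carry out a standard reduction from prime averages to averages weighted by the von Mangoldt function $\Lambda$. The contribution from prime powers $p^k$ with $k\geq 2$ to $\frac{1}{N}\sum_{n\leq N}\Lambda(n)G(n)$ is $O(C\log N/\sqrt{N})$, and the prime number theorem combined with partial summation yields
$$\Bignorm{\frac{1}{\pi(N)} \sum_{p \leq N} G(p) - \frac{1}{N} \sum_{n=1}^{N} \Lambda(n) G(n)}_{L^2(\mu)} \xrightarrow[N\to\infty]{} 0.$$

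Next, fix $w$, let $W = \prod_{p \leq w,\, p\in \P} p$, and split the von Mangoldt sum by residue class modulo $W$. The contribution from residues $b$ with $(b,W)>1$ consists of prime powers of primes dividing $W$ and is bounded by $O_W((\log N)^2 C/N)=o_w(1)$. For $(b,W)=1$ the identity $\Lambda(Wm+b)=\tfrac{W}{\phi(W)}\Lambda_{w,b}(m)$ gives, with $M=\floor{N/W}$,
$$\frac{1}{N} \sum_{n=1}^{N} \Lambda(n) G(n) = \frac{1}{\phi(W)} \sum_{\substack{1\leq b \leq W \\ (b,W)=1}} \frac{1}{M} \sum_{m=1}^{M} \Lambda_{w,b}(m) G(Wm+b) + o_{w}(1).$$
Theorem~\ref{T: the main comparison} then supplies the crucial estimate
$$\limsup_{M \to \infty} \max_{(b,W)=1} \Bignorm{\frac{1}{M} \sum_{m=1}^{M} \bigl(\Lambda_{w,b}(m) - 1\bigr) G(Wm+b)}_{L^2(\mu)} \leq \varepsilon(w),$$
where $\varepsilon(w) \to 0$ as $w \to \infty$. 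By hypothesis~(b), applied for each pair $(W,b)$ with $(b,W)=1$, the inner unweighted averages $\frac{1}{M}\sum_{m\leq M} G(Wm+b)$ converge in $L^2(\mu)$ to some $F_{W,b}$.

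Combining the three estimates above yields, for each fixed $w$,
$$\limsup_{N \to \infty} \Bignorm{\frac{1}{\pi(N)} \sum_{p \leq N} G(p) - \frac{1}{\phi(W)} \sum_{(b,W)=1} F_{W,b}}_{L^2(\mu)} \leq \varepsilon(w).$$
Since $\varepsilon(w)\to 0$, this shows that the prime averages form a Cauchy sequence in $L^2(\mu)$, hence converge; the limit is simultaneously equal to $\lim_{w\to\infty}\tfrac{1}{\phi(W)}\sum_{(b,W)=1} F_{W,b}$ (whose existence is a byproduct). For the ``furthermore'' part, if $F_{W,b}=F$ for every $W$ and every $b$, then $\tfrac{1}{\phi(W)}\sum_{(b,W)=1} F_{W,b}=F$ independently of $w$, so the prime limit is $F$. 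The main obstacle is not in any single step but in handling the order of limits: Theorem~\ref{T: the main comparison} is invoked only after taking $M\to\infty$ for fixed $w$, and the uniformity of its error bound over all $b$ coprime to $W$ is precisely the ingredient that allows the $w$-dependent approximations to be combined into a genuine Cauchy estimate for the prime average.
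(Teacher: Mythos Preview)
Your proof is correct and follows essentially the same approach as the paper: reduce to $\Lambda$-weighted averages via Lemma~\ref{L: indicator of primes to von-Mangoldt}, split into residue classes modulo $W$, invoke Theorem~\ref{T: the main comparison} to compare the $\Lambda_{w,b}$-weighted averages with the unweighted ones (to which hypothesis~(b) applies), and then let $w\to\infty$ to obtain a Cauchy estimate. The paper organizes the endgame slightly differently---first showing that the subsequence $A(W_0N)$ is Cauchy and then using $\|A(W_0N+b)-A(W_0N)\|_{L^2(\mu)}=o_N(1)$ to pass to the full sequence---but this is only a cosmetic difference from your direct estimate of $\limsup_N\|\tfrac{1}{\pi(N)}\sum_{p\leq N}G(p)-\tfrac{1}{\phi(W)}\sum_{(b,W)=1}F_{W,b}\|$.
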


In the setting of Hardy field functions, the fact that we require convergence for sequences 
along arithmetic progressions is typically harmless. Indeed, convergence results along $\N$ typically follow from a growth condition on the implicit functions $a_{ij}$ (such as \eqref{E: far away from rational polynomials}) and it is straightforward to check that the function $a_{ij}(Wt+b)$ satisfies a similar growth condition as well. Therefore, one can think of the second condition morally as asking to establish convergence in the case $W=1$. 

The final part of Theorem \ref{T: criterion for convergence along primes} allows us to compute the limit of averages along primes in cases where we have an expression for the limit of the standard Ces\`{a}ro averages. This is possible, in rough terms, whenever the linear combinations of the functions $a_{ij}$ do not contain polynomials or functions that are approximately equal to a polynomial. The reason for that is that there is no explicit description of the limit of polynomial ergodic averages in a general measure preserving system (although one can get a simplified expression in special cases, or under some total ergodicity assumptions on the system).

\subsubsection{Convergence of ergodic averages along primes}

The foremost application is that the averages in \eqref{E: Furstenberg averages} converge when $a(n)$ is a Hardy sequence
and when we average along primes. This will also lead to generalizations of Szemer\'{e}di's theorem in our applications.
The following theorem is a corollary of our comparison and the convergence results in \cite{Fra-Hardy-singlecase} (specifically, Theorems 2.1 and 2.2 of that paper). In conjunction with the corresponding recurrence result of Theorem \ref{T: multiple recurrence for szemeredi type patterns} below, we get an affirmative answer to a stronger version of \cite[Problem 7]{Fra-Hardy-singlecase} (this problem also reappeared in \cite[Problem 27]{Fra-open}), 
which was stated only for sequences of the form $n^c, c\in \R^{+}\setminus \N$.

\begin{theorem}\label{T: convergence of Furstenberg averages}
    Let $a\in \mathcal{H}$ be a function of polynomial growth that satisfies either\begin{equation} \label{E: far away from real multiples of integer polynomials}
    \lim_{t\to+\infty}
        \left|\frac{a(t)-cq(t)}{\log t} \right|=+\infty \text{ for every } c\in \R \text{ and every } q\in \Z[t],\end{equation}or \begin{equation}\label{E: equal to a real multiple of integer polynomial}
            \lim\limits_{t\to+\infty}|a(t)-cq(t)|=d\ \text{for some } c,d\in \R \ \text{and some } q\in \Z[t].
        \end{equation}
     Then, for any positive integer $k$, any measure-preserving system $(X,\X,\m,T)$ and functions $f_1,\dots,f_k\in L^{\infty}(\m)$, we have that the averages \begin{equation}\label{E: aek ole}
         \frac{1}{\pi(N)}\sum_{p\in \mathbb{P}\colon p\leq N} T^{\floor{a(p)}}f_1\cdot\ldots\cdot T^{k\floor{a(p)}}f_k
    \end{equation}converge in $L^2(\m)$.
    
    In particular, if $a$ satisfies \eqref{E: far away from real multiples of integer polynomials}, the limit of the averages in \eqref{E: aek ole} is equal to the limit in $L^2(\m)$ of the averages \begin{equation*}
        \frac{1}{N}\sum_{n=1}^{N} T^nf_1\cdot\ldots\cdot T^{kn} f_k.
    \end{equation*}
\end{theorem}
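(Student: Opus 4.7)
The plan is to derive Theorem~\ref{T: convergence of Furstenberg averages} as a consequence of Theorem~\ref{T: criterion for convergence along primes}, combined with the convergence theorems of Frantzikinakis~\cite{Fra-Hardy-singlecase} for Ces\`aro averages of Hardy sequences. Specifically, I would apply Theorem~\ref{T: criterion for convergence along primes} with $k$ (pairwise commuting) transformations $T_j:=T^j$ for $j=1,\ldots,k$, with $\ell=k$ factors, and with Hardy functions $a_{ij}(t):=a(t)$ if $i=j$ and $a_{ij}(t):=0$ otherwise. The product $\prod_{i=1}^{k}T_i^{\lfloor a_{ij}(p)\rfloor}$ applied to $f_j$ then collapses to $(T^j)^{\lfloor a(p)\rfloor}f_j=T^{j\lfloor a(p)\rfloor}f_j$, reproducing exactly the iterates in~\eqref{E: aek ole}.

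To verify condition~(a) of Theorem~\ref{T: criterion for convergence along primes} I would observe that the off-diagonal $a_{ij}\equiv 0$ trivially satisfy~\eqref{E: essentially equal to a polynomial} (take $q\equiv 0\in\Q[t]+\R$), so attention reduces to the diagonal $a_{jj}=a$. If $a$ satisfies~\eqref{E: far away from real multiples of integer polynomials}, then~\eqref{E: far away from rational polynomials} follows at once: any $r\in\Q[t]$ can be written $r=q/D$ with $q\in\Z[t]$ and $D\in\N$, so $r=cq$ with $c=1/D\in\R$. If $a$ satisfies~\eqref{E: equal to a real multiple of integer polynomial} with $c\in\Q$, then $cq\in\Q[t]$; since $a-cq$ lies in the Hardy field, the assumption $|a-cq|\to d$ forces $a-cq\to d'$ for some $d'\in\{d,-d\}$, so $a$ falls under~\eqref{E: essentially equal to a polynomial}.

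For condition~(b), I would check convergence of the Ces\`aro averages $\tfrac{1}{N}\sum_{n=1}^{N}T^{\lfloor a(Wn+b)\rfloor}f_1\cdots T^{k\lfloor a(Wn+b)\rfloor}f_k$ for every $W,b\in\N$. Affine substitutions preserve the growth hypotheses---an integer polynomial of degree $\leq D$ in $s=Wt+b$ can be rewritten as $W^{-D}\tilde q(s)$ with $\tilde q\in\Z[s]$, so any polynomial competitor for $a(Wt+b)$ pulls back to one for $a$---hence $a(Wt+b)$ falls within the scope of Theorem~2.1 of~\cite{Fra-Hardy-singlecase}, which delivers the required convergence. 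In case~\eqref{E: far away from real multiples of integer polynomials}, Theorem~2.2 of~\cite{Fra-Hardy-singlecase} additionally identifies the Ces\`aro limit with that of $\tfrac{1}{N}\sum_{n=1}^{N}T^nf_1\cdots T^{kn}f_k$ (independently of $W,b$), so Theorem~\ref{T: criterion for convergence along primes} yields both the convergence of~\eqref{E: aek ole} and the asserted identification of its limit.

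The one delicate case is~\eqref{E: equal to a real multiple of integer polynomial} with $c$ irrational: then $a$ is asymptotic to $cq+d'$, a polynomial with an irrational coefficient not lying in $\Q[t]+\R$, so Theorem~\ref{T: criterion for convergence along primes} no longer directly applies. Here I would instead exploit the asymptotic $a(t)=cq(t)+d'+o(1)$: equidistribution of $cq(p)\bmod 1$ (valid because $c$ is irrational) implies that $\lfloor a(p)\rfloor$ and $\lfloor cq(p)+d'\rfloor$ agree outside a set of primes of density zero, so the averages~\eqref{E: aek ole} are asymptotically equal in $L^2(\m)$ to their real-polynomial analogues. Convergence of the latter along primes is exactly the content of~\cite{koutsogiannis-closest}. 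I expect this reduction to be the technical crux, as the negligibility of the exceptional primes must be verified uniformly in the functions $f_j$.
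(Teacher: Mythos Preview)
Your approach matches the paper's: invoke Theorem~\ref{T: criterion for convergence along primes} together with Theorems~2.1 and~2.2 of~\cite{Fra-Hardy-singlecase}. However, you overcomplicate the subcase of~\eqref{E: equal to a real multiple of integer polynomial} with $c$ irrational and $q$ non-constant. You correctly note that $a$ fails~\eqref{E: essentially equal to a polynomial} there, but you overlook that it \emph{does} satisfy~\eqref{E: far away from rational polynomials}: writing $a(t)=cq(t)+d'+o(1)$, for any $r\in\Q[t]$ the polynomial $cq-r$ retains at least one non-constant irrational coefficient (a nonzero integer times an irrational cannot equal a rational), so $|cq(t)-r(t)|$ grows at least linearly and hence $|a(t)-r(t)|/\log t\to\infty$. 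Thus condition~(a) of Theorem~\ref{T: criterion for convergence along primes} is met directly in all cases, and your detour through equidistribution of $cq(p)\bmod 1$ along primes and the real-polynomial result of~\cite{koutsogiannis-closest} is unnecessary. The paper's proof simply asserts that either hypothesis~\eqref{E: far away from real multiples of integer polynomials} or~\eqref{E: equal to a real multiple of integer polynomial} forces~\eqref{E: far away from rational polynomials} or~\eqref{E: essentially equal to a polynomial}, without isolating this subcase.
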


\begin{comment*}
    We can replace the floor function in \eqref{E: aek ole} with either the function $\lceil\cdot\rceil$ or the function  $[[\cdot ]]$. The assumption that the iterates are Hardy field functions can also be relaxed. We discuss this more in Section \ref{Section: more general iterates}.
 \end{comment*}

Observe that there is only one function appearing in the statement of the previous theorem.
The following convergence results concern the case where we may have several different Hardy field functions. In both cases, there are some "independence" assumptions between the functions involved, which has the advantage of providing an exact description of the limit for the averages along $\N$. Thus, we can get a description for the limit along $\P$ as well.

The following theorem concerns the ``jointly ergodic'' case for one transformation, which refers to the setting when we have convergence to the product of the integrals in ergodic systems. Theorem~\ref{T: the main comparison} combines with \cite[Theorem 1.2]{Tsinas} to provide the next result.
This generalizes the theorem of Frantzikinakis \cite[Theorem 1.1]{Fra-primes} and gives a positive answer to \cite[Problem]{Fra-primes}. Unlike the previous theorem, we have to impose here an additional assumption on $\mathcal{H}$, since the respective convergence result along $\N$ is established under this condition. The field $\mathcal{LE}$ does not have the property appearing in the ensuing theorem, but it is contained in the Hardy field of Pfaffian functions, which does (for the definition, see \cite[Section 2]{Tsinas}).

\begin{theorem}\label{T: jointly ergodic case}
Let $\mathcal{H}$ be a Hardy field that contains $\mathcal{LE}$ and is closed under composition and compositional inversion of functions, when defined.\footnote{This means that if $f,g\in \mathcal{H}$ are such that $g(t)\to+\infty$, then $f\circ g\in \mathcal{H}$ and $g^{-1}\in \mathcal{H}$.} For a positive integer $k,$ let  $a_1,\dots,a_k$ be functions of polynomial growth and assume that every non-trivial linear combination $a$ of them satisfies \begin{equation}\label{E: jointly ergodic condition}
   \lim\limits_{t\to+\infty} \Bigabs{\frac{a(t)-q(t)}{\log t} }=+\infty \text{ for every } q(t)\in \Z[t].\footnote{Actually, we can assume here the more general condition that 
$$\frac{1}{N}\sum_{n=1}^N e(t_1\floor{a_1(n)}+\ldots +t_k\floor{a_k(n)})\to 0, $$ for every $(t_1,\ldots,t_k)\in [0,1)^k\setminus\{(0,\ldots,0)\},$ where $e(x)=e^{2\pi i x},$ $x\in \mathbb{R}$ (see the remark under \cite[Theorem 1.2]{Tsinas}). This condition is necessary and sufficient in order for \eqref{E: jointly ergodic averages along primes} to hold. }
\end{equation}

Then, for any measure-preserving system $(X,\X,\m,T)$ and functions $f_1,\dots, f_k\in L^{\infty}(\m)$, we have 
    that \begin{equation}\label{E: jointly ergodic averages along primes}
   \lim\limits_{N\to+\infty}    \frac{1}{\pi(N)} \sum_{p\in \P\colon p\leq N} T^{\floor{a_1(p)}}f_1\cdot \ldots \cdot T^{\floor{a_k(p)}}f_k = \tilde{f}_1\cdot\ldots\cdot \tilde{f}_k, 
    \end{equation}where $\tilde{f}_i:=\mathbb{E}(f_i|\mathcal{I}(T))=\lim_{N\to+\infty}\frac{1}{N}\sum_{n=1}^N T^n f_i$ and the convergence is in $L^2(\m)$.
\end{theorem}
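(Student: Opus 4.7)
The plan is to deduce Theorem~\ref{T: jointly ergodic case} directly from Theorem~\ref{T: criterion for convergence along primes}, using the norm-convergence result \cite[Theorem 1.2]{Tsinas} along $\N$ to supply hypothesis (b) of that criterion. In the setup of Theorem~\ref{T: criterion for convergence along primes} I take the single transformation $T$ and $\ell = k$ functions $a_{1j} := a_j$, $j=1,\dots,k$. Once hypotheses (a) and (b) are verified and the $\N$-limit is identified, the final assertion of Theorem~\ref{T: criterion for convergence along primes} will immediately produce \eqref{E: jointly ergodic averages along primes}.

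To check hypothesis (a), that each $a_j$ satisfies \eqref{E: far away from rational polynomials}, I fix $q\in \Q[t]$, write $q = p/m$ with $p\in \Z[t]$ and $m\in \N$, and apply \eqref{E: jointly ergodic condition} to the non-trivial linear combination $m a_j$; dividing by $m$ yields $|a_j(t)-q(t)|/\log t\to+\infty$. For hypothesis (b), I fix positive integers $W,b$ and work with the shifted functions $\tilde a_j(t) := a_j(Wt+b)$. These lie in $\mathcal{H}$ and have polynomial growth, since closure of $\mathcal{H}$ under composition with the affine function $t\mapsto Wt+b$ is guaranteed by the assumptions on $\mathcal{H}$. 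Every non-trivial linear combination of the $\tilde a_j$ has the form $a(Wt+b)$ for a corresponding non-trivial combination $a=\sum c_j a_j$. Given $q\in \Z[t]$, the polynomial $r(s):=q((s-b)/W)$ lies in $\Q[s]$; the argument used for (a), now applied to $a$, gives $|a(s)-r(s)|/\log s\to+\infty$ as $s\to+\infty$. Substituting $s=Wt+b$ and using $\log(Wt+b)\sim \log t$, I obtain that $\tilde a$ satisfies \eqref{E: jointly ergodic condition}.

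Consequently, \cite[Theorem 1.2]{Tsinas} applies to the system $(X,\X,\mu,T)$ with iterates $\floor{\tilde a_j(n)}$, so the averages in \eqref{E: averages along Wn+b} converge in $L^2(\mu)$ to $\prod_{j=1}^{k}\tilde f_j$. Crucially, this limit is independent of $W$ and $b$, since the invariant $\sigma$-algebra $\I(T)$ does not depend on any arithmetic-progression shift of the index. Setting $F:=\prod_{j=1}^{k}\tilde f_j\in L^\infty(\mu)$, the final assertion of Theorem~\ref{T: criterion for convergence along primes} delivers exactly \eqref{E: jointly ergodic averages along primes}.

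All the genuine difficulty is loaded into the two imported results, Theorem~\ref{T: the main comparison} (which powers Theorem~\ref{T: criterion for convergence along primes}) and \cite[Theorem 1.2]{Tsinas}. I do not expect a serious obstacle in the reduction itself; the only mildly delicate point is the stability of condition \eqref{E: jointly ergodic condition} under the affine substitution $t\mapsto Wt+b$ and under passage from $\Z[t]$ to $\Q[t]$, both of which reduce to the scaling argument above. The fact that the $\N$-limit is the same across all shifts is what allows the last clause of Theorem~\ref{T: criterion for convergence along primes} to identify the prime-averaged limit explicitly as the product of conditional expectations onto $\I(T)$.
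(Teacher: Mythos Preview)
Your proposal is correct and follows essentially the same approach as the paper: verify hypotheses (a) and (b) of Theorem~\ref{T: criterion for convergence along primes} using \eqref{E: jointly ergodic condition} and \cite[Theorem 1.2]{Tsinas} respectively, and invoke the final clause of that criterion. You supply more detail than the paper (which is very terse, merely citing the analogy with the proof of Theorem~\ref{T: convergence of Furstenberg averages}), in particular the explicit verification that \eqref{E: jointly ergodic condition} persists under affine shifts and extends from $\Z[t]$ to $\Q[t]$, but the strategy is identical.
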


\begin{remark}
    We remark that we can also transfer the convergence result appearing in \cite[Theorem 1.3]{tsinas-pointwise} to primes, although we do not have useful information on the limiting behavior of the associated averages to deduce recurrence results.
\end{remark}

In the case of several commuting transformations, knowledge of the limiting behavior for averages along $\N$ is sparse. This is naturally a barrier to proving multidimensional analogs of our recurrence results below along primes.
Nonetheless, we have the following convergence theorem, which adapts the convergence result in \cite[Theorem 2.3]{Fra-Hardy-multidimensional} to the prime setting. By a shift-invariant Hardy field, we are referring to a Hardy field such that $a(t+h)\in \mathcal{H}$ for any $h\in \Z$ and function $a(t)\in \mathcal{H}$.

\begin{theorem}\label{T: nikos result to primes}
Let $k\in \N,$ $\mathcal{H}$ be a shift-invariant Hardy field, $a_1,\dots,a_k$ be functions in $\mathcal{H}$ with pairwise distinct growth rates and such that there exist integers $d_i\geq 0$ satisfying \begin{equation*}
    \lim\limits_{t\to+\infty} \Bigabs{\frac{a_i(t)}{t^{d_i} \log t}}=\lim\limits_{t\to+\infty} \Bigabs{ \frac{t^{d_i+1}}{a_i(t)}}=0.
\end{equation*}
Then, for any system $(X,\mathcal{X},\m,T_1,\dots, T_k)$ and functions $f_1,\dots, f_k\in L^{\infty}(\m)$, we have \begin{equation*}
     \lim\limits_{N\to+\infty}    \frac{1}{\pi(N)} \sum_{p\in \P\colon p\leq N} T_1^{\floor{a_1(p)}}f_1\cdot \ldots \cdot T_k^{\floor{a_k(p)}}f_k = \tilde{f}_1\cdot\ldots\cdot \tilde{f}_k,
\end{equation*}where $\tilde{f}_i:=\mathbb{E}(f_i|\mathcal{I}(T_i))=\lim_{N\to+\infty}\frac{1}{N}\sum_{n=1}^N T_i^n f_i$ and the convergence is in $L^2(\m)$.
\end{theorem}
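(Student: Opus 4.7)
The plan is to deduce Theorem~\ref{T: nikos result to primes} as an application of the comparison criterion in Theorem~\ref{T: criterion for convergence along primes}, using \cite[Theorem 2.3]{Fra-Hardy-multidimensional} as the input for the Ces\`{a}ro averages along $\N$. Two hypotheses of that criterion need to be checked: condition \eqref{E: far away from rational polynomials} for each $a_i$, and norm convergence of the averages in \eqref{E: averages along Wn+b} for every $W,b\in\N$, with a common limit that can then be identified.

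For the first, the growth sandwich $t^{d_i}\log t\ll |a_i(t)|\ll t^{d_i+1}$ (which is exactly the hypothesis of the theorem) immediately yields \eqref{E: far away from rational polynomials}: if $q\in\Q[t]$ has $\deg q\le d_i$, then $q(t)=O(t^{d_i})$, so $|a_i(t)-q(t)|\sim |a_i(t)|$ dominates $t^{d_i}\log t$; if $\deg q\ge d_i+1$, then $q$ dominates $a_i$ and $|q(t)|/\log t\to+\infty$ on its own. In both cases $|a_i(t)-q(t)|/\log t\to+\infty$.

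For the second, fix $W\in\N$ and $b\in\Z$ and set $\tilde{a}_i(t):=a_i(Wt+b)$. The composition with the affine map $t\mapsto Wt+b$ preserves pairwise distinct growth rates, and a direct estimate using $Wt+b\sim Wt$ shows that $\tilde{a}_i$ satisfies the analogous growth sandwich with the same exponent $d_i$, namely
\[\lim_{t\to+\infty}\Bigabs{\frac{\tilde{a}_i(t)}{t^{d_i}\log t}}=\lim_{t\to+\infty}\Bigabs{\frac{t^{d_i+1}}{\tilde{a}_i(t)}}=0.\]
Provided the $\tilde{a}_i$ lie in a shift-invariant Hardy field, which can be arranged by enlarging $\mathcal{H}$ to include the orbit of its elements under the substitutions $t\mapsto Wt+b$, the multidimensional convergence theorem \cite[Theorem 2.3]{Fra-Hardy-multidimensional} applies and produces convergence of \eqref{E: averages along Wn+b} in $L^2(\m)$ to $\tilde{f}_1\cdots\tilde{f}_k$. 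Crucially, this limit is independent of $W$ and $b$.

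With both hypotheses of Theorem~\ref{T: criterion for convergence along primes} in hand, its final assertion transfers the convergence to the prime averages, yielding the limit $\tilde{f}_1\cdots\tilde{f}_k$ and completing the proof. The only genuinely delicate point in the plan is the verification that the composed sequences $a_i(Wt+b)$ can be placed inside a Hardy field to which \cite[Theorem 2.3]{Fra-Hardy-multidimensional} applies, since the shift-invariance hypothesis of our theorem only covers integer translations and not dilations by $W$; this should amount to a routine Hardy field extension argument but is the main technical issue that needs to be made rigorous.
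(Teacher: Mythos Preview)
Your proposal is correct and follows essentially the same route as the paper: verify condition \eqref{E: far away from rational polynomials} for each $a_i$ from the growth sandwich, invoke \cite[Theorem 2.3]{Fra-Hardy-multidimensional} for the averages along $Wn+b$, and then apply Theorem~\ref{T: criterion for convergence along primes}. The technical worry you flag about placing $a_i(Wt+b)$ in a suitable Hardy field is real and is the same issue the paper acknowledges (in a footnote to the proof of Theorem~\ref{T: convergence of Furstenberg averages}); it is resolved not by a Hardy field extension argument but by observing that the proof of the cited convergence theorem only uses certain growth conditions on the derivatives of the iterates, and these transfer from $a_i$ to $a_i(Wt+b)$ automatically.
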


While there are more restrictions compared to Theorem \ref{T: jointly ergodic case}, we note that Theorem \ref{T: nikos result to primes} shows that we have norm convergence in the case when $a_i(t)=t^{c_i}$ for distinct, positive non-integers $c_i$.

\begin{comment*}
    In the previous two theorems, we can replace the integer part with any other rounding function in each iterate individually (see Remark \ref{R: replacing rounding functions}).
\end{comment*}

\subsubsection{Applications to multiple recurrence and combinatorics}

In this subsection, we will translate the previous convergence results to multiple recurrence results and then combine them with Furstenberg's correspondence principle to extrapolate combinatorial applications. Due to arithmetic obstructions arising from polynomials, we have to work with the set of shifted primes in some cases. In addition, it was observed in \cite{koutsogiannis-closest} that in the case of real polynomials,  one needs to work with the rounding to the closest integer function instead of the floor function. Indeed, even in the case of sequences of the form $\floor{ap(n)+b}$, explicit conditions that describe multiple recurrence are very complicated (cf. \cite[Footnote 4]{Fra-Hardy-singlecase}).

Our first application relates to the averages of the form as in \eqref{E: general ergodic averages along the primes}. We have the following theorem.

\begin{theorem}\label{T: multiple recurrence for szemeredi type patterns}
    Let $a\in \mathcal{H}$ be a function of polynomial growth. Then, for any measure-preserving system $(X,\X,\m,T),$ $k\in \N,$ and set $A$ with positive measure we have the following:\\
    (a) If $a$ satisfies \eqref{E: far away from real multiples of integer polynomials}, we have \begin{equation*}
        \lim\limits_{N\to+\infty} \frac{1}{\pi(N)} \sum_{p\in \P\colon p\leq N} \m(A\cap T^{-\floor{a(p)}}A\cap \dots\cap T^{-{k\floor{a(p)}}}A )> 0.
\end{equation*}
  (b) If $a$ satisfies \eqref{E: equal to a real multiple of integer polynomial} with $cp(0)+d=0$,\footnote{ Notice here the usual necessary assumption that we have to postulate on the polynomial, i.e., to have no constant term, in order to obtain a recurrence, and, consequently, a combinatorial result.} then for any set $A$ with positive measure, the set \begin{equation*}
        \left\{n\in\N:\; \m\big(A\cap T^{-[[a(n)]]}A \cap \dots \cap T^{-k[[a(n)]]}A \big)>0\right\}
    \end{equation*}has non-empty intersection with the sets $\P-1$ or $\P+1$. 
\end{theorem}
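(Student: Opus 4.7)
\emph{Proof proposal.} For part (a), the plan is to apply Theorem~\ref{T: convergence of Furstenberg averages} to the functions $f_1=\dots=f_k=\mathbf{1}_A$. Under \eqref{E: far away from real multiples of integer polynomials}, the ``in particular'' clause equates the $L^2$-limit of the prime averages in \eqref{E: aek ole} with the $L^2$-limit of the ordinary Ces\`aro averages $\frac{1}{N}\sum_{n=1}^{N} T^n\mathbf{1}_A\cdot T^{2n}\mathbf{1}_A\cdots T^{kn}\mathbf{1}_A$ along $\N$. Pairing both sides against $\mathbf{1}_A$ (legitimate by $L^2$-continuity of the inner product) turns them into limits of averages of multiple correlations $\m(A\cap T^{-n}A\cap\dots\cap T^{-kn}A)$, and the right-hand limit is strictly positive by Furstenberg's multiple recurrence theorem (i.e.\ the ergodic formulation of Szemer\'edi's theorem).

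For part (b), the plan is to reduce to the closest-integer recurrence theorem for real polynomials from~\cite{koutsogiannis-closest}. Condition~\eqref{E: equal to a real multiple of integer polynomial} together with $cq(0)+d=0$ means that $a(t)=r(t)+o(1)$ for the real polynomial $r(t):=cq(t)+d$ satisfying $r(0)=0$. The set $E:=\{n\in\N:[[a(n)]]\neq[[r(n)]]\}$ has density zero: for non-constant $r$, the sequence $\{r(n)\}$ spends asymptotic density zero in any fixed $o(1)$-neighborhood of a half-integer (using Weyl equidistribution in the irrational case, and an elementary residue argument in the rational case), while for $r\equiv 0$ the claim is trivial since then $[[a(n)]]=0$ eventually. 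Invoking~\cite{koutsogiannis-closest} with the closest-integer rounding (available by Remark~\ref{R: replacing rounding functions} and the Comment following Theorem~\ref{T: convergence of Furstenberg averages}), the $\liminf$ of the averages
\[
\frac{1}{\pi(N)}\sum_{p\in\P\colon p\leq N}\m\big(A\cap T^{-[[r(p\pm 1)]]}A\cap\dots\cap T^{-k[[r(p\pm 1)]]}A\big)
\]
is strictly positive for at least one choice of sign; consequently, there exists $\veps>0$ such that the set of primes $p$ realizing a correlation $>\veps$ has positive lower relative density in $\P$. Since $\{p\pm 1:p\in\P\}\cap E$ has density zero inside $\{p\pm 1:p\in\P\}$, we may extract a prime $p$ simultaneously achieving positive $r$-correlation and $[[a(p\pm 1)]]=[[r(p\pm 1)]]$, which places $p\pm 1$ in the set appearing in the statement.

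The main obstacle is part (b), where Theorem~\ref{T: convergence of Furstenberg averages} only supplies $L^2$-convergence of the prime averages without an identification of the limit (the ``in particular'' clause fails under~\eqref{E: equal to a real multiple of integer polynomial}), so positivity has to be imported separately through the real-polynomial recurrence theorem. The shift by $\pm 1$ is the familiar arithmetic obstruction to recurrence along $\P$ itself for polynomial iterates, and the normalization $cq(0)+d=0$ is precisely what makes the effective polynomial $r$ amenable to the Bergelson--Leibman style recurrence used in~\cite{koutsogiannis-closest} after a $W$-trick adapted to the chosen sign.
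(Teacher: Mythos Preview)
Your argument for part (a) is correct and matches the paper's proof exactly.

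For part (b), the paper proceeds differently. It first secures a lower bound $c_0>0$ for the \emph{Ces\`aro} averages $\frac{1}{N}\sum_{n\le N}\mu(A\cap T^{-[[cq(n)]]}A\cap\dots)$ via \cite[Proposition~3.8]{koutsogiannis-closest}, transfers from $[[cq(n)]]$ to $[[a(n)]]$ at the level of $\N$ (using Weyl equidistribution when $c$ is irrational, which is elementary), observes that $c_0$ is uniform under dilations $n\mapsto Rn$, and only then invokes Theorem~\ref{T: the main comparison} with $b=1$ (and the shift $a(\cdot-1)$) to pass to $\P-1$. Your route instead quotes the shifted-prime recurrence from \cite{koutsogiannis-closest} for the polynomial $r$ directly and attempts the transfer $r\rightsquigarrow a$ \emph{after} restricting to primes.

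That last transfer is where your argument has a genuine gap. From the fact that $E=\{n:[[a(n)]]\neq[[r(n)]]\}$ has natural density zero in $\N$ you cannot conclude that $E\cap(\P\pm1)$ has density zero inside $\P\pm1$; density-zero subsets of $\N$ can contain all of $\P-1$. What you actually need in the irrational case is equidistribution of $(r(p\pm1))_{p\in\P}$ modulo~$1$, i.e.\ that $\frac{1}{\pi(N)}\sum_{p\le N}e(mr(p\pm1))\to0$ for every nonzero integer $m$. This is true, but it is precisely the sort of nontrivial input (Vinogradov-type exponential sums, or the polynomial-phase case of Lemma~\ref{L: discorrelation of W-tricked with polynomial phases}) that you have not supplied. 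The paper's ordering of operations---transfer over $\N$ first, then apply the $W$-tricked comparison---sidesteps this entirely, since over $\N$ only Weyl's theorem is needed. If you want to keep your route, you must either justify the prime-equidistribution step explicitly, or (in the rational case) argue more carefully that $E$ is in fact cofinite rather than merely density zero.
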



 We recall that for a subset $E$ of $\N$, its upper density $\bar{d}(E)$ is defined by  $$\bar{d}(E):=\limsup\limits_{N\to+\infty}\frac{|E\cap\{1,\ldots,N\}|}{N}.$$ 
\begin{corollary}\label{C: Szemeredi corollary}
    For any set $E\subseteq \N$ of positive upper density, $k\in \N,$ and function $a\in \mathcal{H}$ of polynomial growth, the following holds:\\
    (a) If $a$ satisfies \eqref{E: far away from real multiples of integer polynomials}, we have\begin{equation*}
        \liminf\limits_{N\to+\infty} \frac{1}{\pi(N)}\sum_{p\in \P\colon p\leq N} \bar{d}\big(E\cap (E-\floor{a(p)})\cap \dots\cap (E-k\floor{a(p)})\big)>0.
    \end{equation*}
  (b) If $a$ satisfies \eqref{E: equal to a real multiple of integer polynomial} with $cp(0)+d=0$, then the set \begin{equation*}
        \left\{n\in\N:\; \bar{d}\big(E\cap (E-[[a(n)]]) \cap \dots \cap (E-k[[a(n)]]) \big)>0\right\}
    \end{equation*}has non-empty intersection with the sets $\P-1$ or $\P+1$.\footnote{ In this case only,  $\bar{d}(E)$ can be replaced by $d^\ast(E):=\limsup\limits_{|I|\to+\infty}\frac{|E\cap I|}{|I|}$ following the arguments from \cite{koutsogiannis-closest}, where the $\limsup$ is taken along all intervals $I\subseteq \Z$ with lengths tending to infinity.}
\end{corollary}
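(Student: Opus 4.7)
The plan is to deduce both parts as direct combinatorial consequences of Theorem \ref{T: multiple recurrence for szemeredi type patterns}, using Furstenberg's correspondence principle as the sole bridge between the combinatorial and the ergodic setting. Recall that for a set $E\subseteq\N$ of positive upper density, the correspondence principle supplies a measure-preserving system $(X,\X,\m,T)$ and a set $A\in\X$ with $\m(A)=\bar{d}(E)>0$ such that for every $k\in\N$ and every choice of integers $n_1,\ldots,n_k$ one has
\[
\bar{d}\bigl(E\cap (E-n_1)\cap\cdots\cap (E-n_k)\bigr)\;\geq\;\m\bigl(A\cap T^{-n_1}A\cap\cdots\cap T^{-n_k}A\bigr).
\]
Since this inequality holds pointwise for every choice of shifts, one is free to choose the $n_i$ to be the iterates coming from $a$ evaluated at primes and then average.

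For part (a), I would first invoke the correspondence principle to obtain $(X,\X,\m,T)$ and $A$ as above, and then specialize to $n_i=i\floor{a(p)}$ for each prime $p$. The pointwise inequality gives, for every $N$,
\[
\frac{1}{\pi(N)}\sum_{\substack{p\in\P\\ p\leq N}} \bar{d}\bigl(E\cap (E-\floor{a(p)})\cap\cdots\cap (E-k\floor{a(p)})\bigr)\;\geq\;\frac{1}{\pi(N)}\sum_{\substack{p\in\P\\ p\leq N}}\m\bigl(A\cap T^{-\floor{a(p)}}A\cap\cdots\cap T^{-k\floor{a(p)}}A\bigr).
\]
Taking $\liminf_{N\to+\infty}$ on both sides and applying Theorem \ref{T: multiple recurrence for szemeredi type patterns}(a), the right-hand side actually converges to a strictly positive number, which immediately yields the asserted lower bound on the left-hand side.

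For part (b), I would proceed analogously: use the correspondence principle to produce $(X,\X,\m,T)$ and a set $A$ with $\m(A)>0$, and apply Theorem \ref{T: multiple recurrence for szemeredi type patterns}(b) to find $n\in(\P-1)\cup(\P+1)$ with
\[
\m\bigl(A\cap T^{-[[a(n)]]}A\cap\cdots\cap T^{-k[[a(n)]]}A\bigr)>0.
\]
The inequality from the correspondence principle then transfers positivity back to the density of $E\cap (E-[[a(n)]])\cap\cdots\cap(E-k[[a(n)]])$, which is exactly the assertion that the set in question meets $\P-1$ or $\P+1$. For the footnote concerning the replacement of $\bar{d}$ by $d^{*}$, I would instead appeal to the variant of Furstenberg's correspondence for the upper Banach density (based on a F\o{}lner sequence of intervals whose lengths tend to infinity), exactly as carried out in \cite{koutsogiannis-closest}; no new ergodic input is required, since Theorem \ref{T: multiple recurrence for szemeredi type patterns}(b) still supplies the required prime $n$.

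The argument is essentially a bookkeeping exercise once Theorem \ref{T: multiple recurrence for szemeredi type patterns} is in hand, so no step is a serious obstacle; the only subtlety is that in part (a) one must be careful to apply the pointwise form of Furstenberg's inequality \emph{before} averaging over primes, so that one can interchange the $\liminf$ with the correspondence and thereby invoke the existing (positive) limit from Theorem \ref{T: multiple recurrence for szemeredi type patterns}(a).
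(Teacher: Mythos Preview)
Your proposal is correct and matches the paper's approach: the paper simply states that ``In view of the correspondence principle, the corollaries in Section \ref{Section-Introduction} follow easily,'' without spelling out the details, and your argument is precisely the standard unwinding of that sentence via Theorem \ref{T: multiple recurrence for szemeredi type patterns}. The care you take in part (a) to apply the pointwise Furstenberg inequality before averaging over primes is exactly the right bookkeeping.
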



    Specializing to the case where $a(n)=n^c$ where $c$ is a positive non-integer, Theorem~\ref{T: convergence of Furstenberg averages} and part (a) of Theorem \ref{T: multiple recurrence for szemeredi type patterns} provide an affirmative answer to \cite[Problem 27]{Fra-open}.


\begin{remark}\label{R: remark for shifts of primes}
    In part (a) of both Theorem \ref{T: multiple recurrence for szemeredi type patterns} and Corollary \ref{C: Szemeredi corollary}, one can evaluate the sequences along $p+u$ instead of $p$, for any $u\in \Z$, or even more generally along the affine shifts $ap+b$ for $a, b\in \Q$ with $a\neq 0$.
    This follows from the fact that the function $a_i(at+b)$ satisfies \eqref{E: far away from real multiples of integer polynomials} as well.
    However, the shifts $p-1$ and $p+1$ are the only correct ones in part (b) of Theorem \ref{T: multiple recurrence for szemeredi type patterns}.  Notice also that the function $\floor{\cdot}$ can be replaced by $\lceil\cdot\rceil$ or $[[\cdot]]$ in part (a) of the two previous statements.
\end{remark}

Now, we state the recurrence result obtained by Theorem \ref{T: jointly ergodic case}.
\begin{theorem}\label{T: multiple recurrence in the jointly ergodic case}
    Let $k\in\N,$ $\mathcal{H}$ be a Hardy field that contains $\mathcal{LE}$ and is closed under composition and compositional inversion of functions, when defined, and suppose $a_1,\dots,a_k\in \mathcal{H}$ are functions of polynomial growth whose non-trivial linear combinations satisfy \eqref{E: jointly ergodic condition}. Then, for any measure-preserving system $(X,\X,\m,T),$ and set $A$ with positive measure, we have that \begin{equation*}
        \lim\limits_{N\to+\infty} \frac{1}{\pi(N)}\sum_{p\in \P\colon p\leq N} \m\big(A\cap T^{-\floor{a_1(p)}}A \cap \dots \cap T^{-\floor{a_k(p)}}A \big)\geq \big(\m(A)\big)^{k+1}.
    \end{equation*}
\end{theorem}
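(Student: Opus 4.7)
The plan is to deduce the recurrence statement directly from the mean convergence result of Theorem~\ref{T: jointly ergodic case} combined with a standard Furstenberg-style averaging argument. First, I would apply Theorem~\ref{T: jointly ergodic case} with the choice $f_1=\ldots=f_k=\mathbf{1}_A$. Since the hypotheses on $\mathcal{H}$ and on the $a_i$ transfer verbatim, this yields the $L^2(\mu)$ convergence
$$\lim_{N\to+\infty}\frac{1}{\pi(N)}\sum_{p\in\P\colon p\leq N} T^{\floor{a_1(p)}}\mathbf{1}_A\cdot\ldots\cdot T^{\floor{a_k(p)}}\mathbf{1}_A \;=\; \tilde{\mathbf{1}}_A^{\,k},$$
where $\tilde{\mathbf{1}}_A=\mathbb{E}(\mathbf{1}_A\mid\mathcal{I}(T))$.

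Next, I would multiply each summand by $\mathbf{1}_A$ and integrate. Since $\mathbf{1}_A$ is bounded and the convergence is in $L^2(\mu)$, Cauchy--Schwarz justifies interchanging the limit with the integral and the averaging, yielding
$$\lim_{N\to+\infty}\frac{1}{\pi(N)}\sum_{p\in\P\colon p\leq N}\mu\bigl(A\cap T^{-\floor{a_1(p)}}A\cap\ldots\cap T^{-\floor{a_k(p)}}A\bigr) \;=\; \int \mathbf{1}_A\cdot\tilde{\mathbf{1}}_A^{\,k}\,d\mu.$$
Because $\tilde{\mathbf{1}}_A^{\,k}$ is $\mathcal{I}(T)$-measurable, the defining property of conditional expectation lets me replace the first factor $\mathbf{1}_A$ by $\tilde{\mathbf{1}}_A$, and so the right-hand side equals $\int \tilde{\mathbf{1}}_A^{\,k+1}\,d\mu$.

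Finally, since $0\leq \tilde{\mathbf{1}}_A\leq 1$ almost everywhere and $\int \tilde{\mathbf{1}}_A\,d\mu = \mu(A)$, Jensen's inequality applied to the convex function $x\mapsto x^{k+1}$ on $[0,1]$ with respect to the probability measure $\mu$ gives
$$\int \tilde{\mathbf{1}}_A^{\,k+1}\,d\mu\;\geq\; \left(\int \tilde{\mathbf{1}}_A\,d\mu\right)^{k+1}\;=\;\mu(A)^{k+1},$$
which is precisely the claimed lower bound. In this approach there is no substantive obstacle: once Theorem~\ref{T: jointly ergodic case} supplies the explicit jointly ergodic limit $\tilde{\mathbf{1}}_A^{\,k}$, the passage to the multiple recurrence inequality is entirely standard and mirrors Furstenberg's derivation of multiple recurrence from Szemer\'{e}di-type mean convergence statements. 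The only genuine work has already been done upstream, namely in establishing the mean convergence of Theorem~\ref{T: jointly ergodic case} via Theorem~\ref{T: the main comparison} and \cite[Theorem 1.2]{Tsinas}.
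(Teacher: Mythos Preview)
Your proposal is correct and follows essentially the same route as the paper: apply Theorem~\ref{T: jointly ergodic case} with $f_1=\cdots=f_k=\mathbf{1}_A$, integrate against $\mathbf{1}_A$ to obtain $\int \mathbf{1}_A\cdot(\E(\mathbf{1}_A\mid\mathcal{I}(T)))^k\,d\mu$, and then use convexity (the paper phrases it as H\"older, you as conditional expectation plus Jensen) to bound this below by $\mu(A)^{k+1}$. The only cosmetic difference is that the paper compresses your last two steps into a single line.
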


\begin{corollary}
   For any $k\in\N,$ set $E\subseteq \N$ of positive upper density, Hardy field $\mathcal{H}$ and functions  $a_1,\dots, a_k\in \mathcal{H}$ as in Theorem \ref{T: multiple recurrence in the jointly ergodic case}, we have \begin{equation*}
       \liminf\limits_{N\to+\infty} \frac{1}{\pi(N)}\sum_{p\in \P\colon p\leq N} \bar{d}\big(E\cap (E-\floor{a_1(p)})\cap \dots \cap (E-\floor{a_k(p)})\big)\geq \big(\bar{d}(E)\big)^{k+1}.
   \end{equation*}
\end{corollary}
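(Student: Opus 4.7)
The plan is a direct application of Furstenberg's correspondence principle to bootstrap the ergodic-theoretic statement of Theorem~\ref{T: multiple recurrence in the jointly ergodic case} into a combinatorial one. First, I would invoke the correspondence principle to obtain a measure-preserving system $(X,\mathcal{X},\mu,T)$ and a set $A\in\mathcal{X}$ with $\mu(A)=\bar{d}(E)$ such that for every finite collection $n_1,\ldots,n_k\in\Z$,
\[
\bar{d}\big(E\cap(E-n_1)\cap\cdots\cap(E-n_k)\big)\;\geq\;\mu\big(A\cap T^{-n_1}A\cap\cdots\cap T^{-n_k}A\big).
\]

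Next, I would specialize to the integer tuple $n_i=\floor{a_i(p)}$ for each prime $p$, average the resulting pointwise inequality over primes $p\leq N$ against the normalization $1/\pi(N)$, and take $\liminf_{N\to\infty}$ of both sides. Since the inequality is term-by-term, this yields
\[
\liminf_{N\to\infty}\frac{1}{\pi(N)}\sum_{p\in\P\colon p\leq N}\bar{d}\big(E\cap(E-\floor{a_1(p)})\cap\cdots\cap(E-\floor{a_k(p)})\big)
\]
\[
\geq\;\liminf_{N\to\infty}\frac{1}{\pi(N)}\sum_{p\in\P\colon p\leq N}\mu\big(A\cap T^{-\floor{a_1(p)}}A\cap\cdots\cap T^{-\floor{a_k(p)}}A\big).
\]
Since the $a_i$ satisfy the hypotheses of Theorem~\ref{T: multiple recurrence in the jointly ergodic case}, the right-hand side is actually a genuine limit, bounded below by $\big(\mu(A)\big)^{k+1}=\big(\bar{d}(E)\big)^{k+1}$, which gives the desired inequality.

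There is no real obstacle here: the argument is purely formal once Theorem~\ref{T: multiple recurrence in the jointly ergodic case} is available, since all the dynamical input (the joint ergodicity along primes, the seminorm estimates, and the comparison with Ces\`aro averages) is already encoded in that theorem. The only mild subtlety is to make sure that the quantity $\mu(A\cap T^{-\floor{a_1(p)}}A\cap\cdots\cap T^{-\floor{a_k(p)}}A)$ arising from evaluating the integrand in Theorem~\ref{T: multiple recurrence in the jointly ergodic case} at the indicator function $f_i=\mathbf{1}_A$ can indeed be obtained by integrating the product $\mathbf{1}_A\cdot T^{\floor{a_1(p)}}\mathbf{1}_A\cdot\ldots\cdot T^{\floor{a_k(p)}}\mathbf{1}_A$, which is immediate from the $T$-invariance of $\mu$.
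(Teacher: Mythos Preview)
Your proposal is correct and takes essentially the same approach as the paper: the authors simply remark that, in view of Furstenberg's correspondence principle, the combinatorial corollaries in Section~\ref{Section-Introduction} follow easily from the corresponding multiple recurrence theorems. Your write-up just makes this routine deduction explicit.
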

In particular, we conclude that for any set $E\subseteq \N$ with positive upper density and $a_1,\dots, a_k$ as above, the set
    \begin{equation*}
        \{n\in \N\colon \text{ there exists } m\in\N \text{ such that } m, m+\floor{a_1(n)},\ldots, m+\floor{a_k(n)} \in  E \}
    \end{equation*} has non-empty intersection with the set $\P$.

The following is a multidimensional analog of Theorem~\ref{T: multiple recurrence in the jointly ergodic case} and relies on the convergence result of Theorem~\ref{T: nikos result to primes}.
\begin{theorem}\label{T: multidimensional recurrence for primes}
Let $k\in\N,$ $\mathcal{H}$ be a shift-invariant Hardy field and suppose that $a_1,\dots,a_k\in \mathcal{H}$ are functions of polynomial growth that satisfy the hypotheses of Theorem \ref{T: nikos result to primes}. Then, for any system $(X,\X,\m,T_1,\dots, T_k)$ and set $A$ with positive measure, we have that \begin{equation*}
    \lim\limits_{N\to+\infty} \frac{1}{\pi(N)} \sum_{p\in \P \colon p\leq N} \m\big(A\cap T_1^{-\floor{a_1(p)}}A\cap  \dots  \cap T_k^{-\floor{a_k(p)}}A  \big)\geq \big(\m(A)\big)^{k+1}.
\end{equation*}
\end{theorem}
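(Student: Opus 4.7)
The plan is to deduce Theorem~\ref{T: multidimensional recurrence for primes} from Theorem~\ref{T: nikos result to primes} by specializing the test functions to indicators and then invoking a Chu--Khintchine-type lower bound for the resulting integral of conditional expectations.

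Setting $f_1=\cdots=f_k=\mathbf{1}_A$ in Theorem~\ref{T: nikos result to primes} (whose hypotheses on the $a_i$ match those assumed here), I obtain
\begin{equation*}
\frac{1}{\pi(N)}\sum_{p\in\P,\ p\le N} T_1^{\lfloor a_1(p)\rfloor}\mathbf{1}_A\cdots T_k^{\lfloor a_k(p)\rfloor}\mathbf{1}_A \ \longrightarrow\ \tilde f_1\cdots \tilde f_k \quad \text{in } L^2(\mu),
\end{equation*}
where $\tilde f_i:=\mathbb{E}(\mathbf{1}_A\mid \mathcal{I}(T_i))$. Taking the $L^2$-inner product of both sides with the bounded function $\mathbf{1}_A$ and using the identity $\int_X \mathbf{1}_A\cdot T_1^{n_1}\mathbf{1}_A\cdots T_k^{n_k}\mathbf{1}_A\, d\mu = \mu(A\cap T_1^{-n_1}A\cap\cdots\cap T_k^{-n_k}A)$, the scalar Ces\`{a}ro averages appearing in the statement converge, and their limit equals
\begin{equation*}
\int_X \mathbf{1}_A\cdot \tilde f_1\cdots \tilde f_k\, d\mu.
\end{equation*}

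It then suffices to verify the Chu--Khintchine-type inequality
\begin{equation*}
\int_X \mathbf{1}_A \cdot \tilde f_1\cdots \tilde f_k\, d\mu \ \ge\ \bigl(\mu(A)\bigr)^{k+1}.
\end{equation*}
This integral coincides with the multi-parameter Ces\`{a}ro limit $\lim_{N\to\infty} N^{-k}\sum_{n_1,\dots,n_k=1}^N \mu(A\cap T_1^{-n_1}A\cap\cdots\cap T_k^{-n_k}A)$, obtained by applying the mean ergodic theorem to each $T_i$ separately inside the integral and using the factorization $\frac{1}{N^k}\sum_{n_1,\dots,n_k} \prod_i T_i^{n_i}\mathbf{1}_A = \prod_i\bigl(\frac{1}{N}\sum_{n_i}T_i^{n_i}\mathbf{1}_A\bigr)$ valid for commuting transformations. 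The desired $\mu(A)^{k+1}$ bound is the standard Khintchine-type recurrence lower estimate for such multi-parameter averages over commuting measure-preserving transformations; the base case $k=1$ is immediate from Cauchy--Schwarz ($\int \mathbf{1}_A\tilde f_1\, d\mu=\int \tilde f_1^2\, d\mu\ge (\int \tilde f_1\,d\mu)^2=\mu(A)^2$), and the general case follows from standard inequalities in ergodic Ramsey theory along the same lines as the corresponding recurrence statement along $\N$ discussed in \cite{Fra-Hardy-multidimensional}.

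All the genuine difficulty is therefore encapsulated in Theorem~\ref{T: nikos result to primes}; the present statement is a formal consequence once that convergence theorem is available. The main obstacle in any direct attempt would be proving the convergence together with the identification of the limit as the decoupled product $\tilde f_1\cdots \tilde f_k$ in Theorem~\ref{T: nikos result to primes}, since it is precisely the growth-rate hypotheses imposed on $a_1,\dots,a_k$ that force this particular product form, which in turn is exactly what makes the Chu--Khintchine step available and yields the sharp $\mu(A)^{k+1}$ estimate.
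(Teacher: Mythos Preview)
Your proposal is correct and follows essentially the same route as the paper: apply Theorem~\ref{T: nikos result to primes} with $f_1=\cdots=f_k=\mathbf{1}_A$, integrate against $\mathbf{1}_A$ to identify the limit as $\int \mathbf{1}_A\cdot \E(\mathbf{1}_A\mid\mathcal{I}(T_1))\cdots\E(\mathbf{1}_A\mid\mathcal{I}(T_k))\,d\mu$, and then invoke the inequality $\int f\cdot \E(f\mid\mathcal{I}(T_1))\cdots\E(f\mid\mathcal{I}(T_k))\,d\mu\ge(\int f\,d\mu)^{k+1}$ for nonnegative $f$. The only difference is that where you describe the last step as a ``standard Khintchine-type estimate'' and point toward \cite{Fra-Hardy-multidimensional}, the paper cites the precise source, namely \cite[Lemma~1.6]{Chu-2-commuting}; your detour through the multi-parameter Ces\`{a}ro averages is unnecessary, since Chu's lemma applies directly to the integral of conditional expectations.
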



Lastly, we present the corresponding combinatorial application of our last multiple recurrence result.
We recall that for a set $E\subseteq \Z^d,$ its \emph{upper density} is given by $$\Bar{d}(E):=\limsup_{N\to+\infty}\frac{|E\cap\{-N,\ldots,N\}^d|}{(2N)^d}.$$

\begin{corollary}
   For any $k\in\N,$ set $E\subseteq \Z^d$ of positive upper density, Hardy field $\mathcal{H}
   $ and functions  $a_1,\dots, a_k\in \mathcal{H}$ as in Theorem~\ref{T: multidimensional recurrence for primes} and vectors $\bv_1,\ldots,\bv_k\in \Z^d$, we have 
   \begin{equation*}
       \liminf\limits_{N\to+\infty} \frac{1}{\pi(N)}\sum_{p\in \P\colon p\leq N} \bar{d}\big(E\cap (E-\floor{a_1(p)}\bv_1)\cap \dots \cap (E-\floor{a_k(p)}\bv_k)\big)\geq \big(\bar{d}(E)\big)^{k+1}.
   \end{equation*}
   

\end{corollary}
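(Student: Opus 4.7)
My plan is to deduce this combinatorial corollary from Theorem~\ref{T: multidimensional recurrence for primes} via a multidimensional version of Furstenberg's correspondence principle, by choosing the commuting transformations in the ergodic-theoretic statement to be appropriate shifts built out of the vectors $\bv_1,\dots,\bv_k$.

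Concretely, given $E\subseteq \Z^d$ with $\bar d(E)>0$, Furstenberg's correspondence principle (in the $\Z^d$-action version) produces a measure-preserving system $(X,\X,\mu,S_1,\dots,S_d)$ with commuting transformations, together with a set $A\in\X$ satisfying $\mu(A)=\bar d(E)$, such that for every finite collection of vectors $\bu_0=\bzero,\bu_1,\dots,\bu_k\in\Z^d$ one has
\begin{equation*}
\bar d\big(E\cap (E-\bu_1)\cap\ldots\cap(E-\bu_k)\big)\;\geq\;\mu\big(A\cap S^{-\bu_1}A\cap\ldots\cap S^{-\bu_k}A\big),
\end{equation*}
where $S^{\bu}:=S_1^{u_1}\cdots S_d^{u_d}$ for $\bu=(u_1,\dots,u_d)$. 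Now, for the given vectors $\bv_1,\dots,\bv_k$, define commuting measure-preserving transformations on $X$ by
\begin{equation*}
T_i:=S^{\bv_i}=S_1^{v_{i,1}}\cdots S_d^{v_{i,d}},\qquad 1\leq i\leq k.
\end{equation*}
The $T_i$ commute because the $S_j$ commute, and $T_i^{\floor{a_i(p)}}=S^{\floor{a_i(p)}\bv_i}$ by construction.

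Applying the correspondence principle pointwise in $p$ with $\bu_i=\floor{a_i(p)}\bv_i$ yields, for every prime $p$,
\begin{equation*}
\bar d\big(E\cap (E-\floor{a_1(p)}\bv_1)\cap\ldots\cap(E-\floor{a_k(p)}\bv_k)\big)\geq \mu\big(A\cap T_1^{-\floor{a_1(p)}}A\cap\ldots\cap T_k^{-\floor{a_k(p)}}A\big).
\end{equation*}
Averaging the right-hand side over primes $p\leq N$ and passing to the $\liminf$, Theorem~\ref{T: multidimensional recurrence for primes} applied to the system $(X,\X,\mu,T_1,\dots,T_k)$ and the set $A$ (note the hypotheses on $\mathcal{H}$ and on $a_1,\dots,a_k$ are inherited directly) gives
\begin{equation*}
\lim_{N\to+\infty}\frac{1}{\pi(N)}\sum_{p\in\P\colon p\leq N}\mu\big(A\cap T_1^{-\floor{a_1(p)}}A\cap\ldots\cap T_k^{-\floor{a_k(p)}}A\big)\geq \mu(A)^{k+1}=\bar d(E)^{k+1}.
\end{equation*}
Combining the two inequalities and using $\liminf$-monotonicity termwise yields the desired lower bound for the average of upper densities.

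The proof is essentially mechanical once the correspondence principle is set up; the only nontrivial point to check is that Theorem~\ref{T: multidimensional recurrence for primes} is truly applicable to the transformations $T_i=S^{\bv_i}$, which it is, since nothing in that statement requires the $T_i$ to be ergodic or otherwise structured beyond being commuting and measure-preserving. No obstacle arises from the vectors $\bv_i$ being zero or parallel: if some $\bv_i=\bzero$, the corresponding factor $A\cap T_i^{-\floor{a_i(p)}}A$ simply equals $A$, which only strengthens the inequality, and the correspondence principle itself is unaffected.
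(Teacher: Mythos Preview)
Your proof is correct and follows exactly the approach the paper indicates: the paper simply states that, in view of the $\Z^d$ version of Furstenberg's correspondence principle (Theorem~F), all the combinatorial corollaries follow easily from the corresponding recurrence theorems. Your write-up spells out the only nontrivial point, namely that one applies Theorem~\ref{T: multidimensional recurrence for primes} to the commuting transformations $T_i:=S^{\bv_i}$ obtained from the $\Z^d$-action produced by the correspondence principle, and then uses the termwise inequality to pass to the $\liminf$.
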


\begin{comment*}
    Once again, we remark that in the recurrence results in both Theorem \ref{T: multiple recurrence in the jointly ergodic case} and Theorem \ref{T: multidimensional recurrence for primes} and the corresponding corollaries, one can replace $p$ with any other affine shift $ap+b$ with $a, b\in \Q$ $(a\neq 0),$  as we explained in Remark \ref{R: remark for shifts of primes}. 
In addition, one can replace the floor functions with either $\lceil \cdot  \rceil$ or $[[\cdot]]$. 
\end{comment*}



\subsubsection{Equidistribution in nilmanifolds}

In this part, we present some results relating to pointwise convergence in nilmanifolds along Hardy sequences evaluated at primes. 
We have the following theorem that is similar in spirit to Theorem~\ref{T: criterion for convergence along primes}.

\begin{theorem}\label{T: criterion for pointwise convergence along primes-nil version}
Let $k$ be a positive integer. Assume that $a_1,\dots, a_k\in \mathcal{H}$ are functions of polynomial growth, such that the following conditions are satisfied:\\
 (a) For every $1\leq i\leq k$, the function $a_{i}(t)$ satisfies either $\eqref{E: far away from rational polynomials}$ or \eqref{E: essentially equal to a polynomial}.\\
 (b) For all positive integers $W,b$, any nilmanifold $Y=H/\Delta$, pairwise commuting elements $u_1,\dots,u_k$ and points $y_1,\dots, y_k\in Y$, the sequence \begin{equation*}
    \Big( u_1^{\floor{a_1(Wn+b)}}y_1, \ldots, u_k^{\floor{a_k(Wn+b)}}   y_k\Big)
 \end{equation*} is equidistributed on the nilmanifold $\overline{(u_1^{\Z} y_1)}\times\dots \times~ \overline{(u_k^{\Z} y_k)}$.

  
  Then, for any nilmanifold $X=G/\G$, pairwise commuting elements $g_1,\dots, g_k\in G$ and points $x_1,\dots, x_k\in X$, the sequence \begin{equation*}
       \Big(g_1^{\floor{a_1(p_n)}}x_1,\dots, g_k^{\floor{a_k(p_n)}}x_k   \Big)_{n\in \N},
  \end{equation*}where $p_n$ denotes the $n$-th prime, is equidistributed on the nilmanifold 
 $\overline{(g_1^{\Z} x_1)}\times\dots \times~ \overline{(g_k^{\Z} x_k)}$.
\end{theorem}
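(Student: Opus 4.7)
The plan is to apply Theorem~\ref{T: the main comparison} in a measure-preserving system built from the target nilmanifold
$Z := \overline{g_1^{\Z} x_1} \times \cdots \times \overline{g_k^{\Z} x_k}$
(endowed with its product Haar measure $\mu_Z$) and then upgrade the resulting $L^2$ estimate to a pointwise estimate at the distinguished point $(x_1,\ldots, x_k)\in Z$ via the equicontinuity provided by the nilmanifold structure. By Weyl's criterion on the compact metric space $Z,$ it suffices to prove that for every continuous $F:Z\to\C,$
\begin{equation*}
\frac{1}{\pi(N)}\sum_{p\leq N} F\bigl(g_1^{\floor{a_1(p)}}x_1,\ldots,g_k^{\floor{a_k(p)}}x_k\bigr)\longrightarrow \int_Z F\,d\mu_Z.
\end{equation*}

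First I would run the standard $W$-trick: by Siegel--Walfisz together with Abel summation, the above average along primes differs by $o_{W}(1)$ from the arithmetic mean over the residue classes $(b,W)=1$ of the $\La_{w,b}$-weighted Ces\`{a}ro averages along the progressions $(Wn+b)_{n\geq 1}.$ Specialising hypothesis (b) to $Y=X,$ $u_i=g_i,$ $y_i=x_i$ yields the pointwise convergence of the corresponding \emph{unweighted} Ces\`{a}ro averages to $\int_Z F\,d\mu_Z,$ so the problem reduces to establishing
\begin{equation*}
\lim_{w\to\infty}\limsup_{N\to\infty}\max_{(b,W)=1}\Bigabs{\frac{1}{N}\sum_{n=1}^N(\La_{w,b}(n)-1)F\bigl(g_1^{\floor{a_1(Wn+b)}}x_1,\ldots,g_k^{\floor{a_k(Wn+b)}}x_k\bigr)}=0.
\end{equation*}

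To achieve this, I would introduce the measure-preserving system $(Z,\mu_Z,S_1,\ldots,S_k),$ where $S_i:Z\to Z$ acts on the $i$-th factor of $Z$ by left translation by $g_i$ and is the identity on the remaining factors; the $S_i$ commute pairwise and each preserves $\mu_Z.$ Applying Theorem~\ref{T: the main comparison} with $\ell=1$ and $f_1=F\in L^{\infty}(\mu_Z)$ gives
\begin{equation*}
\lim_{w\to\infty}\limsup_{N\to\infty}\max_{(b,W)=1}\Bignorm{\frac{1}{N}\sum_{n=1}^N(\La_{w,b}(n)-1)\,S_1^{\floor{a_1(Wn+b)}}\cdots S_k^{\floor{a_k(Wn+b)}}F}_{L^2(\mu_Z)}=0,
\end{equation*}
and the function inside the $L^2$-norm, when evaluated at $(x_1,\ldots, x_k),$ coincides with the expression inside the preceding display.

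The main obstacle is the $L^2$-to-pointwise transfer. I would equip $Z$ with the left-invariant Riemannian metric descending from $G$ (so that every $S_i$ is an isometry) and approximate $F$ uniformly by a Lipschitz function. The resulting averages are then uniformly Lipschitz (since isometries do not inflate the Lipschitz constant) and uniformly bounded (as $\frac{1}{N}\sum_n\La_{w,b}(n)=O(1)$). Since $\mu_Z$ has full support on the compact space $Z,$ a standard Arzel\`{a}--Ascoli argument (or equivalently the interpolation bound $\norm{f}_{\infty}\lesssim \mathrm{Lip}(f)^{\theta}\norm{f}_{L^2(\mu_Z)}^{1-\theta}$ with $\theta\in(0,1)$ depending only on $\dim Z$) converts $L^2$-smallness into $L^{\infty}$-smallness, delivering the desired pointwise vanishing at $(x_1,\ldots, x_k)$ for the Lipschitz approximant, and hence, by density, for $F$ itself.
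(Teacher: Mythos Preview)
Your reduction to the $W$-tricked comparison and your application of Theorem~\ref{T: the main comparison} to the nilsystem $(Z,\mu_Z,S_1,\dots,S_k)$ are fine, but the final $L^2$-to-pointwise step breaks down. On a nilmanifold $G/\Gamma$ of step $\geq 2$ there is \emph{no} Riemannian metric for which left translations are isometries: a left-invariant metric on $G$ descends to $G/\Gamma$ only if it is right-$\Gamma$-invariant, which (since $\Gamma$ is cocompact in a nilpotent group) forces bi-invariance and hence abelianness. Concretely, with any metric on the Heisenberg nilmanifold the Lipschitz constant of $T_g^n$ grows linearly in $n$, so the Lipschitz constant of $S_i^{\floor{a_i(Wn+b)}}F$ grows polynomially in $n$, and the Lipschitz constant of your averaged function $G_N$ grows polynomially in $N$. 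The interpolation inequality $\norm{G_N}_{\infty}\lesssim \mathrm{Lip}(G_N)^{\theta}\norm{G_N}_{L^2}^{1-\theta}$ is then useless, since Theorem~\ref{T: the main comparison} gives no quantitative decay of $\norm{G_N}_{L^2}$. Equivalently: a nilsystem of step $\geq 2$ is distal but not equicontinuous, and it is precisely equicontinuity that your Arzel\`{a}--Ascoli argument needs.

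The paper circumvents this entirely by not attempting an $L^2$-to-pointwise upgrade. Instead it invokes Lemma~\ref{L: approximation by nilsequences} to approximate the nilsequence $\psi(n_1,\dots,n_k)=F(\wt{g}_1^{n_1}\cdots\wt{g}_k^{n_k}\wt{x})$ uniformly in $\ell^{\infty}(\Z^k)$ by a multi-correlation sequence
\[
b(n_1,\dots,n_k)=\int \prod_{j=1}^{s}\Big(\prod_{i=1}^{k} S_i^{\ell_j n_i}\Big)G_j\ d\mu
\]
on some auxiliary measure-preserving system. The point is that $b$ is already an \emph{integral}, so after substituting $n_i=\floor{a_i(Wn+b)}$ one can apply Theorem~\ref{T: the main comparison} to the functions $G_j$ and then Cauchy--Schwarz to extract a scalar bound directly---no pointwise evaluation is ever needed. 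This is the missing ingredient in your argument.
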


Instead of the ``pointwise convergence'' assumption (b), one can replace it with a weaker convergence (i.e. in the $L^2$-sense) hypothesis. However, we will not benefit from this in applications, so we opt to not state our results in that setup.

 In the case of a polynomial function, a convergence result along primes follows by combining \cite[Theorem 7.1]{Green-Tao-Mobius} (which is the case of linear polynomials)
and the fact that any polynomial orbit on a nilmanifold can be lifted to a linear orbit of a unipotent affine transformation on a larger nilmanifold (an argument due to Leibman \cite{Leibman-nil-polynomial-equidistribution}). Nonetheless, in this case, we do not have a nice description for the orbit of this polynomial sequence.

On the other hand, equidistribution results in higher-step nilmanifolds (along primes) for sequences such as $\floor{n^c}$, with $c$ a non-integer ($c>1$), are unknown even in the simplest case of one fractional power. Theorem \ref{T: criterion for pointwise convergence along primes-nil version} will allow us to obtain the first results in this direction from the corresponding results along $\N$.
Equidistribution results for Hardy sequences along $\N$ were obtained originally by Frantzikinakis in \cite{Fra-equidsitribution}, while more recently new results were established by Richter \cite{Richter} and the second author \cite{tsinas-pointwise}.
In view of the structure theory of Host-Kra \cite{Host-Kra-annals}, results of this nature are essential to demonstrate that the corresponding multiple ergodic averages along $\N$ converge in $L^2(\m)$.
All of the pointwise convergence theorems that we mentioned above can be transferred to the prime setting. As an application, we state the following sample corollary of Theorem~\ref{T: criterion for pointwise convergence along primes-nil version}. The term invariant under affine shifts refers to a Hardy field $\mathcal{H}$ for which $a(Wt+b)\in \mathcal{H}$ whenever $a\in \mathcal{H}$, for all $W,b\in \N$.

\begin{corollary}\label{C: equidistribution nilmanifolds}
    Let $k$ be a positive integer, $\mathcal{H}$ be a Hardy field invariant under affine shifts, and suppose that $a_1,\dots, a_k\in \mathcal{H}$ are functions of polynomial growth, for which there exists an $\e>0$, so that every non-trivial linear combination $a$ of them satisfies \begin{equation}\label{E: t^e away from polynomials}
        \lim\limits_{t\to+\infty} \Bigabs{\frac{a(t)-q(t)}{ t^{\e}} }=+\infty \text{ for every } q(t)\in \Z[t].
    \end{equation}  Then, for any collection of nilmanifolds $X_i=G_i/\G_i$ $i=1,\dots, k$, elements  $g_i\in G_i$ and points $x_i\in X_i$, the sequence \begin{equation*}
        \big( g_1^{\floor{a_1(p_n)}}x_1,\ldots,g_k^{\floor{a_k(p_n)}}x_k    \big)_{n\in \N},
    \end{equation*}where $p_n$ denotes the $n$-th prime, is equidistributed on the nilmanifold $\overline{(g_1^{\Z} x_1)}\times\dots \times~ \overline{(g_k^{\Z} x_k)}$.
\end{corollary}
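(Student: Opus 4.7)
The plan is to deduce the corollary from Theorem~\ref{T: criterion for pointwise convergence along primes-nil version}, via a short reduction to the single-nilmanifold setting of that theorem and a verification that the hypothesis \eqref{E: t^e away from polynomials} implies both of its conditions (a) and (b). For the reduction, I would set $X:=X_1\times\cdots\times X_k$, viewed as a nilmanifold over $G:=G_1\times\cdots\times G_k$, and embed each $g_i$ as $\tilde g_i:=(e,\ldots,g_i,\ldots,e)\in G$ with $g_i$ in the $i$-th slot. The $\tilde g_i$ pairwise commute; taking $\tilde x_i:=(x_1,\ldots,x_k)\in X$ for every $i$ and invoking Theorem~\ref{T: criterion for pointwise convergence along primes-nil version} would give equidistribution of $\big(\tilde g_1^{\floor{a_1(p_n)}}\tilde x_1,\ldots,\tilde g_k^{\floor{a_k(p_n)}}\tilde x_k\big)$ on $\overline{\tilde g_1^{\Z}\tilde x_1}\times\cdots\times\overline{\tilde g_k^{\Z}\tilde x_k}\subseteq X^k$. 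Projecting the $i$-th copy of $X$ in $X^k$ onto its $i$-th factor $X_i$ then recovers equidistribution on $\overline{g_1^{\Z} x_1}\times\cdots\times\overline{g_k^{\Z} x_k}$, which is the desired statement; that the pushforward of the Haar measure under this projection is the product of the Haar measures on the $\overline{g_i^{\Z} x_i}$ is a direct calculation.

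To verify hypothesis (a) of Theorem~\ref{T: criterion for pointwise convergence along primes-nil version}, note that each $a_i$ is itself a non-trivial linear combination of $a_1,\ldots,a_k$, so by assumption $|a_i(t)-q(t)|/t^{\e}\to+\infty$ for every $q\in\Z[t]$. To upgrade this to every $q\in\Q[t]$ as \eqref{E: far away from rational polynomials} demands, I would clear denominators: writing $q=r/m$ with $r\in\Z[t]$ and $m\in\N$, the same assumption applied to the non-trivial linear combination $m a_i$ against $r\in\Z[t]$ yields $m|a_i(t)-q(t)|/t^{\e}\to+\infty$, and since $t^{\e}/\log t\to+\infty$ this gives \eqref{E: far away from rational polynomials}. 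For hypothesis (b), fix $W,b\in\N$ and set $b_i(t):=a_i(Wt+b)\in\mathcal H$ (by affine-shift invariance). Every non-trivial linear combination of the $b_i$ equals $a(Wt+b)$ for some non-trivial linear combination $a$ of the $a_i$, and the change of variables $s=Wt+b$ together with the same denominator-clearing upgrade shows that $|a(Wt+b)-q(t)|/t^{\e}\to+\infty$ for every $q\in\Q[t]$, hence in particular for every $q\in\Z[t]$. The required joint equidistribution of $\big(u_i^{\floor{b_i(n)}}y_i\big)_{i=1}^{k}$ on the product of orbit closures then follows from the nilmanifold equidistribution theorems for Hardy sequences along $\N$ established in~\cite{tsinas-pointwise} (see also~\cite{Fra-equidsitribution,Richter}), applied to the product nilmanifold $Y^k$ with the diagonally embedded $u_i$ and projected coordinatewise as in the opening reduction.

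The main obstacle is the invocation in step~(b) of an equidistribution theorem along $\N$ that delivers the \emph{joint} equidistribution on the full product of orbit closures under only the $t^{\e}$-away-from-polynomials condition on non-trivial linear combinations, as opposed to merely the coordinatewise equidistribution on each individual $\overline{u_i^{\Z}y_i}$. Ruling out the degenerate joint correlations between the different iterates that a coordinatewise statement alone would permit is the heart of the cited pointwise equidistribution results; once this is imported as a black box, the remainder of the argument reduces to standard bookkeeping about how the hypotheses on the $a_i$ propagate to their affine shifts and to their non-trivial linear combinations.
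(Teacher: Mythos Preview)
Your proposal is correct and follows essentially the same route as the paper: verify hypotheses (a) and (b) of Theorem~\ref{T: criterion for pointwise convergence along primes-nil version} and invoke \cite[Theorem~1.1]{tsinas-pointwise} for the equidistribution along $\N$. The paper's own proof is two sentences and leaves both the product-nilmanifold reduction (passing from separate $X_i$ to a single $X=\prod X_i$ with commuting $\tilde g_i$) and the $\Q[t]$ upgrade of \eqref{E: t^e away from polynomials} implicit; you have simply made these steps explicit, which is fine. One small redundancy: when you verify hypothesis (b), the $u_i$ already commute in a single group $H$, so there is no need to pass to $Y^k$ with ``diagonally embedded $u_i$'' --- \cite{tsinas-pointwise} applies directly in that setting, and your opening product trick is only needed to bridge the corollary's multi-nilmanifold conclusion with the single-nilmanifold formulation of Theorem~\ref{T: criterion for pointwise convergence along primes-nil version}. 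Your closing observation that the genuine content lies in the \emph{joint} (not merely coordinatewise) equidistribution supplied by \cite{tsinas-pointwise} is exactly right, and is precisely the black box both you and the paper rely on.
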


The assumption in \eqref{E: t^e away from polynomials} is a byproduct of the corresponding equidistribution result along $\N$ proven in \cite{tsinas-pointwise}.
Also, the assumption on $\mathcal{H}$ can be dropped since the arguments in \cite{tsinas-pointwise} rely on some growth assumptions on the functions $a_{i}$ which translate over to their shifted versions. We choose not to remove the assumption here since the results in \cite{tsinas-pointwise} are not stated in this setup.

Our corollary implies that the sequence  \begin{equation*}
        \big( g_1^{\floor{p_n^{c_1}}}x_1,\ldots,g_k^{\floor{p_n^{c_k}}}x_k    \big)
    \end{equation*}is equidistributed on the subnilmanifold $\overline{(g_1^{\Z} x_1)}\times\dots \times~ \overline{(g_k^{\Z} x_k)}$ of $X_1\times\dots\times X_k$, for any distinct positive non-integers $c_1,\dots, c_k$ and for all points $x_i\in X_i$. This is stronger than the result of Frantzikinakis \cite{Fra-primes} that establishes convergence in the $L^2$-sense (for linearly independent fractional polynomials). This result is novel even in the simplest case $k=1$. Furthermore, we remark that in the case $k=1$ we can actually replace \eqref{E: t^e away from polynomials} with the optimal condition that $a(t)-q(t)$ grows faster than $\log t$, for all $q(t)$ that are real multiples of integer polynomials, using the results from \cite{Fra-equidsitribution}.

\subsection{Strategy of the proof and organization }\label{strategysubsection}

The bulk of the paper is spent on establishing the asserted comparison between the $W$-tricked averages and the standard Ces\`{a}ro averages (Theorem \ref{T: the main comparison}). The main trick is to recast our problem to the setting where our averages range over a short interval of the form $[N, N+L(N)]$, where $L(t)$ is a function of sub-linear growth chosen so that Hardy sequences are approximated sufficiently well by polynomials in these intervals.
Naturally, the study of the primes in short intervals requires strong number theoretic input and this is provided by the recent result in \cite{MSTT} on the Gowers uniformity of several arithmetic functions in short intervals (this is Theorem~\ref{T: Gowers uniformity in short intervals} in the following section). The strategy of restricting ergodic averages to short intervals was first used by Frantzikinakis in \cite{Fra-Hardy-singlecase} to demonstrate the convergence of the averages in \eqref{E: Furstenberg averages} when $a(n)$ is a Hardy sequence and then amplified further by the second author in \cite{Tsinas} to resolve the problem in the more general setting of the averages in \eqref{E: general ergodic averages} (for one transformation).  Certainly, the uniformity estimate in Theorem \ref{T: Gowers uniformity in short intervals} requires that the interval is not too short, but it was observed in \cite{Tsinas} that one can take the function $L(t)$ to grow sufficiently fast, as long as one is willing to tolerate polynomial approximations with much larger degrees.

After this step has been completed, one typically employs a complexity reduction argument (commonly referred to as PET induction in the literature) that relies on repeated applications of the van der Corput inequality. Using this approach, one derives iterates that are comprised of several expressions with integer parts, which are then assembled together using known identities for the floor function (with an appropriate error).
This approach was used for the conventional averages over $\N$ in \cite{Fra-Hardy-singlecase} and \cite{Tsinas},
because one can sloppily combine integer parts in the iterates at the cost of inserting a bounded weight in the corresponding averages. To be more precise, this weight is actually the characteristic function of a subset of $\N$. However, we cannot afford to do this blindly in our setting, since there is no guarantee that this subset of $\N$ does not correlate very strongly with $\La_{w,b}(n)-1$, which could imply that the resulting average is large. The fact that the weight $\La_{w,b}-1$ is unbounded complicates this step as well.
Nonetheless, it was observed in  \cite{koutsogiannis-closest}  (using an argument from \cite{Koutsogiannis-correlations}),\footnote{  This argument was first used for $k=1$ in \cite{Boshernitzan-Jones-Wierdl-book} and \cite{Lesigne-single}  to prove that when a sequence of
real positive numbers is good for (single term) pointwise convergence, then its
floor value is also good. The method was later adapted to the $k=2$ setting by Wierdl (personal communication with the first author, 2015).} that if the fractional parts of the sequences in the iterates do not concentrate heavily around 1, then one can pass to an extension of the system $(X,\X,\m, T_1,\dots, T_k)$, wherein the actions $T_i$ are lifted to $\R$-actions (also called measure-preserving flows) and the integer parts are removed. Since there are no nuisances with combining rounding functions in the iterates, one can then run the complexity reduction argument in the new system and obtain the desired bounds.

Unfortunately, there is still an obstruction in this approach arising from the fact that the flows in the extension are not continuous. To be more precise, let us assume that we derived an approximation of the form $a(n)=p_N(n)+\e_N(n)$, where $n\in [N, N+L(N)]$, $p_N(n)$ is a Taylor polynomial and $\e_N(n)$ is the remainder term. The PET induction can eliminate the polynomials $p_N(n)$, by virtue of the simple observation that taking sufficiently many ``discrete derivatives'' makes a polynomial vanish.
However, this procedure cannot eliminate the error term $\e_N(n)$ at all and the fact that the flow is not continuous prohibits us from replacing them with zero. Thus, we take action to discard $\e_N(n)$ beforehand. This is done by studying the equidistribution properties of the polynomial $p_N(n)$ in the prior approximation, using standard results from the equidistribution theory of finite polynomial orbits due to Weyl. Practically, we show that for ``almost all'' values of $n$ in the interval $[N, N+L(N)]$, we can write $\floor{p_N(n)+\e_N(n)}=\floor{p_N(n)}$, so that the error $\e_N(n)$ can be removed from the expressions in the iterates.

In our approach, some equidistribution assumptions on our original functions are required. This clarifies the conditions on Theorem \ref{T: the main comparison}. Indeed, \eqref{E: far away from rational polynomials} implies that the sequence $\big(a_{ij}(n)\big)_{n}$ is equidistributed modulo 1 (due to Theorem \ref{T: Boshernitzan}), while condition \eqref{E: essentially equal to a polynomial} implies that the function $a_{ij}(t)$ is essentially equal to a polynomial with rational coefficients (thus periodic modulo 1).

\subsubsection{A simple example}
We demonstrate the methods discussed above in a basic case that avoids most complications that appear in the general setting. 
Even this simple case, however, is not covered by prior methods in the literature. We will use some prerequisites from the following section, such as Theorem \ref{T: Gowers uniformity in short intervals}.  

We consider the averages
\begin{equation}\label{E: three Aps example primes}
\frac{1}{\pi(N)} \sum_{p\in\P:\; p\leq N}T^{\floor{p^{3/2}}}f_1\cdot T^{2\floor{p^{3/2}}}f_2,
\end{equation}where $(X,\mathcal{X},\mu,T)$ is a system and $f_1, f_2\in L^\infty(\mu)$.
For every $1\leq b\leq W$ with $(b,W)=1$, we study the averages
\begin{equation}\label{E: three Aps example}
    \frac{1}{N}\sum_{n=1}^N \big(\La_{w,b}(n)-1\big)  T^{\floor{n^{3/2}}}f_1\cdot T^{2\floor{n^{3/2}}}f_2,
\end{equation}which is the required comparison for the averages in \eqref{E: three Aps example primes}. We will show that as $N\to+\infty$ and then $w\to+\infty$, the norm of this average converges to 0 uniformly in $b$.

We set $L(t)=t^{0.65}$. Notice that $L(t)$ grows faster than $t^{5/8}$, which is a necessary condition to use Theorem~\ref{T: Gowers uniformity in short intervals}. In order to establish the required convergence
for the averages in \eqref{E: three Aps example}, it suffices to show that
\begin{equation}\label{E: second equation in first example}
\limsup\limits_{r\to+\infty} \Bignorm{\E_{r\leq n\leq r+L(r)} \big(\La_{w,b}(n)-1\big) T^{\floor{n^{3/2}}}f_1 \cdot T^{2\floor{n^{3/2}}} f_2   }_{L^2(\m)}=o_w(1)
\end{equation}uniformly in $b$.
We remark that in the more general case that encompasses several functions, we will need to average over the parameter $r$ as well and, thus, we are dealing with a double-averaging scheme.
This reduction is the content of Lemma \ref{L: long averages to short averages}.

Using the Taylor expansion around $r$, we can write for every $0\leq h\leq L(r)$:\begin{equation*}
   (r+h)^{3/2}=r^{3/2}+\frac{3r^{1/2}h}{2}+\frac{3h^2}{8r^{1/2}} -\frac{3h^3}{48\xi^{3/2}_{h}}, \ \text{ where } \xi_{h}\in [r,r+h].
\end{equation*}
Observe that the error term is smaller than a constant multiple of  $$\frac{\big(L(r)\big)^{3}}{r^{3/2}}=o_r(1).$$
We show that we have \begin{equation*}
\floor{(r+h)^{3/2}}=\floor{r^{3/2}+\frac{3r^{1/2}h}{2}+\frac{3h^2}{8r^{1/2}}}
\end{equation*}for ``almost all'' $0\leq h \leq L(r)$, in the sense that the number of $h$'s that do not obey this relation is
bounded by a constant multiple of $L(r)\log^{-100} r$ (say). Thus, their contribution on the average is negligible, since the sequence $\La_{w,b}$ has size comparable to $\log r$. 

Let us denote by $p_r(h)$ the quadratic polynomial in the Taylor expansion above.
In order to establish this assertion, we will investigate the discrepancy of the finite sequence $\big(p_r(h)\big)_{0\leq h \leq L(r)}$, using some exponential sum estimates and the Erd\H{o}s-Tur\'{a}n inequality (Theorem \ref{T: Erdos-Turan}). This is the content of Proposition \ref{P: remove error term for fast functions}.

Assuming that all the previous steps were completed, we shall ultimately reduce our problem to showing that  \begin{equation*}
    \limsup\limits_{r\to+\infty} \Bignorm{\E_{0\leq h\leq L(r)} \big(\La_{w,b}(r+h)-1\big) T^{\floor{p_r(h)}}f_1 \cdot T^{2\floor{p_r(h)}}  f_2  }_{L^2(\m)}=o_w(1)
\end{equation*}uniformly for $1\leq b\leq W$ coprime to $W$.
Now that the error terms have been eliminated, we are left with an average that involves polynomial iterates. Next, we use an argument from \cite{Koutsogiannis-correlations} that allows us to pass to an extension of the system $(X,\X,\m, T)$. 
To be more precise, there exists an $\R$-action (see the definition in Section \ref{Section: Background}) $(Y,\mathcal{Y},v,S)$ and functions $\wt{f}_1,\wt{f}_2$, such that we have the equality \begin{equation*}
    T^{\floor{p_r(h)}}f_1 \cdot T^{2\floor{p_r(h)}}  f_2= S_{p_r(h)}\wt{f}_1\cdot S_{2p_r(h)}\wt{f}_2.
\end{equation*}This procedure can be done because the polynomial $p_r(h)$ has good equidistribution properties (which we analyze in the previous step) and thus the fractional parts of the finite sequence $\big(p_r(h)\big)_{0\leq h\leq L(r)}$ fall inside a small interval around 1 with the correct frequency. This is a necessary condition in order to use Proposition \ref{P: Gowers norm bound on variable polynomials}, which provides a bound for the inner average. To be more specific, we have the expression \begin{equation*}
      \limsup\limits_{r\to+\infty} \Bignorm{\E_{0\leq h\leq L(r)} \big(\La_{w,b}(r+h)-1\big) S_{p_r(h)}\wt{f}_1\cdot S_{2p_r(h)}\wt{f}_2 }_{L^2(\m)}.
\end{equation*}
The inner  average involves polynomials and can be bounded uniformly by  the Gowers norm of the sequence $\La_{w,b}(n)-1$ by Proposition \ref{P: Gowers norm bound on variable polynomials} (modulo some constants and error terms that we ignore for the sake of this discussion).
In particular, we have that the average in \eqref{E: second equation in first example} is bounded by   \begin{equation*}
     \bignorm{\La_{w,b}(n)-1}_{U^s(r,r+L(r)] }
\end{equation*}for some $s\in \N$. Finally, Theorem \ref{T: Gowers uniformity in short intervals} implies that for sufficiently large values of $r$ we have $\bignorm{\La_{w,b}(n)-1}_{U^s(r,r+L(r)] }$ is $o_w(1)$ uniformly in $b$. Finally, sending $w$ to $+\infty$, we reach the desired conclusion.

This argument is quite simpler than the general case since it involves only one function. As we commented briefly, one extra complication is that we are dealing with a double averaging, unlike the model example.
During the proof of Theorem \ref{T: the main comparison}, we will also need to split the functions $a_{ij}$ into several distinct classes, which are handled with different methods. For example, the argument above works nicely for the function $t^{3/2}$ but has to be modified in the case of the function $\log^2 t$, because the latter cannot be approximated by polynomials of degree 1 or higher on our short intervals. 
Namely, the Taylor polynomial corresponding to $\log^2 t$ is constant and the previous method is orendered ineffective. 
Thus, we present an additional, more elaborate model example in Section \ref{Section-Removing error terms section}, which exemplifies the possible cases that arise in the main proof.

\subsection{Open problems and further directions}\label{SUBSection-Open problems}

We expect that condition \eqref{E: far away from real multiples of integer polynomials} in Theorem \ref{T: multiple recurrence for szemeredi type patterns} can be relaxed significantly and still provide a multiple recurrence result. Motivated by \cite[Theorem 2.3]{Fra-Hardy-singlecase}, we make the following conjecture.\begin{conjecture}\label{Conjecture 1}
    Let $a\in \mathcal{H}$ be a function of polynomial growth which satisfies \begin{equation*}
        \lim\limits_{t\to+\infty} \bigabs{a(t)-cp(t)}=+\infty \text{ for every } c\in \R  \text{ and }  p(t)\in \Z[t].
    \end{equation*} Then, for any $k\in\N,$ measure-preserving system $(X,\X,\m,T)$ and set $A$ of positive measure, the set 
$$\{n\in \N:\; \m\big(A\cap T^{-\floor{a(n)}} A \cap \dots \cap T^{-k\floor{a(n)}}A  \big)>0\}$$
has non-empty intersection with $\P$.
\end{conjecture}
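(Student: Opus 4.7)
The plan is to combine the $\N$-analogue of this conjecture --- which should be precisely the content of \cite[Theorem 2.3]{Fra-Hardy-singlecase} --- with an extension of the comparison principle of Theorem~\ref{T: the main comparison} to the weaker growth hypothesis. Write $R := \{n \in \N : \mu(A\cap T^{-\floor{a(n)}}A \cap\cdots\cap T^{-k\floor{a(n)}}A) > 0\}$. The $\N$-analogue yields that $R$ has positive lower density, and the same positivity persists for the Ces\`{a}ro averages along any progression $Wn+b$ with $(b,W)=1$, since the hypothesis on $a$ is preserved under affine integer reparametrization. If one can then establish
\begin{equation*}
\lim_{w\to\infty}\limsup_{N\to\infty}\max_{(b,W)=1}\Bignorm{\frac{1}{N}\sum_{n=1}^N(\Lambda_{w,b}(n)-1)\prod_{i=1}^k T^{i\floor{a(Wn+b)}}\one_A}_{L^2(\mu)} = 0,
\end{equation*}
then the standard transference (as in Theorem~\ref{T: criterion for convergence along primes}) forces $R\cap\P\ne\emptyset$.

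Proving this comparison is the main task. If $a$ satisfies the stronger condition \eqref{E: far away from real multiples of integer polynomials}, then Theorem~\ref{T: multiple recurrence for szemeredi type patterns}(a) already gives the conclusion. Otherwise, there exist $c\in\R$ and $q\in\Z[t]$ with $\psi(t):=a(t)-c\,q(t)$ satisfying $\psi(t)\to+\infty$ and $\psi(t)/\log t\to 0$. Hardy field calculus then gives $\psi'(t) = o(1/t)$, so on any short interval $[r, r+L(r)]$ with $L(t)$ of small polynomial growth, $\psi$ differs from $\psi(r)$ by $o(1)$. An Erd\H{o}s--Tur\'{a}n argument in the spirit of Proposition~\ref{P: remove error term for fast functions} should then yield $\floor{a(r+h)} = \floor{c\,q(r+h)+\psi(r)}$ for all but $o(L(r))$ values of $h\in[0,L(r)]$, reducing the comparison to one for the real polynomial iterates $c\,q(n)+\psi(r)$, \emph{uniformly} in the constant shift $\psi(r)$.

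For this polynomial short-interval comparison, one would proceed as in the paper: extend the system to a flow as in \cite{koutsogiannis-closest}, apply the PET-induction flow argument to bound the average by a short-interval Gowers norm (Proposition~\ref{P: Gowers norm bound on variable polynomials}), and invoke the MSTT uniformity of $\Lambda_{w,b}-1$ (Theorem~\ref{T: Gowers uniformity in short intervals}). The required equidistribution of $c\,q(n)+\psi(r) \bmod 1$ is trivial for $c\in\Q$ and follows from Boshernitzan's theorem for $c\notin\Q$; the flow extension then goes through provided the implicit constants can be made uniform in $r$.

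The heart of the obstacle is precisely this uniformity in $r$: the flow-extension step requires the fractional parts of the iterates to stay away from a neighborhood of $1$, and as $r\to\infty$ the shift $\psi(r)$ drifts to infinity with unpredictable fractional parts, so one must track how the bounds in Proposition~\ref{P: Gowers norm bound on variable polynomials} depend on the coefficients of the variable polynomial. A further subtlety is that \cite[Theorem 2.3]{Fra-Hardy-singlecase} provides only positivity of liminfs rather than convergence to explicit limits, so the transference must be executed at the level of lower bounds throughout rather than in the cleaner convergence framework employed in the paper. I expect that a careful bookkeeping of the dependence on the shift, combined with a Vinogradov-type equidistribution analysis of $\psi(p) \bmod 1$ along primes, will be needed to close the argument.
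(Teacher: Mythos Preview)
This statement is Conjecture~\ref{Conjecture 1}---an open problem in the paper, not a theorem---so there is no proof to compare against. Your plan does not close the gap, and the paper's own remarks in Subsection~\ref{SUBSection-Open problems} indicate why.

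Two technical slips first: negating \eqref{E: far away from real multiples of integer polynomials} only gives $\psi(t):=a(t)-cq(t)$ with $|\psi(t)|\ll\log t$, not $\psi(t)/\log t\to 0$ (take $a(t)=t^4+\log t$), and then $\psi'(t)=O(1/t)$ rather than $o(1/t)$. The substantive error is the claim that an Erd\H{o}s--Tur\'an argument ``in the spirit of Proposition~\ref{P: remove error term for fast functions}'' yields $\floor{a(r+h)}=\floor{cq(r+h)+\psi(r)}$ for all but $o(L(r))$ values of $h$. In the decisive case $c\in\Q$, after passing to a progression $cq$ has integer non-constant coefficients, so the fractional part of $cq(r+h)+\psi(r)$ is \emph{constant} in $h$: there is no exponential sum to bound, and the desired equality either holds for all $h\in[0,L(r)]$ or fails on a set of positive proportion, according to whether $[r,r+L(r)]$ contains a jump of $\floor{\psi}$. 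One is therefore forced into the mechanism of Proposition~\ref{P: remove error term for slow functions}, whose input---and likewise the non-concentration hypothesis \eqref{E: non-concetration around 1} of Proposition~\ref{P: Gowers norm bound on variable polynomials}---requires that the upper density of $\{r:\{\psi(r)\}\in[0,\e]\cup[1-\e,1)\}$ be $o_\e(1)$. For $\psi(t)=\log\log(5t)$, one of the paper's explicit uncovered examples, this upper density equals $1$ for every $\e>0$, and the short-interval scheme collapses entirely. Your assertion that the case $c\in\Q$ is ``trivial'' is thus exactly backwards; the proposed Vinogradov analysis of $\psi(p)\bmod 1$ along primes inherits the same obstruction. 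As the paper itself suggests, a comparison theorem for a different averaging scheme (e.g.\ logarithmic, under which such slow $\psi$ \emph{do} equidistribute) appears to be what is actually needed.
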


Comparing the assumptions on the function $a$ to those in Theorem \ref{T: multiple recurrence for szemeredi type patterns}, we see that we are very close to establishing Conjecture \ref{Conjecture 1}. However, there are examples that our work does not encompass, such as the function $ t^4 +\log t$ or $t^2+\log\log (5t)$.
In the setting of multiple recurrence along $\N$, the corresponding result was established in \cite{Fra-Hardy-singlecase} and was generalized for more functions in \cite{BMR-Hardy2}. 
In view of \cite[Corollary B.3, Corollary B.4]{BMR-Hardy2}, we also make the following conjecture:

\begin{conjecture}
    Let $k\in\N$ and $a_1,\dots, a_k\in \mathcal{H}$ be functions of polynomial growth. Assume that 
     every non-trivial linear combination $a$ of the functions $a_1,\dots,a_k$, $a,$ has the property \begin{equation*}
        \lim_{t\to +\infty}|a(t)-p(t)|=+\infty \text{ for all } p(t)\in \Z[t].
    \end{equation*}
   Then, for any measure-preserving system $(X,\X,\m,T)$ and set $A$ of positive measure,
the set 
$$\{n\in \N:\; \m\big(A\cap T^{-\floor{a_1(n)}} A \cap \dots \cap T^{-\floor{a_k(n)}}A  \big)>0\}$$
has non-empty intersection with $\P$.
\end{conjecture}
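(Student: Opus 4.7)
The plan is to mirror the paper's overall architecture: strengthen Theorem~\ref{T: the main comparison} to the weaker hypothesis of the conjecture, and then invoke the known Khintchine-type multiple recurrence result along $\N$ of Bergelson--Moreira--Richter \cite{BMR-Hardy2}, which is stated under exactly this divergence condition. Given such a comparison, the $\La_{w,b}$-weighted averages inherit the same lower bound $(\m(A))^{k+1}$ along the arithmetic progressions $Wn+b$; letting $w\to+\infty$ and using that $\P$ is essentially covered by residues coprime to $W$ then produces $p\in\P$ with $\m(A\cap T^{-\floor{a_1(p)}}A\cap\cdots\cap T^{-\floor{a_k(p)}}A)>0$, which is the conclusion sought.

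The strengthened comparison would follow the scheme outlined in Subsection~\ref{strategysubsection}. Fix a short-interval length $L(t)=t^{5/8+\delta}$ so that Theorem~\ref{T: Gowers uniformity in short intervals} applies. On each window $[r,r+L(r)]$, Taylor-expand $a_{ij}(Wn+b)$ around $n=r$ as a polynomial $p_{ij,r}(h)$ of bounded degree plus an $o_r(1)$ error, remove the error using an analogue of Proposition~\ref{P: remove error term for fast functions}, pass to an $\R$-flow extension to dissolve the integer parts into a continuous flow at $p_{ij,r}(h)$, perform PET induction in $h$ to linearize the iterates, and close by bounding the remaining average by $\|\La_{w,b}-1\|_{U^s(r,r+L(r)]}$ via Proposition~\ref{P: Gowers norm bound on variable polynomials}, which is $o_w(1)$ uniformly in $b$.

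The main obstacle, as one would expect, is the error-removal step under the weak hypothesis. In the paper, this step applies Erd\H{o}s--Tur\'an to a coefficient of $p_{ij,r}(h)$ that is $\omega(1/\log r)$-separated from rationals of controlled denominator, a separation extracted from \eqref{E: far away from rational polynomials}. For a function like $a(t)=t^4+\log t$, however, differentiating to eliminate the integer polynomial part leaves Taylor coefficients of size only $O(1/N^s)$ on $[0,L(N)]$, which is far too small for the Erd\H{o}s--Tur\'an input to force equidistribution. My proposal to circumvent this is to decompose each $a_{ij}=P_{ij}+R_{ij}$ with $P_{ij}\in\Z[t]$ and $R_{ij}$ a slower-growing Hardy function still satisfying $|R_{ij}(t)|\to+\infty$, use the identity $\floor{P_{ij}(Wn+b)+x}=P_{ij}(Wn+b)+\floor{x}$ to absorb the integer polynomial part into a rewriting of the group action, and run the error-removal argument using only the residual $R_{ij}$, whose Weyl sums on $[0,L(N)]$ are controllable by a Boshernitzan-type criterion. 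The genuinely delicate issue is that after the PET derivation procedure, the non-trivial combinations of the $R_{ij}$'s must continue to satisfy a $|\cdot|\to+\infty$ condition, which seems to require a careful inductive bookkeeping of the Hardy-field structure at each step and likely some case analysis depending on how close to an integer polynomial each $a_{ij}$ is.
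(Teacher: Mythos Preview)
The statement you are trying to prove is listed in the paper as an \emph{open conjecture} (Subsection~\ref{SUBSection-Open problems}); the paper does not prove it and in fact explains why its methods do not reach it. So there is no ``paper's own proof'' to compare against; the relevant comparison is between your plan and the paper's discussion of the obstacles.

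Your proposal has a genuine gap at the error-removal step. After decomposing $a_{ij}=P_{ij}+R_{ij}$ with $P_{ij}\in\Z[t]$, you claim the residual $R_{ij}$ has Weyl sums ``controllable by a Boshernitzan-type criterion''. But Boshernitzan's criterion (Theorem~\ref{T: Boshernitzan}) needs $|R_{ij}(t)-q(t)|\succ\log t$ for every $q\in\Q[t]$, whereas the conjecture's hypothesis only gives $|R_{ij}(t)|\to+\infty$. For the paper's explicit examples $a(t)=t^{4}+\log t$ and $a(t)=t^{2}+\log\log(5t)$, the residuals $\log t$ and $\log\log(5t)$ are \emph{not} equidistributed mod~1, and for $\log\log t$ the set $\{r\le R:\{\log\log r\}\in[0,\varepsilon]\}$ can have density close to $1$ along subsequences of $R$, so the analogue of Proposition~\ref{P: remove error term for slow functions} collapses. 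This is exactly the obstruction the paper singles out, and it is not repaired by absorbing the integer polynomial into the group action.

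There is also a mismatch in the second half of your plan. The recurrence result from \cite{BMR-Hardy2} that you want to feed into the comparison is obtained via a \emph{weaker} (e.g.\ logarithmic-type) averaging scheme, precisely because Ces\`aro convergence fails under the conjecture's hypothesis (the paper states this explicitly: ``a convergence result along $\N$ with typical Ces\`aro averages cannot be obtained''). Your comparison, however, is Ces\`aro-based, so the positive lower bound from \cite{BMR-Hardy2} does not automatically transfer through Theorem~\ref{T: the main comparison}. The paper's own suggested route is the opposite of yours: rather than forcing the Ces\`aro machinery to accept weaker growth, develop an analogue of Theorem~\ref{T: the main comparison} for the non-Ces\`aro averaging used in \cite{BMR-Hardy2}, which would in particular require a short-interval Gowers-uniformity input compatible with that averaging.
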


We remark that if one wants to also include functions that are essentially equal to a polynomial, then there are more results in this direction in \cite{BMR-Hardy2}, where it was shown that a multiple recurrence result for functions that are approximately equal to jointly-intersective polynomials is valid. Certainly, one would need to work with the sets $\P+1$ or $\P-1$ in this setting to transfer this result from $\N$ to the primes.

It is known that a convergence result along $\N
$ with typical Ces\`{a}ro averages cannot be obtained, if one works with the weaker conditions of the previous two conjectures.
Indeed, the result would fail even for rotations on tori, because the corresponding equidistribution statement is false.
The main approach employed in \cite{BMR-Hardy2} was to consider a weaker averaging scheme than  Ces\`{a}ro averages. Using a different averaging method, one can impose some equidistribution assumption on functions that are not equidistributed in the standard sense. For instance, it is well-known that the sequence $(\log n)_{n\in \N}$ is not equidistributed mod 1 using Ces\`{a}ro averages, but it is equidistributed under logarithmic averaging. Thus, it is natural to expect that an analog of Theorem \ref{T: the main comparison} for other averaging schemes would allow someone to relax the conditions \eqref{E: far away from rational polynomials} and \eqref{E: essentially equal to a polynomial} in order to tackle the previous conjectures. A comparison result similar to Theorem \ref{T: the main comparison} (but for other averaging schemes) appears to be a potential first step in this problem.

We expect that, under the same hypotheses, the analogous result in the setting of multiple commuting transformations will also hold. In particular, aside from the special cases established in \cite{Fra-Hardy-multidimensional}, convergence results along $\N$ for Hardy sequences and commuting transformations are still open. For instance, it is unknown whether the averages in Theorem \ref{T: nikos result to primes} converge when the functions $a_i$ are linear combinations of fractional powers.
In view of Theorem \ref{T: the main comparison} and Theorem \ref{T: criterion for convergence along primes}, any new result in this direction can be transferred to the setting of primes in a rather straightforward fashion, since conditions \eqref{E: far away from rational polynomials} and \eqref{E: essentially equal to a polynomial} are quite general to work with. 

\subsection{Acknowledgements}
 We thank Nikos Frantzikinakis for helpful discussions.

\subsection{Notational conventions} Throughout this article, we denote with $\mathbb{N}=\{1,2,\ldots\},$ $\mathbb{Z},$ $\mathbb{Q},$ $\mathbb{R},$ and $\mathbb{C}$ the sets of natural, integer, rational, real, and complex numbers respectively. We denote the one dimensional torus $\T=\R/\Z$, the exponential phases $e(t)=e^{2\pi it}$, while $\norm{x}_{\T}=d(x,\Z),$ $[[x]],$ $\floor{x},$ $\lceil x\rceil,$ and $\{x\}$ are the distance of $x$ from the nearest integer, the nearest integer to $x,$ the greatest integer which is less or equal to $x,$ the smallest integer which is greater or equal to $x,$ and the fractional part of $x$ respectively.  We also let ${\bf 1}_A$ denote the characteristic function of a set $A$ and $|A|$ is its cardinality.

For any integer $Q$ and $0\leq a\leq Q-1$, we use the symbol $a\; (Q)$ to denote the residue class $a$ modulo $Q$. Therefore, the notation ${\bf 1}_{a\; (Q)}$ refers to the characteristic function of the set of those integers, whose residue when divided by $Q$ is equal to $a$.

For two sequences $a_n,b_n$, we say that $b_n$ {\em dominates} $a_n$ and write $a_n\prec b_n$ or $a_n=o(b_n)$, when $a_n/b_n$ goes to 0, as $n\to+\infty$. In addition, we write $a_n\ll b_n$ or $a_n=O(b_n)$, if there exists a positive constant $C$ such that $|a_n|\leq C|b_n|$ for large enough $n$. When we want to denote the dependence of the constant $C$ on some parameters $h_1,\dots,h_k$, we will use the notation $a_n=O_{h_1,\dots,h_k}(b_n)$. In the case that $b_n\ll a_n\ll b_n$, we shall write $a_n\sim b_n$. We say that $a_n$ and $b_n$ have the same growth rate when the limit of $\frac{a_n}{b_n}$, as $n\to+\infty$ exists and is a non-zero real number.
We use a similar notation and terminology for asymptotic relations when comparing functions of a real variable $t$.

Under the same setup as in the previous paragraph,
we say that the sequence $a_n$ {\em strongly dominates} the sequence $b_n$ if there exists $\delta>0$ such that \begin{equation*}
    \frac{a_n}{b_n}\gg n^{\delta}.
\end{equation*}In this case, we write $b_N \lll a_N$, or $a_N\ggg b_N$.\footnote{This notation is non-standard, so we may refer back to this part quite often throughout the text.} We use similar terminology and notation for functions on a real variable $t$.

Finally, for any sequence $(a(n))$, we 
employ the notation \begin{equation*}
    \E_{n\in S} a(n)=\frac{1}{|S|} \sum_{ n\in S}^{} a(n)
\end{equation*}
to denote averages over a finite non-empty set $S$. We will typically work with averages over the integers in a specified interval, whose endpoints will generally be non-integers. We shall avoid using this notation for the Ces\`{a}ro averages.

\section{Background}\label{Section: Background}

\subsection{Measure-preserving actions}

Let $(X,\mathcal{X},\mu)$ be a Lebesgue probability space. A transformation $T:X\to X$ is {\em measure-preserving} if $\m(T^{-1}(A))=\m(A)$ for all $A\in \X$. It is called {\em ergodic} if all the $T$-invariant functions are constant. If $T$ is invertible, then $T$ induces a $\Z$-action on $X$ by $(n,x)=T^n x$, for every $n\in \Z$ and $x\in X$.

More generally, let $G$ be a group.
A {\em measure-preserving $G$-action} on a Lebesgue probability space $(X,\X,\m)$ is an action on $X$ by measure-preserving maps $T_g$ for every $g\in G$ such that, for all $g_1,g_2\in G$, we have $T_{g_1g_2}=T_{g_1}\circ T_{g_2}$. For the purposes of this article, we will only need to consider actions by the additive groups of $\Z$ or $\R$. Throughout the following sections, we will also refer to $\R$-actions as {\em measure-preserving flows}. In the case of $\Z$-actions, we follow the usual notation and write $T^n$ to indicate the map $T_n$.

\subsection{Hardy fields}

Let $(\mathcal{B},+,\cdot)$ denote the ring of germs at infinity of real-valued functions defined on a half-line $(t_0,+\infty)$. A sub-field $\mathcal{H}$ of $\mathcal{B}$ that is closed under differentiation is called a {\em Hardy field}.
 For any two functions $f,g\in\mathcal{H}$, with $g$ not identically zero, the limit \begin{equation*}
    \lim\limits_{t\to+\infty} \frac{f(t)}{g(t)}
\end{equation*} exists in the extended line and thus we can always compare the growth rates of two functions in $\mathcal{H}$. In addition, every non-constant function in $\mathcal{H}$ is eventually monotone and has a constant sign eventually. 
We define below some notions that will be used repeatedly throughout the remainder of the paper.
\begin{definition}\label{D: growthdefinitions}
Let $a$ be a function in $\mathcal{H}$.
We say that the function $a$ has {\em polynomial growth} if there exists a positive integer $k$ such that $a(t)\ll t^k$. The smallest positive integer $k$ for which this holds will be called the {\em degree} of $a$. The function $a$ is called {\em sub-linear} if $a(t)\prec t$. It will be called {\em sub-fractional} if $a(t)\prec t^{\e}$, for all $\e>0$. Finally, we will say that $a$ is {\em strongly non-polynomial} if, for all positive integers $k$, we have that the functions $a(t)$ and $t^k$ have distinct growth rates. 
\end{definition}

Throughout the proofs in the following sections, we will assume that we have fixed a Hardy field $\mathcal{H}$. Some of the theorems impose certain additional assumptions on $\mathcal{H}$, but this is a byproduct of the arguments used to establish the case of convergence of Ces\`{a}ro averages in \cite{Tsinas} and we will not need to use these hypotheses in any of our arguments.

\subsection{Gowers uniformity norms on intervals of integers}

Let $N$ be a positive integer and let $f:\Z_N\to \C$ be a function. For any positive integer $s$, we define the {\em Gowers uniformity norm} $\norm{f}_{U^s(\Z_N)}$ inductively by \begin{equation*}
    \bignorm{f}_{U^1(\Z_N)}=\bigabs{\E_{n\in \Z_N}  \ f(n)}
\end{equation*}and for $s\geq 2$,\begin{equation*}
\bignorm{f}_{U^{s}(\Z_N)}^{2^s}=\E_{h\in \Z_N} \bignorm{\overline{f(\cdot )}f(\cdot+h)}_{U^{s-1}(\Z_N)}^{2^{s-1}}.
\end{equation*} A straightforward computation implies that \begin{equation*}
    \bignorm{f}_{U^{s}(\Z_N)}=\Big( \E_{\uh\in \Z_N^s}\E_{n\in \Z_N} \prod_{\ue\in \{0,1\}^s}\mathcal{C}^{|\ue|} f(n+\uh\cdot \ue)\Big)^{\frac{1}{2^s}}.
\end{equation*} Here, the notation $\mathcal{C}$ denotes the conjugation map in $\C$, whereas for $\ue\in \{0,1\}^s$, $|\ue|$ is the sum of the entries of $\ue$ (the number of coordinates equal to $1$).

It can be shown that for $s\geq 2$, $\norm{\cdot}_{U^s(\Z_N)}$ is a norm and that $$\norm{f}_{U^s(\Z_N)}\leq \norm{f}_{U^{s+1}(\Z_N)}$$ 
for any function $f$ on $\Z_N$ \cite[Chapter 6]{Host-Kra-structures}. For the purposes of this article, it will be convenient to consider similar expressions that are not necessarily defined only for functions in an abelian group $\Z_N$. Therefore, for any $s\geq 1$ and a finitely supported sequence $f(n), n\in \Z$, we define the {\em unnormalized Gowers uniformity norm} \begin{equation}\label{E: unnormalized gowers norm}
    \bignorm{f}_{U^s(\Z)}=\Big( \sum_{\uh\in \Z^s}\ \sum_{n\in \Z} \ \prod_{\ue\in \{0,1\}^s}\mathcal{C}^{|\ue|} f(n+\uh\cdot \ue)\Big)^{\frac{1}{2^s}}
\end{equation}and for a bounded interval $I\subset{\R}$, we define \begin{equation}\label{Definition: Gowers norms along intervals}
    \bignorm{f}_{U^s(I)} =\frac{ \bignorm{f\cdot {\bf 1}_{I}  }_{U^s(\Z)}}{\bignorm{{\bf 1}_{I}}_{U^s(\Z) }}.
\end{equation}

First of all, observe that a simple change of variables in the summation in \eqref{Definition: Gowers norms along intervals} implies that for $X\in \Z$ \begin{equation*}
\bignorm{f}_{U^s(X,X+H]}=\bignorm{f(\cdot+X)}_{U^s[1,H]}.
\end{equation*} Evidently, we want to compare uniformity norms on the interval $[1, H]$ with the corresponding norms on the abelian group $\Z_H$. To this end, we will use the following lemma, whose proof can be found in \cite[Chapter 22, Proposition 11]{Host-Kra-structures}.
\begin{lemma}\label{L: relations for gowers norms defined on intervals and on groups}
    Let $s$ be a positive integer and $N,N'\in \N$ with $N'\geq 2N$. Then, for any sequence $\big(f(n)\big)_{ n\in \Z}$, we have \begin{equation*}
        \bignorm{f}_{U^s[1,N]  } =\frac{\bignorm{f\cdot 1_{[1,N]}}_{U^s(\Z_{N'})} }{\bignorm{1_{[1,N] }}_{U^s(\Z_{N'})}}.
    \end{equation*}
\end{lemma}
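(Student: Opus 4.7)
Proof proposal. The plan is to reduce the lemma to the identity
\begin{equation*}
(N')^{s+1}\, \bignorm{g}_{U^s(\Z_{N'})}^{2^s} = \bignorm{g}_{U^s(\Z)}^{2^s}, \qquad (\star)
\end{equation*}
valid for every sequence $g \colon \Z \to \C$ supported in $[1,N]$, where on the left we identify $g$ with its periodic extension to $\Z_{N'}$ via the representatives $\{1,\ldots,N'\}$. Once $(\star)$ is in hand, applying it both with $g = f\cdot {\bf 1}_{[1,N]}$ and with $g = {\bf 1}_{[1,N]}$ and taking the ratio of the $2^s$-th roots yields the lemma, since the common factor $(N')^{(s+1)/2^s}$ cancels.

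Both sides of $(\star)$, after expanding the definitions, are sums of products
\begin{equation*}
\prod_{\ue \in \{0,1\}^s} \mathcal{C}^{|\ue|} g(n + \uh \cdot \ue)
\end{equation*}
over parallelepiped configurations $(n, h_1, \ldots, h_s)$, ranging either in $\Z^{s+1}$ or in $\Z_{N'}^{s+1}$. Since $g$ vanishes outside $[1,N]$, a summand is nonzero only when every vertex $n + \uh \cdot \ue$ lies in $[1,N]$ --- as an integer on the right, or as a residue modulo $N'$ on the left. My plan is therefore to exhibit a summand-preserving bijection between these two sets of \emph{valid} configurations. The forward direction is immediate: reduction modulo $N'$ sends a valid integer tuple to a valid residue tuple with the same vertex values in $[1,N]$. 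For the inverse, given a valid residue tuple whose vertex-residues are $m_{\ue} \in [1,N]$, I set $n' := m_{\bzero}$ and $h_i' := m_{\be_i} - m_{\bzero}$, where $\be_i \in \{0,1\}^s$ denotes the $i$-th standard basis vector; since $|h_i'| \leq N - 1 < N'/2$, these are the unique representatives in a symmetric fundamental domain of $\Z_{N'}$, so the assignment is well-defined and inverts the reduction map on the first $s+1$ coordinates.

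The main work is to check that the lift $(n', \uh')$ is itself valid in $\Z^{s+1}$, i.e.\ that $n' + \uh'\cdot \ue = m_{\ue} \in [1,N]$ as integers for every $\ue \in \{0,1\}^s$. I would prove this by induction on $|\ue|$; the cases $|\ue| \leq 1$ hold by construction. For the inductive step, I write $\ue = \ue' + \be_i$ with $\varepsilon_i = 1$, so that the induction hypothesis gives $n' + \uh'\cdot \ue = m_{\ue'} + m_{\be_i} - m_{\bzero}$, which is automatically congruent to $m_{\ue}$ modulo $N'$. The main obstacle lies precisely in this congruence-to-equality passage, and it is here that the hypothesis $N' \geq 2N$ is used in a tight way: $m_{\ue'} + m_{\be_i} - m_{\bzero}$ lies in $[2-N,\, 2N-1]$ and $m_{\ue} \in [1,N]$, so their difference has absolute value at most $2N - 2 < 2N \leq N'$, forcing the integer multiple of $N'$ between them to vanish. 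Once integer equality is established, both the validity of the lift and the preservation of the summand across the bijection are automatic, yielding $(\star)$ and hence the lemma.
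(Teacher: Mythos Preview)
Your proof is correct. The paper does not supply its own proof of this lemma but simply cites \cite[Chapter~22, Proposition~11]{Host-Kra-structures}; your argument---establishing a summand-preserving bijection between valid parallelepiped configurations in $\Z^{s+1}$ and in $\Z_{N'}^{s+1}$, with the inductive step on $|\ue|$ using $N'\geq 2N$ to rule out wrap-around---is precisely the standard one found there.
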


We will need a final lemma that implies that the Gowers uniformity norm is smaller when the sequence is evaluated along arithmetic progressions.
\begin{lemma}\label{L: Gowers uniformity norms evaluated at arithmetic progressions}
    Let $u(n)$ be a sequence of complex numbers. Then, for any integer $s\geq 2$ and any positive integers $0\leq a\leq Q-1$, we have \begin{equation*}
        \bignorm{u(n){\bf 1}_{a\;( Q)}(n)}_{U^s(X,X+H]}\leq \bignorm{u(n)}_{U^s(X,X+H]},
    \end{equation*}for all integers $X\geq 0$ and all $H\geq 1$.
\end{lemma}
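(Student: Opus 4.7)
The plan is to reduce the estimate to the manifest fact that multiplication by a pure linear phase cannot increase a Gowers norm of order $\geq 2$, and then to handle the characteristic function of a residue class through its discrete Fourier expansion
$$
{\bf 1}_{a\,(Q)}(n)=\frac{1}{Q}\sum_{j=0}^{Q-1} e\bigl(j(n-a)/Q\bigr).
$$
Substituting this into $u(n){\bf 1}_{a\,(Q)}(n){\bf 1}_{(X,X+H]}(n)$ expresses the restricted weighted sequence as an average of $Q$ linearly-modulated copies of $u(n){\bf 1}_{(X,X+H]}(n)$. Provided each of these modulated copies has the same unnormalized $U^s(\Z)$-norm as the unmodulated one, the triangle inequality for $\bignorm{\cdot}_{U^s(\Z)}$ together with the definition \eqref{Definition: Gowers norms along intervals} will yield the bound.

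The key ingredient, therefore, is the identity $\bignorm{e(\alpha\cdot)\,f}_{U^s(\Z)}=\bignorm{f}_{U^s(\Z)}$ for every $\alpha\in\R$ and every finitely supported $f$, valid under the hypothesis $s\geq 2$. This is a direct calculation from \eqref{E: unnormalized gowers norm}: the factor
$$
\prod_{\ue\in\{0,1\}^s}\mathcal{C}^{|\ue|}\,e\bigl(\alpha(n+\uh\cdot\ue)\bigr)=e\Bigl(\alpha\sum_{\ue\in\{0,1\}^s}(-1)^{|\ue|}(n+\uh\cdot\ue)\Bigr)
$$
collapses to $1$ because both $\sum_{\ue}(-1)^{|\ue|}$ and, for each coordinate $i$, the partial sum $\sum_{\ue\,:\,\ue_i=1}(-1)^{|\ue|}$ vanish once $s\geq 2$; hence the Gowers parallelepiped integrand is identically unchanged under the modulation. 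The case $s=1$ fails (the analogous product equals $e(-\alpha h_1)$), which is exactly where the hypothesis $s\geq 2$ in the lemma is used.

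Combining the two ingredients on the interval $I=(X,X+H]$, the triangle inequality gives
$$
\bignorm{u\cdot{\bf 1}_{a\,(Q)}\cdot{\bf 1}_I}_{U^s(\Z)}\leq \frac{1}{Q}\sum_{j=0}^{Q-1}\bignorm{e(j\cdot/Q)\,u\cdot{\bf 1}_I}_{U^s(\Z)}=\bignorm{u\cdot{\bf 1}_I}_{U^s(\Z)},
$$
and dividing through by $\bignorm{{\bf 1}_I}_{U^s(\Z)}$ produces the desired inequality via \eqref{Definition: Gowers norms along intervals}. The only minor point requiring verification is that $\bignorm{\cdot}_{U^s(\Z)}$ is a genuine norm on finitely supported sequences, so that the triangle inequality may be invoked; this is the standard Gowers--Cauchy--Schwarz argument of \cite[Chapter~6]{Host-Kra-structures} applied on $\Z$ in place of $\Z_N$, and constitutes the only bookkeeping obstacle. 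No other difficulty is anticipated, so the whole argument is essentially one line of Fourier inversion on $\Z/Q\Z$ combined with the phase-invariance identity above.
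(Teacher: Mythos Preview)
Your proposal is correct and follows essentially the same approach as the paper: Fourier expansion of ${\bf 1}_{a\,(Q)}$ over $\Z/Q\Z$, followed by the triangle inequality and the invariance of $\bignorm{\cdot}_{U^s}$ under multiplication by linear phases when $s\geq 2$. The paper's version first shifts to $[1,H]$ and bounds the Fourier coefficients explicitly by $1/Q$ before invoking the phase-invariance, but the substance is identical.
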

\begin{proof}
    We set $u_X(n)=u(X+n)$, so that we can rewrite the norm on the left-hand side as $\bignorm{u_X(n){\bf 1}_{a\;(Q)}(X+n)}_{U^s[1,H]}$. Observe that the function ${\bf 1}_{a\; (Q)}(n)$ is periodic modulo $Q$. Thus, treating it as a function in $\Z_Q$, we have the Fourier expansion \begin{equation*}
        {\bf 1}_{a\; (Q)}(n)=\sum_{\xi\in \Z_q} \widehat{{\bf 1}}_{a\; (Q)}(\xi)e\Big(\frac{n\xi}{Q}\Big),  
    \end{equation*}for every $0\leq n\leq Q-1$, and this can be extended to hold for all $n\in \Z$ due to periodicity.
    Furthermore, we have the bound \begin{equation*}
       \bigabs{  \widehat{{\bf 1}}_{a\; (Q)}(\xi)}=\frac{1}{Q}\Bigabs{e\bigg(\frac{a\xi}{Q}\bigg)}\leq \frac{1}{Q}.
    \end{equation*}Applying the triangle inequality, we deduce that \begin{equation*}
        \bignorm{u_X(n){\bf 1}_{a\;(Q)}(X+n)}_{U^s[1,H]}\leq \sum_{\xi\in \Z_Q}  \bigabs{  \widehat{{\bf 1}}_{a\; (Q)}(\xi)}\cdot \Bignorm{u_X(n)e\Big(\frac{(X+n)\xi}{Q}\Big)}_{U^s[1,H]}.
    \end{equation*}However, it is immediate from \eqref{E: unnormalized gowers norm} that the $U^s$-norm is invariant under multiplication by linear phases, for every $s\geq 2$. Therefore, we conclude that \begin{equation*}
          \bignorm{u_X(n){\bf 1}_{a\;(Q)}(X+n)}_{U^s[1,H]}\leq   \bignorm{u_X(n)}_{U^s[1,H]}= \bignorm{u(n)}_{U^s(X,X+H]},
    \end{equation*}which is the desired result.
\end{proof}

The primary utility of the Gowers uniformity norms is the fact that they arise naturally in complexity reduction arguments that involve multiple ergodic averages with polynomial iterates. In particular, Proposition \ref{P: PROPOSITION THAT WILL BE COPYPASTED FROM FRA-HO-KRA} below implies that polynomial ergodic averages weighted by a sequence $(a(n))_{n\in \N}$  can be bounded in terms of the Gowers norm of $a$ on the abelian group $\Z_{sN}$ for some positive integer $s$ (that depends only on the degrees of the underlying polynomials).

\begin{proposition}
    \cite[Lemma 3.5]{Fra-Host-Kra-primes}\label{P: PROPOSITION THAT WILL BE COPYPASTED FROM FRA-HO-KRA}
Let $k, \ell\in \N,$ $(X,\X,\mu,T_1,\ldots, T_k)$ be a system of commuting $\Z$ actions, $p_{i,j}\in\Z[t]$ be polynomials for every $1\leq i\leq k,$ $1\leq j\leq \ell,$  $f_1,\ldots,f_{\ell}\in L^{\infty}(\mu)$ and $a:\N\to\C$ be a sequence. Then, there exists $s\in\N,$ depending only on the maximum degree of the polynomials $p_{i,j}$ and the integers $k,\ell$, and a constant $C_s$ depending on $s,$ such that 
\begin{equation}\label{E: bound of polynomial ergodic averages in terms of Gowers norm of the weight}
  \Bignorm{\E_{1\leq n\leq N} a(n)\cdot  \prod_{j=1}^{\ell} \prod_{i=1}^k T_i^{p_{i,j}(n)} f_j
}_{L^2(\mu)}\leq C_{s} \left( \norm{a\cdot {\bf 1}_{[1,N]} }_{U^s(\Z_{sN})}+\frac{\max \{1,\norm{a}^{2s}_{\ell^{\infty}[1,sN]}\}}{N} \right).   
\end{equation}
\end{proposition}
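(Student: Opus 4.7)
The plan is to follow the standard PET (Polynomial Exhaustion Technique) scheme: iterated application of the quantitative van der Corput inequality to the weighted average, systematically reducing the polynomial complexity at each step, until the polynomial iterates drop out entirely and only a weighted cube average in $a$ remains, which by definition is (up to normalisation) a Gowers uniformity norm of $a$.

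Concretely, I would set $u_n = \prod_{j=1}^\ell \prod_{i=1}^k T_i^{p_{i,j}(n)} f_j$, bounded in $L^\infty(\m)$ by $M=\prod_j \|f_j\|_\infty$, and study $\bignorm{\E_{n\leq N} a(n)\,u_n}_{L^2(\m)}$. The quantitative van der Corput inequality, in the form
\[
\Bignorm{\E_{n\leq N} v_n}^2_{L^2(\m)} \leq \frac{2}{H+1}\sum_{|h|\leq H}\Bigabs{\E_{n\leq N}\langle v_n, v_{n+h}\rangle_{L^2(\m)}} + O\!\left(\frac{H}{N}\sup_n\|v_n\|_\infty^2\right),
\]
applied with $v_n = a(n) u_n$ (extended by zero off $[1,N]$), introduces a difference parameter $h_1$ and produces a new weighted average whose weight is $a(n)\overline{a(n+h_1)}$ and whose iterates are governed by the differenced polynomials $p_{i,j}(n+h_1)-p_{i,j'}(n)$; using commutativity of the $T_i$, one factor of $u_n$ can be pivoted out by a measure-preserving change of variables, and the resulting polynomial family has strictly smaller PET weight in the Bergelson--Leibman ordering. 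Iterating this procedure $s$ times, with $s$ depending only on $k,\ell$ and $\max_{i,j}\deg p_{i,j}$, reduces all polynomial iterates to the identity and yields an expression of the shape
\[
\Bignorm{\E_{n\leq N} a(n)\, u_n}^{2^s}_{L^2(\m)} \leq \frac{C_s M^{2^s}}{N^s}\sum_{\uh\in \Z^s}\sum_{n\in\Z}\prod_{\ue\in\{0,1\}^s}\mathcal{C}^{|\ue|}\!\left((a\cdot {\bf 1}_{[1,N]})(n+\uh\cdot\ue)\right) + O_s\!\left(\frac{M^{2^s}\max\{1,\|a\|^{2s}_{\ell^\infty[1,sN]}\}}{N}\right),
\]
whose leading term is exactly $C_s M^{2^s}\,\bignorm{a\cdot{\bf 1}_{[1,N]}}^{2^s}_{U^s(\Z)}/N^s$. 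Applying Lemma~\ref{L: relations for gowers norms defined on intervals and on groups} with $N'=sN$ rewrites this as a constant times $\bignorm{a\cdot{\bf 1}_{[1,N]}}^{2^s}_{U^s(\Z_{sN})}$ (the factor $N^{-s}$ is absorbed into $\bignorm{{\bf 1}_{[1,N]}}_{U^s(\Z_{sN})}^{2^s}$), and taking $2^s$-th roots produces exactly \eqref{E: bound of polynomial ergodic averages in terms of Gowers norm of the weight}.

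The main obstacle is the combinatorial bookkeeping of the PET scheme: one must introduce a well-founded ordering on finite families of polynomial tuples that is strictly decreased by each van der Corput step and whose minimum is attained precisely when the polynomial iterates are trivial, and one must simultaneously track the accumulation of boundary errors from the $s$ van der Corput applications (this is where the $\|a\|^{2s}_{\ell^\infty[1,sN]}/N$ term in \eqref{E: bound of polynomial ergodic averages in terms of Gowers norm of the weight} ultimately originates, via a geometric-series estimate of the form $\sum_{i=1}^{s} M^{2^i}\|a\|^{2^i}_{\ell^\infty}/N \ll M^{2^s}\max\{1,\|a\|^{2s}_{\ell^\infty}\}/N$). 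The PET machinery is by now well-established in the Bergelson--Leibman style, so no new ideas are required beyond those, and the implementation runs parallel to that of \cite[Lemma 3.5]{Fra-Host-Kra-primes}.
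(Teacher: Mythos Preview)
Your proposal is correct and aligns with the approach of the cited source. Note that the paper itself does not supply a proof of this proposition: it is quoted from \cite[Lemma 3.5]{Fra-Host-Kra-primes}, and the only additional content in the paper is a remark explaining that the error term $\max\{1,\|a\|_{\ell^\infty[1,sN]}^{2s}\}/N$ arises from the van der Corput error terms of the form $\frac{1}{N}\E_{n\in[1,N]}\E_{\uh\in[1,N]^k}\bigabs{\prod_{\ue}\mathcal{C}^{|\ue|}a(n+\uh\cdot\ue)}$ for $k\leq s-1$, which is exactly the mechanism you describe.
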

    \begin{remark}
       $(i)$ The statement presented in \cite{Fra-Host-Kra-primes} asserts that the second term in the prior sum is just $o_N(1)$, under the assumption that $a(n)\ll n^c$ for all $c>0$. However, a simple inspection of the proof gives the error term presented above. Indeed, the error terms appearing in the proof of Proposition \ref{P: PROPOSITION THAT WILL BE COPYPASTED FROM FRA-HO-KRA} are precisely of the form \begin{equation*}
            \frac{1}{N}\E_{n\in [1,N]}\ \E_{ \uh\in [1,N]^k} \Bigabs{\ \prod_{\ue\in \{0,1\}^k}  \mathcal{C}^{|\ue|}\ a(n+\uh\cdot \ue)}
        \end{equation*}for $k\leq s-1$, which are the error terms in the van der Corput inequality. Deducing the error term on \eqref{E: bound of polynomial ergodic averages in terms of Gowers norm of the weight} is then straightforward. \\
        $(ii)$ The number $s-1$ is equal to the number of applications of the van der Corput inequality in the associated PET argument and we may always assume that $s\geq 2$. In that case, Lemma~\ref{L: relations for gowers norms defined on intervals and on groups} and the bound $\norm{{\bf 1}_{[1,N]} }_{U^s(\Z_{sN})}\leq 1$ implies that we can replace the norm in \eqref{E: bound of polynomial ergodic averages in terms of Gowers norm of the weight} with the term $\norm{a}_{U^s[1,N]}$.
    \end{remark}

For polynomials $p_{i,j}(t)\in\mathbb{R}[t]$ of the form \begin{equation*}
        p_{i,j}(t)=a_{ij,d_{ij}}t^{d_{ij}}+\dots+a_{ij,1}t+a_{ij,0},
    \end{equation*} and $(T_{i,s})_{s\in \mathbb{R}}$ $\mathbb{R}$-actions, we have \begin{equation*}
T_{i,p_{i,j}(n)}=\Big(T_{i,a_{ij,d_{ij}}}\Big)^{n^{d_{ij}}}\cdot\ldots \cdot  \Big(T_{i,a_{ij,1}}\Big)^{n}\cdot \Big(T_{i,a_{ij,0}}\Big).
    \end{equation*} Thus, Proposition \ref{P: PROPOSITION THAT WILL BE COPYPASTED FROM FRA-HO-KRA} implies the following.

\begin{corollary}\label{C: Gowers norm bound adapted to flows}
    Let $k, \ell\in \N,$ $(X,\mathcal{X},\mu,S_1,\ldots, S_k)$ be a system of commuting $\R$-actions, $p_{i,j}\in\Z[t]$ be polynomials for all $1\leq i\leq k,$ $1\leq j\leq \ell,$  $f_1,\ldots,f_{\ell}\in L^{\infty}(\mu)$  and $a:\N\to\C$ be a sequence. Then, there exists $s\in\N,$ depending only on the maximum degree of the polynomials $p_{i,j}$ and the integers $k,\ell$ and a constant $C_s$ depending on $s,$ such that 
\begin{equation}
  \Bignorm{\E_{1\leq n\leq N} a(n)\cdot  \prod_{j=1}^{\ell} \prod_{i=1}^k S_{i,p_{i,j}(n)} f_j
}_{L^2(\mu)}\leq C_{s} \left( \norm{a\cdot {\bf 1}_{[1,N]} }_{U^s(\Z_{sN})}+\frac{\max \{1,\norm{a}^{2s}_{\ell^{\infty}[1,sN]}\}}{N} \right).  
\end{equation}
\end{corollary}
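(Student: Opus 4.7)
The plan is to derive this corollary as a direct consequence of Proposition \ref{P: PROPOSITION THAT WILL BE COPYPASTED FROM FRA-HO-KRA} by recasting the flow-action product $\prod_{j,i} S_{i,p_{i,j}(n)} f_j$ as a product involving only iterates of commuting $\Z$-actions evaluated at integer polynomials. The decomposition already displayed in the paragraph preceding the corollary does essentially all of the work; one just has to verify that the resulting $\Z$-actions commute and that the reindexed polynomials fit the template of Proposition \ref{P: PROPOSITION THAT WILL BE COPYPASTED FROM FRA-HO-KRA}.

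Concretely, write $p_{i,j}(t)=\sum_{d=0}^{D} a_{ij,d}t^{d}$ with $a_{ij,d}\in\Z$ and $D=\max_{i,j}\deg p_{i,j}$ (padding with zero coefficients so that all polynomials have the same formal degree). Using that each $S_i$ is an $\R$-action and that $p_{i,j}(n)\in \Z$, we obtain
\begin{equation*}
S_{i,p_{i,j}(n)}=\prod_{d=0}^{D} S_{i,\,a_{ij,d} n^{d}}=\prod_{d=0}^{D}\bigl(S_{i,a_{ij,d}}\bigr)^{n^{d}}.
\end{equation*}
Set $T_{i,j,d}:=S_{i,a_{ij,d}}$; this is a measure-preserving transformation of $X$, hence generates a $\Z$-action. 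Any two such transformations commute: those coming from the same flow $S_i$ commute because $S_i$ is an $\R$-action, and those coming from distinct flows $S_i$, $S_{i'}$ commute because the flows themselves commute by assumption.

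Next, I would reindex. Letting $i'=(i,j',d)\in\{1,\ldots,k\}\times\{1,\ldots,\ell\}\times\{0,\ldots,D\}$ parametrise a new family of commuting $\Z$-actions $V_{i'}:=T_{i,j',d}$ and defining integer polynomials
\begin{equation*}
q_{i',j}(n)=q_{(i,j',d),j}(n):=n^{d}\cdot\mathbf{1}[j'=j]\in\Z[t],
\end{equation*}
one checks that $\prod_{i'} V_{i'}^{q_{i',j}(n)}f_{j}=\prod_{i=1}^{k}\prod_{d=0}^{D} T_{i,j,d}^{n^{d}}f_{j}=\prod_{i=1}^{k} S_{i,p_{i,j}(n)}f_{j}$. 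Thus the average on the left-hand side of the corollary coincides with $\E_{1\leq n\leq N}a(n)\prod_{j=1}^{\ell}\prod_{i'} V_{i'}^{q_{i',j}(n)}f_{j}$, which falls under the scope of Proposition \ref{P: PROPOSITION THAT WILL BE COPYPASTED FROM FRA-HO-KRA}. Applying that proposition yields the desired estimate with $s$ and $C_{s}$ depending only on $\ell$, on the new number of actions $k'=k\ell(D+1)$ (hence on $k$ and $\ell$) and on the maximum degree $D$ of the polynomials $q_{i',j}$, which is bounded by $D=\max_{i,j}\deg p_{i,j}$. Since all of these data depend only on $k$, $\ell$ and on $\max_{i,j}\deg p_{i,j}$, the conclusion follows.

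There is no substantive obstacle here: the only genuine step is the flow-to-$\Z$-action decomposition, which is purely algebraic and already spelled out above the corollary. Any apparent difficulty is bookkeeping — keeping track of indices when turning the product over $(i,j)$ into a product indexed by $(i,j',d)$ and specifying the correct sparse polynomials $q_{i',j}$ — after which Proposition \ref{P: PROPOSITION THAT WILL BE COPYPASTED FROM FRA-HO-KRA} applies verbatim.
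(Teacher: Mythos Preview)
Your proposal is correct and follows essentially the same approach as the paper: the paper's proof consists solely of the decomposition $S_{i,p_{i,j}(n)}=\prod_d (S_{i,a_{ij,d}})^{n^d}$ displayed just above the corollary, followed by the sentence ``Thus, Proposition \ref{P: PROPOSITION THAT WILL BE COPYPASTED FROM FRA-HO-KRA} implies the following.'' You have simply spelled out the reindexing bookkeeping that the paper leaves implicit.

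One small remark: since the corollary is stated with $p_{i,j}\in\Z[t]$, there is an even shorter route --- set $T_i:=S_{i,1}$ and observe that $S_{i,p_{i,j}(n)}=T_i^{p_{i,j}(n)}$ directly, so Proposition \ref{P: PROPOSITION THAT WILL BE COPYPASTED FROM FRA-HO-KRA} applies with no reindexing at all. Your more elaborate decomposition (with $T_{i,j,d}=S_{i,a_{ij,d}}$ and sparse polynomials $q_{i',j}$) is the one the paper actually highlights, because it works verbatim for $p_{i,j}\in\R[t]$ as well, which is what is needed later in Proposition \ref{P: Gowers norm bound on variable polynomials}.
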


\subsection{Number theoretic tools}

 The following lemma is a standard consequence of the prime number theorem and the sparseness of prime powers (actually, we use this argument in the proof of Corollary \ref{C: Brun-Titchmarsh inequality for von Mangoldt sums} below).
For a proof, see, for instance, \cite[Chapter 25]{Host-Kra-structures}.
\begin{lemma}\label{L: indicator of primes to von-Mangoldt}
    For any bounded sequence $(a(n))_{n\in \N}$ in a normed space, we have \begin{equation}
       \lim\limits_{N\to+\infty} \Bignorm{\frac{1}{\pi(N)}\sum_{p\in \P\colon p\leq N} a(p) -\frac{1}{N}\sum_{n=1}^{N} \La(n) a(n)}=0.
    \end{equation}
\end{lemma}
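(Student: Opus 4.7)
The plan is to split the von Mangoldt function into its ``prime part'' plus a negligible ``prime power'' part, and then use Abel summation together with the prime number theorem to compare the two normalized sums.

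First I would write $\Lambda(n) = (\log n){\bf 1}_{\P}(n) + R(n)$, where $R(n)$ is supported on the genuine prime powers $p^k$ with $k\geq 2$. Since the number of such powers up to $N$ is $\sum_{k\geq 2}\pi(N^{1/k}) \ll \sqrt{N}$, and each is weighted by at most $\log N$, the boundedness of $(a(n))$ gives
\begin{equation*}
    \Bignorm{\frac{1}{N}\sum_{n\leq N} R(n)\, a(n)} \ll \frac{\sqrt{N}\log N}{N}\sup_n\norm{a(n)} \xrightarrow{N\to\infty} 0.
\end{equation*}
So it suffices to compare $\tfrac{1}{\pi(N)}\sum_{p\leq N} a(p)$ with $\tfrac{1}{N}\sum_{p\leq N}(\log p)\, a(p)$.

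For this, I would set $S(k) = \sum_{p\in\P,\, p\leq k} a(p)$, a partial sum in the normed space, and apply discrete Abel summation:
\begin{equation*}
    \sum_{p\leq N}(\log p)\, a(p) = (\log N) S(N) - \sum_{n=2}^{N-1} S(n)\bigl(\log(n+1)-\log n\bigr).
\end{equation*}
Using $\log(n+1)-\log n \leq 1/n$ together with Chebyshev's bound $\|S(n)\|\leq \pi(n)\sup_m\norm{a(m)} \ll n/\log n$, the remainder term divided by $N$ has norm at most a constant multiple of $\tfrac{1}{N}\sum_{n=2}^{N-1}\tfrac{1}{\log n}\ll \tfrac{1}{\log N}$, which tends to $0$. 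This reduces matters to showing that $\tfrac{\log N}{N} S(N) - \tfrac{1}{\pi(N)} S(N) \to 0$, and this is immediate from the prime number theorem $\pi(N)\log N /N \to 1$ together with the uniform bound $\|S(N)/\pi(N)\|\leq \sup_n\norm{a(n)}$.

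There is no real obstacle here; the only point that requires a tiny bit of care is that the sequence takes values in a general normed space rather than in $\R$ or $\C$. However, discrete Abel summation is purely algebraic, and the triangle inequality together with the scalar Chebyshev/prime number theorem bounds is all that is needed, so the argument transfers verbatim. Combining the prime-power bound from the first paragraph with the Abel summation estimate from the second yields the claim.
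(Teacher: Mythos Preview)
Your proof is correct and follows exactly the strategy the paper indicates: the paper does not prove this lemma in full but states that it is ``a standard consequence of the prime number theorem and the sparseness of prime powers,'' and points out that the same partial-summation argument appears in its proof of Corollary~\ref{C: Brun-Titchmarsh inequality for von Mangoldt sums}. Your splitting off of the higher prime powers and your Abel summation reducing matters to $\pi(N)\log N/N\to 1$ are precisely that standard argument.
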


Therefore, in order to study ergodic averages along primes, we can replace them with the ergodic averages over $\N$ weighted by the function $\La(n)$.

For the modified von Mangoldt function, we have the following deep theorem, which was recently established in \cite{MSTT}.
\begin{customthm}{A}\cite[Theorem 1.5]{MSTT}\label{T: Gowers uniformity in short intervals} Let $\e>0$ and assume $L(N)$  is a positive sequence that satisfies the bounds $N^{\frac{5}{8}+\e}\leq L(N)\leq N^{1-\e} $. Let $s$ be a fixed integer and let $w$ be a positive integer. Then, if $N$ is large enough in terms of $w$, we have that 
\begin{equation}
    \label{E: W-trick}
    \norm{\La_{w,b}-1}_{U^s(N,N+L(N)]}=o_w(1)
\end{equation}for every $1\leq b\leq W$ with $(b,W )=1$.
\end{customthm}

We will need to use the orthogonality of $\La_{w,b}$ to polynomial phases in short intervals.
This is an immediate consequence of the $U^d$ uniformity in Theorem \ref{T: Gowers uniformity in short intervals} in conjunction with an application of  the van der Corput inequality $d$ times until the polynomial phase is eliminated. Alternatively, one can use Proposition \ref{P: PROPOSITION THAT WILL BE COPYPASTED FROM FRA-HO-KRA} for a rotation on the torus $\T$ to carry out the reduction to Theorem \ref{T: Gowers uniformity in short intervals}.\footnote{Evidently, both statements rely on similar complexity reduction arguments, though Proposition \ref{P: PROPOSITION THAT WILL BE COPYPASTED FROM FRA-HO-KRA} is stated in much larger generality involving numerous polynomials.}  We omit its proof. 
\begin{lemma}\label{L: discorrelation of W-tricked with polynomial phases}
    Let $L(N)$ be a positive sequence satisfying $N^{\frac{5}{8}+\e}\prec L(N)\prec N^{1-\e}$ for some $\e>0$. Then, we have that \begin{equation}\label{E: orthogonality to polynomial phases}
   \max_{\underset{(b,W)=1}{1\leq b\leq W}}   \sup_{\underset{\deg p=d}{p\in \R[t]}} \Bigabs{\E_{N\leq n\leq N+L(N)}\big(\La_{w,b}(n)-1\big)e(p(n))   }=o_w(1).
    \end{equation}for every $N $ large enough in terms of $w$.
\end{lemma}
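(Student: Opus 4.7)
The plan is to realize the polynomial phase $e(p(n))$ of degree $d$ as a $\Z$-orbit on a torus so that Proposition \ref{P: PROPOSITION THAT WILL BE COPYPASTED FROM FRA-HO-KRA} converts the exponential sum into a bound involving a Gowers uniformity norm of $\La_{w,b}-1$, after which Theorem \ref{T: Gowers uniformity in short intervals} finishes the job. First, I shift the variable $m=n-N$, write $q(m):=p(N+m)=\sum_{i=0}^d \alpha_i m^i$ (still a real polynomial of degree $d$), and set $a(m):=\La_{w,b}(N+m)-1$. On $(\T^d,\mathrm{Leb})$ consider the commuting rotations $T_i(x_1,\ldots,x_d):=(x_1,\ldots,x_i+\alpha_i,\ldots,x_d)$ and the test function $f(x):=\prod_{i=1}^d e(x_i)$; then $\prod_{i=1}^d T_i^{m^i} f = f\cdot e\bigl(\sum_{i=1}^d \alpha_i m^i\bigr)$, so after absorbing the constant $e(\alpha_0)$ of modulus one I obtain
\[
\Bigabs{\E_{1\le m\le L(N)} a(m)\,e(q(m))}=\Bignorm{\E_{1\le m\le L(N)} a(m)\,\prod_{i=1}^d T_i^{m^i} f}_{L^2(\mathrm{Leb})}.
\]

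Next I apply Proposition \ref{P: PROPOSITION THAT WILL BE COPYPASTED FROM FRA-HO-KRA} with $k=d$, $\ell=1$, and integer polynomials $p_{i,1}(m)=m^i$. This produces an $s=s(d)\ge 2$ and a constant $C_s>0$, both depending only on $d$ (and not on the $\alpha_i$), such that the right-hand side is bounded by
\[
C_s\left(\bignorm{a\cdot\mathbf{1}_{[1,L(N)]}}_{U^s(\Z_{sL(N)})}+\frac{\max\{1,\|a\|_{\ell^\infty[1,sL(N)]}^{2s}\}}{L(N)}\right).
\]
Using Lemma \ref{L: relations for gowers norms defined on intervals and on groups} together with the trivial bound $\|\mathbf{1}_{[1,L(N)]}\|_{U^s(\Z_{sL(N)})}\le 1$, the first term is dominated by $\|a\|_{U^s[1,L(N)]}=\bignorm{\La_{w,b}-1}_{U^s(N,N+L(N)]}$. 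The second term is $O_w\bigl((\log N)^{2s}/L(N)\bigr)$, since $\La_{w,b}(n)\le \frac{\phi(W)}{W}\log(Wn+b)\ll_w \log N$ on the relevant range, and this vanishes as $N\to\infty$ because $L(N)\gg N^{5/8+\varepsilon}$.

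Finally, applying Theorem \ref{T: Gowers uniformity in short intervals} with this fixed $s=s(d)$ gives $\bignorm{\La_{w,b}-1}_{U^s(N,N+L(N)]}=o_w(1)$, uniformly in $1\le b\le W$ with $(b,W)=1$, once $N$ is large in terms of $w$. Taking the supremum over real polynomials of degree $d$ and the maximum over admissible residues then yields the desired estimate. The main, though mild, point to verify is that the parameter $s$ and the constant $C_s$ furnished by Proposition \ref{P: PROPOSITION THAT WILL BE COPYPASTED FROM FRA-HO-KRA} depend solely on the maximum degree and on $k,\ell$, not on the real coefficients $\alpha_i$; this is transparent from its statement and is precisely what allows the supremum over $p$ to be absorbed. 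The unboundedness of $\La_{w,b}$ is innocuous here because its logarithmic $\ell^\infty$-growth is easily suppressed by the polynomial factor $1/L(N)$.
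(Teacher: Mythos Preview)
Your proof is correct and follows precisely the second of the two approaches the paper sketches (applying Proposition \ref{P: PROPOSITION THAT WILL BE COPYPASTED FROM FRA-HO-KRA} to a rotation on a torus, then invoking Theorem \ref{T: Gowers uniformity in short intervals}); the paper itself omits the details. The only cosmetic difference is that you encode the degree-$d$ phase via $d$ commuting rotations on $\T^d$ with the integer monomials $m^i$, whereas the paper speaks loosely of ``a rotation on the torus $\T$''; your encoding is the natural way to make the hypotheses $p_{i,j}\in\Z[t]$ of Proposition \ref{P: PROPOSITION THAT WILL BE COPYPASTED FROM FRA-HO-KRA} literally apply, and it makes the uniformity of $s$ and $C_s$ in the real coefficients transparent.
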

\begin{remark}
    $(i)$ The error term $o_w(1)$ depends on the degree $d$, but since this will be fixed in applications, we suppressed that dependence above.\\
    $(ii)$ Quantitative bounds for similar expressions (involving the more general class of nilsequences, as well) were the main focus in \cite{MSTT}, though in that setting the authors used a different weight of the form $\La-\La^{\#}$, where $\La^{\#}$ is a carefully chosen approximant for the von Mangoldt function arising from considerations of the (modified) Cramer random model for the primes.
\end{remark}

Finally, we will also use a corollary of the Brun-Titchmarsh inequality to bound
the contribution of bad residue classes in our ergodic averages by a constant term. For $q\geq 2$ and $(a,q)=1$, we denote by $\pi(x,q,a)$ the number of primes $\leq x$ that are congruent to $a$ modulo $q$. Alternatively, one could also use the asymptotics for averages of $\La$ in short intervals that were established by Huxley \cite{Huxley}, since $L(N)$ will be chosen to grow sufficiently fast in our applications.

\begin{customthm}{B}[Brun-Titchmarsh inequality]\label{T: Brun-Titchmarsh inequality}
   We have \begin{equation}\label{E: Brun-Titchmarsh}
        \pi(x+y,q,a)-\pi(x,q,a)\leq \frac{2y}{\phi(q)\log(\frac{y}{q})}
    \end{equation}for every $x\geq y>q$. 
\end{customthm}
While we referred to this as the Brun-Titchmarsh inequality, 
the previous theorem was established in \cite{Montgomery-Vaughan} by Montgomery and Vaughan (prior results contained the term $2+o(1)$ in the numerator).
We will need a variant of this theorem adapted to the von Mangoldt function. This follows easily from the previous theorem and a standard partial summation argument.
\begin{corollary}\label{C: Brun-Titchmarsh inequality for von Mangoldt sums}
    For every $q\leq y\leq x$, we have \begin{equation*}
        \sum_{\underset{n\equiv a\;(q)}{x\leq n\leq x+y}}\La(n)\leq \frac{2y\log x}{\phi(q)\log(\frac{y}{q})}+O\big(\frac{y}{\log x}\big)+O\big(x^{\frac{1}{2}}\log x\big).
    \end{equation*}
\end{corollary}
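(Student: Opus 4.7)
The plan is a standard partial summation argument, preceded by a routine separation of the prime-power contributions. First, I would split
$$\sum_{\substack{x\leq n\leq x+y\\ n\equiv a\,(q)}}\La(n) = \sum_{\substack{x\leq p\leq x+y\\ p\equiv a\,(q)}}\log p\;+\;\sum_{k\geq 2}\ \sum_{\substack{x\leq p^k\leq x+y\\ p^k\equiv a\,(q)}}\log p.$$
For the second sum, the number of proper prime powers $p^k$ with $k\geq 2$ in $[x,x+y]\subseteq [x,2x]$ is bounded by $\sum_{k=2}^{\lfloor \log_{2}(2x)\rfloor}(2x)^{1/k}\ll x^{1/2}$, and each such term contributes $\log p\leq \log x$; this yields the error term $O(x^{1/2}\log x)$.

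For the prime contribution, I would set $F(t):=\pi(t;q,a)-\pi(x;q,a)$ for $t\geq x$, so that $F(x)=0$ and $F$ is non-decreasing. Abel summation gives
$$\sum_{\substack{x<p\leq x+y\\ p\equiv a\,(q)}}\log p \;=\; \int_{x}^{x+y}\log t\,\mathrm{d}F(t) \;=\; \log(x+y)\,F(x+y)\;-\;\int_{x}^{x+y}\frac{F(t)}{t}\,\mathrm{d}t.$$
Since $F(t)\geq 0$, the integral is non-negative and can be dropped to produce an upper bound. Brun--Titchmarsh (Theorem~\ref{T: Brun-Titchmarsh inequality}), applicable because $y>q$, bounds $F(x+y)\leq \tfrac{2y}{\phi(q)\log(y/q)}$. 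Writing $\log(x+y)=\log x+O(1)$ (valid since $y\leq x$) then extracts the main term
$$\frac{2y\log x}{\phi(q)\log(y/q)} \;+\; O\!\left(\frac{y}{\phi(q)\log(y/q)}\right),$$
and the second summand is absorbed into $O(y/\log x)$ in the regime $\phi(q)\log(y/q)\gg\log x$ (the range relevant to the $W$-trick, where $q=W$ and $y$ is a short-interval length growing as a power of $x$). The possible boundary prime $p=x$, if $x\equiv a\,(q)$, contributes at most $\log x$, which is absorbed into $O(x^{1/2}\log x)$.

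There is no real obstacle here; the only subtlety is the interpretation of the $O(y/\log x)$ error term, which relies implicitly on $\phi(q)\log(y/q)$ being at least comparable to $\log x$. In all the applications of this corollary throughout the paper this is amply satisfied, since the interval length $y$ will be taken to grow as $N^{c}$ for some $c\in(5/8,1)$, while $q=W$ is fixed before $N\to+\infty$ in the usual order of quantifiers of the $W$-trick.
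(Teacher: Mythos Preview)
Your proof is correct and follows essentially the same route as the paper: separate off the proper prime powers to produce the $O(x^{1/2}\log x)$ term, then apply Abel summation to $\sum_{x<p\leq x+y,\,p\equiv a(q)}\log p$ and feed in Brun--Titchmarsh. Your version is in fact slightly cleaner, since you drop the nonnegative integral $\int_x^{x+y} F(t)/t\,dt$ outright, whereas the paper carries the corresponding sum $\sum \pi(n;q,a)/(n+1)$ and bounds it in absolute value via the Chebyshev-type estimate $\pi(x;q,a)\ll x/(\phi(q)\log x)$.

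The one substantive difference is in the passage from $\log(x+y)$ to $\log x$. You use $\log(x+y)=\log x+O(1)$ and multiply the $O(1)$ by the Brun--Titchmarsh bound on $F(x+y)$, producing an error $O\!\big(y/(\phi(q)\log(y/q))\big)$ that only becomes $O(y/\log x)$ under the extra hypothesis $\phi(q)\log(y/q)\gg\log x$ that you flag. The paper instead uses the sharper $\log(x+y)\leq \log x+y/x$ and bounds the resulting term $\tfrac{y}{x}\,\pi(x+y;q,a)$ via the \emph{global} estimate $\pi(x+y;q,a)\ll (x+y)/(\phi(q)\log(x+y))$, which yields $O(y/(\phi(q)\log x))\leq O(y/\log x)$ without any side condition. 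So your argument proves a variant of the stated inequality that suffices for every application in the paper (as you correctly note), while the paper's bookkeeping recovers the displayed error term as written.
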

\begin{proof}
    Consider the function $$\pi(x,q,a)=\sum_{\underset{n\equiv a\;(Q)}{1\leq n\leq x} } 1_{\P}(n)$$ as in the statement of Theorem \ref{T: Brun-Titchmarsh inequality}, defined for all $x\geq 3/2$.
    Let $$\theta(x,q,a)=\sum_{\underset{n\equiv a\;( Q)}{1\leq n\leq x} } 1_{\P}(n)\log n,\ \ \psi(x,q,a)=\sum_{\underset{n\equiv a\;( Q)}{1\leq n\leq x} } \La(n).$$

    It is evident that \begin{equation}\label{E: prime powers suck}
        \Bigabs{\theta(x,q,a)-\psi(x,q,a)}\leq \sum_{p^k\leq x\colon p\in \P, k\geq 2} \log p\leq x^{1/2}\log x,
    \end{equation}since there are at most $x^{1/2}$ prime powers $\leq x$ and each one of them contributes at most $\log x$ in this sum.
 Now, we use summation by parts to deduce that \begin{multline*}
        \theta(x+y,q,a)-\theta(x,q,a)=\sum_{\underset{n\equiv a\;( Q)}{x<  n\leq x+y} } 1_{\P}(n)\log n+O(1)=\pi(x+y,q,a)\log(x+y)-\\
        \pi(x,q,a)\log(x+1)+\sum_{\underset{n\equiv a\;( Q)}{x<  n\leq x+y} } \pi(n,q,a)\Big(\log n -\log(n+1)\Big)+O(1).
    \end{multline*}Using the inequalities $\log n-\log(n+1)\leq- (n+1)^{-1}$ and $\log(x+y)\leq \log x +y/x$, we deduce that \begin{multline*}
          \theta(x+y,q,a)-\theta(x,q,a)\leq \log x\Big(   \pi(x+y,q,a)-\pi(x,q,a)\Big)+\frac{\pi(x+y,q,a)y}{x}-\\
          \sum_{\underset{n\equiv a\;( Q)}{x<  n\leq x+y} } \frac{\pi(n,q,a)}{n+1}+O(1).
    \end{multline*}Using the estimate $\pi(x,q,a)\ll \frac{x}{\phi(q)\log x}$ and Theorem \ref{T: Brun-Titchmarsh inequality}, we bound the sum in the previous expression by \begin{equation*}
        \log x\frac{2y}{\phi(q)\log (\frac{y}{q})}+ O\Big(\frac{(x+y)y}{\phi(q)x\log(x+y)}\Big)+ O\Big(\sum_{\underset{n\equiv a\;( Q)}{x<  n\leq x+y} } \frac{1}{\phi(q)\log n}\Big)+O(1).
    \end{equation*}Since \begin{multline*}\sum_{\underset{n\equiv a\;( Q)}{x<  n\leq x+y} } \frac{1}{\log n}\leq \int_{x}^{x+y}\frac{dt}{\log t}+O(1)=\frac{x+y}{\log(x+y)}-\frac{x}{\log x}+\int_{x}^{x+y}\frac{dt}{\log^2 t}   +O(1)\leq\\ \frac{y}{\log x}+O(\frac{y}{\log^2 x})+O(1),
    \end{multline*}we conclude that \begin{equation}\label{E: estimate for sums of von Mangoldt on primes}
         \theta(x+y,q,a)-\theta(x,q,a)\leq \frac{2y\log x}{\phi(q)\log(\frac{y}{q})}+O(\frac{y}{\log x})+O(1).
    \end{equation}Consequently, if we combine \eqref{E: prime powers suck} and \eqref{E: estimate for sums of von Mangoldt on primes}, we arrive at \begin{equation*}
        \psi(x+y,q,a)-\psi(x,q,a)\leq  \frac{2y\log x}{\phi(q)\log(\frac{y}{q})}+O(\frac{y}{\log x})+O(x^{\frac{1}{2}}\log x),
    \end{equation*} as was to be shown.\end{proof}
\begin{remark}
    We will apply this corollary for $q=W$ and $y\gg x^{5/8+\e}$. Note that for $y$ in this range, the second error term can be absorbed into the first one.
\end{remark}

\subsection{Quantitative equidistribution mod 1}~

\begin{definition}
Let $(x_n)_{n\in \mathbb{N}}$ be a real valued sequence. We say that $(x_n)_{n\in\mathbb{N}}$ is
\begin{itemize}
    \item[$\bullet$] {\em equidistributed $mod \;1$}  if for all $0\leq a< b \leq 1,$ we have
\begin{equation}\label{Equi}
\lim_{N\to+\infty}\frac{\big|\big\{ n\in \{1,\ldots, N\}:\;\{x_n\}\in [a,b)\big\}\big|}{N}=b-a.
\end{equation} 
\item[$\bullet$] {\em well distributed $mod \;1$}  if for all $0\leq a<b\leq 1,$ we have
\begin{equation}\label{Well_Equi}
\lim_{N\to+\infty}\frac{\big|\big\{n\in \{1,\ldots,N\}:\;\{x_{k+n}\}\in [a,b)\big\}\big|}{N}=b-a, \text{ uniformly in } k=0,1,\ldots.
\end{equation}
\end{itemize}
\end{definition}

In the case of polynomial sequences, their equidistribution properties are well understood. If the polynomial has rational non-constant coefficients, it is straightforward to check that the sequence of its fractional parts is periodic. On the other hand, for polynomials with at least one non-constant irrational coefficient, we have the following theorem.

\begin{customthm}{C}[Weyl]\label{T: Weyl}
Let $p\in \R[t]$ be a polynomial with at least one non-constant irrational coefficient. Then, the sequence $(p(n))_{n\in \N}$ is well-distributed $mod\: 1$.
\end{customthm}

This theorem is classical and for a proof, we refer the reader to \cite[Chapter 1, Theorem 3.2]{Kuipers-Niederreiter}.\footnote{While this theorem concerns the case of equidistribution, the more general result follows easily by a straightforward adaptation of van der Corput's difference theorem to the case of well-distribution. The authors of \cite{Kuipers-Niederreiter} discuss this in the notes of Section 5 in Chapter 1.} 
In the case of Hardy field functions, we have a complete characterization of equidistribution modulo 1 due to Boshernitzan. We recall here \cite[Theorem 1.3]{Boshernitzan-equidistribution}.
\begin{customthm}{D}[Boshernitzan]\label{T: Boshernitzan}
    Let $a\in\mathcal{H}$ be a function of polynomial growth. Then, the sequence $(a(n))_{n\in \N}$ is equidistributed $mod \; 1$ if and only if $|a(t)-p(t)|\succ \log t$ for every $p\in \Q[t]$.
\end{customthm}
This theorem explains the assumptions in Theorem \ref{T: the main comparison} and, in particular, condition \eqref{E: far away from rational polynomials}. Indeed, since we need equidistribution assumptions for our method to work, this condition appears to be vital.
We will invoke Boshernitzan's theorem only in the case of sub-fractional functions. Indeed, we will investigate the equidistribution properties of fast-growing functions by studying their exponential sums in short intervals. This leads to a proof of the previous theorem indirectly, at least in the case that the function involved is not sub-fractional.

For our purposes, we will need a quantitative version of the equidistribution phenomenon.
For a finite sequence of real numbers $(u_n)_{1\leq n\leq N}$ and an interval $[a,b]\subseteq [0,1]$, we define the {\em discrepancy} of the sequence $u_n$ with respect to $[a,b]$ by 
\begin{equation}\label{E: discrepancy}
    \D_{[a,b]}(u_1,\dots,u_N)=\Bigg|\frac{\bigabs{\big\{n\in \{1,\ldots,N\}\colon \{u_n\}\in [a,b]     \big\}    }}{N}-(b-a)\Bigg|.
\end{equation}
The discrepancy of a sequence is a quantitative measure of how close a sequence of real numbers is to being equidistributed modulo 1. For example, it is immediate that for an equidistributed sequence $u_n$, we have that \begin{equation*}
    \lim\limits_{N\to+\infty}  {\D_{[a,b]} (u_1,\dots,u_N)  } =0,
\end{equation*}for all $0\leq a\leq b\leq 1.$
 For an in-depth discussion on the concept of discrepancy and the more general theory of equidistribution on $\T$, we refer the reader to \cite{Kuipers-Niederreiter}. Our only tool will be an upper bound of Erd\H{o}s and Tur\'{a}n on the discrepancy
of a finite sequence. For a proof of this result, see \cite[Chapter 2, Theorem 2.5]{Kuipers-Niederreiter}.\footnote{In this book, the theorem is proven for measures of the form  $\nu= \frac{1}{N}\sum_{i=1}^{N}\delta_{x_i}$, although the more general statement follows by noting that every Borel probability measure is a weak limit of measures of the previous form.}

\begin{customthm}{E}[Erd\H{o}s-Tur\'{a}n]\label{T: Erdos-Turan}
    There exists an absolute constant $C$, such that for any positive integer $M$ and any Borel probability measure $\nu$ on $\T$, we have  \begin{equation*}
        \sup_{A\subseteq \T} |\nu(A)-\lambda(A)|\leq C\Big(\frac{1}{M}+\sum_{m=1}^{M}\frac{|\widehat{\nu}(m)|}{m}\Big),
    \end{equation*}where $\lambda$ is the Lebesgue measure on $\T$ and the supremum is taken over all arcs $A$ of $\T$.

    In particular, specializing to the case that $\nu= N^{-1}\sum_{i=1}^{N}\delta_{\{u_i\}}$,  where $u_1,\dots,u_N$ is a finite sequence of real numbers, we have \begin{equation}
        \D_{[a,b]}(u_1,\dots,u_N)\leq C\Big( \frac{1}{M}+\sum_{m=1}^{M}\frac{1}{m} \Bigabs{\frac{1}{N} \sum_{n=1}^{N} e(mu_n)      } \Big)
    \end{equation} for all positive integers $M$ and all $0\leq a\leq b<1.$
\end{customthm}

It is clear that in order to get the desired bounds on the discrepancy in our setting, we will need some estimates for exponential sums of Hardy field sequences in short intervals. Due to the Taylor approximation, this is morally equivalent to establishing estimates for exponential sums of polynomial sequences. There are several well-known estimates in this direction, the most fundamental of these being a result of Weyl that shows that an exponential sum along a polynomial sequence is small unless all non-constant coefficients of the polynomial are ``major-arc''. In the case of strongly non-polynomial Hardy field functions, we will only need to study the leading coefficient of the polynomial in its Taylor approximation, which will not satisfy such a major-arc condition.
To this end, we require the following lemma. 
\begin{lemma}\label{L: Weyl-type estimate}
Let $0<\delta <1$ and $d\in \N$. There exists a positive constant $C$ depending only on $d$, such that if $p(x)=a_dx^d+\dots +a_1x+a_0$ is a real polynomial that satisfies \begin{equation*}
    \Bigabs{\frac{1}{N}\sum_{n=1}^{N} e(p(n))  }>\delta,
\end{equation*}then, for every $1\leq k\leq d$, there exists $q\in \Z$  with $|q|\leq \delta ^{-C}$, such that $ N^k \norm{qa_k }_{\T}\leq \delta^{-C}   $.
\end{lemma}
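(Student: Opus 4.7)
The plan is to prove this by strong induction on the degree $d$, combining classical Weyl differencing with restriction to residue classes modulo $q$. The base case $d=1$ is the geometric series estimate $\bigabs{\sum_{n=1}^N e(a_1 n + a_0)} \leq \min(N, \norm{a_1}_{\T}^{-1})$, so the hypothesis forces $N\norm{a_1}_\T \ll \delta^{-1}$, giving the conclusion with $q=1$ and an absolute constant $C$.

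For the inductive step, first handle the leading coefficient $k=d$. Apply the van der Corput inequality $d-1$ times to bound the original sum by a $(d-1)$-fold average of linear exponential sums whose slopes involve $d!\, a_d h_1 \cdots h_{d-1}$; the hypothesis then produces many tuples $(h_1,\ldots,h_{d-1}) \in [H]^{d-1}$ (with $H$ a suitable power of $\delta^{-1}$) for which $\norm{d!\, a_d\, h_1 \cdots h_{d-1}}_\T \ll \delta^{-O_d(1)}/N$. A standard pigeonhole argument on the products $h_1 \cdots h_{d-1}$, exploiting the triangle inequality $\norm{a(m_1 - m_2)}_\T \leq \norm{a m_1}_\T + \norm{a m_2}_\T$, then extracts a single integer $q \ll \delta^{-O_d(1)}$ with $N^d \norm{q a_d}_\T \ll \delta^{-O_d(1)}$, yielding the desired estimate for $a_d$.

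To handle $k < d$, write $q a_d = a + \beta$ with $|\beta| \ll \delta^{-O_d(1)}/N^d$, so that $|\beta n^d| \ll \delta^{-O_d(1)}$ on $[1,N]$ and the error from replacing $a_d$ by $a/q$ in the phase is uniformly bounded. Partitioning the summation into residue classes $n \equiv r \pmod{q}$ and applying pigeonhole, we obtain a class on which the subsum is still at least $\delta^{O_d(1)} N/q$ in absolute value. The change of variables $n = q m + r$ converts the phase into a new polynomial $\tilde{p}(m)$ of degree $d$ whose leading coefficient, after subtracting its integer part, has size $O(\delta^{-O_d(1)}/N^d)$; this allows us to absorb the top-degree term into a negligible error and reduces the situation to an exponential sum of effective degree $d-1$. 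The coefficients of this reduced polynomial are explicit affine combinations of $a_{d-1}, \ldots, a_1$ (with coefficients depending on $q$ and $r$), and the inductive hypothesis applied to it supplies rational approximations for each such combination; since the approximation for $a_d$ is already known, we may then solve for rational approximations of the individual $a_k$ for $1 \leq k < d$.

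The principal obstacle will be the bookkeeping in the descent: after the change of variables, the coefficient of $m^{d-1}$ in $\tilde{p}$ is a linear combination of $q^{d-1} a_{d-1}$ and a correction proportional to $r q^{d-1} a_d$, with analogous mixing at lower orders. The inductive hypothesis hence delivers approximations for mixed quantities, which must be carefully disentangled using the previously-obtained approximation of $a_d$ to recover approximations for the $a_k$ individually. Ensuring that the constants $C = C_d$ grow only polynomially in $d$ and that the errors introduced by replacing $a_d$ with $a/q$ do not compound uncontrollably through the recursion requires uniform tracking of all parameters.
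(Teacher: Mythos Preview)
The paper does not supply its own proof of this lemma; it simply refers to \cite[Proposition~4.3]{Green-Tao-quantitative}. Your outline follows the standard route taken there: Weyl differencing for the top coefficient, then a change of variables to residue classes modulo the resulting denominator in order to lower the effective degree.

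There is, however, one genuine gap. After the substitution $n=qm+r$, the fractional part $\gamma$ of the leading coefficient of $\tilde p(m)$ indeed has size $O(\delta^{-O_d(1)}/N^d)$, but this does \emph{not} make the top-degree phase negligible on $[1,N/q]$: its total variation there is $|\gamma|(N/q)^d$, which can be as large as $\delta^{-O_d(1)}$ (take $q=1$). A modulation by a phase of range $R$ can wipe out a sum of size $\delta N$ once $R\gg \delta^{-1}$, so you cannot simply drop this term. The standard repair is either Abel summation against the slowly varying factor $e(\gamma m^d)$, or a further partition of $[1,N/q]$ into $O(\delta^{-O_d(1)})$ sub-intervals on which $\gamma m^d$ varies by at most $1/10$, followed by another pigeonhole to locate a sub-interval where the degree-$(d-1)$ sum remains $\gg \delta^{O_d(1)}$ times its length; only then does the inductive hypothesis apply. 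Your final paragraph flags the need for ``uniform tracking'' of errors, but the body of the sketch calls the residual top term ``negligible'', which it is not --- this extra partition (or summation by parts) is precisely the mechanism by which the degree reduction is actually achieved, and it must be made explicit.
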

Note that there is no dependency of the constant on the length of the averaging interval, or on the implicit polynomial $p$ (apart from its degree). For a proof of this lemma, see \cite[Proposition 4.3]{Green-Tao-quantitative}, where a more general theorem is established in the setting of nilmanifolds as well.

\subsection{Nilmanifolds and correlation sequences}

Let $G$ be a nilpotent Lie group with nilpotency degree $s$ and let $\Gamma$ be a discrete and cocompact subgroup. The space $X=G/\G$ is called an $s$-step {\em nilmanifold}. The group $G$ acts on the space $X$ by left multiplication and the measure on $X$ that is invariant under this action is called the {\em Haar measure} of $X$, which we shall denote by $m_X$. 

Given a  sequence of points $x_n\in X$, we will say that the sequence $x_n$ is {\em equidistributed} on $X$, if for any continuous function $F:X\to \C$ we have that \begin{equation*}
    \lim\limits_{N\to+\infty} \frac{1}{N}\sum_{n=1}^{N} F(x_n)=\int F \ d\, m_X.
\end{equation*}

A {\em subnilmanifold} of $X=G/\G$ is a set of the form $Hx$, where $H$ is a closed subgroup of the Lie group $G$, $x\in X$ and such that $Hx$ is closed in $X$.  

Let $g$ be any element on the group $G$. Then, for any $x\in X$, the closed orbit of the action of $g$ on $x$ will be denoted by $\overline{(g^{\Z}x)}$. 
It is known that this set is a subnilmanifold of $X=G/\G$ and that the sequence $g^n x$ is equidistributed in the subnilamnifold $\overline{(g^{\Z}x)}$ (see, for example, \cite[Chapter 11, Theorem 9]{Host-Kra-structures}). 

We now present the following definition for nilsequences in several variables.
\begin{definition}\label{D: definition of nilsequence}
    Let $k,s$ be positive integers and let $X=G/\G$ be an $s$-step nilmanifold. Assume that $g_1,\dots, g_k$ are pairwise commuting elements of the group $G$, $F: X\to\C$ is a continuous function on $X$ and $x\in X$. Then, the sequence $$\psi(n_1,\dots, n_k)=F(g_1^{n_1}\cdot\ldots\cdot g_k^{n_k}x), \ \text{where } n_1,\dots,n_k\in \Z$$ is called an {\em $s$-step nilsequence in $k$-variables}.
\end{definition}

The main tool that we will need is an approximation of general nilsequences by multi-correlation sequences in the $\ell^{\infty}$-sense. The following lemma is established in \cite[Proposition 4.2]{Fra-Host-weighted}.

\begin{lemma}\label{L: approximation by nilsequences}
    Let $k,s$ be positive integers and $\psi:\Z^k\to \C$ be a $(s-1)$-step nilsequence in $k$ variables. Then, for every $\e>0$, there exists a system $(X,\X,\m,T_1,\dots, T_k)$ and functions $F_1,\dots, F_s$ on $L^{\infty}(\m)$, such that the sequence $b(n_1,\dots,n_k)$ defined by \begin{equation*}
        b(n_1,\dots,n_k)=\int \prod_{j=1}^{s}\big(  T_1^{\ell_j n_1}\cdot\ldots\cdot T_k^{\ell_jn_k} \big)F_j\ d\,\m, \ (n_1,\dots, n_k)\in \Z^k
    \end{equation*}with $\ell_j=s!/j$ satisfies \begin{equation*}
        \norm{\psi-b}_{\ell^{\infty}(\Z^k)}\leq \e.
    \end{equation*}
\end{lemma}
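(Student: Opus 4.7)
The plan is to realize the $(s-1)$-step nilsequence $\psi(n_1,\ldots,n_k) = F(g_1^{n_1}\cdots g_k^{n_k}x)$ on $X = G/\Gamma$ as a uniformly close multi-correlation sequence on a carefully constructed measure-preserving system. The specific weights $\ell_j = s!/j$ are chosen precisely so that $j\,\ell_j$ is constant in $j$; this is the combinatorial identity that allows the algebraic matching which makes the construction work.

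First, I would reduce to a manageable class of functions $F$. Using the Stone--Weierstrass theorem on the compact nilmanifold $X$, combined with iterated vertical Fourier analysis along the lower central series of $G$, one can approximate $F$ uniformly by a finite linear combination of nilcharacters (or of smooth functions with controlled Fourier support). Since the target estimate $\norm{\psi-b}_{\ell^\infty(\Z^k)}\le\e$ is linear in $F$, it suffices to establish the statement for a single such building block, absorbing $\e$ into the density of the approximation.

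Second, I would build the system. A natural candidate is to pass to the simply connected cover of the connected component of $G$ so that the $s!$-th roots $h_i := g_i^{1/s!}$ exist via the exponential map, and then take $T_i$ to be left multiplication by $h_i$ on a suitable homogeneous space (for instance, an $s$-fold product of $X$ equipped with a nil-diagonal measure). With this setup, $T_1^{\ell_j n_1}\cdots T_k^{\ell_j n_k}$ acts by $g_1^{n_1/j}\cdots g_k^{n_k/j}$, so the $j=1$ factor carries the ``target speed'' $g_1^{n_1}\cdots g_k^{n_k}$ that appears in $\psi$, while the remaining $F_2,\ldots,F_s$ are chosen, via matching in the (nil)character expansion of the chosen building block, so that upon integration against the Haar-type measure all unwanted $n$-dependence cancels out.

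The hard part will be the cancellation in the higher-step setting: in the $1$-step abelian case this reduces to an elementary Fourier calculation (matching the condition that the character indices sum to zero while a weighted sum reproduces the target frequency), but in general one needs Leibman's equidistribution theorem \cite{Leibman-nil-polynomial-equidistribution} to identify the sub-nilmanifold of $X^s$ carrying the relevant polynomial orbit, together with the Host--Kra structure theory that controls which coordinates ``talk'' to which via the characteristic factors. Verifying that these cancellations are uniform in $(n_1,\ldots,n_k)\in\Z^k$ is the technical heart of the argument in \cite[Proposition~4.2]{Fra-Host-weighted}, which our plan follows.
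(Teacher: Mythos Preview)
The paper does not prove this lemma; it is imported directly as \cite[Proposition~4.2]{Fra-Host-weighted} and used as a black box in the proof of Theorem~\ref{T: criterion for pointwise convergence along primes-nil version}. There is therefore no in-paper argument to compare your proposal against.

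Your outline points in a reasonable direction and correctly isolates the arithmetic reason for the weights, namely that $j\ell_j=s!$ is independent of $j$, and you explicitly say you are following \cite{Fra-Host-weighted}. However, what you have written is a plan rather than a proof. Two places are genuinely underspecified: (i) in the second step you never pin down the measure (``an $s$-fold product of $X$ equipped with a nil-diagonal measure'' is not a definition, and the choice of measure is exactly what makes the integral collapse correctly), and (ii) passing to the simply connected cover of the identity component to extract $s!$-th roots $h_i=g_i^{1/s!}$ is not automatically available when the $g_i$ lie outside that component, which is the generic situation for nilsequences as defined here. The actual argument in \cite{Fra-Host-weighted} handles these issues, but your sketch does not. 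Since the paper treats the lemma as an external input, the correct move in this context is simply to cite it rather than to reprove it.
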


\begin{comment*}
    The definition of nilsequences used in \cite{Fra-Host-weighted} imposed that $x=\G$ and that ${\bf n}\in \N^k$. However, their arguments generalize in a straightforward manner to the slightly more general setting that we presented above.
 \end{comment*}

\section{Lifting to an extension flow}\label{Section-Lifting section}

In this section, we use a trick that allows us to replace the polynomial ergodic averages  with similar ergodic averages over $\R$ actions on an extension of the original probability space, removing the rounding functions in the process.. This argument is implicit in \cite{koutsogiannis-closest} for Ces\`{a}ro averages, so we adapt its proof to the setting of short intervals. 

\begin{proposition}\label{P: Gowers norm bound on variable polynomials}
    Let $k,\ell,d$ be positive integers and let $L(N)$ be a positive sequence satisfying $ N^{\frac{5}{8}+\e}\ll L(N)\ll N^{1-\e}$. Let $(X,\X,\m,T_1,\dots,T_k)$ be a system of commuting transformations. Then, there exists a positive integer $s$ depending only on $k,\ell,d$, such that for any variable family $\mathcal{P}=\{p_{i,j,N}\colon 1\leq i\leq k, 1\leq j\leq \ell\}$ of polynomials with degrees at most $d$ that, for all $i,j,$ satisfy \begin{equation}\label{E: non-concetration around 1}
        \lim_{\delta\to 0^+}\lim_{N\to+\infty}\frac{\left|\{N\leq n\leq N+L(N):\; \{p_{i,j,N}(n)\}\in [1-\delta,1)\}\right|}{L(N)}=0,
    \end{equation} we have that for any $0<\delta<1$ and functions $f_1,\dots, f_{\ell}\in L^{\infty}(\m)$ \begin{multline*}
        \Bignorm{\E_{N\leq n\leq N+L(N)} \ \big(\La_{w,b}(n)-1\big) \prod_{j=1}^{\ell} \prod_{i=1}^{k} T_i^{\floor{p_{i,j,N}(n) }} f_j    }_{L^2(\m)}\ll_{k,\ell,d}\\ \frac{1}{\delta^{k\ell }}\Big(\bignorm {\La_{w,b}(n)-1}_{U^s(N,N+sL(N)]}+o_w(1)\Big)+
        o_{\delta}(1)(1+o_w(1)),
    \end{multline*}
     for all $1\leq b\leq W,\ (b,W)=1$,
    where $W=\prod_{p\in \mathbb{P}\colon p\leq w}p$.
\end{proposition}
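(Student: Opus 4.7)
The plan is to prove the bound in three main phases: lift the system so the floor functions disappear into honest polynomial flow-iterates, apply the polynomial-Gowers-norm estimate of Corollary~\ref{C: Gowers norm bound adapted to flows} on the extended system, and finally invoke Theorem~\ref{T: Gowers uniformity in short intervals} to kill the resulting Gowers norm. The non-concentration hypothesis \eqref{E: non-concetration around 1} and the quantitative number-theoretic input (Corollary~\ref{C: Brun-Titchmarsh inequality for von Mangoldt sums}) are what make this decomposition work against the unbounded weight $\Lambda_{w,b}-1$.

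\textbf{Lifting step.} For each transformation $T_i$, I would construct the suspension extension $Y_i = X \times [0,1)$ with the measure-preserving flow
$$S_{i,t}(x,s) \;=\; \bigl(T_i^{\lfloor s+t\rfloor}x,\; \{s+t\}\bigr).$$
Fibering over all $i$, one obtains an extension $(Y,\mathcal{Y},\nu,S_1,\ldots,S_k)$ of the original system by commuting measure-preserving $\mathbb{R}$-actions $S_i$ satisfying $S_{i,m}|_X = T_i^m$ for every $m\in\mathbb{Z}$. Given $\delta>0$ and $f_j\in L^{\infty}(\mu)$, I would introduce lifted functions $\tilde{f}_j^{\delta}$ by normalized indicators on the auxiliary coordinates (supported on $[0,1-\delta)^{k\ell}$, renormalized by $\delta^{-k\ell}$ factors), chosen so that the pointwise identity
$$\prod_{j=1}^{\ell}\prod_{i=1}^{k} T_i^{\lfloor p_{i,j,N}(n)\rfloor} f_j \;=\; \int_{Y}\prod_{j=1}^{\ell}\prod_{i=1}^{k} S_{i,\,p_{i,j,N}(n)}\,\tilde{f}_j^{\delta}\, d\nu$$
holds for every $n$ in the good set $G_{\delta,N} := \{n \in [N,N+L(N)] :\ \{p_{i,j,N}(n)\}\notin[1-\delta,1)\text{ for all }i,j\}$. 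This is a mild variant of the extension trick of \cite{Koutsogiannis-correlations}--\cite{koutsogiannis-closest} adapted to short-interval averages; the $\delta^{-k\ell}$ normalization factor is responsible for the prefactor in the conclusion.

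\textbf{Splitting good and bad.} Writing the average as a sum over $G_{\delta,N}$ and $B_{\delta,N}:=[N,N+L(N)]\setminus G_{\delta,N}$, the hypothesis \eqref{E: non-concetration around 1} yields $|B_{\delta,N}|\leq o_{\delta}(1)\cdot L(N)$. For the bad contribution, I would apply Corollary~\ref{C: Brun-Titchmarsh inequality for von Mangoldt sums} (together with the trivial bound for the constant weight $-1$) to the set $B_{\delta,N}$, intersected with each residue class modulo $W$, to show that
$$\frac{1}{L(N)}\sum_{n\in B_{\delta,N}}|\Lambda_{w,b}(n)-1| \;=\; o_{\delta}(1)(1+o_w(1)),$$
uniformly in $b$; multiplying by the $L^\infty$ bound of the iterate product gives the second error term in the conclusion. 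For the good contribution, inserting the integrated identity and applying Minkowski's inequality reduces matters to bounding, for $\nu$-a.e.\ $y\in Y$,
$$\Bignorm{\mathbb{E}_{N\leq n\leq N+L(N)} \bigl(\Lambda_{w,b}(n)-1\bigr)\prod_{j=1}^{\ell}\prod_{i=1}^{k} S_{i,\,p_{i,j,N}(n)}\,\tilde{f}_j^{\delta}}_{L^{2}(\nu)}.$$

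\textbf{Polynomial-Gowers bound and conclusion.} The remaining average is a polynomial ergodic average over commuting $\mathbb{R}$-actions with iterates $p_{i,j,N}(n)$ of bounded degree at most $d$, weighted by the sequence $\Lambda_{w,b}-1$. Corollary~\ref{C: Gowers norm bound adapted to flows}, applied with parameters $(k,\ell,d)$, produces an integer $s=s(k,\ell,d)$ and a constant $C_s$ so that this quantity is bounded by
$$C_s\bigl(\|\Lambda_{w,b}-1\|_{U^{s}(N,N+sL(N)]} + N^{-1}\|\Lambda_{w,b}\|_{\ell^{\infty}[1,sN]}^{2s}\bigr),$$
where I used Lemma~\ref{L: relations for gowers norms defined on intervals and on groups} to convert the $\mathbb{Z}_{sN}$-norm back to the interval norm. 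Since $\|\Lambda_{w,b}\|_{\ell^{\infty}[1,sN]}\ll \log N$, the second term is $o_N(1)$ and is absorbed into $o_w(1)$. Theorem~\ref{T: Gowers uniformity in short intervals} furnishes the bound $\|\Lambda_{w,b}-1\|_{U^{s}(N,N+sL(N)]}=o_w(1)$ uniformly in $b$ coprime to $W$, which yields the first term of the conclusion after multiplication by the $\delta^{-k\ell}$ prefactor from the lifting.

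\textbf{Main obstacle.} The genuinely delicate step is the construction of the lifting identity together with the $L^2$-analysis of the error on $B_{\delta,N}$: since $\Lambda_{w,b}$ is unbounded, one cannot afford to control errors by the product $\|\Lambda_{w,b}\|_{\infty}|B_{\delta,N}|/L(N)$ alone, which would waste a factor of $\log N$. This is exactly where the Brun--Titchmarsh-type estimate must be used in place of the triangle inequality, to show that the mass of $\Lambda_{w,b}$ on $B_{\delta,N}$ is at most an $O(\delta)$-fraction of its total mass in the short interval. Once this is in hand, the remainder of the argument is an orchestration of already-available ingredients.
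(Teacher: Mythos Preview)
Your overall architecture matches the paper's: lift to a product suspension so that the floors disappear, apply Corollary~\ref{C: Gowers norm bound adapted to flows} to the flow average, and handle separately the ``bad'' set where the lifting identity fails. The Gowers-norm step and the role of the $\delta^{-k\ell}$ prefactor are exactly as in the paper.

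There is, however, a real gap in your treatment of the bad set. You propose to control
\[
\frac{1}{L(N)}\sum_{n\in B_{\delta,N}}|\Lambda_{w,b}(n)-1|
\]
via Corollary~\ref{C: Brun-Titchmarsh inequality for von Mangoldt sums}. But that corollary only bounds sums of $\Lambda$ over an \emph{interval} intersected with a residue class; the set $B_{\delta,N}=\{n:\{p_{i,j,N}(n)\}\in[1-\delta,1)\text{ for some }i,j\}$ has no such structure, and nothing prevents the primes in $[N,N+L(N)]$ from falling disproportionately into it. Brun--Titchmarsh gives you $\frac{1}{L(N)}\sum_{N\le n\le N+L(N)}\Lambda_{w,b}(n)\ll 1$, but it cannot see the fractional-part constraint defining $B_{\delta,N}$, so the $o_\delta(1)$ factor you claim does not follow. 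This is precisely the point you flag as the ``main obstacle'', and the resolution you offer is the wrong one.

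The paper's fix exploits the one extra piece of structure $B_{\delta,N}$ carries: it is defined by a \emph{polynomial} condition. First one writes $|\Lambda_{w,b}(n)-1|\le(\Lambda_{w,b}(n)-1)+2$; the ``$+2$'' part is handled by the non-concentration hypothesis~\eqref{E: non-concetration around 1} alone, yielding the $o_\delta(1)$ contribution. For the signed part
\[
\E_{N\le n\le N+L(N)}(\Lambda_{w,b}(n)-1)\,\mathbf{1}_{[1-\delta,1)}(\{p_{i,j,N}(n)\}),
\]
one approximates the indicator by trigonometric polynomials and invokes Lemma~\ref{L: discorrelation of W-tricked with polynomial phases} (discorrelation of $\Lambda_{w,b}-1$ with polynomial phases on short intervals), which in turn rests on Theorem~\ref{T: Gowers uniformity in short intervals}. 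This yields the $o_w(1)$ in the second error term. Brun--Titchmarsh plays no role in this proposition.

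Two minor points. First, the suspension should carry $k\ell$ auxiliary coordinates (one per pair $(i,j)$), with the cutoff indicator supported on $[0,\delta]^{k\ell}$, not $[0,1-\delta)^{k\ell}$; otherwise the $\delta^{-k\ell}$ normalisation is inconsistent with the measure of the support. Second, the proposition as stated leaves the $U^s$-norm explicit in the conclusion; you do not need to invoke Theorem~\ref{T: Gowers uniformity in short intervals} for the Gowers-norm term at this stage (that is applied later, in the proof of Theorem~\ref{T: the main comparison}).
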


\begin{proof}
Let $\lambda$ denote the Lebesgue measure on $[0,1)$ and we define (as in \cite{koutsogiannis-closest}) the measure-preserving $\R^{k \ell}$-action $\displaystyle \prod_{i=1}^{k}S_{i,s_{i,1}}\cdot \ldots\cdot \prod_{i=1}^k S_{i,s_{i,\ell}}$  on the space $Y:=X\times [0,1)^{k\ell},$ endowed with the measure $\nu:=\mu\times \lambda^{k\ell}$, by
$$\prod_{j=1}^{\ell} \prod_{i=1}^k S_{i,s_{i,j}}(x,a_{1,1},\ldots,a_{k,1},a_{1,2},\ldots,a_{k,2},\ldots,a_{1,\ell},\ldots,a_{k,\ell})=$$
$$\left(\prod_{j=1}^{\ell} \prod_{i=1}^k T_i^{[s_{i,j}+a_{i,j}]}x,\{s_{1,1}+a_{1,1}\},\ldots,\{s_{k,1}+a_{k,1}\},\ldots,\{s_{1,\ell}+a_{1,\ell}\},\ldots,\{s_{k,\ell}+a_{k,\ell}\}\right). $$
 If $f_1,\ldots,f_{\ell}$ are bounded functions on $X,$ we define the $Y$-extensions of $f_j,$ setting for every element
  $(a_{1,1},\ldots,a_{k,1},a_{1,2},\ldots,a_{k,2},\ldots,a_{1,\ell},\ldots,a_{k,\ell})\in [0,1)^{k\ell}$:  $$\hat{f}_j(x,a_{1,1},\ldots,a_{k,1},a_{1,2},\ldots,a_{k,2},\ldots,a_{1,\ell},\ldots,a_{k,\ell})=f_j(x),\;\;1\leq j\leq \ell;\;\;$$ and we also define the function
  $$\hat{f}_0(x,a_{1,1},\ldots,a_{k,1},a_{1,2},\ldots,a_{k,\ell})= 1_{[0,\delta]^{k\ell }}(a_{1,1},\ldots,a_{k,1},a_{1,2},\ldots,a_{k,\ell}).$$ 
  For every $N\leq n\leq N+L(N),$ we consider the functions (on the original space $X$)
   $$b_N(n):= (\prod_{i=1}^k T_i^{[p_{i,1,N}(n)]})f_1\cdot\ldots\cdot(\prod_{i=1}^k T_i^{[p_{i,\ell,N}(n)]})f_\ell$$
  as well as the functions $$\tilde{b}_N(n):= \hat{f}_0\cdot(\prod_{j=1}^{\ell}\prod_{i=1}^k S_{i,\delta_{j 1}\cdot p_{i,1,N}(n)})\hat{f}_1\cdot \ldots\cdot (\prod_{j=1}^{\ell}\prod_{i=1}^k S_{i,\delta_{j \ell}\cdot p_{i,\ell,N}(n)})\hat{f}_{\ell}$$ defined on the extension $Y$. Here, $\delta_{ij}$ denotes the Kronecker $\delta$, meaning that the only terms that do not vanish are the diagonal ones (i.e., when $i=j$).
  For every $x\in X$, we also let
  $$b'_N(n)(x):=\int_{[0,1)^{k\ell }}\tilde{b}_N(n)(x,a_{1,1},\ldots,a_{k,1},a_{1,2},\ldots,a_{k,2},\ldots,a_{1,\ell},\ldots,a_{k,\ell})\,d\lambda^{k\ell },$$ where the integration is with respect to the variables $a_{i,j}.$

Using the triangle and Cauchy-Schwarz inequalities, we have 
\begin{multline}\label{E: triangle Andreas 2}
\delta^{k\ell }\Bignorm{\E_{N\leq n\leq N+L(N)} \big(\La_{w,b}(n)-1\big) b_N(n)}_{L^2(\mu)}  \leq \\ \Bignorm{\E_{N\leq n\leq N+L(N)} \big(\La_{w,b}(n)-1\big)\cdot (\delta^{k\ell }b_N(n)-b'_N(n))}_{L^2(\mu)}
 + \Bignorm{\E_{N\leq n\leq N+L(N)} \big(\La_{w,b}(n)-1\big)\tilde{b}_N(n)}_{L^2(\nu)}.
\end{multline}

Using Proposition \ref{P: PROPOSITION THAT WILL BE COPYPASTED FROM FRA-HO-KRA}, we find an integer $s\in \N,$ depending only on the integers $k, \ell, d,$ and a constant $C_s$ depending on $s,$ such that
\begin{equation}\label{E: bound for the first term}
    \Bignorm{\E_{N\leq n\leq N+L(N)} \big(\La_{w,b}(n)-1\big)\tilde{b}_N(n)}_{L^2(\nu)}\leq C_s\left(\bignorm{\La_{w,b}-1}_{U^s(N,N+sL(N)]}+o_N(1)\right),
\end{equation}
where the $o_N(1)$ term depends only on the integer $s$ and the sequence $\La_{w,b}(n).$


Now we study the first term $$\Bignorm{\E_{N\leq n\leq N+L(N)} \big(\La_{w,b}(n)-1\big)\cdot (\delta^{k\ell }b_N(n)-b'_N(n))}_{L^2(\mu)}$$ in \eqref{E: triangle Andreas 2}.
For every $x\in X$ and $N\leq n\leq N+L(N),$ we have
\begin{equation*}
\begin{split}
	&\quad \Big|\delta^{k\ell } b_N(n)(x)-b'_N(n)(x)\Big|= 
	\\& \quad\quad\quad\quad\quad\quad\quad \left|\int_{[0,\delta]^{k\ell }}  \left(\prod_{j=1}^\ell f_j(\prod_{i=1}^k T_i^{[p_{i,j,N}(n)]}x)-\prod_{j=1}^\ell f_j(\prod_{i=1}^k T_i^{[p_{i,j,N}(n)+a_{i,j}]}x) \right)\, d \lambda^{k\ell }\right|. 
	\end{split}
	\end{equation*}
Since all the integrands $a_{i,j}$ are less than or equal than $\delta,$ we deduce that if all of the implicit polynomials satisfy $\{p_{i,j,N}(n)\}<1-\delta,$ we have $T_i^{[p_{i,j,N}(n)+a_{i,j}]}=T_i^{[p_{i,j,N}(n)]}$ for all $1\leq i\leq k,$ $1\leq j\leq \ell.$  To deal with the possible case where $\{p_{i,j,N}(n)\}\geq 1-\delta$ for at least one of our polynomials, we define, for every $1\leq i\leq k,$ $1\leq j\leq \ell,$ the set
$$E_{\delta,N}^{i,j}:=\{n\in [N,N+L(N)]\colon \{p_{i,j,N}(n)\}\in [1-\delta,1)\}.$$ Then, by using the fact that $${\bf 1}_{E_{\delta,N}^{1,1}\cup \ldots\cup E_{\delta,N}^{1,\ell}\cup E_{\delta,N}^{2,1}\cup\ldots\cup E_{\delta,N}^{k,\ell}}\leq \sum_{(i,j)\in[1,k]\times[1,\ell]} {\bf 1}_{E_{\delta,N}^{i,j}} $$ and that ${\bf 1}_{E_{\delta,N}^{i,j}}(n)={\bf 1}_{[1-\delta,1)}(\{p_{i,j,N}(n)\}),$  
 we infer that
$$\Big|\delta^{k\ell }b_N(n)(x)-b'_N(n)(x)\Big|\leq 2\delta^{k\ell }\sum_{(i,j)\in[1,k]\times[1,\ell]}   {\bf 1}_{[1-\delta,1)}(\{p_{i,j,N}(n)\}) $$for every $x\in X$. In view of the above, using the inequality $|\La_{w,b}(n)-1|\leq \La_{w,b}(n)+1$, we deduce that
\begin{multline*}
    \E_{N\leq n\leq N+L(N)}\big|\big(\La_{w,b}(n)-1\big)\big|\cdot {\bf 1}_{[1-\delta,1)}(\{p_{i,j,N}(n)\}) \leq \ \ \ \   \\ 
    \E_{N\leq n\leq N+L(N)} \big(\La_{w,b}(n)-1\big)\cdot{\bf 1}_{[1-\delta,1)}(\{p_{i,j,N}(n)\})+
    2\E_{N\leq n\leq N+L(N)}{\bf 1}_{[1-\delta,1)}(\{p_{i,j,N}(n)\})\leq \\
       \E_{N\leq n\leq N+L(N)}\big(\La_{w,b}(n)-1\big)\cdot{\bf 1}_{[1-\delta,1)}(\{p_{i,j,N}(n)\})+2\cdot\frac{|E^{i,j}_{\delta,N}|}{L(N)}.
\end{multline*}
Since each polynomial $p_{i,j,N}$ satisfies \eqref{E: non-concetration around 1} for large $N$ and small enough $\delta,$ the term (and the sum of finitely many terms of this form)  $\frac{|E_{\delta,N}^{i,j}|}{L(N)}$ is as small as we want.

It remains to show that the term $$\E_{N\leq n\leq N+L(N)} \big(\La_{w,b}(n)-1\big)\cdot{\bf 1}_{[1-\delta,1)}(\{p_{i,j,N}(n)\})$$ goes to zero as $N\to\infty,$ then $w\to\infty$ and finally $\delta\to 0^+.$ To this end, it suffices to show $$\E_{N\leq n\leq N+L(N)}\ \big(\La_{w,b}(n)-1\big)e^{2\pi i m p_{i,j,N}(n)}\to 0$$ as $N\to\infty$ and then $w\to\infty$ for all $m\in \Z\setminus\{0\},$\footnote{ This follows by the fact that if $f$ is Riemann integrable on $[0,1)$ with $\int_{[0,1)}f(x)\,dx=c,$  then,  for every $\varepsilon>0,$ we can find trigonometric polynomials $q_1,\;q_2,$ with no constant terms, with $q_1(t)+c-\varepsilon\leq f(t)\leq q_2(t)+c+\varepsilon.$ We use this for the function $f={\bf 1}_{[1-\delta,1)}.$} which follows from Lemma \ref{L: discorrelation of W-tricked with polynomial phases}. 
\end{proof}

\section{Equidistribution in short intervals}\label{Section-Removing error terms section}

We gather here some useful propositions that describe the behavior of a Hardy field function when restricted to intervals of the form $[N, N+L(N)]$, where $L(N)$ grows slower compared to the parameter $N$. In our applications, we will typically need the function $L(N)$ to grow faster than $N^{5/8}$ in order to be able to use the uniformity results in short intervals, but we will not need to work under this assumption throughout most of this section, the only exception being Proposition \ref{P: remove error terms for polynomial functions} below. We will also present an example that illustrates the main points in the proof of Theorem \ref{T: the main comparison} in the following section.

\subsection{Details on the proof}
In the case of  strongly non-polynomial functions that also grow faster than some fractional power, we show that the associated Taylor polynomial $p_N(n)$ has ideal equidistribution properties. Indeed, by picking the length $L(N)$ a little more carefully, one gains arbitrary logarithmic powers over the trivial bound in the exponential sums of $p_N$. Consequently, we demonstrate that the number of integers in $[N, N+L(N)]$ for which $\floor{a(n)}\neq \floor{p_N(n)}$ is less than $L(N)(\log N)^{-100}$ (say) and, thus,  their contribution to the average is negligible. Therefore, for all intents and purposes, one can suppose that the error terms are identically zero.

The situation is different when a function that grows slower than all fractional powers is involved since these functions are practically constant in these short intervals. For instance, if one has the function $p(t)+\log^2 t$, where $p$ is a polynomial, the only feasible  approximation is of the form $p(n)+\log^2 n=p(n)+\log^2 N +e_N(n)$,  where $e_N(n)$ converges to 0.
While it seems that we do have a polynomial as the main term in the approximation (at least when $p$ is non-constant),
quantitative bounds on the exponential sums of the polynomial component cannot be established in this case at all. The main reason is that such bounds depend heavily on the diophantine properties of the coefficients of $p$, for which we have no data.

In the case that $p$ is a constant polynomial, we can use the equidistribution (mod 1) of the sequence $\log^2 n$ to show that in most short intervals $[N, N+L(N)]$, we have $\floor{\log^2 n}=\floor{\log^2 N}$ for all $n\in [N, N+L(N)]$. The contribution of the bad short intervals is then bounded using the triangle inequality and Corollary \ref{C: Brun-Titchmarsh inequality for von Mangoldt sums}.

Suppose that the polynomial $p$ above is non-constant.
In the case that $p$ has rational non-constant coefficients, we split our averages to suitable arithmetic progressions so that the resulting polynomials have integer coefficients (aside from the constant term), and the effect of $e_N(n)$  will be eliminated when we calculate the integer parts. In the case that $p$ has a non-constant irrational coefficient, we can invoke the well-distribution of $p(n)$ to conclude that the number of integers of the set \begin{equation*}
    E_N=\{n\in [N, N+L(N)]\colon \floor{p(n)+\log^2 n}\neq \floor{p(n)+\log^2 N}\}
\end{equation*}is $O(\e L(N))$, for a fixed small parameter $\e$ and $N$ large. However, in order to bound the total contribution of the set $E_N$, we can only use the triangle inequality in the corresponding ergodic averages, so we are forced to extract information on how large the quantity $$\frac{1}{L(N)}\sum_{N\leq n\leq N+L(N)} \La_{w, b}(n){\bf 1}_{E_N}(n)$$
can be. This can be bounded effectively if the corresponding exponential sums $$\frac{1}{L(N)}\sum_{N\leq n\leq N+L(N)} \La_{w,b}(n)e\big(p(n)\big)$$
are small. This is demonstrated by combining the fact that the exponential sums of $p(n)$ are small (due to the presence of an irrational coefficient) with the fact that exponential sums weighted by $\La_{w,b}(n)-1$ are small due to the uniformity of the $W$-tricked von Mangoldt function. The conclusion follows again by an application of the Erd\H{o}s-Tur\`{a}n inequality, this time for a probability measure weighted by $\La_{w,b}(n)$.

\subsection{A model example}
We sketch the main steps in the case of the ergodic averages \begin{equation}\label{E: example averages}
    \frac{1}{N}\sum_{n=1}^{N} \big(\La_{w,b}(n)-1\big)T^{\floor{n\log n}}f_1\cdot T^{\floor{an^2+\log n }}f_2\cdot T^{\floor{\log^2 n}}f_3.
\end{equation}where $a$ is an irrational number.
We will show that the $L^2$-norm of this expression converges to 0, as $N\to+\infty$ and then $w\to+\infty$.
Note that the three sequences in the iterates satisfy our hypotheses. In addition, we remark that the arguments below are valid in the setting where we have three commuting transformations, but we consider a simpler case for convenience. Additionally, we do not evaluate the sequences at $Wn+b$ (as we should in order to be in the setup of Theorem \ref{T: the main comparison}), since the underlying arguments remain identical apart from changes in notation.

We choose $L(t)=t^{0.66}$ (actually, any power $t^c$ with $5/8<c<2/3$ works here) and claim that it suffices to show that \begin{equation}\label{E: example averages in short intervals}
    \E_{1\leq r \leq R} \Bignorm{\E_{r\leq n\leq r+L(r)  }\big(\La_{w,b}(n)-1\big)T^{\floor{n\log n}}f_1\cdot T^{\floor{an^2+\log n }}f_2\cdot T^{\floor{\log^2 n}}f_3        }_{L^2(\m)}=0.
\end{equation}This reduction is the content of Lemma \ref{L: long averages to short averages}. Now, we can use the Taylor expansion around $r$ to write \begin{align*}
    n\log n&=r\log r+(\log r+1)(n-r)+\frac{(n-r)^2}{2r}-\frac{(n-r)^3}{6\xi_{1,n,r}^2}\\
    \log n&=\log r+\frac{n-r}{\xi_{2,n,r}}\\
    \log^2  n&=\log^2 r +\frac{2(n-r)\log \xi_{3,n,r}}{\xi_{3,n,r}},
\end{align*}for some real numbers $\xi_{i,n,r}\in [r,n]$ ($i=1,2,3$). Our choice of $L(t)$ implies that \begin{equation*}
    \Bigabs{\frac{(n-r)^3}{6\xi_{1,n,r}^2}}\leq \frac{r^{3\cdot 0.65}}{6r^2}\ll 1,
\end{equation*}and similarly for the other two cases. To be more specific, there exists a $\delta>0$, such that all the error terms (the ones involving the quantities $\xi_{i,n,r}$) are $O(r^{-\delta})$.

Let us fix a small $\e>0$.
Firstly, we shall deal with the third iterate, since this is the simplest one. Observe that if $r$ is chosen large enough and such that it satisfies $\{\log^2 r\}\in (\e,1-\e)$, then for all $n\in [r,r+L(r)]$, we will have $$\floor{\log^2 n}=\floor{\log^2 r},$$since the error terms in the expansion are $O(r^{-\delta})$, which is smaller than $\e$ for large $r$. In addition, the sequence $\log^2 n$ is equidistributed modulo 1, so our prior assumption can fail for at most $3\e R$ (say) values of $r\in [1, R]$, provided that $R$ is sufficiently large. For the bad values of $r$, we use the triangle inequality for the corresponding norm to deduce that their contribution on the average is $O(\e R)$, which will be acceptable if $\e$ is small. Actually, in order to establish this, we will need to use Corollary \ref{C: Brun-Titchmarsh inequality for von Mangoldt sums}, though we will ignore that in this exposition. In conclusion, we can rewrite the expression in \eqref{E: example averages in short intervals} as \begin{equation}\label{E: example first reduction}
     \E_{1\leq r\leq R} \Bignorm{\E_{r\leq n\leq r+L(r)  }\big(\La_{w,b}(n)-1\big)T^{\floor{n\log n}}f_1\cdot T^{\floor{an^2+\log n }}f_2\cdot T^{\floor{\log^2 r}}f_3        }_{L^2(\m)}+O(\e ).
\end{equation}

Now, we deal with the first function. We claim that the discrepancy of the finite sequence $$\Big(\{r\log r+(\log r+1)(n-r)+\frac{(n-r)^2}{2r}\}\Big)_{r\leq n\leq r+L(r)}$$is $O_A(\log^{-A }r)$ for any $A>0$. We will establish this in Proposition \ref{P: remove error term for fast functions} using Lemma~\ref{L: Weyl-type estimate} and Theorem \ref{T: Erdos-Turan}. As a baby case, we show the following estimate for some simple trigonometric averages: \begin{equation*}
   \Bigabs{ \E_{r\leq n\leq r+L(r)} e\Big(\frac{(n-r)^2}{2r}\Big)}\leq \frac{1}{\log^A r}
\end{equation*}for $r$ large enough. Indeed, if that inequality fails for some $r\in \N$, there exists an integer $|q_r|\leq \log^{O(A)}r$, such that \begin{equation*}
    \Bignorm{\frac{q_r}{2r}}_{\T}\leq \frac{\log^{O(A)}r}{(L(r))^2}.
\end{equation*}Certainly, if $r$ is large enough, we can replace the norm with the absolute value, so that the previous inequality implies that \begin{equation*}
    \big(L(r)\big)^2 \leq \frac{2r\log^{O(A)}r}{|q_r|}. 
\end{equation*}However, the choice $L(t)=t^{0.66}$  implies that this inequality is false for large $r$.

In our problem, we can just pick $A=2$.
Using the definition of discrepancy, we deduce that the number of integers in $[r,r+L(r)]$, for which we have $$\{r\log r+(\log r+1)(n-r)+\frac{(n-r)^2}{2r}\}\in [0,r^{-\delta/2}]\cup [1-r^{-\delta/2},1)$$ is $O(L(r)\log^{-2} r)$. However, if $n$ does not belong to this set of bad values, we conclude that \begin{equation*}
    \floor{n\log n}=\floor{ r\log r+(\log r+1)(n-r)+\frac{(n-r)^2}{2r}}
\end{equation*}since the error terms are $O(r^{-\delta})$. Furthermore, since $\La_{w,b}(n)=O(\log r)$ for $n\in [r,r+L(r)]$, we conclude that the contribution of the bad values is $o_r(1)$ on the inner average.
Therefore, we can rewrite the expression in \eqref{E: example first reduction} as \begin{equation}\label{E: example second reduction}
       \E_{1\leq r\leq R} \Bignorm{\E_{r\leq n\leq r+L(r)  }\big(\La_{w,b}(n)-1\big)T^{\floor{p_r(n)}}f_1\cdot T^{\floor{an^2+\log n }}f_2\cdot T^{\floor{\log^2 r}}f_3        }_{L^2(\m)}+O(\e )+o_R(1),
\end{equation}where $p_r(n)=r\log r+(\log r+1)(n-r)+\dfrac{(n-r)^2}{2r}$.
\smallskip

Finally, we deal with the second iterate. We consider the parameter $\e$ as above and set $M=1/\e$. Once again, we shall assume that $r$ is very large compared to $M$. Since $a$ is irrational, we have that the sequence $an^2$ is well-distributed modulo 1, so we would expect the number of $n$ for which $\{an^2+\log r\}\not\in [\e,1-\e]$ to be small. Note that for the remaining values of $n$, we have $\floor{an^2+\log n}=\floor{an^2+\log r}$, since the error term in the approximation is $O(r^{-\delta})$. Therefore, we estimate the size of the set $$\B_{r,\e}:=\{n\in [r,r+L(r)]\colon \{an^2+\log  r\}\in [0,\e]\cup [1-\e,1)   \}$$

Using Weyl's theorem, we conclude that \begin{equation}\label{E: exponential sums in example}
    \max_{1\leq m\leq M}  \Bigabs{ \E_{r\leq n\leq r+L(r)} e\big(m(an^2+\log r)\big)  }=o_r(1).
\end{equation}
Here, the $o_r(1)$ term depends on $M=1/\e$, but since we will send $r\to+\infty$ and then $\e\to 0$, this will not cause any issues. We suppress these dependencies in this exposition. 

An application of Theorem \ref{T: Erdos-Turan} implies that \begin{equation}\label{E: size of bad set in example}
    \frac{|\B_{r,\e}|}{L(r)}\ll 2\e +\frac{1}{M} +\sum_{m=1}^{M}\frac{1}{m}\Bigabs{ \E_{r\leq n\leq r+L(r)} e\big(m(ar^2+\log r)\big)  },
\end{equation}so that $|\mathcal{B}_{r,\e}|\ll (\e+o_r(1))L(r)$. Additionally, we will need to estimate \begin{equation*}
    \frac{1}{L(r)} \sum_{r\leq n\leq r+L(r)} \La_{w,b}(n) {\bf 1}_{\B_r}(n),
\end{equation*}which will arise when we apply the triangle inequality to bound the contribution of the set $\B_r$. However, we have that  \begin{equation}
     \max_{1\leq m\leq M}  \Bigabs{ \E_{r\leq n\leq r+L(r)} \La_{w,b}(n)e\big(m(an^2+\log r)\big)  }=o_w(1)+o_r(1),
\end{equation}which can be seen by splitting $\La_{w,b}(n)=(\La_{w,b}(n)-1)+1$, applying the triangle inequality and using Lemma \ref{L: discorrelation of W-tricked with polynomial phases} and \eqref{E: exponential sums in example}, respectively, to treat the resulting exponential averages.
In view of this, we can apply the Erd\H{o}s-Tur\'{a}n inequality (Theorem \ref{T: Erdos-Turan}) for the probability measure $$\nu(S)=\dfrac{\sum\limits_{r\leq n\leq r+L(r)}^{} \La_{w,b}(n)\delta_{\{an^2+\log r\}}(S)  }{\sum\limits_{r\leq n\leq r+L(r)}^{}  \La_{w,b}(n)}$$ 
as well as Corollary \ref{C: Brun-Titchmarsh inequality for von Mangoldt sums} (to bound the sum in the denominator) to conclude that \begin{equation*}
    \frac{1}{L(r)} \sum_{r\leq n\leq r+L(r)} \La_{w,b}(n) {\bf 1}_{\B_r}(n)\ll \e +o_w(1)\log \frac{1}{\e}+o_r(1),
\end{equation*}Therefore, if we apply the triangle inequality, we conclude that the contribution of the set $\mathcal{B}_{r,\e}$ on the average over $[r,r+L(r)]$ is at most $O(\e+o_w(1)\log \frac{1}{\e} +o_r(1))$. This is acceptable if we send $R\to +\infty$, then $w\to +\infty$, and then $\e\to 0$ at the end.

Ignoring the peculiar error terms that turn out to be satisfactory, we can rewrite the expression in \eqref{E: example second reduction} as 
\begin{equation}
      \E_{1\leq r\leq R} \Bignorm{\E_{r\leq n\leq r+L(r)  }\big(\La_{w,b}(n)-1\big)T^{\floor{p_r(n)}}f_1\cdot T^{\floor{an^2+\log r }}f_2\cdot T^{\floor{\log^2 r}}f_3        }_{L^2(\m)}.
\end{equation}
Now, the iterates satisfy the assumptions of Proposition \ref{P: Gowers norm bound on variable polynomials}. This is true for the first iterate since we have a good bound on the discrepancy and it is also true for the second iterate because
the polynomial $an^2$ has an irrational coefficient (so we can use its well-distribution modulo 1). For the third one, our claim is obvious because we simply have an integer in the iterate. Therefore, we can bound the inner average by a constant multiple of the norm $$\norm{\La_{w,b}-1}_{U^s(r,r+L(r)]}$$
with some error terms that we will ignore here. Finally, we invoke Theorem \ref{T: Gowers uniformity in short intervals} to show that the average \begin{equation*}
      \E_{1\leq r\leq R}\norm{\La_{w,b}-1}_{U^s(r,r+L(r)]}
\end{equation*}converges to 0, which leads us to our desired conclusion.

\subsection{Some preparatory lemmas}
Let us fix a Hardy field $\mathcal{H}$. Firstly, we will need a basic lemma that relates the growth rate of a Hardy field function of polynomial growth with the growth rate of its derivative. To do this, we recall a lemma due to Frantzikinakis \cite[Lemma 2.1]{Fra-equidsitribution}, as  well as \cite[Proposition A.1]{Tsinas}.

\begin{lemma}\label{L: Frantzikinakis growth inequalities}
    Let $a\in \mathcal{H}$ satisfy $t^{-m}\prec a(t)\prec t^m$ for some positive integer $m$ and assume that $a(t)$ does not converge to a non-zero constant as $t\to+\infty$. Then,  \begin{equation*}
        \frac{a(t)}{t\log^2 t}\prec a'(t)\ll \frac{a(t)}{t}.
    \end{equation*}
\end{lemma}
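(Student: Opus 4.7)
The plan is to reduce both inequalities to growth bounds on the logarithmic derivative $h'(t) = a'(t)/a(t)$, where $h(t) := \log|a(t)|$. Since $a \in \mathcal{H}$ is eventually of constant sign, $h$ lies in a suitable Hardy-field extension of $\mathcal{H}$, which is all we need to invoke the trichotomy of limits and the total ordering of growth rates. The hypothesis $t^{-m} \prec a(t) \prec t^m$ immediately gives $|h(t)| \leq m\log t + O(1)$, i.e.\ $|h(t)| \ll \log t$. On the other hand, since $a$ has a limit in $[-\infty,+\infty]$ and, by assumption, does not tend to a non-zero constant, we are in one of the cases $|a(t)| \to \infty$ or $a(t) \to 0$; in either case $|h(t)| \to \infty$. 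This pair of estimates on $h$ is the engine of both bounds.

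For the upper bound $a'(t) \ll a(t)/t$, we must show $t h'(t) = O(1)$. Since $t h' \in \mathcal{H}$ (up to a Hardy-field extension), it has a limit in $[-\infty,+\infty]$; if this limit were infinite, total ordering would give $|h'(t)| \succ 1/t$, so for every $C>0$ we would have $|h'(t)| > C/t$ eventually. Integrating from a fixed $t_0$ then yields $|h(t)| \geq C\log t - O(1)$, which contradicts $|h(t)| \ll \log t$ as soon as $C > m$.

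For the lower bound $a(t)/(t\log^2 t) \prec a'(t)$, equivalently $|h'(t)| \succ 1/(t\log^2 t)$, suppose the contrary. Then the total ordering of growth rates in the Hardy field forces $|h'(t)| \ll 1/(t\log^2 t)$, so for some constant $C$ and all large $t$,
$$
|h(t) - h(t_0)| \;\leq\; \int_{t_0}^t \frac{C}{s \log^2 s}\, ds \;=\; C\Bigl(\frac{1}{\log t_0} - \frac{1}{\log t}\Bigr) \;\leq\; \frac{C}{\log t_0}.
$$
Hence $h$ would be bounded, contradicting $|h(t)| \to \infty$. This is the point where the exclusion of the non-zero constant limit is essential: without it, $h$ could stay bounded and this argument would collapse.

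The only real obstacle is the technical verification that $\log|a|$ (together with the elementary combinations of it used above) lies in a Hardy field with the required trichotomy and closure properties; this is standard and is precisely the framework exploited in \cite{Fra-equidsitribution} and \cite{Tsinas}. Once granted, both halves of the lemma reduce to one-line integration arguments played against the single inequality $|h(t)| \ll \log t$ and the single qualitative fact $|h(t)| \to \infty$.
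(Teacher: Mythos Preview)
Your proof is correct and follows the standard route; the paper itself does not supply a proof but merely cites \cite[Lemma~2.1]{Fra-equidsitribution} and \cite[Proposition~A.1]{Tsinas}, where essentially the same logarithmic-derivative argument appears. One small simplification: you do not need any Hardy-field extension to access $h'$, since $h'=a'/a$ already lies in $\mathcal{H}$ (which is closed under differentiation and division), and likewise $th'\in\mathcal{H}$ because the paper assumes $\mathcal{H}$ contains the polynomials; the only place an extension (adjoining $\log t$) is genuinely needed is to invoke the trichotomy against $1/(t\log^2 t)$ in the lower bound, and that is indeed routine.
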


Observe that if a function $a(t)$ satisfies the growth inequalities in the hypothesis of this lemma, then the function $a'(t)$ satisfies $\frac{t^{-1-m}}{\log^2 t} \prec a'(t)\prec t^{m-1}$. Therefore, we deduce the relations $ t^{-m-2}  \prec a'(t)\prec t^{m+2}$, which implies that the function $a'(t)$ satisfies a similar growth condition. Provided that the function $a'(t)$ does not converge to a non-zero constant as $t\to+\infty$, the above lemma can then be applied to the function $a'(t)$. 

When a function $a(t)$ is strongly non-polynomial and dominates the logarithmic function $\log t$, one can get a nice ordering relation for the growth rates of consecutive derivatives. This is the content of the following proposition.

\begin{proposition}\cite[Proposition A.2]{Tsinas}\label{P: Taylor expansion-preliminary}
Let $a\in \mathcal{H}$ be a function of polynomial growth that is strongly non-polynomial and also satisfies $a(t)\succ  \log t$. Then, for all sufficiently large $k\in \N$, we have \begin{equation*}
    1\prec \bigabs{ a^{(k)}(t)   }^{-\frac{1}{k}}\prec  \bigabs{ a^{(k+1)}(t)   }^{-\frac{1}{k+1}}\prec t.
\end{equation*} 
\end{proposition}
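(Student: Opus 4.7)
My plan is to prove the two outer inequalities independently and then extract the middle one from them together with Lemma~\ref{L: Frantzikinakis growth inequalities}. Throughout I abbreviate $b_k(t):=\bigabs{a^{(k)}(t)}$. A preliminary observation is that each iterated derivative $a^{(j)}$ lies in $\mathcal{H}$, has polynomial growth (the degree falls by at most one per differentiation by the upper half of Lemma~\ref{L: Frantzikinakis growth inequalities}), and never converges to a non-zero constant $c$, since otherwise $j$-fold integration would force $a(t)$ to share the growth rate of $ct^j/j!$, contradicting the strongly non-polynomial hypothesis. Consequently Lemma~\ref{L: Frantzikinakis growth inequalities} is applicable to every $a^{(j)}$.

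For the leftmost inequality $1\prec b_k(t)^{-1/k}$, I would iterate the upper half of Lemma~\ref{L: Frantzikinakis growth inequalities} to reach $b_k(t)\ll a(t)/t^k \prec t^{m-k}$, where $m$ is the degree of $a$; taking $k>m$ forces $b_k(t)\to 0$ and hence $b_k(t)^{-1/k}\to\infty$.

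The heart of the argument is the rightmost inequality $b_{k+1}(t)\succ t^{-(k+1)}$. I would split this into two cases according to the growth of $a$. If $a(t)\succ t^{\varepsilon}$ for some $\varepsilon>0$, a direct iteration of the lower half of Lemma~\ref{L: Frantzikinakis growth inequalities} yields $b_{k+1}(t)\succ a(t)/(t^{k+1}(\log t)^{2(k+1)})\succ t^{\varepsilon-(k+1)}/(\log t)^{2(k+1)}$, and the polynomial gain $t^{\varepsilon}$ absorbs any power of the logarithm. The delicate case is the sub-fractional one, where $a(t)\prec t^{\varepsilon}$ for every $\varepsilon>0$ yet $a(t)\succ \log t$; I expect this to be the main obstacle, since the naive iteration wastes a factor $(\log t)^{2k}$ which the bare hypothesis $a\succ\log t$ cannot compensate for when $k$ is large. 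The hope is to exploit the Hardy-field structure of a sub-fractional, strongly non-polynomial $a$ to show that its successive derivatives carry the same slowly varying factor as $a$ itself, so that, in analogy with the model calculation for $a(t)=\log^c t$ with $c>1$ where $a^{(k)}(t)\sim(-1)^{k-1}(k-1)!\,c\,(\log t)^{c-1}/t^k$, one obtains $t^{k+1}b_{k+1}(t)\succ a(t)/\log t\to\infty$ thanks to $a\succ \log t$.

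The middle inequality then falls out for free. The upper half of Lemma~\ref{L: Frantzikinakis growth inequalities} applied to $a^{(k)}$ gives $b_{k+1}(t)/b_k(t)\ll 1/t$, while the rightmost inequality applied at level $k$ reads $b_k(t)^{-1/k}\prec t$, i.e.\ $1/t\prec b_k(t)^{1/k}$. Chaining the two yields $b_{k+1}(t)/b_k(t)\prec b_k(t)^{1/k}$, which rearranges to $b_{k+1}(t)^{k}\prec b_k(t)^{k+1}$, precisely the middle inequality.
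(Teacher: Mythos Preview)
The paper does not give its own proof of this proposition; it is quoted from \cite[Proposition~A.2]{Tsinas} and only used as a black box (see Remark~\ref{R: why we can use Lemma 4.1 for derivatives}). So there is no in-paper argument to compare against, and I can only assess your proposal on its own merits.

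Your skeleton is sound. The preliminary observation (each $a^{(j)}$ stays in $\mathcal{H}$, has polynomial growth, and cannot tend to a nonzero constant because that would force $a(t)$ to share the growth of $t^{j}$) is exactly what justifies iterating Lemma~\ref{L: Frantzikinakis growth inequalities}. The leftmost inequality and the derivation of the middle inequality from the two outer ones plus Lemma~\ref{L: Frantzikinakis growth inequalities} are correct as written. Your treatment of the rightmost inequality when $a(t)\succ t^{\varepsilon}$ for some $\varepsilon>0$ is also fine: the $t^{\varepsilon}$ gain from $a$ swallows the $(\log t)^{2(k+1)}$ loss coming from $k{+}1$ applications of the lower bound in Lemma~\ref{L: Frantzikinakis growth inequalities}.

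The genuine gap is precisely where you flag it: the sub-fractional case of the rightmost inequality. Writing ``the hope is to exploit the Hardy-field structure'' and citing the model $a(t)=\log^{c}t$ is not a proof, and the naive iteration of Lemma~\ref{L: Frantzikinakis growth inequalities} really does fail here, as you note. The clean way to close the gap is an iterated L'H\^{o}pital argument, which is legitimate in a Hardy field since all the relevant ratios are Hardy functions and therefore have limits in $[-\infty,\infty]$. From $a(t)\succ\log t$ and $a'(t)>0$ eventually, L'H\^{o}pital applied to $a(t)/\log t$ gives
\[
\lim_{t\to\infty} t\,a'(t)=\lim_{t\to\infty}\frac{a'(t)}{1/t}=\lim_{t\to\infty}\frac{a(t)}{\log t}=+\infty.
\]
Now induct: in the sub-fractional regime one has $a^{(j)}(t)\to 0$ for every $j\ge 1$ (since $|a^{(j)}|\ll a/t^{j}\prec t^{\varepsilon-j}$), so the $0/0$ form of L'H\^{o}pital applies to $a^{(j)}(t)/t^{-j}$ and yields
\[
\lim_{t\to\infty} t^{j}a^{(j)}(t)=\lim_{t\to\infty}\frac{a^{(j+1)}(t)}{-j\,t^{-j-1}}=-\tfrac{1}{j}\lim_{t\to\infty} t^{j+1}a^{(j+1)}(t).
\]
Thus $t^{j}|a^{(j)}(t)|\to\infty$ forces $t^{j+1}|a^{(j+1)}(t)|\to\infty$, and by induction $b_{k+1}(t)^{-1/(k+1)}\prec t$ for every $k\ge 0$. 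This is exactly the qualitative version of your heuristic $t^{k+1}b_{k+1}(t)\approx c_{k}\,t\,a'(t)\succ a(t)/\log t$, made rigorous without needing the sharper asymptotic. With this in place, your proof is complete.
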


\begin{remark}\label{R: why we can use Lemma 4.1 for derivatives}
    The proof of Proposition \ref{P: Taylor expansion-preliminary} in \cite{Tsinas} establishes the fact that if $a$ satisfies the previous hypotheses, then the derivatives of $a$ always satisfy the conditions of Lemma~\ref{L: Frantzikinakis growth inequalities}.
\end{remark}

This proposition is the main tool used to show that a strongly non-polynomial function $a(t)$ can be approximated by polynomials in short intervals. Indeed, assume that a positive sub-linear function $L(t)$ satisfies \begin{equation}\label{E: growth relations}
    \bigabs{a^{(k)}(t) }^{-\frac{1}{k}}\prec L(t) \prec   \bigabs{a^{(k+1)}(t) }^{-\frac{1}{k+1}} 
\end{equation}for some sufficiently large $k\in \N$ (large enough so that the inequalities in Proposition \ref{P: Taylor expansion-preliminary} hold). In particular, this implies that $\lim\limits_{t\to+\infty }a^{(k+1)}(t)\to 0$.

Then, we can use the Taylor expansion around the point $N$ to write \begin{equation*}
    a(N+h)=a(N) +{ha'(N)}+\dots +\frac{h^ka^{(k)}(N)}{k!}+\frac{h^{k+1}a^{(k+1)} (\xi_{N,h}) }{(k+1)!}\  \text{for some } \xi_{N,h}\in [N,N+h]
\end{equation*}for every $0\leq h\leq L(N)$. However, we observe that \begin{equation*}
    \Bigabs{ \frac{h^{k+1}a^{(k+1)} (\xi_{N,h}  ) }{(k+1)!} }   \leq \frac{L(N)^{k+1} |a^{(k+1)} (N )|  }{(k+1)!}=o_N(1),       
\end{equation*}where we used the fact that $|a^{(k+1)}(t)|\to 0$ monotonically (since $a^{(k+1)}(t)\in \mathcal{H}$). Therefore, we have \begin{equation*}
   a(N+h)= a(N) +{ha'(N)}+\dots +\frac{h^ka^{(k)}(N)}{k!}+o_N(1),
\end{equation*}which implies that the function $a(N+h)$ is essentially a polynomial in $h$.

The final lemma implies that if the function $L(t)$ satisfies certain growth assumptions, then a strongly non-polynomial function $a(t)$ will be approximated by a polynomial of some degree $k$.

\begin{proposition}\label{P: Taylor expansion main}
Let $a\in \mathcal{H}$ be a strongly non-polynomial function of polynomial growth, such that $a(t)\succ \log t$. Assume that $L(t)$ is a positive sub-linear function, such that $1\prec L(t)\ll t^{1-\e}$ for some $\e>0$. Then, there exists a non-negative integer $k$ depending on the function $a(t)$ and $L(t)$, such that \begin{equation*}
   \bigabs{a^{(k)}(t)}^{-\frac{1}{k}}\prec L(t)\prec \bigabs{a^{(k+1)}(t)}^{-\frac{1}{k+1}},
\end{equation*}where we adopt the convention that $\bigabs{a^{(k)}(t)}^{-\frac{1}{k}}$ denotes the constant function 1, when $k=0$.
\end{proposition}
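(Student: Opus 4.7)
The plan is to locate $L$ within the growth-rate chain $b_k(t) := |a^{(k)}(t)|^{-1/k}$ for $k\ge 1$, extended by $b_0\equiv 1$, and pick off the transition index. By Proposition \ref{P: Taylor expansion-preliminary}, there exists a threshold $k_0$ so that the tail $1\prec b_{k_0}\prec b_{k_0+1}\prec\cdots\prec t$ is strictly increasing in the Hardy-field ordering and uniformly sub-linear. Since $b_0=1\prec L$ by the hypothesis $L\succ 1$, the task reduces to producing some $b_K$ that dominates $L$ and then selecting the correct index.

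First I would derive a quantitative upper bound on the derivatives of $a$. Using the polynomial-growth hypothesis $a(t)\ll t^m$ together with the upper inequality $a'\ll a/t$ of Lemma \ref{L: Frantzikinakis growth inequalities}, iterated on consecutive derivatives of $a$ (which is permitted by Remark \ref{R: why we can use Lemma 4.1 for derivatives}), I would deduce $|a^{(k)}(t)|\ll t^{m-k}$ for all $k\ge 0$. This forces the lower bound $b_k(t)\succeq t^{1-m/k}$, and combined with the hypothesis $L(t)\ll t^{1-\e}$ it gives $b_k\succ L$ as soon as $k>m/\e$. Consequently the set $S:=\{k\ge 0 : b_k\prec L\}$ is non-empty (it contains $0$) and bounded above.

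Next I would set $k:=\max S$ and verify the two inequalities. The bound $b_k\prec L$ is immediate from the definition of $k$. For $L\prec b_{k+1}$ I would use that $k+1\notin S$ together with Hardy-field trichotomy applied to $L$ and $b_{k+1}$; in the intended applications $L$ is itself a Hardy-field element such as $t^c$, so this trichotomy is available, and the maximality of $k$ rules out $b_{k+1}\preceq L$, leaving $L\prec b_{k+1}$. The main bookkeeping obstacle I anticipate is that strict monotonicity of the chain $b_k$ is guaranteed only for $k\ge k_0$, so the finitely many values $0\le k<k_0$ must be handled by direct pairwise comparison against $L$ before passing to the tail where Proposition \ref{P: Taylor expansion-preliminary} applies; since there are only finitely many such indices, this is routine once the asymptotic estimates above have been set up.
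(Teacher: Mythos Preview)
Your proposal is correct and essentially matches the paper's proof: both iterate the upper bound $a'\ll a/t$ of Lemma~\ref{L: Frantzikinakis growth inequalities} (legitimised by Remark~\ref{R: why we can use Lemma 4.1 for derivatives}) to force $|a^{(k)}(t)|\ll a(t)/t^k$ and hence $L\prec b_k$ for all large $k$, and then select the transition index. The paper organises this as a case split on whether $a$ is sub-fractional (handling $k=0$ separately) rather than your uniform $\max S$ argument, but the substance is identical; incidentally, your anticipated bookkeeping obstacle about indices $k<k_0$ is a non-issue, since taking $k=\max S$ does not require monotonicity of the chain $(b_k)$ at all.
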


\begin{proof}

We split the proof into two cases depending on whether $a$ is sub-fractional or not.

Assume first that $a(t)\ll t^{\delta}$ for all $\delta>0$. We will establish the claim for $k=0$. This means that functions that are sub-fractional become essentially constant when restricted to intervals of the form $[N, N+L(N)]$. The left inequality is obvious. Furthermore, since $a(t)\prec t^{\e}$, Lemma \ref{L: Frantzikinakis growth inequalities} implies that $$a'(t)\prec \frac{1}{t^{1-\e}}\ll \frac{1}{L(t)},$$ which yields the desired result. 

Assume now that $a(t)\succ t^{\delta}$ for some $\delta>0$. Observe that, in this case, we have that \begin{equation*}
      \bigabs{a^{(k)}(t)}^{-\frac{1}{k}} \prec \bigabs{a^{(k+1)}(t)}^{-\frac{1}{k+1}}
\end{equation*}for $k$ large enough, due to Proposition \ref{P: Taylor expansion-preliminary}.
We also consider the integer $d$, such that $t^d\prec a(t)\prec t^{d+1}$. This number exists because the function $a$ is strongly non-polynomial.

If $L(t)\prec \bigabs{a^{(d+1)}(t)}^{-\frac{1}{d+1}}$, then the claim holds for $k={d}$, since $\bigabs{a^{(d)}(t)}^{-\frac{1}{d}}\prec 1\prec L(t)$. 

It suffices to show that there exists $k\in \N$, such that $L(t)\prec \bigabs{a^{(k+1)}(t)}^{-\frac{1}{k+1}}$, which, in turn, follows if we show that \begin{equation}\label{E: large derivatives surpass sub-linear powers}
        t^{1-\e}\prec \bigabs{a^{(k+1)}(t)}^{-\frac{1}{k+1}}
    \end{equation}for some $k\in \N$. We can rewrite the above inequality as $a^{(k+1)}(t)\prec t^{(k+1)(\e-1)}$. However, since the function $a(t)$ is strongly non-polynomial and $a(t)\succ \log t$, the functions $a^{(k)}(t)$ satisfy the hypotheses of Lemma \ref{L: Frantzikinakis growth inequalities} (see also Remark \ref{R: why we can use Lemma 4.1 for derivatives}). 
   Therefore, iterating the aforementioned lemma, we deduce that $$a^{(k+1)}(t)\ll \frac{a(t)}{t^{k+1}}.$$
    Hence, it suffices to find $k$ such that $a(t)\ll t^{(k+1)\e}$ and such a number exists, because the function $a(t)$ has polynomial growth.
\end{proof}

\begin{remark}
    The condition $L(t)\prec t^{1-\e}$ is necessary. For example, if $a(t)=t\log t$ and $L(t)=\frac{t}{\log t}$, then for any $k\in \N$, we can write \begin{equation*}
        (N+h)\log (N+h)=N\log N+\dots +\frac{C_1h^k}{N^{k-1}} +\frac{C_2h^{k+1}}{\xi_{N,h}^k}
    \end{equation*}for every $0\leq h\leq \frac{N}{\log N}$ and some numbers $C_1,C_2\in \R$. However, there is no positive integer $k$ for which the last term in this expansion can be made to be negligible since $\frac{N}{\log N}\succ N^{\frac{k}{k+1}}$ for all $k\in \N$. Essentially, in order to approximate the function $t\log t$ in these specific short intervals, one would be forced to use the entire Taylor series instead of some appropriate cutoff.  
\end{remark}

\subsection{Eliminating the error terms in the approximations}

In the previous subsection, we saw that any Hardy field function can be approximated by polynomials in short intervals using the Taylor expansion. Namely, if $a(t)$ diverges and $L(t)\to+\infty$ is a positive function, such that \begin{equation}\label{E: ti na valw twra edw?}
      \bigabs{a^{(k)}(t)}^{-\frac{1}{k}}\prec L(t)\prec \bigabs{a^{(k+1)}(t)}^{-\frac{1}{k+1}}
\end{equation}then, for any $0\leq h\leq L(N)$, we have \begin{equation*}
    a(N+h)= a(N+h)=a(N) +\dots +\frac{h^ka^{(k)}(N)}{k!}+\frac{h^{k+1}a^{(k+1)} (\xi_{N,h}) }{(k+1)!}=p_N(h)+\theta_N(h)
\end{equation*}$  \text{for some } \xi_{N,h}\in [N,N+h]$, where we denote $$p_N(h)=a(N) +\dots +\frac{h^ka^{(k)}(N)}{k!}.$$
Observe that our growth assumption on $L(t)$ implies that the term $\theta_N(h)$ is bounded by a quantity that converges to 0, as $N\to+\infty$. Therefore, for large values of $N$, we easily deduce that $$\floor{a(N+h)}=\floor{p_N(h)}+\e_{N,h},$$where $\e_{N,h}\in \{-1,0,1\}$. In order to be able to apply Proposition \ref{P: Gowers norm bound on variable polynomials}, we will need to eliminate the error terms $\e_{N,h}$. We will consider three distinct cases, which are tackled using somewhat different arguments.

\subsubsection{The case of fast-growing functions}
Firstly, we establish the main proposition that will allow us to remove the error terms in the case of functions that contain a "non-polynomial part" which does not grow too slowly. We will need a slight strengthening of the growth conditions in \eqref{E: ti na valw twra edw?}, which, as we saw previously, are sufficient to have a Taylor approximation in the interval $[N, N+L(N)]$.

\begin{proposition}\label{P: remove error term for fast functions}
    Let $A>0$ and let $a(t)$ be a $C^{\infty}$ function defined for all sufficiently large $t\in \R$. Assume $L(t)$ is a positive sub-linear function going to infinity and let $k$ be a positive integer, such that   \begin{equation}\label{E: strong domination conditions}
       1\lll  \bigabs{a^{(k)}(t)}^{-\frac{1}{k}}\lll L(t)\lll \bigabs{a^{(k+1)}(t)}^{-\frac{1}{k+1}} 
    \end{equation}and such that the function $a^{(k+1)}(t)$ converges to 0 monotonically.
     Then, for $N$ large enough, we have that, for all $0\leq c\leq d<1$, \begin{equation}\label{E: discrepancy of Hardy sequences on short intervals}
       \dfrac{\big|\{n\in [N, N+L(N)]\colon a(n)\in [c,d]\}\big|}{L(N)}=|d-c|+O_A(L(N)\log^{-A} N).\footnote{One can actually get a small power saving here, with an exponent that depends on $k$ and the implicit fractional powers in the growth relations of \eqref{E: strong domination conditions}, though this will not be any more useful for our purposes.}
    \end{equation}
    Consequently, for all $N$ sufficiently large, we have that \begin{equation*}
        \floor{a(N+h)} =\floor{ a(N) +{ha'(N)}+\dots +\frac{h^ka^{(k)}(N)}{k!}   }
    \end{equation*}for all, except at most $O_A(L(N)\log^{-A}(N))$ values of integers $h\in [N,N+L(N)]$.
\end{proposition}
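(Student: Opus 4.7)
The plan is to Taylor-expand $a$ around $N$ on the short interval, to obtain quantitative equidistribution of the resulting Taylor polynomial in $h$ by combining the Erd\H{o}s--Tur\'{a}n inequality (Theorem \ref{T: Erdos-Turan}) with the Weyl-type estimate (Lemma \ref{L: Weyl-type estimate}), and to pass from the polynomial back to $a$ using the smallness of the Taylor remainder. First, I would write
$$a(N+h)=p_N(h)+\theta_N(h),\qquad p_N(h)=\sum_{j=0}^{k}\frac{h^j a^{(j)}(N)}{j!},$$
where, by the integral form of the remainder, $|\theta_N(h)|\ll L(N)^{k+1}|a^{(k+1)}(N)|/(k+1)!$ for every $h\in[0,L(N)]$. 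The strong domination $L(N)\lll |a^{(k+1)}(N)|^{-1/(k+1)}$ forces $|\theta_N(h)|\ll N^{-\eta}$ for some fixed $\eta>0$, uniformly in $h$.

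The heart of the argument is a uniform polylogarithmic exponential sum bound for $p_N$: for any fixed $A>0$, setting $\delta=\log^{-B}N$ and $M=\floor{\log^B N}$ with $B=B(A)$ sufficiently large, I would show that
$$\bigabs{\E_{0\leq h\leq L(N)}e\bigl(m\,p_N(h)\bigr)}\leq \delta \qquad \text{for all }1\leq m\leq M,$$
for all sufficiently large $N$. Assuming the opposite for some $m$, Lemma \ref{L: Weyl-type estimate} applied to the degree-$k$ polynomial $m\,p_N(h)$ in $h$, with leading coefficient $m\,a^{(k)}(N)/k!$, yields an integer $q$ with $|q|\leq \delta^{-C}$ satisfying $L(N)^k\,\norm{q\,m\,a^{(k)}(N)/k!}_{\T}\leq \delta^{-C}$. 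Since $|a^{(k)}(N)|\to 0$ polynomially in $N$ while $|qm|$ is only polylogarithmic, the quantity $|qm\,a^{(k)}(N)/k!|$ is eventually less than $1/2$, so the torus norm equals the absolute value. This upgrades the bound to $L(N)^k|a^{(k)}(N)|\leq k!\,\delta^{-C}$, directly contradicting $|a^{(k)}(N)|^{-1/k}\lll L(N)$, which guarantees $L(N)^k|a^{(k)}(N)|\gg N^{\eta' k}$ for some $\eta'>0$.

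Plugging this exponential sum bound into the Erd\H{o}s--Tur\'{a}n inequality applied to the empirical measure of $(p_N(h))_{0\leq h\leq L(N)}$ yields a discrepancy bound of the shape $O(\log^{-B+1}N)$, which for $B=A+2$ gives the desired $O_A(\log^{-A}N)$ saving and establishes \eqref{E: discrepancy of Hardy sequences on short intervals} for $p_N$. The same estimate for $a$ itself follows by noting that $\{a(N+h)\}\in[c,d]$ iff $\{p_N(h)\}$ lies in a small perturbation of $[c,d]$ of width $O(N^{-\eta})$, and the discrepancy bound tolerates such perturbations. For the floor identity, I would observe that $\floor{a(N+h)}\neq\floor{p_N(h)}$ forces $\{p_N(h)\}\in[0,CN^{-\eta}]\cup[1-CN^{-\eta},1)$; by the discrepancy bound just established, the number of such $h$ is at most a constant multiple of $L(N)(N^{-\eta}+\log^{-A}N)=O_A(L(N)\log^{-A}N)$.

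The main obstacle is calibrating the polylogarithmic parameters $\delta$ and $M$ so that the losses $\delta^{-C}$ coming from Lemma \ref{L: Weyl-type estimate} are dominated by the polynomial gain $N^{\eta' k}$ coming from the strong domination of $L(N)$ over $|a^{(k)}(N)|^{-1/k}$. This is precisely where $\lll$, rather than merely $\prec$, is essential in the hypothesis \eqref{E: strong domination conditions}; absent the power-of-$N$ gain the contradiction in Step~2 would fail, and one would at best obtain a qualitative equidistribution statement with no effective decay rate.
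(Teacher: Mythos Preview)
Your proposal is correct and follows essentially the same approach as the paper: Taylor-expand $a$ on the short interval, bound the exponential sums of the Taylor polynomial $p_N$ via Lemma~\ref{L: Weyl-type estimate} (using the strong domination $|a^{(k)}(t)|^{-1/k}\lll L(t)$ to derive the contradiction), feed these into Erd\H{o}s--Tur\'{a}n, and then use the polynomial smallness of the remainder for the floor identity. Your closing remark on why $\lll$ rather than $\prec$ is essential matches exactly the mechanism the paper relies on.
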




\begin{proof}

Our hypothesis on $L(t)$ implies that there exist $\e_1,\e_2>0$ such that \begin{equation}\label{E: powersavinggrowthinequality}
L(t)\bigabs{a^{(k)}(t)}^{\frac{1}{k}} \gg t^{\e_1} \text{ and } \ L(t)\bigabs{a^{(k+1)}(t)}^{\frac{1}{k+1}}\ll t^{-\e_2}. 
\end{equation}In addition, the leftmost inequality implies that there exists $\e_3>0$, such that $a^{(k)}(t)\ll t^{-\e_3}$. 
 Using the Taylor expansion around the point $N$, we can write \begin{equation}\label{E: Taylor approximation for C apeiro function}
        a(N+h)= a(N) +{ha'(N)}+\dots +\frac{h^ka^{(k)}(N)}{k!}  +\frac{h^{k+1}a^{(k+1)}(\xi_h)  }{(k+1)!},\  \text{ for some }  \xi_h\in [N,N+h],
    \end{equation}for every $h\in [0,L(N)]$. We denote $$p_N(h)=a(N) +\dots +\frac{h^ka^{(k)}(N)}{k!}$$ and $$\theta_N(h)=\frac{h^{k+1}a^{(k+1)}(\xi_h)  }{(k+1)!}.$$
The function $a^{(k+1)}(t)$ converges to 0 monotonically due to our hypothesis. Therefore, for sufficiently large $N$, \begin{equation}\label{E: definition of theta_N}
    \max_{0\leq h\leq L(N)}^{} |\theta_N(h) |\leq \Bigabs{\frac{a^{(k+1)}(N)}{(k+1)!} }(L(N))^{k+1}=\theta_N,
\end{equation}and the quantity $\theta_N$ is strongly dominated by the constant 1 due to \eqref{E: powersavinggrowthinequality}. More precisely, we have that $\theta_N\ll N^{-(k+1)\e_2}$.

Let $A>0$ be any constant. We study the discrepancy of the finite polynomial sequence $$p_N(h),\ \text{where } 0\leq h\leq{L(N)}.$$
We shall establish that we have \begin{equation*}
    \D_{[c,d]}\big( p_N(h)  \big)\ll_A \log^{-A}N
\end{equation*}for any choice of the interval $[c,d]\subseteq [0,1]$.
To this end,
we apply Theorem \ref{T: Erdos-Turan} for the finite sequence $(p_N(h))_{0\leq h\leq L(N)}$ to deduce that \begin{equation}\label{E: discrepancy bound}
    \D_{[c,d]}\Big(\big (p_N(h)\big)_{0\leq h\leq L(N)}  \Big)\leq \frac{C}{\floor{\log^A N}}+C\sum_{m=1}^{\floor{\log^A N}} \frac{1}{m}\Bigabs{\underset{0\leq h \leq {L(N)}}{\E} \  e(mp_N(h))},
\end{equation}where $C$ is an absolute constant.
 We claim that for every $1\leq m\leq \floor{\log^A N}$, we have that \begin{equation}\label{E: log^A saving }
    \Bigabs{\underset{0\leq h \leq {L(N)}}{\E} \  e(mp_N(h))}\leq \frac{1}{\log ^A N},
\end{equation}provided that $N$ is sufficiently large.
Indeed, assume for the sake of contradiction that there exists $1\leq m_0\leq \floor{\log^A N}$, such that\begin{equation}\label{E: application of Erdos-Turan}
    \Bigabs{\underset{0\leq h\leq {L(N)}}{\E} \  e(m_0p_N(h))}>\frac{1}{\log ^A N}.
\end{equation} The leading coefficient of $m_0p_N(h)$ is equal to $$\frac{m_0a^{(k)} (N)}{k!}.$$ Then, Lemma \ref{L: Weyl-type estimate} implies that there exists a constant $C_k$ (depending only on $k$) an integer $q$ satisfying $|q|\leq \log^{C_k A} N$ and such that \begin{equation*}
    \Bignorm{q\cdot\frac{m_0a^{(k)} (N)}{k!} }_{\T} \leq \frac{ \log^{C_kA}  N}{\floor{L(N)}^k}.
\end{equation*} The number $qm_0$ is bounded in magnitude by $\log^{(C_k+1)A}(N)$, so that $$q\cdot\frac{m_0a^{(k)} (N)}{k!}\ll \log^{(C_k+1)A} N\cdot N^{-\e_3}=o_N(1).$$ Therefore, for large values of $N$, we can substitute the circle norm of the fraction in \eqref{E: application of Erdos-Turan} with the absolute value, which readily implies that \begin{equation*}
     \Bigabs{q\cdot\frac{m_0a^{(k)} (N)}{k!} }\leq \frac{ \log^{C_kA} N}{\floor{L(N)}^k}\implies \floor{L(N)}^k\big|a^{(k)}(N)\big|\leq k!\log^{C_kA} N.
\end{equation*}However, this implies that $L(t)$ cannot strongly dominate the function $\big(a^{(k)}(t)\big)^{-\frac{1}{k}}$, which is a contradiction due to our hypothesis.

We have established that for every $1\leq m\leq \floor{\log^A N}$ and large $N$, inequality \eqref{E: log^A saving } holds. Substituting this in \eqref{E: discrepancy bound}, we deduce that \begin{equation*}
    \D_{[c,d]}\Big(\big(p_N(h)\big)_{0\leq h\leq {L(N)}}\Big)\leq  \frac{C}{\floor{\log^A N}}+C\sum_{m=1}^{\floor{\log^A N}} \frac{1}{m\log^A N},
\end{equation*}which implies that \begin{equation*}
     \D_{[c,d]}\Big(\big(p_N(h)\big)_{0\leq h\leq {L(N)}}\Big)\ll \frac{A\log \log N}{\log^A N}.
\end{equation*}In particular, since $A$ was arbitrary, we get \begin{equation}\label{E: logarithmic power savings}
    \D_{[c,d]}\Big(\big(p_N(h)\big)_{0\leq h\leq {L(N)}}\Big)\ll_A \frac{1}{\log^A N}.
 \end{equation}
This establishes the first part of the proposition.

The second part of our statement follows from an application of the bound on the discrepancy of the finite polynomial sequence $(p_N(h))$.
Indeed, we consider the set $$S_N=[0,\theta_N]\cup [1-\theta_N,1),$$where we recall that $\theta_N$ was defined in \eqref{E: definition of theta_N} and decays faster than a small fractional power.  Then, if $\{p_N(h)\}\notin S_N$, we have $\floor{p_N(h) +\theta_N(h)}=\floor{p_N(h)}$, as can be seen by noticing that the error term in \eqref{E: Taylor approximation for C apeiro function} is bounded in magnitude by $\theta_N$.  Now, we estimate the number of integers $h\in [0, L(N)]$ for which $\{p_N(h)\}\in S_N$. 

    Using the definition of discrepancy and the recently established bounds, we deduce that \begin{equation*}
        \dfrac{\bigabs{\{h\in [0,{L(N)}]\colon \{p_N(h)\}\in [0,\theta_N]  \} }}{{L(N)}{}}  -\theta_N\ll_A \frac{1}{\log^A N}
    \end{equation*}for every $A>0$. Since the number $\theta_N$ is dominated by $ N^{-(k+1)\e_2}$, this implies that \begin{equation*}
         \bigabs{\{h\in [0,L(N)]\colon \{p_N(h)\}\in [0,\theta_N]  \} }\ll_A \frac{L(N)}{\log ^A N}.
    \end{equation*} An entirely similar argument yields the analogous relation for the interval $[1-\theta_N, 1)$. Therefore, the number of integers in $[0,L(N)]$ for which $\{p_N(h)\}\in S_N$ is at most $O_A({L(N)}\log^{-A} N)$.

In conclusion, since $\floor{a(N+h)}=\floor{p_N(h)}$ for all integers not in $S_N$, we have that the number of integers which does not satisfy this last relation is $O_A(L(N)\log ^{-A} N)$, which yields the desired result.
\end{proof}

The above proposition asserts that, for almost all values of $h\in [0,L(N)]$, we can write $\floor{a(N+h)}=\floor{p_N(h)}$. The logarithmic power saving in the statement will be helpful since we are dealing with averages weighted by the sequence $\La_{w,b}(n)-1$, which has size comparable to $\log N$ on the interval $[N, N+L(N)]$. Furthermore, notice that we did not assume that $a$ is a Hardy field function in the proof. Thus, the conditions in this proposition can be used to prove a comparison result for more general iterates.

\subsubsection{The case of slow functions}
Unfortunately, the previous proposition cannot deal with functions whose only possible Taylor approximations involve only a constant term. This case will emerge when we have sub-fractional functions (see Definition \ref{D: growthdefinitions}) since, as we have already remarked, these functions have a polynomial approximation of degree 0 in short intervals (assuming that $L(t)\ll t^{1-\e}$).
To cover this case, we will need the following proposition which is practically of a qualitative nature.

\begin{proposition}\label{P: remove error term for slow functions}
    Let $a(t)\in \mathcal{H}$ be a sub-fractional function such that $a(t)\succ \log t$. Assume $L(t)$ is a positive sub-linear function going to infinity and such that $L(t)\ll t^{1-\delta}$, for some $\delta>0$. Then, for every $0<\e<1$, we have the following: for all $R\in \N$ sufficiently large  we have $\floor{a(N+h)}=\floor{a(N)}$ for every $h\in [0,L(N)]$, for all, except at most $\e R$ values of $N\in [1,R]$.
\end{proposition}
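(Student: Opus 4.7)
The plan is to combine two ingredients: (i) sub-fractional growth forces $a$ to oscillate by a vanishing amount on each short interval $[N, N+L(N)]$, and (ii) Boshernitzan's theorem (Theorem~\ref{T: Boshernitzan}) yields equidistribution of $(a(n))$ modulo $1$. Together these show that, for most $N \in [1, R]$, the fractional part $\{a(N)\}$ lies far enough from $1$ that the integer part of $a$ cannot change on $[N, N+L(N)]$.

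First, I would apply Lemma~\ref{L: Frantzikinakis growth inequalities}, noting that $a \succ \log t$ ensures $a$ does not converge to a non-zero constant, to obtain $a'(t) \ll a(t)/t$. Combined with the sub-fractional bound $a(t) \prec t^{\e}$ (valid for every $\e > 0$), this gives $a'(t) \ll t^{\e - 1}$. Choosing any $0 < \e < \delta$ (where $L(t) \ll t^{1-\delta}$), the mean value theorem yields, for all sufficiently large $N$ and all $h \in [0, L(N)]$,
\[
|a(N+h) - a(N)| \leq L(N) \cdot \sup_{t \in [N, N+L(N)]} |a'(t)| \ll N^{1-\delta} \cdot N^{\e - 1} = N^{\e - \delta} =: \theta_N,
\]
and $\theta_N \to 0$. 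Since $a \succ \log t$ and $a \in \mathcal{H}$, the function $a$ is eventually positive and monotone increasing, so $a(N+h) \in [a(N), a(N) + \theta_N]$ throughout this range. Hence $\floor{a(N+h)} = \floor{a(N)}$ for every $h \in [0, L(N)]$ whenever $\{a(N)\} < 1 - \theta_N$.

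Next, I would verify Boshernitzan's hypothesis for $a$. For any $p \in \Q[t]$ of degree at least $1$, sub-fractional growth of $a$ gives $|a(t) - p(t)| \sim |p(t)| \succ t \succ \log t$, while for constant $p$ the assumption $a \succ \log t$ directly gives $|a(t) - p(t)| \sim a(t) \succ \log t$. Theorem~\ref{T: Boshernitzan} then yields that $(a(n))_{n \in \N}$ is equidistributed modulo $1$. Given $\e \in (0, 1)$, I would pick $N_0$ so that $\theta_N < \e/2$ for all $N \geq N_0$. The set of bad $N \in [1, R]$ (those for which some $h \in [0, L(N)]$ violates the desired equality) is then contained in $[1, N_0) \cup \{N \in [N_0, R] \colon \{a(N)\} \in [1 - \e/2, 1)\}$, whose cardinality is at most $N_0 + (\e/2)R + o(R)$ by equidistribution, and thus at most $\e R$ once $R$ is large.

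There is no serious obstacle here: the statement is purely qualitative, and the only substantive input is equidistribution via Boshernitzan's theorem, whose hypothesis holds trivially because sub-fractional functions are far from every non-constant rational polynomial. In contrast to the preceding proposition, no quantitative rate on $\theta_N$ is needed, which is why the sub-fractional case can be treated independently of the exponential-sum machinery used for fast-growing functions.
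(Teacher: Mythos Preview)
Your proof is correct and follows essentially the same approach as the paper's: bound the oscillation of $a$ on $[N,N+L(N)]$ via the mean value theorem and Lemma~\ref{L: Frantzikinakis growth inequalities}, then invoke Boshernitzan's theorem to show that $\{a(N)\}$ lands in a safe zone away from the integers for all but $o(R)$ values of $N\le R$. The paper uses the two-sided safe zone $[\e/2,1-\e/2]$, which also covers the case $a(t)\to -\infty$ (note that $a\succ\log t$ only forces $|a(t)|\to\infty$, so your claim that $a$ is eventually positive and increasing need not hold), but your one-sided version is trivially symmetrized.
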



\begin{proof}
Observe that for any $h\in [0,L(N)]$, we have \begin{equation}\label{E: approximation of subfractional}
    a(N+h)=a(N)+ha'(\xi_h)
\end{equation}for some $\xi_h\in [N,N+h]$. In addition, since $a'(t)$ converges to 0 monotonically, we have $$|ha'(\xi_h)|\leq L(N)a'(N)\ll N^{1-\delta} a'(N) \lll 1,$$
where the last inequality follows from Lemma \ref{L: Frantzikinakis growth inequalities} and the assumption that $a(t)$ is sub-fractional.
  In particular, there exists a positive real number $q$, such that $|ha'(\xi_h)|\ll N^{-q}$, for all $h\in [0, L(N)]$.\footnote{We do not actually need this quantity to converge to zero faster than some power of $N$. The same argument applies if this quantity simply converges to zero.} 

   The sequence $a(n)$ is equidistributed mod 1 by Theorem \ref{T: Boshernitzan}, since it dominates the function $\log t$. Now, suppose that $\e>0$, and choose a number $R_0$ such that ${R_0^{-2q}}<\e/2$. Then, for $R\geq R_0$, the number of integers $N\in [R_0,R]$ such that $\{a(N)\}\in [\frac{\e}{2},1-\frac{\e}{2}]$ is $$(R-R_0)(1-\e+o_R(1))$$ due to the fact that $a(n)$ is equidistributed. For these values of $N$, we have that $$\{a(N)\}\notin [0,N^{-2q}]\cup [1-N^{-2q}, 1],$$ which implies that for all $h\in [0,L(N)]$, we have that $\floor{a(N+h)}=\floor{a(N)}$, as can be derived easily by \eqref{E: approximation of subfractional} and the fact that the error term is $O(N^{-q})$. If we consider the integers $N$ in the interval $[1,R_0]$ as well, then the number of ``bad values'' (that is, the numbers $N$ for which we do not have $\floor{a(N+h)}=\floor{a(N)}$ for every $h\in [0,L(N)]$) is at most $$R_0+(R-R_0)(\e+o_R(1)).$$ 
    Finally, choosing $R$ sufficiently large, we get that this number is smaller than $2\e R$ and the claim follows.
\end{proof}

In simplistic terms, what we have established is that if we restrict our attention to short intervals $[N, N+L(N)]$ for the natural numbers $N$, such that $\{a(N)\}\in [\e,1-\e]$, then we can just write $\floor{a(N+h)}=\floor{a(N)}$ for all $h\in [0, L(N)]$. Due to the equidistribution of $a(n)$ mod 1 (which follows from Theorem \ref{T: Boshernitzan}), this is practically true for almost all $N$, if we take $\e$ sufficiently small.

\subsubsection{The case of polynomial functions}

The final case is the case of functions of the form $p(t)+x(t)$, where $p$ is a polynomial
with real coefficients and $x(t)$ is a sub-fractional function. The equidistribution of the corresponding sequence will be affected only by the polynomial $p$ when restricted to short intervals. Nonetheless, the techniques of Proposition \ref{P: remove error term for fast functions} cannot be employed, because we cannot establish quantitative bounds on the exponential sums uniformly over all real polynomials. 
Therefore, we will use the following proposition, which allows us to calculate the integer parts in this case. Unlike the previous two propositions which can be bootstrapped to give a similar statement for several functions, we establish this one for several functions from the outset. We do not need to concern ourselves with rational polynomials, since these can be trivially reduced to the case of integer polynomials by passing to arithmetic progressions.

\begin{proposition}\label{P: remove error terms for polynomial functions}
 Let $k,d$ be positive integers, let $0<\e<1/2$ be a real number and let $w\in \N$. We define $W=\prod_{p\in \P\colon p\leq w}p$ and let $1\leq b\leq W$ be any integer with $(b,W)=1$.
    Suppose that $a_1,\dots, a_k\in \mathcal{H}$ are functions of the form $p_i(t)+x_i(t)$, where $p_i$ are polynomials of degree at most $d$ and with at least one irrational  non-constant coefficient, while $x_i(t)$ are sub-fractional functions.
    Finally, assume that $L(t)$ is a positive sub-linear function going to infinity and such that \begin{equation*}
        t^{\frac{5}{8}}\lll L(t)\lll t.\footnote{See the notational conventions for the definition of $\lll$.}
    \end{equation*}
    
    Then, for every $r$ sufficiently large in terms of $w$, $\frac{1}{\e}$, we have that there exists a subset $\mathcal{B}_{r,\e}$ of integers in the interval $[r,r+L(r)]$ with at most $O_k(\e L(r))$ elements, such that for all integers $n\in [r,r+L(r)]\setminus\mathcal{B}_{r,\e}$, we have   \begin{equation*}
    \floor{p_i(n)+x_i(n)}=\floor{p_i(n)+x_i(r)}.
\end{equation*}
Furthermore, the set $\mathcal{B}_{r,\e}$ satisfies \begin{equation}\label{E: bound on weighted sums of B_r}
   \frac{1}{L(r)} \sum_{r\leq n\leq r+L(r)} \La_{w,b}(n){\bf 1}_{\mathcal{B}_{r,\e}}(n)\ll_{k,d} \e +o_w(1)\log \frac{1}{\e}+o_r(1).
\end{equation}
\end{proposition}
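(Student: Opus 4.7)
The plan is to adapt the treatment of the term $T^{\floor{an^2+\log n}}f_2$ from the model example in Subsection 4.2 so as to accommodate $k$ functions of arbitrary polynomial degree. First, since each $x_i$ is a sub-fractional element of $\mathcal{H}$, Lemma~\ref{L: Frantzikinakis growth inequalities} (or a direct computation when $x_i$ is bounded) gives $|x_i'(t)| \ll |x_i(t)|/t \prec t^{\delta-1}$ for every $\delta>0$. Combined with $L(t) \lll t$, the mean value theorem yields
$$\max_{n\in[r,r+L(r)]}|x_i(n)-x_i(r)| \;\leq\; L(r)\cdot\max_{t\in[r,r+L(r)]}|x_i'(t)| \;\ll\; r^{-\eta}$$
for some fixed $\eta>0$; call this bound $\delta_r\to 0$. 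Hence for any $n\in [r,r+L(r)]$ with $\{p_i(n)+x_i(r)\}\in[\delta_r,1-\delta_r]$, we have $\floor{p_i(n)+x_i(n)} = \floor{p_i(n)+x_i(r)}$. For $r$ large enough that $\delta_r<\e$, I therefore take
$$\mathcal{B}_{r,\e} = \bigcup_{i=1}^{k}\mathcal{B}_{r,\e}^{(i)}, \qquad \mathcal{B}_{r,\e}^{(i)} := \{n\in[r,r+L(r)] : \{p_i(n)+x_i(r)\}\in[0,\e]\cup[1-\e,1)\}.$$

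To bound $|\mathcal{B}_{r,\e}^{(i)}|$, I fix the \emph{largest} index $j\geq 1$ for which the coefficient $\alpha_j$ of $p_i$ is irrational (which exists by hypothesis). The coefficient of $h^{j}$ in the shifted polynomial $p_i(r+h)+x_i(r)$ equals $\alpha_j$ plus a rational number depending on $r$, hence is itself irrational; all higher-degree coefficients are rational. Theorem~\ref{T: Weyl} therefore guarantees that $(p_i(n)+x_i(r))_{n\in\N}$ is well-distributed modulo $1$, and in particular for every non-zero $m\in\Z$,
$$\Bigabs{\E_{r\leq n\leq r+L(r)} e\big(m(p_i(n)+x_i(r))\big)} = o_r(1).$$
An application of Theorem~\ref{T: Erdos-Turan} with $M=\lceil 1/\e\rceil$ yields $|\mathcal{B}_{r,\e}^{(i)}|/L(r) \ll \e + o_r(1)$, and summing over $i$ gives $|\mathcal{B}_{r,\e}| \ll_k \e L(r)$.

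For the weighted estimate \eqref{E: bound on weighted sums of B_r}, I apply Theorem~\ref{T: Erdos-Turan} to the probability measure on $\T$ given by
$$\nu_i(S) := \frac{\displaystyle\sum_{r\leq n\leq r+L(r)}\La_{w,b}(n)\,\delta_{\{p_i(n)+x_i(r)\}}(S)}{\displaystyle\sum_{r\leq n\leq r+L(r)}\La_{w,b}(n)}.$$
Corollary~\ref{C: Brun-Titchmarsh inequality for von Mangoldt sums}, together with the standard complementary lower bound from the prime number theorem in arithmetic progressions (valid since $L(r)\ggg r^{5/8}$), ensures that $\sum_{n}\La_{w,b}(n) \asymp L(r)$. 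For the Fourier coefficients I split $\La_{w,b}=(\La_{w,b}-1)+1$: the first piece is $o_w(1)$ uniformly in $b$ by Lemma~\ref{L: discorrelation of W-tricked with polynomial phases}, while the second is $o_r(1)$ by the displayed estimate of the previous paragraph. Hence $|\widehat{\nu_i}(m)|\ll o_w(1)+o_r(1)$ uniformly in $1\leq m\leq M$ and in $b$ with $(b,W)=1$, and Theorem~\ref{T: Erdos-Turan} yields $\nu_i(\mathcal{B}_{r,\e}^{(i)}) \ll \e + o_w(1)\log(1/\e) + o_r(1)$. Multiplying by the upper bound on the denominator and summing over $i$ proves \eqref{E: bound on weighted sums of B_r}.

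The main technical obstacle is securing the irrationality of a non-constant coefficient of the shifted polynomial $p_i(r+h)+x_i(r)$ uniformly in $r$; this is what forces the choice of the \emph{highest} irrational coefficient of $p_i$, since lower-indexed irrational coefficients might be absorbed by the rational perturbations coming from the integer shift by $r$. A secondary bookkeeping issue is tracking the order of limits in the subsequent application of the proposition (first $r\to\infty$, then $w\to\infty$, finally $\e\to 0^+$), which is precisely why the extra factor $\log(1/\e)$ multiplying $o_w(1)$ in the bound is harmless.
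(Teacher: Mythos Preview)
Your argument is correct and matches the paper's proof essentially step for step: the same set $\mathcal{B}_{r,\e}$, the same use of Weyl well-distribution for the size bound, and the same Erd\H{o}s--Tur\'an argument applied to the $\Lambda_{w,b}$-weighted measure for \eqref{E: bound on weighted sums of B_r}. Two minor points of organization: your careful tracking of which coefficient of $p_i(r+h)$ is irrational is more than needed, since well-distribution of $(p_i(n))_n$ already handles all translates uniformly (and the constant shift $x_i(r)$ is harmless); and where you invoke a two-sided bound $S_r\asymp L(r)$ to control $|\widehat{\nu_i}(m)|$, the paper instead multiplies the Erd\H{o}s--Tur\'an inequality through by $S_r$ \emph{before} inserting the exponential-sum estimates, so that only the Brun--Titchmarsh upper bound $S_r\ll L(r)$ is ever required.
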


\begin{remark}
    The $o_r(1)$ term depends on the fixed parameters $w,\e$. However, in our applications, we will send $r\to+\infty$, then we will send $w\to+\infty$,  and then $\e\to 0$. We shall reiterate this observation in the proof of Theorem \ref{T: the main comparison}.
    On the other hand, the $o_w(1)$ term is the same as the one in \ref{L: discorrelation of W-tricked with polynomial phases} and depends on the degree $d$ of the polynomials, which will be fixed in applications. 
\end{remark}

\begin{proof}[Proof of Proposition \ref{P: remove error terms for polynomial functions}]
    Fix an index $1\leq i\leq k$ and consider a sufficiently large integer $r$. Using the mean value theorem and the fact that $|x_i'(t)|$ decreases to 0 faster than all fractional powers by Lemma \ref{L: Frantzikinakis growth inequalities}, we deduce that \begin{equation*}
\max_{0\leq h\leq L(r)} |x_{i}(r+h)-x_i(r)|\leq L(r)|x'_i(r)|\lll 1.
\end{equation*}In particular, there exists $\delta_0>0$ depending only on the functions $a_1,\dots, a_k$ and $L(t)$, such that \begin{equation}\label{E: locally constant function}
    \max_{0\leq h\leq L(r)} |x_{i}(r+h)-x_i(r)|\ll r^{-\delta_0}
\end{equation}for all $1\leq i\leq k$. Thus, we observe that if $\{p_i(n)+x_i(r)\}\in (\e,1-\e)$ and $r$ is large enough in terms of $1/\e$, then we have that \begin{equation*}
    \floor{p_i(n)+x_i(n)}=\floor{p_i(n)+x_i(r)}.
\end{equation*}Naturally, we consider the set \begin{equation}
    \mathcal{B}_{i,r,\e}=\{n\in [r,r+L(r)]\colon \{p_i(n)+x_i(r)\}\in [0,\e]\cup [1-\e,1)\}
\end{equation}and take $\mathcal{B}_{r,\e}=\mathcal{B}_{1,r,\e}\cup\dots\cup\mathcal{B}_{k,r,\e} $.
Now, we observe that the polynomial sequence $p_i$ is well-distributed modulo 1, since it has at least one non-constant irrational coefficient. Therefore, if $r$ is large enough, we have that the set $\mathcal{B}_{i,r,\e}$ has less than $3\e L(r)$ elements (say). Using the union bound, we conclude that the set $\mathcal{B}_{r,\e}$ has $O(\e k L(r))$ elements.
This shows the first requirement of the proposition.

We have to establish \eqref{E: bound on weighted sums of B_r}. We shall set $M=\floor{\e^{-1}}$ for brevity so that $r$ is assumed to be very large in terms of $M$.
Since the polynomials $p_i$ have at least one non-constant irrational coefficient, we can use Weyl's criterion for well-distribution (see, for instance, \cite[Theorem 5.2, Chapter 1]{Kuipers-Niederreiter}) to conclude that \begin{equation*}
    \max_{1\leq m\leq M} \Bigabs{\E_{r\leq n\leq r+L(r)}  e\big(m(p_i(n)+x_i(r)  ) \big)}=o_r(1),
\end{equation*}for all $r$ sufficiently large in terms of $M$, as we have assumed to be the case.\footnote{ A bound that is uniform over all $m\in \N$ is in general false, so we have to restrict $m$ to a finite range.} On the other hand, Lemma \ref{L: discorrelation of W-tricked with polynomial phases} implies that \begin{equation*}
    \max_{1\leq m\leq M} \Bigabs{ \E_{r\leq n\leq r+L(r)} \big(\La_{w,b}(n)-1 \big) e\big(m(p_i(n)+x_i(r)  ) \big)}=o_w(1)
\end{equation*}for $r$ sufficiently large in terms of $w$. Combining the last two bounds, we deduce that 
\begin{equation}\label{E: W-tricked Lambda exponential sums}
     \max_{1\leq m\leq M} \Bigabs{ \E_{r\leq n\leq r+L(r)} \La_{w,b}(n) e\big(m(p_i(n)+x_i(r)  ) \big)}=o_w(1)+o_r(1).
\end{equation}
Since we have estimates on the exponential sums weighted by $\La_{w,b}(n)$, we can now make the passage to \eqref{E: bound on weighted sums of B_r}. To this end, we apply Theorem \ref{T: Erdos-Turan} for the probability measure \begin{equation*}
    \nu(S)=\frac{\sum\limits_{r\leq n\leq r+L(r)} \La_{w,b}(n)\delta_{\{p_i(n)+x_i(r)\}}(S)   }{\sum\limits_{r\leq n\leq r+L(r)}\La_{w,b}(n)  }.\footnote{ The denominator is non-zero if $r$ is large enough.}
\end{equation*}Setting $$S_r=\sum_{r\leq n\leq r+L(r)}\La_{w,b}(n)$$ for brevity, we conclude that \begin{multline}
    \frac{\sum\limits_{r\leq n\leq r+L(r)} \La_{w,b}(n)\delta_{\{p_i(n)+x_i(r)\}}\big([0,\e]\cup[1-\e,1)\big)   }{S_r }\ll 2\e+ \frac{1}{M} +\\
    \sum_{m=1}^{M}\frac{1}{m}\Bigabs{\frac{1}{S_r}\sum_{r\leq n\leq r+L(r)}  \La_{w,b}(n) e\big(m(p_i(n)+x_i(r)  ) \big)  },
\end{multline}
where the implied constant is absolute.
Applying the bounds in \eqref{E: W-tricked Lambda exponential sums} and recalling the definition of $\mathcal{B}_{i,r,\e}$, we conclude that \begin{multline}\label{E: forza aekara}
    \sum\limits_{r\leq n\leq r+L(r)} \La_{w,b}(n){\bf 1}_{\mathcal{B}_{i,r,\e}}(n)\ll \Big(\e+\frac{1}{M}\Big)S_r+\sum_{m=1}^M \frac{L(r)}{m}(o_w(1)+o_r(1))\\
    \ll \e S_r+ L(r)\big(o_w(1)+o_r(1)\big)\log \frac{1}{\e},
\end{multline}since $M=\floor{\e^{-1}}$.
Finally, we bound $S_r$ by applying Corollary \ref{C: Brun-Titchmarsh inequality for von Mangoldt sums} to conclude that \begin{multline}\label{E: sieve upper bound on the S_r}
    S_r=\frac{\phi(W)}{W}\sum_{\underset{n\equiv b\;(W)}{Wr+b\leq n\leq Wr+b+WL(r)}} \La(n)\leq \frac{\phi(W)}{W}\Big( \frac{2WL(r)\log r}{\phi(W)\log\big(\frac{L(r)}{W}\big)} +\\
    O\Big(\frac{L(r)}{\log r}\Big)+
    O(r^{1/2}\log r)\Big)\ll L(r)(1+o_r(1)),
\end{multline}where we used the fact that $L(r)\gg t^{5/8}$ to bound the first fraction by an absolute constant. Applying this in \eqref{E: forza aekara}, we conclude that \begin{equation*}
    \frac{1}{L(r)} \sum_{r\leq n\leq r+L(r)} \La_{w,b}(n){\bf 1}_{\mathcal{B}_{i,r,\e}}(n)\ll \e (1+o_r(1))+\big(o_w(1)+o_r(1)\big)\log \frac{1}{\e}.
\end{equation*}

Finally, we recall that $\mathcal{B}_{r,\e}=\mathcal{B}_{1,r,\e}\cup\dots\cup\mathcal{B}_{k,r,\e}$ and use the union bound to get\begin{equation*}
    \frac{1}{L(r)} \sum_{r\leq n\leq r+L(r)} \La_{w,b}(n){\bf 1}_{\mathcal{B}_{r,\e}}(n)\ll_k \e++o_w(1)\log \frac{1}{\e}+o_r(1),
\end{equation*}provided that $r$ is very large in terms of $1/\e, w$. This is the desired conclusion.
\end{proof}

\subsection{Simultaneous approximation of Hardy field functions}

In view of Proposition~\ref{P: remove error term for fast functions}, we would like to show that we can find a function $L(t)$ such that the growth rate condition of the statement is satisfied for several functions in $\mathcal{H}$ simultaneously.
This is the content of the following lemma. We will only need to consider the case where the functions dominate some fractional power, since for sub-fractional functions, we have Propositions \ref{P: remove error term for slow functions} and \ref{P: remove error terms for polynomial functions} that can cover them adequately. We refer again to our notational conventions in Section \ref{Section-Introduction} for the notation $\lll$.

\begin{proposition}\label{P: Taylor expansion final000}
    Let $\ell\in \N$  and suppose $a_1,\dots,a_{\ell}\in \mathcal{H}$ are strongly non-polynomial functions of polynomial growth that are not sub-fractional. Then, for all $0<c<1$, there exists a positive sub-linear function $L(t)$, such that $t^c\ll L(t)\ll t^{1-\e}$ for some $\e>0$ and such that, for all $1\leq i\leq \ell,$ there exist positive integers $k_i$, which satisfy \begin{equation*}
   1\lll \bigabs{a_i^{(k_i)}(t)}^{-\frac{1}{k_i}}\lll L(t)\lll \bigabs{a_i^{(k_i+1)}(t)}^{-\frac{1}{k_i+1}}.
\end{equation*}
 Furthermore, the integers $k_i$ can be chosen to be arbitrarily large, provided that $c$ is sufficiently close to 1.
\end{proposition}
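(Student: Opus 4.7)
The plan is to take $L(t)=t^{c'}$ for a suitable exponent $c'\in [c,1)$ close to $1$, and to read off each $k_i$ from the position of $c'$ within the sequence of logarithmic growth rates of $|a_i^{(k)}|^{-1/k}$; the strong-domination chain $\lll$ in the statement then reduces to strict inequalities between real numbers.

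Introduce, for $f\in\mathcal{H}$ of polynomial growth, the logarithmic growth rate $\tau(f):=\lim_{t\to\infty}\log f(t)/\log t$. The limit exists in $[0,\infty)$ because $\log f/\log t$ belongs to a Hardy field extension of $\mathcal{H}$ and the bound $f\ll t^m$ makes it finite. The functional $\tau$ is additive on products, vanishes on sub-polynomial factors (in particular on powers of $\log t$), and crucially $\tau(f)<\tau(g)$ implies $f\lll g$: if $\delta:=\tau(g/f)>0$, then $\log(g/f)/\log t\to\delta$ gives $g/f\geq t^{\delta/2}$ for all $t$ sufficiently large, which is exactly the polynomial saving required by $\lll$. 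Setting $\tau_i:=\tau(a_i)$, the hypotheses (polynomial growth and not sub-fractional) force $\tau_i\in(0,\infty)$. Iterating Lemma \ref{L: Frantzikinakis growth inequalities} along the derivatives of $a_i$---this is legitimate by Remark \ref{R: why we can use Lemma 4.1 for derivatives} since $a_i$ is strongly non-polynomial---one obtains $\tau(a_i^{(k)})=\tau_i-k$ for every $k\in\mathbb{N}$, hence
\[
\tau\bigl(|a_i^{(k)}|^{-1/k}\bigr)=1-\tau_i/k.
\]

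Given $c\in(0,1)$, observe that the set $\mathcal{B}:=\bigcup_{i=1}^{\ell}\{c''\in(0,1):\tau_i/(1-c'')\in\mathbb{Z}\}$ is countable, so one may choose $c'\in[c,1)\setminus\mathcal{B}$ close enough to $1$ that $c'>1/(\tau_i+1)$ for every $i=1,\ldots,\ell$; this is always possible because $1/(\tau_i+1)<1$ for each $i$. Set $L(t):=t^{c'}$, which is sub-linear with $L\gg t^c$ and $L\ll t^{1-\varepsilon}$ for $\varepsilon:=1-c'>0$, and define $k_i:=\lfloor\tau_i/(1-c')\rfloor$. From $c'\notin\mathcal{B}$ we get the strict inequality $k_i<\tau_i/(1-c')<k_i+1$, while $c'>1/(\tau_i+1)$ combined with the floor bound $\lfloor x\rfloor>x-1$ gives $k_i>\tau_i/(1-c')-1>\tau_i$, in particular $k_i\geq 1$. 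Rearranging produces the strict chain
\[
0<1-\tau_i/k_i<c'<1-\tau_i/(k_i+1),
\]
which by the translation rule $\tau(f)<\tau(g)\Rightarrow f\lll g$ is precisely
\[
1\;\lll\;|a_i^{(k_i)}|^{-1/k_i}\;\lll\;L\;\lll\;|a_i^{(k_i+1)}|^{-1/(k_i+1)}.
\]
For the final assertion, as $c$ approaches $1$ any admissible $c'$ must also approach $1$, forcing $k_i=\lfloor\tau_i/(1-c')\rfloor\to\infty$.

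The main technical point to verify carefully is the translation $\tau(g)>\tau(f)\Rightarrow f\lll g$, which is where the Hardy field structure is used in an essential way, through the existence and finiteness of the limit $\log(g/f)/\log t$ and the fact that log factors do not perturb $\tau$; once this is in place, everything else is bookkeeping about floor functions and the avoidance of the countable bad set $\mathcal{B}$.
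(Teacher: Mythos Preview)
Your proof is correct and takes a genuinely different route from the paper's. The paper argues by induction on $\ell$: in the base case it establishes the strong-domination gap $|a^{(k)}|^{-1/k}\lll |a^{(k+1)}|^{-1/(k+1)}$ directly from Lemma~\ref{L: Frantzikinakis growth inequalities} and takes $L$ to be the geometric mean of two consecutive terms; in the inductive step it starts from the $L$ obtained for $a_1,\dots,a_\ell$, locates where it sits relative to the chain for $a_{\ell+1}$ via Proposition~\ref{P: Taylor expansion main}, and then replaces $L$ by a new geometric mean if one of the two strong dominations fails. Your argument bypasses the induction entirely by introducing the growth exponent $\tau(f)=\lim \log f/\log t$, computing $\tau(|a_i^{(k)}|^{-1/k})=1-\tau_i/k$ once and for all, and then choosing $L(t)=t^{c'}$ with $c'$ generic (avoiding the countable set where some $\tau_i/(1-c')$ is an integer). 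The chain of strong dominations then reduces to the elementary chain of strict real inequalities $0<1-\tau_i/k_i<c'<1-\tau_i/(k_i+1)$, and all functions are handled simultaneously.

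What each approach buys: the paper never needs to name $\tau$ or argue that it exists, working only with the comparability structure of $\mathcal{H}$ and Lemma~\ref{L: Frantzikinakis growth inequalities}; the cost is the somewhat delicate inductive adjustment of $L$. Your approach is cleaner and more transparent once $\tau$ is in hand, and it gives the explicit form $L(t)=t^{c'}$, but it rests on the existence of the limit $\log f/\log t$, which you correctly extract from the Hardy field structure (via embedding into an extension containing $\log f$, or equivalently from comparability with powers). Note that the formula $\tau(a_i^{(k)})=\tau_i-k$ is exactly what the sandwich $a_i/(t^k\log^{2k}t)\prec a_i^{(k)}\ll a_i/t^k$ (from iterating Lemma~\ref{L: Frantzikinakis growth inequalities}, licensed by Remark~\ref{R: why we can use Lemma 4.1 for derivatives}) gives after dividing by $\log t$; this is the same ingredient the paper uses, just packaged differently.
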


\begin{proof}
    We will use induction on $\ell$. For $\ell=1$, it suffices to show that there exists a positive integer $k$, such that  the function $\bigabs{a^{(k+1)}(t)}^{-\frac{1}{k+1}}$ strongly dominates the function $\bigabs{a^{(k)}(t)}^{-\frac{1}{k}}$. Then, we can pick the function $L(t)$ to be the geometric mean of these two functions to get our claim.\footnote{It is straightforward to check that if $f\lll g$, then $f\lll \sqrt{fg}\lll g$, assuming, of course, that the square root is well-defined (e.g. when the functions $f,g$ are eventually positive).}

Firstly, note that if we pick $k$ sufficiently large, then we can ensure that  $(a^{(k)}(t))^{-\frac{1}{k}}\gg t^{c}$, which would also imply the lower bound on the other condition imposed on the function $L(t)$.
To see why this last claim is valid, observe that the derivatives of $a$ satisfy the assumptions of Lemma \ref{L: Frantzikinakis growth inequalities}, so that we have $a^{(k)}(t)\ll t^{-k}a(t) $. Thus, if $d$ is a positive integer, such that $t^{d}$ grows faster than $a(t)$ and we choose $k>\frac{d}{c}-1$, we verify that our claim holds. 

Secondly, we will show that for all $k\in \N$, we have $$\bigabs{a^{(k)}(t)}^{-\frac{1}{k}}\ll t^{1-\e}$$ for some $0<\e<1$, as this relation (with $k+1$ in place of $k$) will yield the upper bound on the growth of the function $L(t)$ that we chose above. For the sake of contradiction, we assume that this fails and use the lower bound from Lemma \ref{L: Frantzikinakis growth inequalities}, to deduce that \begin{equation*}
    t^{k(\e-1)}\gg a^{(k)}(t) \succ \frac{a(t)}{t^{k}\log^{2k} t}
\end{equation*}for every $0<\e<1$. This, implies that $a(t)\ll t^{k\e}\log^{2k}t$ for all small $\e$, which contradicts the hypothesis that $a(t)$ is not sub-fractional.
 We remark in passing that this argument also indicates that the integer $k$ can be made arbitrarily large by choosing $c$ to be sufficiently close to $1$, as the last claim in our statement suggests.

In order to complete the base case of the induction, we show that for all sufficiently large $k$, we have $$\bigabs{a^{(k)}(t)}^{-\frac{1}{k}}\lll\bigabs{a^{(k)}(t)}^{-\frac{1}{k+1}}.$$
Equivalently, we prove that \begin{equation}\label{E: fractionally away}
        \frac{ \bigabs{a^{(k+1)}(t)}^{-\frac{1}{k+1}}}{ \bigabs{a^{(k)}(t)}^{-\frac{1}{k}}}\gg t^{\delta}
    \end{equation}for some $\delta>0$ that will depend on $k$. Choose a real number $0<q<1$ (the value of $q$ depends on $k$), such that  $\bigabs{a^{(k)}(t)}^{-\frac{1}{k}}\ll t^{1-q}$, which can be done as we demonstrated above.
In order to establish \eqref{E: fractionally away}, we combine the inequality $a^{(k)}(t)\gg ta^{(k+1)}(t)$ with the inequality $\bigabs{a^{(k)}(t)}^{-\frac{1}{k}}\ll t^{1-q}$, which after some computations gives the desired result for $\delta=q/(k+1)$. This completes the base case.

    Assume  that the claim has been established for the integer $\ell$. Now, let $a_1,\dots, a_{\ell+1}$ be functions that satisfy the hypotheses of the proposition. Our induction hypothesis implies that there exists a function $L(t)$ with $t^c \ll L(t)\ll t^{1-\e}$  and integers $k_1,\dots,k_{\ell}$, such that \begin{equation*}
        \bigabs{a_i^{(k_i)}(t)}^{-\frac{1}{k_i}}\lll L(t)\lll \bigabs{a_i^{(k_i+1)}(t)}^{-\frac{1}{k_i+1}},\;\;1\leq i\leq \ell.
    \end{equation*} Due to Proposition \ref{P: Taylor expansion main}, there exists a positive integer $s$, such that \begin{equation}\label{E: prec equation}
        \bigabs{a_{\ell+1}^{(s)}(t)}^{-\frac{1}{s}}\prec L(t)\prec \bigabs{a_{\ell+1}^{(s+1)}(t)}^{-\frac{1}{s+1}}.
    \end{equation}
    Without loss of generality, we may assume that $c$ is sufficiently close to 1. This implies that the integer $s$ can be chosen to be sufficiently large as well, so that the relation $\bigabs{a_{\ell+1}^{(s)}(t)}^{-\frac{1}{s}}\lll \bigabs{a_{\ell+1}^{(s+1)}(t)}^{-\frac{1}{s+1}}$ holds, as we established in the base case of the induction.

    If each function strongly dominates the preceding one in \eqref{E: prec equation}, then we are finished. Therefore, assume that $L(t)$ is not strongly dominated by the function $\bigabs{a_{\ell+1}^{(s+1)}(t)}^{-\frac{1}{s+1}}$ (the other case is similar). Note that for every $1\leq i\leq \ell$, we have that \begin{equation*}
    \bigabs{a_i^{(k_i)}(t)}^{-\frac{1}{k_i}}    \lll \bigabs{a_{\ell+1}^{(s+1)}(t)}^{-\frac{1}{s+1}}.
    \end{equation*}Indeed, since the function $L(t)$ strongly dominates the function $\bigabs{a_i^{(k_i)}(t)}^{-\frac{1}{k_i}}$ (by the induction hypothesis) and $L(t)$ grows slower than the the function $ \bigabs{a_{\ell+1}^{(s+1)}(t)}^{-\frac{1}{s+1}}$, this claim follows immediately. Among the functions $a_1,\dots,a_{\ell+1}$, we choose a function for which the growth rate of $\bigabs{a_i^{(k_i)}(t)}^{-\frac{1}{k_i}} $ is maximized.\footnote{ In the case $i=\ell+1$, we are referring to the function $\bigabs{a_i^{(s)}(t)}^{-\frac{1}{s}}$.} Assume that this happens for the index $i_0\in \{1,\dots,\ell+1\}$ and observe that the function $\bigabs{a_{\ell+1}^{(s+1)}(t)}^{-\frac{1}{s+1}}$ strongly dominates $\bigabs{a_{i_0}^{(k_{i_0})}(t)}^{-\frac{1}{k_{i_0}}} $, because the first function grows faster than $L(t)$ and $L(t)$ strongly dominates the latter (in the case $i_0={\ell+1}$, this follows from the fact that $\bigabs{a_{\ell+1}^{(s)}(t)}^{-\frac{1}{s}}\lll \bigabs{a_{\ell+1}^{(s+1)}(t)}^{-\frac{1}{s+1}}$).
    
    Define the function ${\wt{L}(t)}$ to be the geometric mean of the functions $\bigabs{a_{i_0}^{(k_{i_0})}(t)}^{-\frac{1}{k_{i_0}}} $ and $\bigabs{a_{\ell+1}^{(s+1)}(t)}^{-\frac{1}{s+1}} $. 
     Observe that this function grows slower than the function $L(t)$, since it is strongly dominated by the function $\bigabs{a_{\ell+1}^{(s+1)}(t)}^{-\frac{1}{s+1}},$ while the original function $L(t)$ is not. Due to its construction, we deduce that the function ${\wt{L}(t)}$ satisfies \begin{equation*}
         \bigabs{a_{\ell+1}^{(s)}(t)}^{-\frac{1}{s}}\lll {\wt{L}(t)}\lll \bigabs{a_{\ell+1}^{(s+1)}(t)}^{-\frac{1}{s+1}}
    \end{equation*}and \begin{equation*}
         \bigabs{a_i^{(k_i)}(t)}^{-\frac{1}{k_i}}\lll {\wt{L}(t)}
    \end{equation*}for all $1\leq i\leq \ell$. This is a simple consequence of the fact that $\wt{L}(t)$ strongly dominates the function $\bigabs{a_{i_0}^{(k_{i_0})}(t)}^{-\frac{1}{k_{i_0}}} $ and the index $i_0$ was chosen so that the growth rate of the associated function is maximized.
    In addition, the function $L(t)$ grows faster than the function ${\wt{L}(t)}$, which implies that \begin{equation*}
        {\wt{L}(t)}\prec L(t) \lll \bigabs{a_i^{(k_i+1)}(t)}^{-\frac{1}{k_i+1}}
    \end{equation*}for all $1\leq i\leq \ell$. The analogous relation in the case $i=\ell+1$ is also correct, as we pointed out previously. Therefore, the function ${\wt{L}(t)}$ satisfies all of our required properties and the induction is complete.

    Finally, the assertion that the integers $k_i$ can be made arbitrarily large follows by enlarging $c$ appropriately and the fact that given a fixed $k_i\in \N$, the function $\bigabs{a_i^{(k_i+1)}(t)}^{-\frac{1}{k_i+1}}$ cannot dominate all powers $t^c$ with $c<1$, as we displayed in the base case of the induction.
 \end{proof}

We can actually weaken the hypothesis that the functions are strongly non-polynomial. The following proposition is more convenient to use and its proof is an immediate consequence of Proposition \ref{P: Taylor expansion final000}.
\begin{proposition}\label{P: Taylor expansion final}
      Let $\ell\in \N$ and suppose $a_1,\dots,a_{\ell}\in \mathcal{H}$ are  functions of polynomial growth, such that $|a_i(t)-p(t)|\ggg 1$, for all real polynomials $p(t)$ and every $i\in \{1,\dots, \ell\}$. Then, for all $0<c<1$, there exists a positive sub-linear function $L(t)$, such that $t^c\prec L(t)\ll t^{1-\e}$ for some $\e>0$ and such that there exist positive integers $k_i$, which satisfy \begin{equation*}
   1\lll \bigabs{a_i^{(k_i)}(t)}^{-\frac{1}{k_i}}\lll L(t)\lll \bigabs{a_i^{(k_i+1)}(t)}^{-\frac{1}{k_i+1}}.
\end{equation*}
\end{proposition}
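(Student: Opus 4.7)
The plan is to reduce this to Proposition~\ref{P: Taylor expansion final000}: I will write each $a_i$ as $p_i + b_i$, where $p_i$ is a real polynomial and $b_i \in \mathcal{H}$ is strongly non-polynomial, of polynomial growth, and not sub-fractional. Once this decomposition is in hand, I invoke the previous proposition to find $L(t)$ and integers $m_i$ adapted to the family $b_1,\dots,b_\ell$, and then observe that for $m_i$ sufficiently large (specifically, larger than $\deg p_i$), the derivatives $a_i^{(m_i)}$ and $b_i^{(m_i)}$ coincide, so the conclusion transfers to the $a_i$'s.

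To construct the decomposition, I first note that no $a_i$ can be sub-fractional: choosing $p=0$ in the hypothesis would give $|a_i(t)| \gg t^{\delta}$ for some $\delta > 0$, contradicting sub-fractionality. Now I run the following iteration starting from $a_i$: at each step, if the current function has the same growth rate as some integer monomial $t^{d}$, i.e., the ratio tends to a nonzero constant $c$, I subtract off $c\, t^{d}$ (which strictly decreases the degree of the polynomial part); otherwise I stop. Since the degree strictly decreases, the process terminates and produces a polynomial $p_i$ such that $b_i := a_i - p_i$ has no integer-power growth rate, i.e., is strongly non-polynomial. Moreover, $b_i \in \mathcal{H}$ because $\mathcal{H}$ contains the polynomials, and applying the hypothesis to $p = p_i$ yields $|b_i(t)| \gg t^{\delta}$ for some $\delta > 0$, so $b_i$ is not sub-fractional either.

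Finally, I apply Proposition~\ref{P: Taylor expansion final000} to $b_1,\dots,b_\ell$ and obtain, for any $0 < c < 1$, a positive sub-linear function $L(t)$ with $t^c \ll L(t) \ll t^{1-\e}$, together with positive integers $m_i$ satisfying
\[
1 \lll \bigabs{b_i^{(m_i)}(t)}^{-\frac{1}{m_i}} \lll L(t) \lll \bigabs{b_i^{(m_i+1)}(t)}^{-\frac{1}{m_i+1}}.
\]
The ``arbitrarily large'' clause in that proposition lets me take $c$ close enough to $1$ that each $m_i$ exceeds $\deg p_i$. Then $p_i^{(m_i)} \equiv 0$ and $p_i^{(m_i+1)} \equiv 0$, so $a_i^{(m_i)} = b_i^{(m_i)}$ and $a_i^{(m_i+1)} = b_i^{(m_i+1)}$, and setting $k_i := m_i$ gives the desired inequalities for the original functions $a_i$. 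The hard analytic work has already been done in the previous proposition; the only real obstacle here is extracting the polynomial part of each $a_i$ cleanly, and that is handled by the terminating iterative subtraction described above.
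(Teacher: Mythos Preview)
Your proposal is correct and follows essentially the same approach as the paper: decompose each $a_i$ as a real polynomial $p_i$ plus a strongly non-polynomial remainder $b_i\in\mathcal{H}$, observe via the hypothesis (with $p=p_i$) that $b_i$ is not sub-fractional, apply Proposition~\ref{P: Taylor expansion final000} to the $b_i$'s with $c$ close enough to $1$ that the resulting $k_i$ exceed $\deg p_i$, and conclude since then $a_i^{(k_i)}=b_i^{(k_i)}$. The only difference is that you spell out the iterative extraction of the polynomial part explicitly, whereas the paper simply asserts the decomposition; this is a standard fact about Hardy fields and your construction is fine.
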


\begin{proof}
    Each of the functions $a_i$ can be written in the form $p_i(t)+x_i(t)$, where $p_i$ is a polynomial with real coefficients and $x_i\in \mathcal{H}$ is strongly non-polynomial. The hypothesis implies that the functions $x_i$ are not sub-fractional.
    If $k$ is large enough, then we have $a_i^{(k)}(t)=x_i^{(k)}(t)$ for all $t\in \R$. The conclusion follows from Proposition \ref{P: Taylor expansion final000} applied to the functions $x_i(t)$, where the corresponding integers $k_i$ are chosen large enough so that the equality $a_i^{(k_i)}(t)=x_i^{(k_i)}(t)$ holds.
\end{proof}

\section{The main comparison}\label{Section-The main comparison}

In this section, we will establish the main proposition that asserts that averages weighted by the W-tricked von-Mangoldt function are morally equal to the standard Ces\`{a}ro averages over $\N$. In order to do this, we will use the polynomial approximations for our Hardy field functions and we will try to remove the error terms arising from these approximations using Propositions \ref{P: remove error term for fast functions}, \ref{P: remove error term for slow functions} and \ref{P: remove error terms for polynomial functions}. Firstly, we will use a lemma that allows us to pass from long averages over the interval $[1,N]$ to shorter averages over intervals of the form $[N,N+L(N)]$. This lemma is similar to \cite[Lemma 3.3]{Tsinas}, the only difference being the presence of the unbounded weights.

\begin{lemma}\label{L: long averages to short averages}
    Let $(A_{n})_{n\in \N}$ be a sequence in a normed space, such that $\norm{A_{n}}\leq 1$ and let $L(t)\in\mathcal{H}$ be an (eventually) increasing sub-linear function, such that $L(t)\gg t^{\e}$ for some $\e>0$. Suppose that $w$ is a fixed natural number. Then, we have \begin{equation*}
       \Bignorm{\underset{1\leq r\leq R}{\E} \big(\La_{w,b}(r)-1\big)A_r}\leq   \underset{1\leq r\leq R}{\E}  \ \Bignorm{\underset{r\leq n\leq r+L(r)}{\E} \big(\La_{w,b}(n)-1\big)  A_n}+o_R(1),
    \end{equation*}uniformly for all $1\leq b\leq W$ with $(b,W)=1$.
\end{lemma}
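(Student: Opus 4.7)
The plan is to insert an inner short average and use the triangle inequality twice. Setting $B_s := (\Lambda_{w,b}(s) - 1)A_s$ so that $\norm{B_s}\le \Lambda_{w,b}(s) + 1$, I write
\begin{equation*}
\Bignorm{\E_{1\le r\le R} B_r} \le \Bignorm{\E_{1\le r\le R}\E_{r\le n\le r+L(r)} B_n} + E_R \le \E_{1\le r\le R}\Bignorm{\E_{r\le n\le r+L(r)} B_n} + E_R,
\end{equation*}
where
\begin{equation*}
E_R := \Bignorm{\E_{1\le r\le R} B_r - \E_{1\le r\le R}\E_{r\le n\le r+L(r)} B_n},
\end{equation*}
the first inequality coming from the triangle inequality after adding and subtracting the double average, and the second pushing the norm through the outer $r$-average via $\norm{\E_r X_r} \le \E_r \norm{X_r}$. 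This is exactly the assertion of the lemma once I show $E_R = o_R(1)$ uniformly in $b$.

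To estimate $E_R$, I swap the order of summation in the double average. Writing $L_r := \floor{L(r)}$,
\begin{equation*}
\E_{1\le r\le R}\E_{r\le n\le r+L(r)} B_n = \sum_{n\ge 1} c(n)\,B_n, \qquad c(n) := \frac{1}{R}\sum_{\substack{1\le r\le R \\ r\le n\le r+L_r}}\frac{1}{L_r+1},
\end{equation*}
so $E_R \le \sum_n \bigabs{c(n) - \mathbf{1}_{[1,R]}(n)/R}\,\norm{B_n}$. I split this sum into an \emph{interior} piece $n\in [2L(R), R]$ and a \emph{boundary} piece consisting of $O(L(R))$ integers near $1$ and near $R+L(R)$.

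On the interior, the eventual monotonicity of $L$ means the summation range is an interval $[r_0(n), n]$; the slow-variation estimate $L'(t)\ll L(t)/t$ from Lemma~\ref{L: Frantzikinakis growth inequalities} (applicable since $L$ is sub-linear) yields $L(n) - L(n - L(n)) \ll L(n)^2/n = o(L(n))$. This forces $n - r_0(n) = L(n)(1+o(1))$ and $L_r = L_n(1+o(1))$ uniformly on $[r_0(n), n]$, so $c(n) = (1+o(1))/R$ uniformly on the interior as $R\to\infty$. The interior contribution is therefore $o(1)\cdot R^{-1}\sum_{n=1}^R \norm{B_n} = o_R(1)$, using $\sum_{n=1}^R (\Lambda_{w,b}(n)+1) = O(R)$ from the prime number theorem. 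For the boundary, the trivial bound $\bigabs{c(n) - \mathbf{1}_{[1,R]}(n)/R}\le 2/R$ reduces matters to bounding $R^{-1}\sum_{n\in\text{boundary}}\norm{B_n}$; Corollary~\ref{C: Brun-Titchmarsh inequality for von Mangoldt sums} applied to the windows $[1, 2L(R)]$ and $(R, R+L(R)]$ gives $\sum\Lambda_{w,b}(n) = O(L(R))$ on each (the hypothesis $L(t)\gg t^\e$ keeps $\log(L(R)/W)$ bounded below), and so the boundary contribution is $O(L(R)/R) = o_R(1)$.

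The main obstacle is the interior estimate $c(n) = (1+o(1))/R$, which requires the Hardy-field regularity of $L$ via the slow-variation lemma; the uniformity in $b$ then follows because all constants above depend only on $W$, which is fixed.
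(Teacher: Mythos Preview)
Your proof is correct and follows essentially the same approach as the paper: both reduce to showing that the double average and the single average differ by $o_R(1)$, swap the order of summation to obtain a weight $c(n)$ (the paper's $p(n)/R$), show this weight is $(1+o(1))/R$ on the bulk of $[1,R]$, and handle the overflow past $R$ via Corollary~\ref{C: Brun-Titchmarsh inequality for von Mangoldt sums}. The only notable difference is that the paper establishes $p(n)\to 1$ by citing \cite[Lemma~3.3]{Tsinas}, whereas you supply a direct slow-variation argument via Lemma~\ref{L: Frantzikinakis growth inequalities}; two cosmetic points are that your boundary bound should read $O(1/R)$ rather than literally $2/R$, and the left boundary window $[1,2L(R)]$ is handled by the Chebyshev bound $\psi(x)=O(x)$ rather than Corollary~\ref{C: Brun-Titchmarsh inequality for von Mangoldt sums} (which requires $y\le x$).
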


\begin{proof}
    Using the triangle inequality, we deduce that \begin{equation*}
     \underset{1\leq r\leq R}{\E} \ \Bignorm{ \underset{r\leq n\leq r+L(r)}{\E}  \big(\La_{w,b}(n)-1\big)  A_{n}               }\geq \Bignorm{\underset{1\leq r\leq R}{\E}    \big(  \underset{r\leq n\leq r+L(r)}{\E}   \big(\La_{w,b}(n)-1\big) A_{n} \big) }.
 \end{equation*}Therefore, our result will follow if we show that \begin{equation*}
    \Bignorm{ \underset{1\leq r\leq R}{\E}    \Big( \underset{r\leq n\leq r+L(r)}{\E}    \big(\La_{w,b}(n)-1\big)A_{n}\Big) -\underset{1\leq r\leq R}{\E}   \big(\La_{w,b}(r)-1\big)A_{r}       }=o_R(1).
 \end{equation*}
 
 Let $u$ denote the inverse of the function $t+L(t)$, which is well-defined for sufficiently large $t$ due to monotonicity. 
 Furthermore, it is straightforward to derive that $\lim\limits_{t\to+\infty} u(t)/t=1$ from the fact that $t+L(t)$ also grows linearly. Now, we have \begin{multline*}
     \underset{1\leq r\leq R}{\E}    \Big(\   \underset{r\leq n\leq r+L(r)}{\E}    \big(\La_{w,b}(n)-1\big)A_{n} \Big) = \frac{1}{R} \Big( \sum_{n=1}^{R}p_R(n)\big(\La_{w,b}(n)-1\big)A_{n} + \\
     \sum_{n=R+1}^{R+L(R)} p_R(n)\big(\La_{w,b}(n)-1\big)A_{n}\Big)
 \end{multline*}for some real numbers $p_R(n)$, which denote the number of appearances of $A_n$ in the previous expression (weighted by the term $1/L(r)$ that appears on each inner average). Assuming that $n$ (and thus $R$) is sufficiently large, so that $u(n)$ is positive, we can calculate $p_R(n)$ to be equal to \begin{equation*}
     p_R(n)=   \frac{1}{L(\floor{u(n)})+1}+\cdots +\frac{1}{L(n)+1} +o_n(1),
 \end{equation*}since the number $A_{n}$ appears on the average $\underset{r\leq n\leq r+L(r)}{\E}$ if and only if $u(n)\leq r\leq n$. Note that $p_R(n)$ is actually independent of $R$ (for $n$ large enough) and therefore, we will denote it simply as $p(n)$ from now on. 
We have that \begin{equation}\label{E: p(n)limit}
    \lim_{n\to +\infty } p(n)=1.
\end{equation}
This follows exactly as in the proof of Lemma 3.3 in \cite{Tsinas}, so we omit its proof here.

Now, we show that \begin{equation}\label{E: to kommati poy jefeygei}
    \frac{1}{R}  \sum_{n=R+1}^{R+L(R)} p(n)\big(\La_{w,b}(n)-1\big)A_{n}=o_R(1).
\end{equation}Bounding $p(n)$ trivially by 2 (since its limit is equal to 1) and $\norm{A_n}$ by $1$, we infer that it is sufficient to show that \begin{equation*}
    \frac{1}{R} \sum_{n=R+1}^{R+L(R)} \big|\La_{w,b}(n)-1\big|=o_R(1).
\end{equation*}Using the triangle inequality and the fact that $L(r)\prec r$, this reduces to
    \begin{equation*}
    \frac{1}{R} \sum_{n=R+1}^{R+L(R)} \La_{w,b}(n)=o_R(1).
\end{equation*}To establish this, we apply Corollary \ref{C: Brun-Titchmarsh inequality for von Mangoldt sums} to conclude that \begin{multline*}
     \frac{1}{R} \sum_{n=R+1}^{R+L(R)} \frac{\phi(W)}{W}\La(Wn+b)=\frac{1}{R}\sum_{\underset{n\equiv b\;(W)}{WR+R+b\leq n\leq WR+R+b+WL(r)}} \La(n)\leq\\
     \frac{\phi(W)}{WR}\Big( \frac{2WL(R)\log R}{\phi(W)\log\big(\frac{L(R)}{W}\big)} +O\big(\frac{L(R)}{\log (WR+R+b)}\big) +O(R^{1/2}\log R)\Big)=o_R(1).
\end{multline*} This follows from the fact that $L(R)\prec R$ and that the quantity $\log R/\log(L(R))$ is bounded by the hypothesis $L(R)\gg R^{\e}$.

In view of this, it suffices to show that \begin{equation*}
    \Bignorm{ \frac{1}{R}\sum_{n=1}^{R}p(n)\big(\La_{w,b}(n)-1\big)A_{n}-\frac{1}{R}\sum_{n=1}^{R}\big(\La_{w,b}(n)-1\big)A_{n} }=o_R(1).
\end{equation*}

 We have \begin{equation*}
   \Bignorm{ \frac{1}{R}\sum_{n=1}^{R}p(n)\big(\La_{w,b}(n)-1\big)A_{n}-\frac{1}{R}\sum_{n=1}^{R}\big(\La_{w,b}(n)-1\big)A_{n} }\leq  \frac{1}{R}\sum_{n=1}^{R}|p(n)-1||\La_{w,b}(n)-1|, 
\end{equation*}by the triangle inequality. Now, given $\e>0$, we can bound this by \begin{equation*}
    \frac{1}{R}\sum_{n=1}^{R}\e\big(\La_{w,b}(n)+1\big)+o_R(1),
\end{equation*}where the $o_R(1)$ term reflects the fact that the bound for $|p(n)-1|\leq \e$ is valid for large values of $n$ only. It suffices to bound the term \begin{equation*}
     \frac{\e}{R}\sum_{n=1}^{R}\La_{w,b}(n),
\end{equation*}since the remainder is simply $O(\e)$. However, using Corollary \ref{C: Brun-Titchmarsh inequality for von Mangoldt sums} (or the prime number theorem in arithmetic progressions), we see that this term is also $O(\e)$, exactly as we did above. Sending $\e\to 0$, we reach the desired conclusion.
\end{proof}

We restate here our main theorem for convenience. 

\begin{customthm}{1.1}
    Let $\ell,k$ be positive integers and, for all $1\leq i\leq \ell,\ 1\leq j\leq k$, let $a_{ij}\in \mathcal{H}$ be functions of polynomial growth such that \begin{equation}
        |a_{ij}(t) -q(t) |\succ \log t\ \text{ for every polynomial }  q(t)\in \Q[t],
        \end{equation} or \begin{equation}
           \lim\limits_{t\to+\infty} |a_{ij}(t)-q(t)|=0\  \text{ for some polynomial }  q(t)\in \Q[t]+\R.
        \end{equation}Then, for any measure-preserving system $(X,\X,\m,T_1,\dots,T_k)$ of commuting transformations and functions $f_1,\dots,f_{\ell}\in L^{\infty}(\m)$, we have  \begin{equation}
         \lim_{w\to+\infty} \  \limsup\limits_{N\to+\infty}\max_{\underset{(b,W)=1}{1\leq b\leq W}} \Bignorm{\frac{1}{N}\sum_{n=1}^{N} \ \big(\La_{w,b}(n) -1\big) \prod_{j=1}^{\ell}\big(  \prod_{i=1}^{k} T_i^{\floor{a_{ij}(Wn+b)}}       \big)f_j   }_{L^2(\m)}=0.
        \end{equation}
\end{customthm}

\begin{proof}

    We split this reduction into several steps. For a function $a\in \mathcal{H}$, we will use the notation $a_{w,b}(t)$ to denote the function $a(Wt+b)$ and we will need to keep in mind that the asymptotic constants must not depend on $W$ and $b$. As is typical in these arguments, we shall rescale the functions $f_1,\dots, f_{\ell}$ so that they are all bounded by 1. 
\subsection*{Step 1: A preparatory decomposition of the functions}
Each function $a_{ij}$ can be written in the form \begin{equation*}
    a_{ij}(t)=g_{ij}(t)+p_{ij}(t)+q_{ij}(t)
\end{equation*}where $g_{ij}(t)$ is a strongly non-polynomial function (or identically zero), $p_{ij}(t)$ is either a polynomial with at least one non-constant irrational coefficient or a constant polynomial, and, lastly, $q_{ij}(t)$ is a polynomial with rational coefficients. Observe that there exists a fixed positive integer $Q_0$ for which all the polynomials $q_{ij}(Q_0n+s_0)$ have integer coefficients except possibly the constant term, for all $0\leq s_0\leq Q_0$. These non-integer constant terms can be absorbed into the polynomial $p_{ij}(t)$. Therefore, splitting our average into the arithmetic progressions $(Q_0n+s_0)$, it suffices to show that \begin{equation*}
     \lim_{w\to+\infty} \   \limsup\limits_{N\to+\infty}\ \max_{\underset{(b,W)=1}{1\leq b\leq W}} \Bignorm{\frac{1}{N}\sum_{n=1}^{N} \big(\La_{w,b}(Q_0n+s_0)-1\big) \prod_{j=1}^{\ell}\big(  \prod_{i=1}^{k} T_i^{\floor{a_{ij,w,b}(Q_0n+s_0)}}       \big)f_j   }_{L^2(\m)}=0
\end{equation*}for all $s_0\in\{0,\dots,Q_0-1\}$. Observe that each one of the functions $a_{ij,w,b}(Q_0t+s_0)$ satisfies either \eqref{E: far away from rational polynomials} or \eqref{E: essentially equal to a polynomial}. Since the polynomials $q_{ij,w,b}(Q_0n+s_0)$ have integer coefficients, we can rewrite the previous expression as \begin{multline}\label{E: Step 1 final expression}
     \lim_{w\to+\infty} \   \lim\limits_{N\to+\infty}\ \max_{\underset{(b,W)=1}{1\leq b\leq W}} \Bignorm{\frac{1}{N}\sum_{n=1}^{N} {\bf 1}_{s_0\;( Q_0)}(n)\big(\La_{w,b}(n)-1\big)\\
     \prod_{j=1}^{\ell}\big(  \prod_{i=1}^{k} T_i^{\floor{g_{ij,w,b}(n) +p_{ij,w,b}(n) }+q_{ij,w,b}(n)  }       \big)f_j   }_{L^2(\m)}=0.
\end{multline}

\subsection*{Step 2: Separating the iterates}

Define the sets \begin{equation}\label{E: set S_1}
  S_1=  \{(i,j)\in [1, k]\times [1, \ell] \colon g_{ij}(t)\ll t^{\delta}  \text{ for all } \delta>0 \ \text{and } p_{ij} \text{ is non-constant}\},
\end{equation}and 
\begin{equation}\label{E: set S_2}
    S_2=\{(i,j)\in [1, k]\times [1, \ell] \colon g_{ij}(t)\ll t^{\delta}  \text{ for all } \delta>0 \ \text{and } p_{ij} \text{ is constant}\},
\end{equation}whose union contains precisely the pairs $(i,j)$, for which  $g_{ij}(t)$ is sub-fractional.

Our first observation is that if a pair $(i,j)$ belongs to $S_2$, then the function $a_{ij}(t)$ has the form $g_{ij}(t)+q_{ij}(t)$, where $g_{ij}$ is sub-fractional and $q_{ij}$ is a rational polynomial. Thus, \eqref{E: far away from rational polynomials} and \eqref{E: essentially equal to a polynomial} imply that we either have that $g_{ij}(t)\succ \log(t)$ or $g_{ij}(t)$ converges to a constant, as $t\to+\infty$. The constant can be absorbed into the constant polynomial $p_{ij}$. In view of this, we will subdivide $S_2$ further into the following two sets:\begin{align}\label{E: sets S_2',S_2''}
    &S'_2=\{(i,j)\in S_2\colon g_{ij}(t)\succ \log t\},\\
  \tag*{}  &S''_2=\{(i,j)\in S_2\colon g_{ij}(t)\prec  1\}.
\end{align}

Observe that iterates corresponding to pairs $(i,j)$ that do not belong to the union $S_1\cup S'_2\cup S''_2$ have an expression inside the integer part that has the form $g(t)+p(t)$, where $g$ is a strongly non-polynomial function that is not sub-fractional. In particular, these functions satisfy the hypotheses of Proposition~\ref{P: Taylor expansion final}. Furthermore, functions that correspond to the set $S_1$ have the form $p(t)+x(t)$, where $p$ is an irrational polynomial and $x$ is sub-fractional, while functions in $S'_2$ are sub-fractional functions that dominate $\log t$. We will use Proposition~\ref{P: remove error terms for polynomial functions} and Proposition~\ref{P: remove error term for slow functions} for these two collections respectively. Finally, observe that if $(i,j)\in S_2''$, then for $n$ sufficiently large, we can write $$\floor{a_{ij}(Q_0n+s_0)}= q_{ij}(Q_0n+s_0)+\floor{c_{ij}}+e_{ij,Q_0n+s_0},$$
where $e_{ij, Q_0n+s_0}\in \{0,-1\}$ and $c_{ij}$ is a constant term arising from the constant (in this case) polynomial $p_{ij}$. The error term $e_{ij, Q_0n+s_0}$ actually exists only if $c_{ij}$ is an integer. 
In particular, we have $e_{ij,Q_0n+s_0}=0$ for all large enough $n$ when $g_{ij}(t)$ decreases to 0 and $e_{ij,Q_0n+s_0}=-1$ if $g_{ij}(t)$ increases to 0. Therefore, if we redefine the polynomials $q_{ij}(t)$ accordingly so that both $\floor{c_{ij}}$ and the error term $e_{ij, Q_0n+s_0}$ (which is independent of $s_0$) is absorbed into the constant term, we may assume without loss of generality that for all $n$ sufficiently large, we have \begin{equation*}
    \floor{g_{ij}(Q_0n+s_0)+p_{ij}(Q_0n+s_0)}+q_{ij}(Q_0n+s_0)=q_{ij}(Q_0n+s_0).
\end{equation*}We will employ this relation to simplify the iterates in \eqref{E: Step 1 final expression}, where $n$ will be replaced by $Wn+b$.

We rewrite the limit in \eqref{E: Step 1 final expression} as \begin{multline}\label{E: }
      \lim_{w\to+\infty} \   \limsup\limits_{N\to+\infty}\ \max_{\underset{(b,W)=1}{1\leq b\leq W}} \Bignorm{\frac{1}{N}\sum_{n=1}^{N} {\bf 1}_{s_0\;( Q_0)}(n)\big(\La_{w,b}(n)-1\big)\\ \prod_{j=1}^{\ell}\Big(  \prod_{i\colon (i,j)\in S_1}^{} T_i^{\floor{g_{ij,w,b}(n) +p_{ij,w,b}(n) }+q_{ij,w,b}(n)  } \cdot 
      \prod_{i\colon(i,j)\in S'_2}^{} T_i^{\floor{g_{ij,w,b}(n) +p_{ij,w,b}(n) }+q_{ij,w,b}(n)  }\cdot\\
      \prod_{i\colon(i,j)\in S''_2}^{} T_i^{q_{ij,w,b}(n)  }\cdot 
      \prod_{i\colon (i,j)\not S_1\cup S'_2\cup S''_2}^{} T_i^{\floor{g_{ij,w,b}(n) +p_{ij,w,b}(n) }+q_{ij,w,b}(n)  }     \Big)f_j   }_{L^2(\m)}.
\end{multline}

\subsection*{Step 3: Passing to short intervals }

The functions $g_{ij}(t)+p_{ij}(t)$ with $(i,j)\in S_1$ satisfy the assumptions of Proposition \ref{P: remove error terms for polynomial functions}, while the functions $g_{ij}(t)+p_{ij}(t)$ with $(i,j)\notin S_1\cup S'_2\cup S''_2 $ satisfy the assumptions of Proposition \ref{P: Taylor expansion final} (thus, each one of them satisfies Proposition \ref{P: remove error term for fast functions} for some appropriately chosen values of the integer $k$ in that statement). Lastly, the functions of the set $S_2'$ satisfy the assumptions of Propositions \ref{P: remove error term for slow functions}.
It is straightforward to infer that, in each case, the corresponding property continues to hold when the functions $g_{ij}(t)+p_{ij}(t)$ are replaced by the functions $g_{ij,w,b}(t)+p_{ij,w,b}(t)$. This is a simple consequence of the fact that if $f\in \mathcal{H}$ has polynomial growth, then the functions $f$ and $f_{w,b}$ have the same growth rate.

Let $d_0$ be the maximal degree appearing among the polynomials $p_{ij}(t)$.
Then, we can find a sub-linear function $L(t)$ such that \begin{equation}\label{E: L(t) satisfies the condition of Polynomial proposition}
   t^{ \frac{5}{8}} \lll L(t)\lll t
\end{equation} and, such that there exists positive integers $k_{ij}$ for $(i,j)\notin S_1\cup S'_2\cup S''_2$, for which we have the growth inequalities \begin{equation}\label{E: inequalities that give taylor approximation0}
   \Big| g_{ij}^{(k_{ij})}(t)\Big|^{-\frac{1}{k_{ij}}}\lll L(t) \lll  \Big| g_{ij}^{(k_{ij}+1)}(t)\Big|^{-\frac{1}{k_{ij}+1}}.
\end{equation} Furthermore, we can assume that $k_{ij}$ are very large compared to the maximal degree $d_0$ of the polynomials $p_{ij}(t)$, by taking $L(t)$ to grow sufficiently fast. 
We remark that \eqref{E: inequalities that give taylor approximation0} also implies the inequalities \begin{equation}\label{E: inequalities that give taylor approximation}
      \Big| g_{ij,w,b}^{(k_{ij})}(t)\Big|^{-\frac{1}{k_{ij}}}\lll L(t) \lll  \Big| g_{ij,w,b}^{(k_{ij}+1)}(t)\Big|^{-\frac{1}{k_{ij}+1}}.
\end{equation}for any fixed $w,b$.

For the choice of $L(t)$ that we made above, we apply Lemma \ref{L: long averages to short averages} to infer that it suffices to show that \begin{multline}\label{E: final expression in Step 3}
      \lim_{w\to+\infty} \   \limsup\limits_{R\to+\infty}  \ \max_{\underset{(b,W)=1}{1\leq b\leq W}}\ \E_{1\leq r\leq R} \Bignorm{\E_{r\leq n\leq r+L(r)} {\bf 1}_{s_0\;( Q_0)}(n)\big(\La_{w,b}(n)-1\big)\\
      \prod_{j=1}^{\ell}\Big(  \prod_{i\colon (i,j)\in S_1}^{} T_i^{\floor{g_{ij,w,b}(n) +p_{ij,w,b}(n) }+q_{ij,w,b}(n)  } \cdot 
      \prod_{i\colon(i,j)\in S'_2}^{} T_i^{\floor{g_{ij,w,b}(n) +p_{ij,w,b}(n) }+q_{ij,w,b}(n)  }
      \cdot\\
      \prod_{i\colon(i,j)\in S''_2}^{} T_i^{q_{ij,w,b}(n)  }\cdot
      \prod_{i\colon (i,j)\notin S_1\cup S'_2\cup S''_2}^{} T_i^{\floor{g_{ij,w,b}(n) +p_{ij,w,b}(n) }+q_{ij,w,b}(n)  }     \Big)f_j   }_{L^2(\m)}
      =0.
\end{multline}

\subsection*{Step 4: Reducing to polynomial iterates and using uniformity bounds}

We now fix $w$ (thus $W$) and the integer $b$.
Suppose that $R$ is sufficiently large and consider the expression \begin{multline}\label{E: I could not come up with something good}
   \mathcal{J}_{w,b,s_0}(R):=  \E_{1\leq r\leq R} \Bignorm{\E_{r\leq n\leq r+L(r)} {\bf 1}_{s_0\;( Q_0)}(n)\big(\La_{w,b}(n)-1\big)\\
      \prod_{j=1}^{\ell}\Big(  \prod_{i\colon (i,j)\in S_1}^{} T_i^{\floor{g_{ij,w,b}(n) +p_{ij,w,b}(n) }+q_{ij,w,b}(n)  } \cdot 
      \prod_{i\colon(i,j)\in S'_2}^{} T_i^{\floor{g_{ij,w,b}(n) +p_{ij,w,b}(n) }+q_{ij,w,b}(n)  }\cdot \\
      \prod_{i\colon(i,j)\in S''_2}^{} T_i^{q_{ij,w,b}(n)  }\cdot
      \prod_{i\colon (i,j)\notin S_1\cup S'_2\cup S''_2}^{} T_i^{\floor{g_{ij,w,b}(n) +p_{ij,w,b}(n) }+q_{ij,w,b}(n)  }     \Big)f_j   }_{L^2(\m)}.
\end{multline}
We will apply Propositions \ref{P: remove error term for fast functions}, \ref{P: remove error term for slow functions} and \ref{P: remove error terms for polynomial functions} to replace the iterates with polynomials (with coefficients depending on $r$). Due to the nature of Proposition \ref{P: remove error term for slow functions} (namely, that it excludes a small set of $r\in [1, R]$), we let $\mathcal{E}_{R,w,b}$ denote a subset of $\{1,\dots, R\}$, which will be constructed throughout the proof and will have small size. We remark that the iterates corresponding to $S_2''$ have been dealt with (morally), so we will focus our attention on the other three sets. 

Let $d$ be the maximum number among the degrees among the polynomials $p_{ij},q_{ij}$ and the integers $k_{ij}$.
Let $\e>0$ be a small (but fixed) quantity and we assume that $r$ is large enough in terms of $1/\e$, i.e., larger than some $R_0=R_0(\e)$.
Observe that if $R$ is sufficiently large, then we have $ R_0\leq \e R$.  We include the ``small'' $r$ in the exceptional set $\mathcal{E}_{R,w,b}$, so that $\mathcal{E}_{R,w,b}$ now has at most $\e R$ elements.
 We will need to bound the expression $\mathcal{J}_{w,b,s_0}(R)$ for large $R$ uniformly in $b$.

\smallskip

{\em Throughout the rest of this step, we implicitly assume that all terms of the form $o_r(1)$ or $o_R(1)$ are allowed to depend on the parameters $w$ and $\e$ which will be fixed up until the end of Step 4. One can keep in mind the following hierarchy $\frac{1}{\e}\ll w\ll r$.}

\smallskip

$\underline{\text{Case 1}}:$ We first deal with the functions in $S'_2$. Fix an $(i,j)\in S'_2$ and consider the function $g_{ij,w,b}(n)+p_{ij,w,b}(n)$ appearing in the corresponding iterate. Observe that due to the definition of $S'_2$ in \eqref{E: sets S_2',S_2''}, the polynomial $p_{ij}(t)$ is constant, so that $p_{ij,w,b}(t)$ is also constant. In addition, the function $g_{ij}(t)$ is a sub-fractional function and dominates $\log t$. Therefore, the same is true for the function $g_{ij,w,b}(t)$.

We apply Proposition \ref{P: remove error term for slow functions}: for all except at most $\e R$ values of $r\in [1,R]$, we have that \begin{equation}\label{E: iterates of the set S'_2: completed}
    \floor{g_{ij,w,b}(n)+p_{ij,w,b}(n)}=\floor{g_{ij,w,b}(r)+p_{ij,w,b}(r)} \ \text{for all } n\in [r,r+L(r)].
\end{equation}For each $(i,j)\in S'_2$, we include the ``bad'' values of $r$ to the set $\mathcal{E}_{R,w,b}$, so that the set $\mathcal{E}_{R,w,b}$ now has at most $(k\ell+1)\e R$ elements.

$\underline{\text{Case 2}}:$ Now, we turn our attention to functions on the complement of the set $S_1\cup S'_2\cup S''_2$. The functions $g_{ij}$ satisfy \eqref{E: inequalities that give taylor approximation} and recall that we have chosen $k_{ij}$ to be much larger than the degrees of the $p_{ij}$, so that the derivative of order $k_{ij}$ of our polynomial vanishes. In conclusion, we may conclude that $g_{ij}(t)+p_{ij}(t)$
satisfies the assumptions of Proposition \ref{P: remove error term for fast functions} for the integer $k_{ij}$ (and the sub-linear function $L(t)$ that we have already chosen).

Given $A>0$, we infer that for all but $O_A(L(r)\log^{-A} r)$ values of $n\in [r,r+L(r)]$, we have   \begin{equation}\label{E: iterates of the complement set: completed}
    \floor{g_{ij,w,b}(n)+p_{ij,w,b}(n) }=\floor{\wt{p}_{ij,w,b,r}(n)},
\end{equation}where $\wt{p}_{ij,w,b,r}(n)$ is the polynomial \begin{equation*}
    \sum_{l=0}^{k_{ij}} \frac{(n-r)^{l} g_{ij,w,b}^{(l)}(r) }{l!} +p_{ij,w,b}(n).
\end{equation*}Additionally, the polynomials $\wt{p}_{ij,w,b,r}$ satisfy \begin{equation}\label{E: strong non-concentration of polynomial approximations- strongly non-polynomial case}
     \frac{\bigabs{\{n\in [r,r+L(r)]\colon \{\wt{p}_{ij,w,b,r}(n)\}\in [1-\delta, 1)\}}   }{L(r)}=\delta+O_{A}(\log^{-A} r)
\end{equation}
for any $\delta<1$.
Practically, this last condition signifies that the polynomials $\wt{p}_{ij,w,b,r}$ satisfy the equidistribution condition in Proposition \ref{P: Gowers norm bound on variable polynomials}, which we shall invoke later.

$\underline{\text{Case 3}}:$ Finally, we deal with the case of the set $S_1$. Proposition \ref{P: remove error terms for polynomial functions} suggests that there is a subset $\mathcal{B}_{w,b,r,\e}$ of $[r,r+L(r)]$ of size $O_{k,\ell}(\e L(r))$, such that for every $n\in [r,r+L(r)]\setminus \mathcal{B}_{w,b,r,\e}$, we have 
\begin{equation}\label{E: iterates of the set S_1: completed}
    \floor{p_{ij,w,b}(n)+g_{ij,w,b}(n)}=\floor{p_{ij,w,b}(n)+g_{ij,w,b}(r)}.
\end{equation}
Additionally, the set $\mathcal{B}_{w,b,r,\e}$ satisfies \begin{equation}\label{E: bound on the size of B_r}
     \frac{1}{L(r)} \sum_{r\leq n\leq r+L(r)} \La_{w,b}(n){\bf 1}_{\mathcal{B}_{w,b,r,\e}}(n)\ll_{k,\ell,d} \ \e +o_w(1)\log \frac{1}{\e}+o_r(1).
\end{equation}
We emphasize that the asymptotic constant in \eqref{E: bound on the size of B_r} depends only on $k,l,d$, so that the constant is the same regardless of the choice of the parameters $w,b$.

First of all, we apply 
\eqref{E: iterates of the set S'_2: completed} to simplify the expression for  $\mathcal{J}_{w,b,s_0}(R)$. Namely, for any $r\notin \mathcal{E}_{R,w,b}$, we have that the inner average in the definition of $\mathcal{J}_{w,b,s_0}(R)$ is equal to \begin{multline*}
     \Bignorm{\E_{r\leq n\leq r+L(r)} {\bf 1}_{s_0\;( Q_0)}(n)\big(\La_{w,b}(n)-1\big)
      \prod_{j=1}^{\ell}\Big(  \prod_{i\colon (i,j)\in S_{1}}^{} T_i^{\floor{g_{ij,w,b}(n) +p_{ij,w,b}(n) }+q_{ij,w,b}(n)  }\cdot \\
      \prod_{i\colon(i,j)\in S'_2}^{} T_i^{\floor{g_{ij,w,b}(r) +p_{ij,w,b}(r) }+q_{ij,w,b}(n)  }\cdot \prod_{i\colon(i,j)\in S''_2}^{} T_i^{q_{ij,w,b}(n)  }\cdot\\
      \prod_{i\colon (i,j)\notin S_1\cup S'_2\cup S''_2}^{} T_i^{\floor{g_{ij,w,b}(n) +p_{ij,w,b}(n) }+q_{ij,w,b}(n)  }     \Big)f_j   }_{L^2(\m)}.
\end{multline*}
Thus, we have replaced the iterates of the set $S'_2$ with polynomials in the averaging parameter $n$.

Secondly,
we use \eqref{E: iterates of the complement set: completed} to deduce that for all, except at most $O_A(k\ell L(r)\log^{-A} r)$
values of $n\in [r, r+L(r)]$, the product of transformations appearing in the previous relation can be written as \begin{multline}\label{E: first reduction of the product on the iterates}
      \prod_{j=1}^{\ell}\Big(  \prod_{i\colon (i,j)\in S_{1}}^{} T_i^{\floor{g_{ij,w,b}(n) +p_{ij,w,b}(n) }+q_{ij,w,b}(n)  } 
      \prod_{i\colon(i,j)\in S'_2}^{} T_i^{\floor{g_{ij,w,b}(r) +p_{ij,w,b}(r) }+q_{ij,w,b}(n)  }\cdot\\
      \prod_{i\colon(i,j)\in S''_2}^{} T_i^{q_{ij,w,b}(n)  }\cdot
      \prod_{i\colon (i,j)\notin S_1\cup S'_2\cup S''_2}^{} T_i^{\floor{\wt{p}_{ij,w,b}(n)}+q_{ij,w,b}(n)  }     \Big)f_j.
\end{multline}
The contribution of the exceptional set can be at most \begin{equation*}
   k\ell \log(Wr+WL(r)+b)\cdot O_A(\log^{-A}r),
\end{equation*}since each $\La_{w,b}(n)$ is bounded by $\log(Wn+b)$. Therefore, if we choose $A\geq 2$, this contribution is $o_r(1)$ and we can rewrite the average over the corresponding short interval as \begin{multline}\label{E: equation before lifting two flows first time}
    \Bignorm{\E_{r\leq n\leq r+L(r)} {\bf 1}_{s_0\;( Q_0)}(n)\big(\La_{w,b}(n)-1\big)
     \prod_{j=1}^{\ell}\Big(  \prod_{i\colon (i,j)\in S_{1}}^{} T_i^{\floor{g_{ij,w,b}(n) +p_{ij,w,b}(n) }+q_{ij,w,b}(n)  } \\
      \prod_{i\colon(i,j)\in S'_2}^{} T_i^{\floor{g_{ij,w,b}(r) +p_{ij,w,b}(r) }+q_{ij,w,b}(n)  }\cdot \prod_{i\colon(i,j)\in S''_2}^{} T_i^{q_{ij,w,b}(n)  }\cdot\\
      \prod_{i\colon (i,j)\notin S_1\cup S'_2\cup S''_2}^{} T_i^{\floor{\wt{p}_{ij,w,b,r}(n)}+q_{ij,w,b}(n)  }     \Big)f_j  }_{L^2(\m)}
      +o_r(1).
\end{multline}Thus, we have reduced our iterates to polynomial form in this case as well.

Finally, we follow the same procedure for the set $S_1$. Namely, for all integers $n$ in the interval $[r,r+L(r)]$ such that $n\notin \mathcal{B}_{w,b,r,\e}$, we use \eqref{E: iterates of the set S_1: completed} to rewrite \eqref{E: first reduction of the product on the iterates} as  \begin{multline*}
       \prod_{j=1}^{\ell}\Big(  \prod_{i\colon (i,j)\in S_{1}}^{} T_i^{\floor{g_{ij,w,b}(r) +p_{ij,w,b}(n) }+q_{ij,w,b}(n)  } 
      \prod_{i\colon(i,j)\in S'_2}^{} T_i^{\floor{g_{ij,w,b}(r) +p_{ij,w,b}(r) }+q_{ij,w,b}(n)  }\cdot\\
      \prod_{i\colon(i,j)\in S''_2}^{} T_i^{q_{ij,w,b}(n)  }\cdot
      \prod_{i\colon (i,j)\notin S_1\cup S'_2\cup S''_2}^{} T_i^{\floor{\wt{p}_{ij,w,b}(n)}+q_{ij,w,b}(n)  }     \Big)f_j.
\end{multline*}
The contribution of the set $\mathcal{B}_{w,b,r,\e}$ on the average over the interval $[r,r+L(r)]$ can be estimated using the triangle inequality. More specifically, this contribution is smaller than \begin{equation*}
    \frac{1}{L(r)}\sum_{r\leq n\leq r+L(r)}{\bf 1}_{s_0\;(Q_0)}(n) \Bigabs{\La_{w,b}(n)-1}{\bf 1}_{\mathcal{B}_{w,b,r,\e}}(n).
\end{equation*}We bound the characteristic function ${\bf 1}_{s_0\;(Q_0)} $ trivially by 1, so that the above quantity is smaller than \begin{equation}\label{E: contribution of B_r on the average}
     \frac{1}{L(r)}\sum_{r\leq n\leq r+L(r)}\La_{w,b}(n){\bf 1}_{\mathcal{B}_{w,b,r,\e}}(n)+ \frac{1}{L(r)}\sum_{r\leq n\leq r+L(r)}{\bf 1}_{\mathcal{B}_{w,b,r,\e}}(n).
\end{equation}The second term contributes $O_{k,\ell}(\e)$, since $\mathcal{B}_{w,b,r,\e}$ has at most $O_{k,\ell}(\e L(r))$ elements. On the other hand, we have a bound for the first term already in \eqref{E: bound on the size of B_r}. Thus, the total contribution is $O_{k,\ell,d}(1)$ times the expression \begin{equation*}
    \e+o_w(1)\log \frac{1}{\e}+o_r(1).
\end{equation*}
In view of the above, we deduce that the average in \eqref{E: equation before lifting two flows first time} is bounded by $O_{k,\ell,d}(1)$ times 
\begin{multline}\label{E: final polynomial ergodic average }
     \Bignorm{\E_{r\leq n\leq r+L(r)} {\bf 1}_{s_0( Q_0)}(n)\big(\La_{w,b}(n)-1\big)
     \prod_{j=1}^{\ell}\Big(  \prod_{i\colon (i,j)\in S_{1}}^{} T_i^{\floor{g_{ij,w,b}(r) +p_{ij,w,b}(n) +q_{ij,w,b}(n)}  } \\
      \prod_{i\colon(i,j)\in S'_2}^{} T_i^{\floor{g_{ij,w,b}(r) +p_{ij,w,b}(r) +q_{ij,w,b}(n)}  }\cdot \prod_{i\colon(i,j)\in S''_2}^{} T_i^{\floor{q_{ij,w,b}(n)}  }\cdot\\
      \prod_{i\colon (i,j)\notin S_1\cup S'_2\cup S''_2}^{} T_i^{\floor{\wt{p}_{ij,w,b}(n)+q_{ij,w,b}(n)}  }     \Big)f_j  }_{L^2(\m)}
      + \e+o_w(1)\log \frac{1}{\e}+o_r(1).
      \end{multline}
Here, we moved the polynomials $q_{ij,w,b}$ back inside the integer parts, which we are allowed to do since they have integer coefficients.

 The polynomials in the iterates corresponding to $S_1, S'_2, S''_2$, and the complement of  $S_1\cup S'_2\cup S''_2$ fulfill the hypothesis of Proposition \ref{P: Gowers norm bound on variable polynomials}. To keep the number of parameters lower, we will apply this proposition for $\delta=\e$, where we have assumed that $\e$ is a very small parameter. Accordingly, we assume (as we may) that $w$ and $r$ are much larger than $\frac{1}{\e}$. To see why the hypotheses are satisfied, observe that for the first set, this follows from the fact that $p_{ij,w,b}$ has at least one non-constant irrational coefficient (since $p_{ij}$ is non-constant by the definition of $S_1$). 
 Therefore, the number of integers $n\in [r,r+L(r)]$ for which we have \begin{equation*}
     \{g_{ij,w,b}(r) +p_{ij,w,b}(n) +q_{ij,w,b}(n)\}\in (1-\e,1)
 \end{equation*}is smaller than $2\e L(r)$ for $r$ sufficiently large.
 At the same time, the result is immediate for the second and third sets, since the iterates involve polynomials with integer coefficients (except, possibly, their constant terms). For the final set, this claim follows from 
\eqref{E: strong non-concentration of polynomial approximations- strongly non-polynomial case}.

In view of the prior discussion, we conclude that there exists a positive integer $s$, that depends only on $d,k,\ell$, such that the expression   
in \eqref{E: final polynomial ergodic average } is bounded by \begin{multline}\label{E: final gowers norm bound}
    \e^{-k\ell} \bignorm{{\bf 1}_{s_0\;(Q_0)}\big(\La_{w,b}(n)-1\big)  }_{U^s(r,r+sL(r)]} 
      +\e^{-k\ell}o_w(1) + o_{\e}(1)(1+o_w(1))+\\
      \e+o_w(1)\log \frac{1}{\e}+o_r(1).
\end{multline}
Applying Lemma \ref{L: Gowers uniformity norms evaluated at arithmetic progressions}, we can bound the previous Gowers norm along the residue class $s_0\; (Q_0)$ as follows: \begin{equation}\label{E: gowers norm bound for Lambda in progressions}
  \bignorm{{\bf 1}_{s_0\;(Q_0)}\big(\La_{w,b}(n)-1\big)  }_{U^s(r,r+sL(r)]}\leq   \bignorm{ \La_{w,b}(n)-1  }_{U^s(r,r+sL(r)]}.
\end{equation}

In view of the arguments above, we conclude that, for every $r\notin \mathcal{E}_{R,w,b}$, the following inequality holds \begin{multline*}
    \Bignorm{\E_{r\leq n\leq r+L(r)} {\bf 1}_{s_0( Q_0)}(n)\big(\La_{w,b}(n)-1\big)\\
      \prod_{j=1}^{\ell}\Big(  \prod_{i\colon (i,j)\in S_1}^{} T_i^{\floor{g_{ij,w,b}(n) +p_{ij,w,b}(n) }+q_{ij,w,b}(n)  } \cdot 
      \prod_{i\colon(i,j)\in S'_2}^{} T_i^{\floor{g_{ij,w,b}(n) +p_{ij,w,b}(n) }+q_{ij,w,b}(n)  }\cdot \\
      \prod_{i\colon(i,j)\in S''_2}^{} T_i^{q_{ij,w,b}(n)  }\cdot
      \prod_{i\colon (i,j)\notin S_1\cup S'_2\cup S''_2}^{} T_i^{\floor{g_{ij,w,b}(n) +p_{ij,w,b}(n) }+q_{ij,w,b}(n)  }     \Big)f_j   }_{L^2(\m)}\ll_{k,\ell,d}\\\
    \e^{-k\ell}\bignorm{ \big(\La_{w,b}(n)-1\big)  }_{U^s(r,r+sL(r)]}+\e+\big(\e^{-k\ell}+\log \frac{1}{\e}+o_{\e}(1)\big)o_w(1)+o_{\e}(1)+o_r(1).
\end{multline*}We apply this estimate to the double average defining $\mathcal{J}_{w,b,s_0}(R)$ in \eqref{E: I could not come up with something good}. This estimate holds for every $r\notin \mathcal{E}_{R,w,b}$ and, thus, we need an estimate for the values of $r$ in this exceptional set. In order to achieve this, we recall that the set $\mathcal{E}_{R,w,b}$ has at most $(2k\ell+1)\e R$ elements. For each $r\in \mathcal{E}_{R,w,b}$, we use the triangle inequality to bound the average over the corresponding short interval by \begin{equation*}
    \frac{1}{L(r)} \sum_{\underset{n\equiv s_0\; ( \, Q_0)}{r\leq n\leq r+L(r)}} (\La(Wn+b) +1).
\end{equation*}
We bound the characteristic function of the residue class $n\equiv s_0\;(Q_0)$ trivially by 1 and
apply Corollary \ref{C: Brun-Titchmarsh inequality for von Mangoldt sums} to conclude that this expression is $O(1)+o_r(1)$, using similar estimates as the ones used in the proof of Proposition \ref{P: remove error terms for polynomial functions} (see \eqref{E: sieve upper bound on the S_r}). Therefore, the contribution of the set $\mathcal{E}_{R,w,b}$ is at most $O_{k,\ell}(\e)+o_r(1)$.
Combining all of the above, we arrive at the estimate\begin{multline}\label{E: penultimate bound}
\mathcal{J}_{w,b,s_0}(R)\ll_{d,k,\ell}  \  \e^{-k\ell} \Big( \E_{1\leq r\leq R}\bignorm{ \big(\La_{w,b}(n)-1\big)  }_{U^s(r,r+sL(r)]}\Big) +\e^{-k\ell}o_w(1)+\\
o_{\e}(1)(1+o_w(1))+o_R(1).
\end{multline}

We restate \eqref{E: final expression in Step 3}  here. Namely, we want to show that \begin{equation*}
       \limsup\limits_{R\to+\infty}\ \max_{\underset{(b,W)=1}{1\leq b\leq W}}\ \mathcal{J}_{w,b,s_0}(R)=o_w(1).
\end{equation*}
Applying \eqref{E: penultimate bound}, we conclude that for a fixed $w$, we have\begin{multline*}
    \limsup\limits_{R\to+\infty} \max_{\underset{(b,W)=1}{1\leq b\leq W}}\ \mathcal{J}_{w,b,s_0}(R)\ll_{d,k,\ell} \e^{-k\ell} \Big( \lim_{R\to+\infty}\E_{1\leq r\leq R}\ \max_{\underset{(b,W)=1}{1\leq b\leq W}}\ \bignorm{ \big(\La_{w,b}(n)-1\big)  }_{U^s(r,r+L(r)]}\Big)+\\
     \e^{-k\ell}o_w(1)+o_{\e}(1)(1+o_w(1)).
\end{multline*} 
Due to Theorem \ref{T: Gowers uniformity in short intervals}, we have that \begin{equation*}
 \max_{\underset{(b,W)=1}{1\leq b \leq W}}    \bignorm{ \big(\La_{w,b}(n)-1\big)  }_{U^s(r,r+L(r)]}=o_w(1)
\end{equation*}for every sufficiently large $r$.
Thus, we conclude that \begin{equation*}
     \limsup\limits_{R\to+\infty}\ \max_{\underset{(b,W)=1}{1\leq b\leq W}}\ \mathcal{J}_{w,b,s_0}(R)\ll_{d,k,\ell}   \e^{-k\ell}o_w(1)+ o_{\e}(1)(1+o_w(1)).
\end{equation*}

\subsection*{Step 5: Putting all the bounds together}

We restate here our conclusion. We have shown that for all fixed integers $w$ and real number $0<\e<1$, we have \begin{multline}\label{E: Final Estimate}
    \limsup\limits_{R\to+\infty} \lim\limits_{N\to+\infty}\ \max_{\underset{(b,W)=1}{1\leq b\leq W}} \Bignorm{\frac{1}{N}\sum_{n=1}^{N} {\bf 1}_{s_0( Q_0)}(n)\big(\La_{w,b}(n)-1\big)
     \prod_{j=1}^{\ell}\big(  \prod_{i=1}^{k} T_i^{\floor{a_{ij,w,b}(n) }  }       \big)f_j   }_{L^2(\m)}\\
     \ll_{d,k,\ell} \e^{-k\ell}o_w(1)+o_{\e}(1)(1+o_w(1)),
\end{multline}where we recall that $d$ was the maximum among the integers $k_{ij}$ and the degrees of the polynomials $p_{ij},q_{ij}$ (all of these depend only on the initial functions $a_{ij}$). Sending $w\to+\infty$, we deduce that the limit in \eqref{E: Step 1 final expression} (in view of \eqref{E: Final Estimate}) is smaller than a constant (depending on $k,\ell,d$) multiple of $o_{\e}(1)$. 
Sending $\e\to 0$, we conclude that the original limit is $0$, which is the desired result.
\end{proof}

\section{Proofs of the remaining theorems}\label{Section-Proofs of remaining theorems}

We finish the proofs of our theorems in this section.

\subsection{Proof of the convergence results}
\begin{proof}[Proof of Theorem \ref{T: criterion for convergence along primes}]
   Let $(X,\X,\m,T_1,\dots,T_k)$ be the system and 
   $a_{ij}\in \mathcal{H}$ the functions in the statement.
    In view of Lemma \ref{L: indicator of primes to von-Mangoldt}, it suffices to show that
    the averages \begin{equation*}
        A(N):= \frac{1}{N}\sum_{n=1}^{N} \La(n)\big(  \prod_{i=1}^{k} T_i^{\floor{a_{i1}(n)}}       \big)f_1\cdot\ldots \cdot  \big(  \prod_{i=1}^{k} T_i^{\floor{a_{i\ell}(n)}}       \big)f_{\ell}
    \end{equation*}converge in $L^2(\m)$. For a fixed $w \in \N$, we define $W=\prod_{p\leq w, p\in\P}p$ as usual and let $b\in \N$. We define 
    \begin{equation*}
        B_{w,b}(N):= \frac{1}{N}\sum_{n=1}^{N} \La(n)\big(  \prod_{i=1}^{k} T_i^{\floor{a_{i1}(Wn+b)}}       \big)f_1\cdot\ldots \cdot  \big(  \prod_{i=1}^{k} T_i^{\floor{a_{i\ell}(Wn+b)}}       \big)f_{\ell}.
    \end{equation*}
    
    Let $\e>0$.
    Using Theorem \ref{T: the main comparison}, we can find $w_0\in \N$ (which yields a corresponding $W_0$) such that \begin{equation}\label{E: comparison between the A, B}
\Bignorm{A(W_0N)-\frac{1}{\phi(W_0)}\sum_{\underset{(b,W_0)=1}{1\leq b\leq W_0} }B_{w_0,b}(N)}_{L^2(\mu)}=O(\varepsilon)
\end{equation}for all $N$ sufficiently large. Our hypothesis implies that the sequence of bounded functions $B_{w_0,b}(N)$ is a Cauchy sequence in $L^2(\m)$, which, in conjunction with \eqref{E: comparison between the A, B}, implies that the sequence $A(W_0N)$ is a Cauchy sequence. In particular, we have \begin{equation*}
\norm{A(W_0M)-A(W_0N)}_{L^2(\mu)}=O(\varepsilon),
\end{equation*}for all $N,M$ sufficiently large.
Finally, since
\begin{equation*}
\norm{A(W_0N+b)-A(W_0N)}_{L^2(\mu)}=o_N(1),
\end{equation*} for all $1\leq b\leq W_0$, we conclude that $A(N)$ is a Cauchy sequence, which implies the required convergence.

Furthermore, if the sequence $B_{w,b}(N)$ converges to the function $F$ in $L^2(\m)$ for all $w,r\in \N$, then \eqref{E: comparison between the A, B} implies that $\norm{A(W_0N)-F}_{L^2(\m)}=O(\e)$, for all large enough $N$. Repeating the same argument as above, we infer that $A(N)$ converges to the function $F$ in norm, as we desired.
\end{proof}

\begin{proof}[Proof of Theorem~\ref{T: convergence of Furstenberg averages}]
Let $a\in \mathcal{H}$ satisfy either \eqref{E: far away from real multiples of integer polynomials} or \eqref{E: equal to a real multiple of integer polynomial}, $k\in\N,$ $(X,\X,\m,T)$ be any measure-preserving system, and functions $f_1,\dots,f_k\in L^{\infty}(\m).$ 
Observe that in either case, the function $a$ satisfies \eqref{E: far away from rational polynomials} or \eqref{E: essentially equal to a polynomial}. In addition, when $a(t)$ satisfies
either of the two latter conditions, then the function $a(Wt+b)$ satisfies the same condition, for all $W,b\in \N$.

Using \cite[Theorem 2.1]{Fra-Hardy-singlecase},\footnote{There is a slight issue here, in that we would need the assumption that the function $a(Wn+b)$ belongs to $\mathcal{H}$ in order to apply Theorem 2.2 from \cite{Fra-Hardy-singlecase}, However, the proof in \cite{Fra-Hardy-singlecase} only requires some specific growth conditions on the derivatives of the function $a(Wn+b)$ (specifically those outlined in equation 26 of that paper), which follow naturally from the assumption that $a\in \mathcal{H}$.  } we have that, for all $W,b \in \N,$, the averages \begin{equation*}
    \frac{1}{N}\sum_{n=1}^{N} T^{\floor{a(Wn+b)}}f_1\cdot\ldots\cdot T^{k\floor{a(Wn+b)}}f_k
\end{equation*}converge in $L^2(\m)$.
aWe conclude that the two conditions of Theorem \ref{T: criterion for convergence along primes} are satisfied, which shows that the desired averages converge.

In particular, if $a$ satisfies condition \eqref{E: far away from real multiples of integer polynomials}, we can invoke \cite[Theorem~2.2]{Fra-Hardy-singlecase} to conclude that the limit of the averages \begin{equation*}
      \frac{1}{N}\sum_{n=1}^{N} T^{\floor{a(Wn+b)}}f_1\cdot\ldots\cdot T^{k\floor{a(Wn+b)}}f_k
\end{equation*}
is equal to the limit (in $L^2(\mu)$) of the averages
\begin{equation*}
        \frac{1}{N}\sum_{n=1}^{N} T^nf_1\cdot\ldots\cdot T^{kn} f_k.
    \end{equation*}
   Again, Theorem \ref{T: criterion for convergence along primes} yields the desired conclusion.
\end{proof}

\begin{proof}[Proof of Theorem \ref{T: jointly ergodic case}]
    We work analogously as in the proof of Theorem \ref{T: convergence of Furstenberg averages}. The only difference is that in this case, we use \cite[Theorem 1.2]{Tsinas} to deduce that, for all  $W\in \N,$ $b\in\N$ positive integers $W$ and $b$, the averages \begin{equation*}
        \frac{1}{N}\sum_{n=1}^{N} T^{\floor{a_1(Wn+b)}}f_1\cdot\ldots\cdot T^{\floor{a_k(Wn+b)}}f_k
    \end{equation*}
     converge in $L^2(\m)$ to the product $\wt{f}_1\cdot \ldots \cdot \wt{f}_{k}$. The result follows from Theorem \ref{T: criterion for convergence along primes}.
\end{proof}

\begin{proof}[Proof of Theorem~\ref{T: nikos result to primes}] 
  The proof follows identically as the one of Theorem~\ref{T: jointly ergodic case}
by using \cite[Theorem 2.3]{Fra-Hardy-multidimensional}
 instead of \cite[Theorem 1.2]{Tsinas}.\end{proof}

\subsection{Proof of the recurrence results}

We recall Furstenberg's Correspondence Principle for $\Z^d$-actions \cite{Furstenberg-book}, for the reader's convenience. 

\begin{customthm}{F}[Furstenberg's Correspondence Principle] Let $d\in \N$ and $E\subseteq \Z^d.$ There exists a system $(X,\X,\mu,T_1,\ldots,T_d)$ and a set $A\in \X$ with $\bar{d}(E)=\mu(A),$ such that 
\begin{equation*}
\bar{d}\big(E\cap(E+{\bf{n}}_1)\cap\dots\cap(E-{\bf{n}}_k)\big)\geq \mu\left(A\cap\prod_{i=1}^d T_i^{-n_{i,1}}A\cap\dots\cap \prod_{i=1}^d T_i^{-n_{i,k}}A\right),
\end{equation*}
for all $k\in \N$ and ${\bf{n}}_j=(n_{1,j},\dots,n_{d,j})\in\Z^d,$ $1\leq j\leq k.$  
\end{customthm}

In view of the correspondence principle, the corollaries in Section \ref{Section-Introduction} follow easily.

\begin{proof}[Proof of Theorem \ref{T: multiple recurrence for szemeredi type patterns}]
(a) We apply Theorem \ref{T: convergence of Furstenberg averages} for the functions $f_1=\dots=f_k={\bf 1}_{A}$. Since convergence in $L^2(\m)$ implies weak convergence, integrating along $A$ the relation \begin{equation*}
      \lim\limits_{N\to+\infty} \frac{1}{\pi(N)} \sum_{p\in \P\colon p\leq N} T^{\floor{a(p)}}{\bf 1}_A\cdot \ldots\cdot T^{k\floor{a(p)}} {\bf 1}_A = \\
      \lim\limits_{N\to+\infty} \frac{1}{N}\sum_{n=1}^{N} T^n{\bf 1}_A\cdot\ldots\cdot T^{kn}{\bf 1}_{A},
\end{equation*}and applying Furstenberg's multiple recurrence theorem we infer that \begin{equation*}
       \lim\limits_{N\to+\infty} \frac{1}{\pi(N)} \sum_{p\in \P\colon p\leq N} \m\big(A\cap T^{-\floor{a(p)}}A\cap \dots\cap T^{-{k\floor{a(p)}}}A \big)> 0,
\end{equation*}which is the desired result.

(b) We write $a(t)=cq(t)+\e(t)$, where $q(t)\in \Z[t],\ q(0)=0,\ c\in \R$ and $\e(t)$ is a function that converges to $0$, as $t\to+\infty$. Using \cite[Proposition 3.8]{koutsogiannis-closest}, we have that there exists $c_0$ depending only on $\m(A)$, the degree of $q$ and $k$, such that \begin{equation*}
    \liminf\limits_{N\to+\infty} \frac{1}{N} \sum_{n=1}^{N} \m(A\cap T^{-[[cq(n)]]}A\cap \dots\cap T^{-k[[cq(n)]]}A)\geq c_0.
\end{equation*}

Now, we consider two separate cases. If $c$  is rational with denominator $Q$ in lowest terms, then for $t$ sufficiently large, we have $|\e(t)|\leq (2Q)^{-1}$. Therefore, we immediately deduce that \begin{equation*}
    [[cq(t)+\e(t)]]=[[cq(t)]].
\end{equation*} Thus, we conclude that \begin{equation}\label{E: recurrence lower bound for polynomials}
    \liminf\limits_{N\to+\infty} \frac{1}{N} \sum_{n=1}^{N} \m(A\cap T^{-[[cq(n)+\e(n)]]}A\cap \dots\cap T^{-k[[cq(n)+\e(n)]]}A)\geq c_0.
\end{equation}

If $c$ is irrational, then the polynomial $cq(t)$ is uniformly distributed mod 1. Given $\delta>0$, we consider the set $S:=\{n\in \N \colon \{cq(n)\}\in [\delta,1-\delta] \}$, which has density $1-2\delta$. Therefore, we have \begin{multline*}
 \Bigabs{   \frac{1}{N} \sum_{n=1}^{N} \m(A\cap T^{-[[cq(n)+\e(n)]]}A\cap \dots\cap T^{-k[[cq(n)+\e(n)]]}A)-\\
    \frac{1}{N}\sum_{n=1}^{N}  \m(A\cap T^{-[[cq(n)]]}A\cap \dots\cap T^{-k[[cq(n)]]}A) }\leq 2\delta+o_N(1).
\end{multline*}Sending $\delta\to 0^{+}$, we derive \eqref{E: recurrence lower bound for polynomials} in this case as well.

Notice that since $c_0$ depends only on the degree of $q$, we have that \begin{equation*}
     \liminf\limits_{N\to+\infty} \frac{1}{N} \sum_{n=1}^{N} \m(A\cap T^{-[[cq(Rn)+\e(Rn)]]}A\cap \dots\cap T^{-k[[cq(Rn)+\e(Rn)]]}A)\geq c_0,
\end{equation*}for all positive integers $R$.
Now, we apply Theorem \ref{T: the main comparison} with $b=1$ and the functions $a(\cdot -1)$, where we recall that $a(t)=cq(t)+\e(t)$ to obtain that for some sufficiently large $w$, we have \begin{equation*}
        \liminf\limits_{N\to+\infty} \frac{1}{N}\sum_{n=1}^{N} \La_{w,1}(n)  \m\big(A\cap T^{-\floor{a(Wn)}}A\cap \dots\cap T^{-k\floor{a(Wn)}}A\big) \geq c_0/2,
    \end{equation*}where $W$ is defined as usual in terms of $w$.
Finally, we observe that we can replace the function $\La(n)$ in the previous relation with the function $\La(n){\bf 1}_{\P}(n)$ since the contribution of the prime powers (i.e. with exponent $\geq 2$) is negligible on the average. Therefore, we conclude that 
\begin{equation*}
        \liminf\limits_{N\to+\infty} \frac{1}{N}\sum_{n=1}^{N} \La_{w,1}(n){\bf 1}_{\P}(Wn+1)  \m\big(A\cap T^{-\floor{a(Wn)}}A\cap \dots\cap T^{-k\floor{a(Wn)}}A\big) \geq c_0/2,
    \end{equation*}which implies the desired result.
Analogously, we reach the expected conclusion for the set $\P+1$ instead of $\P-1$.
\end{proof}

\begin{proof}[Proof of Theorem \ref{T: multiple recurrence in the jointly ergodic case}]
   Similarly to the proof of Theorem \ref{T: multiple recurrence for szemeredi type patterns}, we apply Theorem \ref{T: jointly ergodic case} for the functions $f_1=\cdots=f_k={\bf1}_{A}$. 
  We deduce that \begin{equation}\label{E: vale oti thes edw pera}
        \lim\limits_{N\to+\infty} \frac{1}{\pi(N)} \sum_{p\in \P\colon p\leq N} \m\big(A\cap T^{-\floor{a_1(p)}}A \cap \dots \cap T^{-\floor{a_k(p)}}A \big)=\int {\bf 1}_A\cdot \big( \E({\bf1}_A| \mathcal{I}(T))\big)^{k}\, d\m.
    \end{equation}However, using that the function ${\bf 1}_A$ is non-negative and H\"{o}lder's inequality, we get
$$\int {\bf 1}_A\cdot \big( \E({\bf1}_A| \mathcal{I}(T))\big)^{k}\, d\m\geq \Big(\int \E({\bf 1}_A|\mathcal{I}(T))\, d\m \Big )^{k+1}=\big(\m(A)  \big)^{k+1},$$
and the conclusion follows.
\end{proof}

\begin{proof}[Proof of Theorem \ref{T: multidimensional recurrence for primes}]
    The proof is similar to the proof of Theorem \ref{T: multiple recurrence in the jointly ergodic case}.
    The only distinction is made in \eqref{E: vale oti thes edw pera}, namely we have \begin{multline*}
          \lim\limits_{N\to+\infty} \frac{1}{\pi(N)} \sum_{p\in \P\colon p\leq N} \m\big(A_0\cap T_1^{-\floor{a_1(p)}}A_1 \cap \dots \cap T_k^{-\floor{a_k(p)}}A_k \big)=\\ \int {\bf 1}_{A_0}\cdot \E({\bf1}_{A_1}| \mathcal{I}(T_1))\cdot \ldots\cdot \E({\bf1}_{A_k}| \mathcal{I}(T_k))\, d\m,
    \end{multline*}
       where the sets $A_0, A_1, \dots, A_k$ satisfy the hypothesis.
Since each function $\E({\bf1}_{A_i}| \mathcal{I}(T_i))$ is $T_i$-invariant, we deduce that the integral on the right-hand side is larger than \begin{equation*}
    \int f\cdot  \E(f|\mathcal{I}(T_1))\cdot\ldots\cdot \E(f|\mathcal{I}(T_k))\, d\m,
\end{equation*}where $f={\bf 1}_{A_0\cap T^{\ell_1} A_1\cap \dots \cap T^{\ell_k}A_k}$.
However, since the function $f$ is non-negative, \cite[Lemma~1.6]{Chu-2-commuting}
    implies that \begin{equation*}
     \int f\cdot  \E(f|\mathcal{I}(T_1))\cdot\ldots\cdot \E(f|\mathcal{I}(T_k))\, d\m\geq  \left(\int f\;d\mu\right)^{k+1}= \m(A)^{k+1},
    \end{equation*}and the conclusion follows.
\end{proof}

\subsection{Proof of the equidistribution results in nilmanifolds}

In this final part of this section, we offer a proof for Theorem \ref{T: criterion for pointwise convergence along primes-nil version}. The main tool is the approximation of Lemma \ref{L: approximation by nilsequences}.

\begin{proof}[Proof of Theorem \ref{T: criterion for pointwise convergence along primes-nil version}]
    Let $X$ and $g_1,\dots, g_k,x_1,\dots, x_k$ be as in the statementthe section?, we offer a proof for Theorem 1.12. The main tool is the approximation of and let $s$ denote the nilpotency degree of $X$. It suffices to show that, for any continuous functions $f_1,\dots, f_s$ on $X$, we have the following:\begin{equation*}
        \lim\limits_{N\to+\infty} \frac{1}{\pi(N)} \sum_{p\in\P \colon p\leq N}f_1(g_1^{\floor{a_1(p)}}x_1)\cdot \ldots\cdot f_k(g_k^{\floor{a_k(p)}}x_k) =\int_{Y_1}f_1 \, dm_{Y_1}\cdot\ldots\cdot \int_{Y_k} f_k\, d m_{Y_k},
    \end{equation*}where $Y_i=\overline{(g_i^{\Z}x_i )}$ for all admissible values of $i$.
We rewrite this in terms of the von Mangoldt function as \begin{equation}\label{E: what we want to show}
      \lim\limits_{N\to+\infty} \frac{1}{N} \sum_{n=1}^{N}\La(n) f_1(g_1^{\floor{a_1(n)}}x_1)\cdot \ldots\cdot f_k(g_k^{\floor{a_k(n)}}x_k) =\int_{Y_1}f_1 \, dm_{Y_1}\cdot\ldots\cdot \int_{Y_k} f_k\, d m_{Y_k},
\end{equation}where the equivalence of the last two relations is a consequence of Lemma \ref{L: indicator of primes to von-Mangoldt}.

Our equidistribution assumption implies that for all $W,b\in \N$, we have \begin{equation}\label{E: what the shit assumption gives}
     \lim\limits_{N\to+\infty} \frac{1}{N} \sum_{n=1}^{N} f_1(g_1^{\floor{a_1(Wn+b)}}x_1)\cdot \ldots\cdot f_k(g_k^{\floor{a_k(Wn+b)}}x_k) =\int_{Y_1}f_1 \, dm_{Y_1}\cdot\ldots\cdot \int_{Y_k} f_k\, d m_{Y_k}.
\end{equation}

 We write $Y_i=G_i/\G_i$ for some nilpotent Lie groups $G_i$ with discrete and co-compact subgroups $\G_i$ and denote $Y=Y_1\times \dots\times Y_k$.
    Define the function $F:Y\to \C$ by $F(y_1,\dots, y_k)=f_1(y_1)\cdot\ldots\cdot f_k(y_k)$ and rewrite \eqref{E: what we want to show} as \begin{equation}\label{E: modified what we want to show}
     \lim\limits_{N\to+\infty} \frac{1}{N} \sum_{n=1}^{N} \La(n) F(\wt{g}_1^{\floor{a_1(n)}}\cdot\ldots\cdot \wt{g}_k^{\floor{a_k(n)}} \wt{x})
     =\int_{Y}F \, dm_{Y},
\end{equation}where $\wt{g_i}$ is the element on the nilpotent Lie group $G_1\times\dots\times  G_k$ whose $i$-th coordinate is equal to $g_i$ and the rest of its entries are the corresponding identity elements. Lastly, $\wt{x}$ is the point $(x_1,\dots, x_k)\in Y$.
Similarly, we rewrite \eqref{E: what the shit assumption gives} as  \begin{equation}\label{E: modified what the shit assumption gives}
       \lim\limits_{N\to+\infty} \frac{1}{N} \sum_{n=1}^{N}  F(\wt{g}_1^{\floor{a_1(Wn+b)}}\cdot\ldots\cdot \wt{g}_k^{\floor{a_k(Wn+b}} \wt{x})
     =\int_{Y}F \, dm_{Y}.
\end{equation}Therefore, we want to prove \eqref{E: modified what we want to show} under the assumption that \eqref{E: modified what the shit assumption gives} holds for all $W,r\in \N$.

We use the notation  \begin{equation*}
    A(N):=\frac{1}{N} \sum_{n=1}^{N} \La(n) F(\wt{g}_1^{\floor{a_1(n)}}\cdot\ldots\cdot \wt{g}_k^{\floor{a_k(n)}} \wt{x}),
\end{equation*}and \begin{equation*}
    B_{W,b}(N):=\frac{1}{N} \sum_{n=1}^{N}  F(\wt{g}_1^{\floor{a_1(Wn+b)}}\cdot\ldots\cdot \wt{g}_k^{\floor{a_k(Wn+b)}} \wt{x})
\end{equation*}for convenience.

Let $\e>0$.
Observe that the sequence $\psi({\bf n})=F(\wt{g}_1^{n_1}\cdot\ldots\cdot \wt{g}_k^{n_k} \wt{x})$
is an $s$-step nilsequence in $k$-variables.
We apply  Lemma \ref{L: approximation by nilsequences} to deduce that there exists a system $(X',\X',\m,S_1,\dots, S_k)$ and functions $G_1,\dots, G_s\in L^{\infty}(\m)$ such that \begin{equation*}
    \Bigabs{F(\wt{g}_1^{n_1}\cdot\ldots\cdot \wt{g}_k^{n_k} \wt{x})-\int \prod_{j=1}^{s} \Big(\prod_{i=1}^{k} S_i^{\ell_jn_i}\Big)G_j \,d\m }\leq \e
\end{equation*}for all $n_1,\dots, n_k \in \Z$, where $\ell_j=(s+1)!/j$.

Thus, if we define \begin{equation*}
    A'(N):=\frac{1}{N} \sum_{n=1}^{N} \La(n) \int \prod_{j=1}^{s+1} \Big(\prod_{i=1}^{k} S_i^{\ell_j\floor{a_i(n)}}\Big)G_j \,d\m,
\end{equation*}and \begin{equation*}
    B'_{W,b}(N)=\frac{1}{N} \sum_{n=1}^{N} \int \prod_{j=1}^{s+1} \Big(\prod_{i=1}^{k} S_i^{\ell_j\floor{a_i(Wn+b)}}\Big)G_j \,d\m,
\end{equation*}we deduce that $|B_{W,b}(N)-B'_{W,b}(N)|\leq \e$, for all $N\in \N$, whereas $|A(N)-A'(N)|\leq \e (1+o_N(1))$, by the prime number theorem.

The functions $a_1,\dots, a_k $ satisfy the assumptions of Theorem \ref{T: the main comparison}. Thus, we deduce that if we pick $w_0$ (which provides a corresponding $W_0$) sufficiently large and apply the Cauchy-Schwarz inequality, we will get \begin{equation}\label{E: application of Theorem 1.1 in nilproof}
   \max_{\underset{(b,W_0)=1}{1\leq b\leq W}} \Bigabs{\frac{1}{N}\sum_{n=1}^{N}   \big(\La_{w_0,b}(n)-1\big) \int \prod_{j=1}^{s+1} \Big(\prod_{i=1}^{k} S_i^{\ell_j\floor{a_i(W_0n+b)}}\Big)G_j \,d\m   }\leq \e
\end{equation}for every sufficiently large $N\in \N$. In addition, we use \eqref{E: modified what the shit assumption gives}, the inequality $|B_{W_0,b}(N)-B'_{W_0,b}(N)|\leq \e$ and the triangle inequality to infer that for $N$ large enough, we have \begin{equation}\label{E: approximation of B(N) by the integral}
    \Bigabs{B'_{W_0,b}(N) -\int_{Y}F \, dm_{Y}}\leq 2\e, 
\end{equation}for all $1\leq b\leq W_0$ coprime to $W_0$.

Observe that \eqref{E: application of Theorem 1.1 in nilproof} implies that for all $N$ sufficiently large, we have \begin{equation*}
    \Bigabs{A'(W_0N)-\frac{1   }{\phi(W_0)} \sum_{\underset{(b,W_0)=1}{1\leq b\leq W_0}}^{}B'_{W_0,b}(N)    }\leq 2\e, 
\end{equation*}and we can combine this with \eqref{E: approximation of B(N) by the integral}  to conclude that \begin{equation*}
     \Bigabs{A'(W_0N)- \int_{Y}F \, dm_{Y} }\leq 4\e
\end{equation*}for all $N$ sufficiently large. 
Since $|A'(N)-A(N)|\leq \e (1+o_N(1))$, we finally arrive at the inequality \begin{equation*}
    \Bigabs{A(W_0N) -\int_{Y}F \, dm_{Y}}\leq 6\e,
\end{equation*}for all large enough $N\in \N$. Since $|A(W_0N)-A(W_0N+b)|=o_N(1)$ for all $1\leq b\leq W$, we conclude that \begin{equation*}
    \Bigabs{A(N) -\int_{Y}F \, dm_{Y}}\leq 7\e,
\end{equation*}for all sufficiently large $N\in \N$. Sending $\e\to 0$, we deduce \eqref{E: modified what we want to show}, which is what we wanted to show.
\end{proof}

\begin{proof}[Proof of Proposition Corollary \ref{C: equidistribution nilmanifolds}]
    The result follows readily from Theorem \ref{T: criterion for pointwise convergence along primes-nil version}. The first hypothesis of the criterion is satisfied, since each of the functions $a_i(t)$ satisfies \eqref{E: t^e away from polynomials}, while condition (b) follows from \cite[Theorem 1.1]{tsinas-pointwise} and our assumption that $a_i(Wt+b)$ belongs to $\mathcal{H}$ .
\end{proof}

\section{More general iterates}\label{Section: more general iterates}

In this last section of the article, we discuss how the hypotheses that the functions $a_i(t)$ in the iterates belong to a Hardy field $\mathcal{H}$ can be weakened. The starting point is Proposition \ref{P: remove error term for fast functions}, which was established for general smooth functions, subject to some growth inequalities on the derivative of some particular order (the integer $k$ in the statement). Unfortunately, one cannot generalize theorems such as Theorem \ref{T: jointly ergodic case}, which involve several functions to a more general class. The main obstruction is that in order to obtain the simultaneous Taylor 
expansions, one needs to find a function $L(t)$ (the length of the short interval) that satisfies a growth relation for all functions at the same time, which is non-trivial to perform, because we do not know how the derivatives of one function might grow relative to the derivatives of another function. Nonetheless, this is not feasible in the case of one function, such as Theorem~\ref{T: convergence of Furstenberg averages}, which leads to Szemer\'{e}di-type results.

We have the following proposition.

\begin{proposition}\label{P: comparison for more general iterates}
       Let $a(t)$ be a function, defined for all sufficiently large $t$ and satisfying $|a(t)|\to+\infty$, as $t\to+\infty$.
       Suppose there exists a positive integer $k$ for which $a$ is $C^{k+1},$ $a^{(k+1)}(t)$ converges to 0 monotonically, and such that\footnote{See the subsection with the notational conventions in Section \ref{Section-Introduction} for the notation $\lll$.}\begin{equation*}
           t^{5/8}\ll \bigabs{a^{(k)}(t)}^{-\frac{1}{k}}\lll\bigabs{a^{(k+1)}(t)}^{-\frac{1}{k+1}}\ll t.
       \end{equation*}
       
     Then, for any $\ell\in\N,$ measure-preserving system $(X,\X, \m, T_1,\dots, T_{\ell}),$ and functions $f_1,\dots,f_{\ell}\in L^{\infty}(\m)$, we have  \begin{equation*}\label{E: main average comparison tempered}
         \lim_{w\to+\infty} \  \limsup\limits_{N\to+\infty}\  \max_{\underset{(b,W)=1}{1\leq b\leq W}}\ \Bignorm{\frac{1}{N}\sum_{n=1}^{N} \big(\La_{w,b}(n) -1\big) T_1^{\floor{a(Wn+b)}}f_1\cdot\ldots\cdot  T_{\ell}^{\floor{a(Wn+b)}}       f_{\ell}  }_{L^2(\m)}=0.
        \end{equation*}
\end{proposition}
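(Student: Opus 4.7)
The plan is to follow the strategy of the proof of Theorem~\ref{T: the main comparison}, specialized to the single-function setting, where many of the complications disappear because we do not need to decompose the iterates into sub-families and we do not need a separate treatment for sub-fractional parts or polynomial perturbations. First, choose $L(t)$ to be the geometric mean of $\bigabs{a^{(k)}(t)}^{-1/k}$ and $\bigabs{a^{(k+1)}(t)}^{-1/(k+1)}$. The strong-domination $\bigabs{a^{(k)}}^{-1/k}\lll \bigabs{a^{(k+1)}}^{-1/(k+1)}$ gives $\bigabs{a^{(k)}}^{-1/k}\lll L(t)\lll \bigabs{a^{(k+1)}}^{-1/(k+1)}$, while the outer bounds $t^{5/8}\ll\bigabs{a^{(k)}}^{-1/k}$ and $\bigabs{a^{(k+1)}}^{-1/(k+1)}\ll t$ imply $t^{5/8+\delta_1}\ll L(t)\ll t^{1-\delta_2}$ for some $\delta_1,\delta_2>0$, which is exactly what Theorem~\ref{T: Gowers uniformity in short intervals} requires.

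Next, using Lemma~\ref{L: long averages to short averages}, reduce matters to showing that
\[
\limsup_{R\to+\infty}\ \max_{\underset{(b,W)=1}{1\leq b\leq W}}\ \E_{1\leq r\leq R}\Bignorm{\E_{r\leq n\leq r+L(r)}\big(\La_{w,b}(n)-1\big)\prod_{j=1}^{\ell}T_j^{\floor{a(Wn+b)}}f_j}_{L^2(\mu)}=o_w(1).
\]
Fix $w$ and $b$. The rescaled function $\tilde a(t):=a(Wt+b)$ has derivatives $\tilde a^{(j)}(t)=W^j a^{(j)}(Wt+b)$; since $w$ (and hence $W$) and $b$ are fixed before we let $r\to\infty$, the relations $\bigabs{\tilde a^{(k)}}^{-1/k}\lll L(t)\lll \bigabs{\tilde a^{(k+1)}}^{-1/(k+1)}$ still hold, and $\tilde a^{(k+1)}$ still converges monotonically to $0$. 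Apply Proposition~\ref{P: remove error term for fast functions} to $\tilde a$ with $A=2$: for every $r$ sufficiently large in terms of $w$, there is a subset $\mathcal{B}_r\subseteq[r,r+L(r)]\cap\Z$ of size $O(L(r)\log^{-2}r)$ such that, for $n\notin\mathcal{B}_r$, $\floor{\tilde a(n)}=\floor{p_r(n)}$, where $p_r$ is the degree-$k$ Taylor polynomial of $\tilde a$ about $r$. Since $\La_{w,b}(n)=O(\log r)$ uniformly on $[r,r+L(r)]$, the triangle inequality shows that the contribution of $\mathcal{B}_r$ to the inner norm is $O(\log^{-1}r)=o_r(1)$.

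After this replacement the inner average becomes
\[
\Bignorm{\E_{r\leq n\leq r+L(r)}\big(\La_{w,b}(n)-1\big)\prod_{j=1}^{\ell}T_j^{\floor{p_r(n)}}f_j}_{L^2(\mu)}+o_r(1).
\]
The discrepancy estimate \eqref{E: discrepancy of Hardy sequences on short intervals} in Proposition~\ref{P: remove error term for fast functions} (applied with any $A>0$) guarantees that the single polynomial family $p_r(n)$ satisfies the non-concentration hypothesis \eqref{E: non-concetration around 1} of Proposition~\ref{P: Gowers norm bound on variable polynomials}. Apply that proposition in the configuration where the commuting system is $(X,\mathcal{X},\mu,T_1,\dots,T_\ell)$ and the polynomial family $\{p_{i,j,N}\}_{1\leq i,j\leq \ell}$ is given by $p_{j,j,N}=p_r$ and $p_{i,j,N}\equiv 0$ for $i\neq j$, so that $\prod_{j}\prod_{i}T_i^{\floor{p_{i,j,N}(n)}}f_j=\prod_j T_j^{\floor{p_r(n)}}f_j$. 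This produces an integer $s=s(\ell,k)$ and, for any $\delta>0$, a bound of the form
\[
\frac{C_{\ell,k}}{\delta^{\ell}}\Big(\bignorm{\La_{w,b}-1}_{U^s(r,r+sL(r)]}+o_w(1)\Big)+o_{\delta}(1)(1+o_w(1)).
\]
Averaging over $r\in[1,R]$, invoking Theorem~\ref{T: Gowers uniformity in short intervals} to get $\bignorm{\La_{w,b}-1}_{U^s(r,r+sL(r)]}=o_w(1)$ uniformly in $b$ once $r$ is large in terms of $w$, and absorbing the contribution of the small-$r$ regime by the Brun-Titchmarsh bound of Corollary~\ref{C: Brun-Titchmarsh inequality for von Mangoldt sums}, yields $O_{\ell,k}(\delta^{-\ell}o_w(1)+o_{\delta}(1))$. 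Sending $R\to\infty$, then $w\to\infty$, then $\delta\to 0^+$ gives the required conclusion.

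The main obstacle is essentially bookkeeping: verifying that the hypotheses of Proposition~\ref{P: remove error term for fast functions} transfer from $a$ to its affine rescaling $a(Wt+b)$ with constants independent of $b$, so that the error estimates in the discrepancy and the $o_r(1)$ terms obtained from the exceptional set $\mathcal{B}_r$ are uniform over $1\leq b\leq W$ with $(b,W)=1$. This is immediate since multiplying $a^{(j)}$ by the constant $W^j$ preserves all the strong-domination relations that go into the proof of that proposition, and the monotone decay of $a^{(k+1)}$ carries over. Apart from this, the argument is a one-function, one-class specialization of the proof of Theorem~\ref{T: the main comparison}, so no new ideas are needed.
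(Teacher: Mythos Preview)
Your proposal is correct and follows the same strategy as the paper's sketch: take $L$ to be the geometric mean, reduce to short intervals, apply Proposition~\ref{P: remove error term for fast functions} to replace $\floor{a_{W,b}(n)}$ by $\floor{p_r(n)}$, then invoke Proposition~\ref{P: Gowers norm bound on variable polynomials} and Theorem~\ref{T: Gowers uniformity in short intervals}. The only difference is in the short-interval reduction: you go through Lemma~\ref{L: long averages to short averages} (incurring the outer average over $r\in[1,R]$ and the Brun--Titchmarsh bookkeeping for small $r$), whereas the paper points out that with a single iterate function one can cover $[1,N]$ by non-overlapping blocks $[m,m+L(m)]$ as in \cite[Lemma~4.3]{Fra-Hardy-singlecase} and reduce directly to showing that the norm on a single short interval is $o_w(1)$ as $N\to\infty$, avoiding the double averaging altogether---either route works, the paper's is just a bit more economical here.
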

 We remark that any improvement in the parameter $5/8$ 
in Theorem \ref{T: Gowers uniformity in short intervals} will also lower the term $t^{5/8}$ on the leftmost part of the growth inequalities accordingly.
\begin{proof}[Sketch of the proof of Proposition \ref{P: comparison for more general iterates}]
We define $L(t)$ to be the geometric mean of the functions $\bigabs{a^{(k)}(t)}^{-\frac{1}{k}}$ and $\bigabs{a^{(k+1)}(t)}^{-\frac{1}{k+1}}$, which is well-defined for all $t$ sufficiently large. A standard computation implies the relation \begin{equation*}
         t^{5/8}\ll \bigabs{a^{(k)}(t)}^{-\frac{1}{k}}\lll L(t) \lll\bigabs{a^{(k+1)}(t)}^{-\frac{1}{k+1}}\ll t.
    \end{equation*}Regarding the parameter $w$ as fixed, it suffices to show that \begin{equation*}
        \limsup\limits_{N\to+\infty}\max_{\underset{(b,W)=1}{1\leq b\leq W}}\ \Bignorm{\frac{1}{N}\sum_{n=1}^{N} \big(\La_{w,b}(n) -1\big) T_1^{\floor{g(Wn+b)}}f_1\cdot\ldots\cdot  T_{\ell}^{\floor{g(Wn+b)}}       f_{\ell}  }_{L^2(\m)}=o_w(1).
    \end{equation*}This follows if we show that \begin{equation*}
          \limsup\limits_{N\to+\infty}\max_{\underset{(b,W)=1}{1\leq b\leq W}}\ \Bignorm{\E_{N\leq n\leq N+L(N)} \big(\La_{w,b}(n) -1\big) T_1^{\floor{a(Wn+b)}}f_1\cdot\ldots\cdot  T_{\ell}^{\floor{a(Wn+b)}}       f_{\ell}  }_{L^2(\m)}=o_w(1).
    \end{equation*}
    This derivation is very similar to the proof of \cite[Lemma 4.3]{Fra-Hardy-singlecase}, which was stated only for bounded sequences.
    This is proven by covering the interval $[1,N]$ with non-overlapping sub-intervals that have the form $[m,m+L(m)]$ (for $m$ large enough), where the term of the average on the last set of the covering is bounded as in \eqref{E: to kommati poy jefeygei}. 
    \footnote{In particular, this case is much simpler than the method used to establish Theorem \ref{T: the main comparison}, in that we do not have to consider the more complicated double averaging scheme. In addition, we do not need any assumptions on $L(t)$ other than it is positive and $L(t)\prec t$.}

    Using Proposition \ref{P: remove error term for fast functions} and the abbreviated notation $g_{W,b}(t)$ for the function $g(Wt+b)$, we deduce that we can write \begin{equation*}
        \floor{g_{W,b}(n) }=\floor{g_{W,b}(N)+\dots+\frac{(n-N)^kg^{(k)}_{W,b}(N)}{k!}}
    \end{equation*}for all except at most $O(L(N)\log^{-100}N)$ values of $n\in [N,N+L(N)]$. Furthermore, we also have the equidistribution assumption of Proposition~\ref{P: remove error term for fast functions}, which implies that Proposition~\ref{P: Gowers norm bound on variable polynomials} is applicable for the polynomial $$g_{W,b}(N)+\dots+\frac{(n-N)^kg^{(k)}_{W,b}(N)}{k!}$$ appearing in the iterates. The conclusion then follows similarly as in the proof of Theorem~\ref{T: the main comparison}, so we omit the rest of the details.
\end{proof}

An application of the previous comparison is for the class of {\em tempered} functions, which we define promptly.

\begin{definition}
    Let $i$ be a non-negative integer. A real-valued function $g$ which is $(i+1)$-times continuously differentiable on $(t_0,\infty)$ for some $t_0\geq 0,$ is called a \emph{tempered function of degree $i$} (we write $d_g=i$), if the following hold:
\begin{enumerate}
\item[$(a)$] $g^{(i+1)}(t)$ tends monotonically to $0$ as $t\to\infty;$

\item[$(b)$]  $\lim_{t\to+\infty}t|g^{(i+1)}(t)|=+\infty.$
\end{enumerate}
Tempered functions of degree $0$ are called \emph{Fej\'{e}r} functions. 
\end{definition}

For example, consider the functions 
\begin{equation}\label{E: Examples tempered} g_1(t)=t^{1/25}(100+\sin\log t)^3, \; g_2(t)=t^{1/25}, \; g_3(t)=t^{17/2}(2+\cos\sqrt{\log t}).\end{equation}
We have that $g_1$ and $g_2$ are Fej\'er, $g_3$ is tempered of degree $8$ (which is not Hardy, see \cite{Bergelson-Haland}). 
Every tempered function of degree $i$ is eventually monotone and it grows at least as fast as $t^i\log t$ but slower than $t^{i+1}$ (see \cite{Bergelson-Haland}), so that, under the obvious modification of Definition \ref{D: growthdefinitions}, tempered functions $\mathcal{T}$ are strongly non-polynomial. Also, for every tempered function $g,$ we have that $(g(n))_{n\in\N}$ is equidistributed mod 1.\footnote{ For Fej\'er functions this is a classical result due to Fej\'er (for a proof see \cite{Kuipers-Niederreiter}). The general case follows inductively by van der Corput's difference theorem.}

In general, it is more restrictive to work with tempered functions than working with Hardy field ones. To see this, notice that ratios of tempered functions need not have limits, in contrast to the Hardy field case. For example, the functions $g_1$ and $g_2$ in \eqref{E: Examples tempered} are such that $g_1(t)/g_2(t)$ has no limit as $t\to+\infty$. This issue persists even when we are dealing with a single function, as ratios that involve derivatives of the same function may not have a limit either. Indeed, we can easily see that $g_1$ from \eqref{E: Examples tempered} (which was first studied in  \cite{DKS-pointwise}) has the property that $\frac{tg_1'(t)}{g_1(t)}$ does not have a limit as $t\to+\infty.$ The existence of the limit of the latter is important as it allows us to compare (via L' H\^opital's rule) growth rates of derivatives of functions with comparable growth rates.

In order to sidestep the aforementioned problematic cases, we restrict our study to the following subclass of tempered functions (see also \cite{Bergelson-Haland}, \cite{Koutsogiannis-tempered}).

Let $\mathcal{R}:=\Big\{g\in C^\infty(\mathbb{R}^+):\;\lim_{t\to+\infty}\frac{tg^{(i+1)}(t)}{g^{(i)}(t)}\in \mathbb{R}\;\;\text{for all}\;\;i\in\mathbb{N}\cup\{0\}\Big\};$

$\mathcal{T}_i:=\Big\{g\in\mathcal{R}:\;\exists\;i<\alpha< i+1,\;\lim_{t\to+\infty}\frac{tg'(t)}{g(t)}=\alpha,\;\lim_{t\to+\infty}g^{(i+1)}(t)=0\Big\};$ 

and $\mathcal{T}:=\bigcup_{i=0}^\infty \mathcal{T}_i.$ For example, $g_2\in \mathcal{T}_0$ and $g_3\in\mathcal{T}_8$ ($g_2, g_3$ are those from \eqref{E: Examples tempered}).


Notice that while the class of Fej\'er functions contain sub-fractional functions, $\mathcal{T}_0$ does not as, according to \cite[Lemma~6.4]{DKS-hardy}, if $g\in \mathcal{T}$ with $\lim_{t\to+\infty}\frac{tg'(t)}{g(t)}=\alpha,$ then for every $0<\beta<\alpha$ we have $t^\beta\prec g(t).$ 

We will prove a convergence result for the class $\mathcal{T}$ through an application of Proposition~\ref{P: comparison for more general iterates}.

\begin{lemma}
    Let $g$ be a function in $\mathcal{T}$ and $0<c<1$. Then,
for all large enough positive integers $k$, we have 
 \begin{equation*}
   t^c\prec \bigabs{g^{(k)}(t)}^{-\frac{1}{k}}\lll  \bigabs{g^{(k+1)}(t)}^{-\frac{1}{k+1}} \prec t.
\end{equation*}
\end{lemma}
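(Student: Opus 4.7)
My plan is to reduce everything to the single asymptotic $|g^{(j)}(t)|=t^{\alpha-j+o(1)}$. Since $g\in\mathcal{T}$, I fix $i\in\mathbb{N}\cup\{0\}$ and $\alpha\in(i,i+1)$ with $g\in\mathcal{T}_i$, so $\lim_{t\to+\infty}\frac{tg'(t)}{g(t)}=\alpha$; and by the definition of $\mathcal{R}$, the limits
$$\alpha_j:=\lim_{t\to+\infty}\frac{tg^{(j+1)}(t)}{g^{(j)}(t)}$$
exist in $\mathbb{R}$ for every $j\geq 0$, with $\alpha_0=\alpha$. The first step is to establish $\alpha_j=\alpha-j$ for every $j\geq 0$ by induction on $j$.

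For the inductive step, I would write $\frac{g^{(j+1)}(t)}{g^{(j)}(t)}=\frac{\alpha_j+o(1)}{t}$ and integrate this logarithmic derivative from some large $t_0$ up to $t$, obtaining $\log|g^{(j)}(t)|=\alpha_j\log t+o(\log t)$ and hence $|g^{(j)}(t)|=t^{\alpha_j+o(1)}$. Plugging this asymptotic back into the same identity yields $|g^{(j+1)}(t)|=|g^{(j)}(t)|\cdot\frac{|\alpha_j|+o(1)}{t}=t^{\alpha_j-1+o(1)}$ whenever $\alpha_j\neq 0$; comparing with the analogous formula $|g^{(j+1)}(t)|=t^{\alpha_{j+1}+o(1)}$ then forces the recursion $\alpha_{j+1}=\alpha_j-1$. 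The non-integrality of $\alpha$ baked into the definition of $\mathcal{T}_i$ guarantees that $\alpha_j=\alpha-j\neq 0$, so the induction propagates without degeneration and gives $|g^{(j)}(t)|=t^{\alpha-j+o(1)}$ for every $j\geq 0$.

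Once this asymptotic is available, the three required relations reduce to simple exponent arithmetic. Fixing any integer $k>\alpha/(1-c)$, one has $|g^{(k)}(t)|^{-1/k}=t^{(k-\alpha)/k+o(1)}$ with exponent strictly exceeding $c$, so $t^c\prec|g^{(k)}(t)|^{-1/k}$. Similarly $|g^{(k+1)}(t)|^{-1/(k+1)}=t^{1-\alpha/(k+1)+o(1)}\prec t$ since $\alpha>0$. For the crucial strong domination $|g^{(k)}(t)|^{-1/k}\lll|g^{(k+1)}(t)|^{-1/(k+1)}$, I would compute the ratio of the two sides, which equals $t^{\alpha/(k(k+1))+o(1)}$; the fixed positive exponent $\alpha/(k(k+1))$ supplies the required power saving $t^{\delta}$ for any $0<\delta<\alpha/(k(k+1))$.

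The main obstacle I anticipate is the bookkeeping in the first step: the derivation $|g^{(j+1)}|\sim|g^{(j)}|(|\alpha_j|+o(1))/t$ degenerates if some $\alpha_j=0$, in which case the identity loses all control over the growth of $g^{(j+1)}$ and the induction breaks. The hypothesis $\alpha\notin\mathbb{Z}$, hard-wired into $g\in\mathcal{T}_i$, is precisely the structural input that excludes this degeneracy; without it the argument would collapse at the step $j$ with $\alpha-j=0$, and finer estimates on $g^{(j+1)}$ based on direct integration of $g^{(j+2)}$ would be required to recover the chain of asymptotics.
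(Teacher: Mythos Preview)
Your proof is correct and is essentially the same approach as the paper's: both exploit the limits $\alpha_j=\lim_{t\to+\infty}\frac{tg^{(j+1)}(t)}{g^{(j)}(t)}$ and their non-vanishing (forced by $\alpha\notin\Z$) to control the growth of $|g^{(k)}|$. The only difference is presentational: you integrate the logarithmic derivative to obtain the sharp asymptotic $|g^{(j)}(t)|=t^{\alpha-j+o(1)}$ and then read off all three inequalities by exponent arithmetic, whereas the paper works directly with the telescoping identity $g^{(k)}(t)=g(t)\cdot t^{-k}\prod_{i=1}^k\frac{tg^{(i)}(t)}{g^{(i-1)}(t)}$; your route has the virtue of actually \emph{proving} the recursion $\alpha_{j+1}=\alpha_j-1$ that the paper invokes only tacitly when it asserts $\alpha_k\in\R\setminus\{0\}$.
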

\begin{proof}
     Since $g(t)\prec t^{d_g+1}$ and $0<c<1,$ we have $g(t)\prec t^{k(1-c)}$ for all large enough $k\in \N$, which implies
$$\frac{g^{(k)}(t)}{t^{-ck}}=\frac{g(t)}{t^{k(1-c)}}\cdot \prod_{i=1}^k \frac{tg^{(i)}(t)}{g^{(i-1)}(t)}\to 0.$$
Hence, $g^{(k)}(t)\prec t^{-ck}$ or, equivalently, $t^c\prec \bigabs{g^{(k)}(t)}^{-\frac{1}{k}}.$

For the aforementioned $k$'s, let $0<q<1$ so that $t^{kq}\prec g(t).$ Since $\lim_{t\to+\infty}\frac{tg'(t)}{g(t)}\notin \N,$
$$\frac{t^{k(q-1)}}{g^{(k)}(t)}=\frac{t^{kq}}{g(t)}\cdot \prod_{i=1}^k \frac{g^{(i-1)}(t)}{tg^{(i)}(t)}\to 0,$$
so $t^{k(q-1)}\prec g^{(k)}(t).$ As $\lim_{t\to+\infty}\frac{tg^{(k+1)}(t)}{g^{(k)}(t)}\in\R\setminus\{0\},$ we get $g^{(k+1)}(t)\ll t^{-1}g^{(k)}(t)$, so, if we let $\delta=\frac{q}{k+1},$ we have
$$\frac{\bigabs{g^{(k+1)}(t)}^{-\frac{1}{k+1}}}{\bigabs{g^{(k)}(t)}^{-\frac{1}{k}}}\gg \frac{t^{\frac{1}{k+1}}\bigabs{g^{(k)}(t)}^{-\frac{1}{k+1}}}{\bigabs{g^{(k)}(t)}^{-\frac{1}{k}}}=t^{\frac{1}{k+1}}\bigabs{g^{(k)}(t)}^{\frac{1}{k(k+1)}}\succ t^{\frac{1}{k+1}}\cdot t^{\frac{q-1}{k+1}}=t^\delta,$$
completing the proof of the lemma (the rightmost inequality follows by \cite{DKS-hardy}). 
\end{proof}

Using Proposition \ref{P: comparison for more general iterates} and \cite[Theorem~2.2]{Fra-Hardy-singlecase} we get the following result. More precisely, we use the fact here that \cite[Theorem~2.2]{Fra-Hardy-singlecase} holds for a single function $a$ which has the property that, for some $k\in\N,$ $a$ is $C^{k+1},$ $a^{(k+1)}(t)$ converges to $0$ monotonically, $1/t^k\prec a^{(k)}(t),$ and $|a^{(k)}(t)|^{-1/k}\prec |a^{(k+1)}(t)|^{-1/(k+1)}$ (see comments in \cite[Subsection 2.1.5]{Fra-Hardy-singlecase}). We omit its proof as it is identical to the one of Theorem~\ref{T: convergence of Furstenberg averages}.

\begin{theorem}\label{T: convergence of Furstenberg averages tempered}
    Let $g\in \mathcal{T}.$ For any $k\in\N,$ measure-preserving system $(X,\X, \m, T),$ and functions $f_1,\dots,f_{k}\in L^{\infty}(\m)$, we have 
     \begin{equation}\label{E: aek ole tempered}
        \lim_{N\to+\infty} \frac{1}{\pi(N)}\sum_{p\in \mathbb{P}\colon p\leq N} T^{\floor{g(p)}}f_1\cdot\ldots\cdot T^{k\floor{g(p)}}f_k=\lim_{N\to+\infty} \frac{1}{N}\sum_{n=1}^{N} T^nf_1\cdot\ldots\cdot T^{kn} f_k,
    \end{equation} where the convergence takes place in $L^2(\m)$.
\end{theorem}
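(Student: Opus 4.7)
The plan is to mimic the proof of Theorem~\ref{T: convergence of Furstenberg averages}, replacing the Hardy-field input with the tempered-function machinery just developed. The two ingredients are the comparison provided by Proposition~\ref{P: comparison for more general iterates} and the convergence of the associated Cesàro averages, which is what Theorem~2.2 of \cite{Fra-Hardy-singlecase} supplies for a single function $a$ satisfying the growth conditions recalled just before the statement.

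First I would reduce to showing, for every fixed $w$ (and hence $W=\prod_{p\leq w,\,p\in\P}p$) and every $1\leq b\leq W$ with $(b,W)=1$, that the Cesàro averages
\begin{equation*}
    \frac{1}{N}\sum_{n=1}^N T^{\lfloor g(Wn+b)\rfloor}f_1\cdot\ldots\cdot T^{k\lfloor g(Wn+b)\rfloor}f_k
\end{equation*}
converge in $L^2(\mu)$ to the same limit as $\frac{1}{N}\sum_{n=1}^N T^n f_1\cdot\ldots\cdot T^{kn}f_k$, and then invoke Theorem~\ref{T: criterion for convergence along primes} (specialized to the iterates $a_{ij}(t)=j\,g(t)$, $1\leq j\leq k$, and commuting transformations $T_i=T^i$) together with the final clause of that theorem to transfer the identification of the limit from $\N$ to $\P$. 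The comparison itself is provided directly by Proposition~\ref{P: comparison for more general iterates} with $\ell=k$, $T_j=T^j$ and the constant family $f_j$, once its hypotheses are verified for the function $g(Wt+b)$.

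The verification is the technical step. The lemma preceding the statement produces, for any $0<c<1$, arbitrarily large integers $k_0$ for which
\begin{equation*}
  t^c\prec\bigabs{g^{(k_0)}(t)}^{-1/k_0}\lll \bigabs{g^{(k_0+1)}(t)}^{-1/(k_0+1)}\prec t.
\end{equation*}
Choosing $c>5/8$ and noting that $\bigl(g(Wt+b)\bigr)^{(j)}=W^{j}g^{(j)}(Wt+b)$, the same chain of inequalities survives (with the same $k_0$) under the affine substitution, since the factor $W^{j}$ is an absolute constant for fixed $w$. Moreover, $g^{(k_0+1)}$ decreases monotonically to $0$ because $g\in\mathcal{T}$, and this monotonicity is inherited by $t\mapsto W^{k_0+1}g^{(k_0+1)}(Wt+b)$. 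Thus $g(Wt+b)$ satisfies the full hypothesis of Proposition~\ref{P: comparison for more general iterates}, and it also satisfies the single-function growth condition recalled in Subsection~2.1.5 of \cite{Fra-Hardy-singlecase} needed for their Theorem~2.2, yielding that the Cesàro averages above converge to the same limit as $\frac{1}{N}\sum_{n=1}^N T^n f_1\cdot\ldots\cdot T^{kn}f_k$ in $L^2(\mu)$ (independently of $W$ and $b$).

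Combining these two ingredients via Theorem~\ref{T: criterion for convergence along primes} gives the convergence of the primes-weighted averages and the identification of the limit. The main obstacle I anticipate is the slight subtlety that Theorem~2.2 of \cite{Fra-Hardy-singlecase} is stated for Hardy-field functions, whereas $g\in\mathcal{T}$ need not lie in a Hardy field; however, as the authors remark in Subsection~2.1.5 of that paper, their proof only uses a handful of derivative growth inequalities, all of which we have just verified for $g(Wt+b)$ from the containment $g\in\mathcal{T}$ together with the preceding lemma, so no genuinely new ideas are required beyond the bookkeeping of the affine shift.
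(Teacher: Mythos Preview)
Your approach is correct and matches the paper's: combine Proposition~\ref{P: comparison for more general iterates} (applied with $T_j=T^j$) and \cite[Theorem~2.2]{Fra-Hardy-singlecase} via the argument behind Theorem~\ref{T: criterion for convergence along primes}. Two small caveats: Theorem~\ref{T: criterion for convergence along primes} is stated only for $a_{ij}\in\mathcal{H}$, so it cannot be quoted as a black box---what you are really doing is rerunning its short proof with Proposition~\ref{P: comparison for more general iterates} replacing Theorem~\ref{T: the main comparison}; and the parenthetical specialization ``$a_{ij}(t)=jg(t)$, $T_i=T^i$'' should be dropped, since $\lfloor jg\rfloor\neq j\lfloor g\rfloor$ in general---the correct setup is the one you already use for Proposition~\ref{P: comparison for more general iterates}, namely a single function $g$ together with the commuting transformations $T_j=T^j$.
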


As in the Hardy field case, we have the corresponding recurrence result.

\begin{theorem}
    \label{T: recurrence tempered}
Let $g\in \mathcal{T}.$ For any $k\in\N,$ measure-preserving system $(X,\X, \m,T),$  and set $A$ with positive measure, we have 
     \begin{equation*}
        \lim\limits_{N\to+\infty} \frac{1}{\pi(N)} \sum_{p\in \P\colon p\leq N} \m(A\cap T^{-\floor{g(p)}}A\cap \dots\cap T^{-{k\floor{g(p)}}}A )> 0.
\end{equation*}
\end{theorem}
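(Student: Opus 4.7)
The plan is to deduce Theorem~\ref{T: recurrence tempered} as an essentially immediate corollary of Theorem~\ref{T: convergence of Furstenberg averages tempered}, following the exact template used to obtain part~(a) of Theorem~\ref{T: multiple recurrence for szemeredi type patterns} from Theorem~\ref{T: convergence of Furstenberg averages}. The point is that the hard analytic work (the comparison between prime-weighted and $\mathbb{N}$-Ces\`aro averages, the short-interval Taylor approximation, and the lifting to an $\mathbb{R}$-action) has already been performed in Proposition~\ref{P: comparison for more general iterates} and packaged inside Theorem~\ref{T: convergence of Furstenberg averages tempered}, so all that remains is a soft dualization argument combined with Furstenberg's multiple recurrence theorem.

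Concretely, I would fix $k \in \mathbb{N}$, a measure-preserving system $(X, \mathcal{X}, \mu, T)$, and a measurable set $A \subseteq X$ with $\mu(A) > 0$. I would then apply Theorem~\ref{T: convergence of Furstenberg averages tempered} to the functions $f_1 = \cdots = f_k = \mathbf{1}_A$. This yields the $L^2(\mu)$-identity
\begin{equation*}
\lim_{N \to +\infty} \frac{1}{\pi(N)} \sum_{p \in \mathbb{P}:\, p \leq N} T^{\lfloor g(p) \rfloor} \mathbf{1}_A \cdot \ldots \cdot T^{k \lfloor g(p) \rfloor} \mathbf{1}_A = \lim_{N \to +\infty} \frac{1}{N} \sum_{n=1}^{N} T^{n} \mathbf{1}_A \cdot \ldots \cdot T^{k n} \mathbf{1}_A,
\end{equation*}
where both limits exist (the right-hand limit by the Host--Kra convergence theorem, the left-hand one by Theorem~\ref{T: convergence of Furstenberg averages tempered}). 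Since $L^2$-convergence implies weak convergence, I would take the inner product of both sides with $\mathbf{1}_A$ to obtain
\begin{equation*}
\lim_{N \to +\infty} \frac{1}{\pi(N)} \sum_{p \in \mathbb{P}:\, p \leq N} \mu\bigl(A \cap T^{-\lfloor g(p) \rfloor} A \cap \ldots \cap T^{-k \lfloor g(p) \rfloor} A\bigr) = \lim_{N \to +\infty} \frac{1}{N} \sum_{n=1}^{N} \mu\bigl(A \cap T^{-n} A \cap \ldots \cap T^{-kn} A\bigr).
\end{equation*}

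The final step is to invoke Furstenberg's multiple recurrence theorem, which asserts that the right-hand limit is strictly positive whenever $\mu(A) > 0$. Transferring this positivity through the equality above gives the desired conclusion. There is essentially no obstacle here beyond the bookkeeping of the previous two displays, because the entire burden of proof lies upstream in Theorem~\ref{T: convergence of Furstenberg averages tempered} (which in turn rests on Proposition~\ref{P: comparison for more general iterates} together with the analogue of Theorem~2.2 from \cite{Fra-Hardy-singlecase} that applies to tempered functions). The only thing one should double-check is that the Cauchy--Schwarz estimate passing from norm convergence to inner-product convergence is applied correctly and that no further hypothesis on $g$ is implicitly needed when specializing to $f_i = \mathbf{1}_A$; both are routine.
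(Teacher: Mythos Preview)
Your proposal is correct and follows exactly the approach indicated by the paper: the authors state Theorem~\ref{T: recurrence tempered} with the remark ``As in the Hardy field case, we have the corresponding recurrence result,'' meaning it is deduced from Theorem~\ref{T: convergence of Furstenberg averages tempered} by the same argument used to prove part~(a) of Theorem~\ref{T: multiple recurrence for szemeredi type patterns}. Your write-up reproduces that argument (apply the convergence theorem to $f_i=\mathbf{1}_A$, pass to weak convergence by integrating against $\mathbf{1}_A$, and invoke Furstenberg's multiple recurrence theorem) essentially verbatim.
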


The latter implies the following corollary, which guarantees arbitrarily long arithmetic progressions, with steps coming from the class of tempered functions evaluated at primes.

\begin{corollary}\label{C: Szemeredi tempered}
Let $g\in \mathcal{T}.$ For any set $E\subseteq \N$ of positive upper density, and $k\in \N,$ we have
\begin{equation*}
        \liminf\limits_{N\to+\infty} \frac{1}{\pi(N)}\sum_{p\in \P\colon p\leq N} \bar{d}\big(E\cap (E-\floor{g(p)})\cap \dots\cap (E-k\floor{g(p)})\big)>0.
    \end{equation*}
\end{corollary}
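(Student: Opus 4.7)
The plan is to deduce this as a straightforward consequence of Theorem~\ref{T: recurrence tempered} via Furstenberg's Correspondence Principle (Theorem~F in the excerpt), exactly analogous to how Corollary~\ref{C: Szemeredi corollary} is obtained from Theorem~\ref{T: multiple recurrence for szemeredi type patterns}. First, I would apply the correspondence principle to the set $E\subseteq \Z$ (embedding it in $\Z$ if stated as a subset of $\N$) in the one-dimensional case $d=1$, obtaining a measure-preserving system $(X,\X,\mu,T)$ and a set $A\in \X$ with $\mu(A)=\bar d(E)>0$, such that for every finite collection of integers $n_1,\ldots,n_k$,
\begin{equation*}
\bar d\bigl(E\cap(E-n_1)\cap \dots\cap (E-n_k)\bigr)\geq \mu\bigl(A\cap T^{-n_1}A\cap\dots\cap T^{-n_k}A\bigr).
\end{equation*}

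Applying this inequality to $n_i=i\floor{g(p)}$ for $1\leq i\leq k$, with $p\in \P$ ranging over primes up to $N$, I would obtain
\begin{equation*}
\frac{1}{\pi(N)}\sum_{p\in\P\colon p\leq N}\bar d\bigl(E\cap(E-\floor{g(p)})\cap\dots\cap(E-k\floor{g(p)})\bigr)\geq \frac{1}{\pi(N)}\sum_{p\in\P\colon p\leq N}\mu\bigl(A\cap T^{-\floor{g(p)}}A\cap\dots\cap T^{-k\floor{g(p)}}A\bigr),
\end{equation*}
which is valid for every $N$. Taking $\liminf_{N\to+\infty}$ on both sides preserves the inequality.

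Finally, Theorem~\ref{T: recurrence tempered} applied to the system $(X,\X,\mu,T)$ and the set $A$ (which has $\mu(A)>0$) guarantees that the right-hand side above has positive $\liminf$ (in fact, the limit exists and is positive). Combining this with the above inequality yields the desired conclusion. There is no real obstacle here: all the heavy lifting—namely the norm-convergence and multiple recurrence along primes for iterates of the form $\floor{g(p)}$ with $g\in\mathcal{T}$—is already packaged into Theorem~\ref{T: recurrence tempered}, and the correspondence principle translates this measure-theoretic statement directly into the combinatorial one about upper densities.
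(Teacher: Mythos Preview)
Your proposal is correct and matches the paper's approach: the paper states that Corollary~\ref{C: Szemeredi tempered} follows directly from Theorem~\ref{T: recurrence tempered} via Furstenberg's Correspondence Principle, exactly as you outline (and exactly as the corollaries in Section~\ref{Section-Introduction} are deduced from their corresponding recurrence theorems).
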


\begin{comment*}
In Theorem~\ref{T: convergence of Furstenberg averages tempered}, and, thus, in Theorem~\ref{T: recurrence tempered} and Corollary~\ref{C: Szemeredi tempered}, the floor function can be replaced with either the function $\lceil\cdot\rceil$ or the function  $[[\cdot ]]$. Furthermore, in each of these results, one can alternatively evaluate the sequences along the affine shifts $ap+b,$ for $a, b\in \R$ with $a\neq 0.$
 \end{comment*}

As we saw, the comparison method provides results along primes through the corresponding results for averages along $\N$, though in the case of tempered functions, we do not have a comparison result of the same strength as Theorem \ref{T: the main comparison}. Nonetheless, it is expected that convergence results along $\N$ for iterates which are comprised of multiple tempered functions (or even combinations of tempered and Hardy field functions) can be transferred to the prime setting. Even in the case of averages along $\N$, the convergence results are still not established under the most general expected assumptions. For a single function and commuting transformations, a result in this direction was proven in \cite{DKS-hardy}.
We note that \cite[Theorem~6.1]{DKS-hardy} reflects the complexity of the assumptions we have to impose on the growth rates of functions to deduce such results. This analysis is beyond the scope of this paper.

 \bibliography{final}
\bibliographystyle{plain}

\end{document}